\long\def\forget#1{}
\def\theenumi{(\alph{enumi})}
\def\p@enumii{\theenumi}
\newcommand{\DS}{\displaystyle}
\newcommand{\TS}{\textstyle}
\newcommand{\SC}{\scriptstyle}
\newcommand{\SSC}{\scriptscriptstyle}
\newcommand{\BA}{{\mathbb{A}}}
\newcommand{\BB}{{\mathbb{B}}}
\newcommand{\BF}{{\mathbb{F}}}
\newcommand{\BG}{{\mathbb{G}}}
\newcommand{\BN}{{\mathbb{N}}}
\newcommand{\BO}{{\mathbb{O}}}
\newcommand{\BQ}{{\mathbb{Q}}}
\newcommand{\BU}{{\mathbb{U}}}
\newcommand{\BZ}{{\mathbb{Z}}}
\newcommand{\sA}{{\mathscr{A}}}
\newcommand{\sB}{{\mathscr{B}}}
\newcommand{\sD}{{\mathscr{D}}}
\newcommand{\sG}{{\mathscr{G}}}
\newcommand{\sH}{{\mathscr{H}}}
\newcommand{\CB}{{\mathcal{B}}}
\newcommand{\CC}{{\mathcal{C}}}
\newcommand{\CalD}{{\mathcal{D}}}
\newcommand{\CE}{{\mathcal{E}}}
\newcommand{\CF}{{\mathcal{F}}}
\newcommand{\CG}{{\mathcal{G}}}
\newcommand{\CM}{{\mathcal{M}}}
\newcommand{\CN}{{\mathcal{N}}}
\newcommand{\CO}{{\mathcal{O}}}
\newcommand{\CX}{{\mathcal{X}}}
\newcommand{\FS}{{\mathfrak{S}}}
\newcommand{\Fm}{{\mathfrak{m}}}
\newcommand{\Fn}{{\mathfrak{n}}}
\newcommand{\Fp}{{\mathfrak{p}}}
\newcommand{\Fq}{{\mathfrak{q}}}
\newcommand{\q}{\mathfrak{q}}
\DeclareMathOperator{\Ad}{Ad}
\DeclareMathOperator{\End}{End}
\DeclareMathOperator{\Fil}{Fil}
\DeclareMathOperator{\Flag}{Flag}
\DeclareMathOperator{\Frac}{Frac}
\DeclareMathOperator{\Frob}{Frob}
\DeclareMathOperator{\Gal}{Gal}
\DeclareMathOperator{\GL}{GL}
\DeclareMathOperator{\Gr}{Gr}
\DeclareMathOperator{\Hom}{Hom}
\DeclareMathOperator{\Id}{Id}
\DeclareMathOperator{\Isom}{Isom}
\DeclareMathOperator{\Quot}{Quot}
\DeclareMathOperator{\Res}{Res}
\DeclareMathOperator{\Spa}{Spa}
\DeclareMathOperator{\Spec}{Spec}
\DeclareMathOperator{\Spf}{Spf}
\DeclareMathOperator{\Sym}{Sym}
\DeclareMathOperator{\Tor}{Tor}
\DeclareMathOperator{\Var}{V}
\newcommand{\ad}{{\rm ad}}
\newcommand{\alg}{{\rm alg}}
\DeclareMathOperator{\coker}{coker}
\newcommand{\dom}{{\rm dom}}
\newcommand{\dR}{{\rm dR}}
\DeclareMathOperator{\gr}{gr}
\DeclareMathOperator{\id}{\,id}
\newcommand{\lft}{{\rm lft}}
\renewcommand{\mod}{\;{\rm mod}\;}
\DeclareMathOperator{\pr}{pr}
\DeclareMathOperator{\Rad}{Rad}
\DeclareMathOperator{\rk}{rk}
\newcommand{\sep}{{\rm sep}}
\DeclareMathOperator{\trdeg}{trdeg}
\renewcommand{\phi}{\varphi}
\renewcommand{\epsilon}{\varepsilon}
\def\longto{\longrightarrow}
\def\into{\hookrightarrow}
\let\onto\twoheadrightarrow
\def\isoto{\stackrel{}{\mbox{\hspace{1mm}\raisebox{+1.4mm}{$\SC\sim$}\hspace{-3.5mm}$\longrightarrow$}}}
\let\setminus\smallsetminus
\newcommand{\es}{\enspace}
\newcommand{\open}{^\circ}
\newcommand{\mal}{^{^\times}}
\newcommand{\fdot}{{}_{\bullet}}
\newcommand{\dbl}{{\mathchoice{\mbox{\rm [\hspace{-0.15em}[}}
                              {\mbox{\rm [\hspace{-0.15em}[}}
                              {\mbox{\scriptsize\rm [\hspace{-0.15em}[}}
                              {\mbox{\tiny\rm [\hspace{-0.15em}[}}}}
\newcommand{\dbr}{{\mathchoice{\mbox{\rm ]\hspace{-0.15em}]}}
                              {\mbox{\rm ]\hspace{-0.15em}]}}
                              {\mbox{\scriptsize\rm ]\hspace{-0.15em}]}}
                              {\mbox{\tiny\rm ]\hspace{-0.15em}]}}}}
\newcommand{\dpl}{{\mathchoice{\mbox{\rm (\hspace{-0.15em}(}}
                              {\mbox{\rm (\hspace{-0.15em}(}}
                              {\mbox{\scriptsize\rm (\hspace{-0.15em}(}}
                              {\mbox{\tiny\rm (\hspace{-0.15em}(}}}}
\newcommand{\dpr}{{\mathchoice{\mbox{\rm )\hspace{-0.15em})}}
                              {\mbox{\rm )\hspace{-0.15em})}}
                              {\mbox{\scriptsize\rm )\hspace{-0.15em})}}
                              {\mbox{\tiny\rm )\hspace{-0.15em})}}}}
\newcommand{\invlim}[1][]{\ifthenelse{\equal{#1}{}}
{\DS \lim_{\longleftarrow}}
{\DS \lim_{\underset{#1}{\longleftarrow}}}
}
\newcommand{\dirlim}[1][]{\ifthenelse{\equal{#1}{}}
{\DS \lim_{\longrightarrow}}
{\DS \lim_{\underset{#1}{\longrightarrow}}}
}
\newcommand{\ul}[1]{{\underline{#1}}}
\newcommand{\ol}[1]{{\overline{#1}}}
\newcommand{\wt}[1]{{\widetilde{#1}}}
\newcommand{\wh}[1]{{\widehat{#1}}}
\theoremstyle{plain}
\newtheorem{theorem}{Theorem}[section]
\newtheorem{lemma}[theorem]{Lemma}
\newtheorem{corollary}[theorem]{Corollary}
\newtheorem{proposition}[theorem]{Proposition}
\newtheorem{theo}[theorem]{Theorem}
\newtheorem{lem}[theorem]{Lemma}
\newtheorem{prop}[theorem]{Proposition}
\newtheorem{cor}[theorem]{Corollary}
\theoremstyle{definition}
\newtheorem{definition}[theorem]{Definition}
\newtheorem{remark}[theorem]{Remark}
\newtheorem{example}[theorem]{Example}
\newtheorem{construction}[theorem]{Construction}
\newtheorem{rem}[theorem]{Remark}
\newtheorem{defn}[theorem]{Definition}
\newcounter{commentcounter}
\def\?{\ 
{\bf\color{red}???}\ 
\immediate\write16{}
\immediate\write16{Warning: There was still a question mark . . . }
\immediate\write16{}}
\def\olK{\,{\overline{\!K}}{}}
\def\wtM{{\,\wt{\!M}}}
\def\ulD{{\underline{D\!}\,}{}}
\def\wtsA{{\,\,\wt{\!\!\sA}}}
\newcommand{\BOne}{\hbox{\rm1\kern-2.5pt l\kern.9pt}}
\newcommand{\OOne}{\BO\kern-7.7pt\raisebox{1.3pt}{\underline{\phantom{{$\BO$}}}}}
\def\UOne{\underline{\hbox{\rm1\kern-2.5pt l\kern.9pt}}}
\newcommand{\BdRplus}[1]{\BB^+_{#1}}
\newcommand{\BdR}[1]{\BB_{#1}}
\numberwithin{equation}{section}
\newcommand{\Acal}{\mathscr{A}}
\newcommand{\Bcal}{\mathscr{B}}
\newcommand{\Ccal}{\mathcal{C}}
\newcommand{\Ecal}{\mathcal{E}}
\newcommand{\Fcal}{\mathcal{F}}
\newcommand{\Gcal}{\mathcal{G}}
\newcommand{\Mcal}{\mathcal{M}}
\newcommand{\Ncal}{\mathcal{N}}
\newcommand{\Ocal}{\mathcal{O}}
\newcommand{\Ucal}{\mathcal{U}}
\newcommand{\Xcal}{\mathcal{X}}
\newcommand{\Ycal}{\mathcal{Y}}
\newcommand{\Q}{\mathbb{Q}}
\newcommand{\Z}{\mathbb{Z}}
\newcommand{\C}{\mathbb{C}}
\newcommand{\Abb}{\mathbb{A}}
\newcommand{\boldB}{\mathbb{B}}
\newcommand{\bA}{\mathbf{A}}
\newcommand{\bB}{\mathbf{B}}
\newcommand{\Fbb}{\mathbb{F}}
\newcommand{\Gbb}{\mathbb{G}}
\newcommand{\Ubb}{\mathbb{U}}
\newcommand{\Mfrak}{\mathfrak{M}}
\newcommand{\mfrak}{\mathfrak{m}}
\newcommand{\slope}{\lambda}
\newcommand{\norm}[1]{\wt{#1}}
\begin{document}
\title{The universal family of semi-stable $p$-adic Galois representations 
}
\author[U. Hartl, E. Hellmann]{Urs Hartl \es and \es Eugen Hellmann}

\maketitle

\begin{abstract}
Let $K$ be a finite field extension of $\BQ_p$ and let $\mathscr{G}_K$ be its absolute Galois group. We construct the universal family of filtered $(\phi,N)$-modules, or (more generally) the universal family of $(\phi,N)$-modules with a Hodge-Pink lattice, and study its geometric properties. Building on this, we construct the universal family of semi-stable $\mathscr{G}_K$-representations in $\BQ_p$-algebras. All these universal families are parametrized by moduli spaces which are Artin stacks in schemes or in adic spaces locally of finite type over $\BQ_p$ in the sense of Huber. This has conjectural applications to the $p$-adic local Langlands program.

\medskip\noindent
{\it Mathematics Subject Classification (2000)\/}: 
11S20,  
(11F80,  
13A35)  
\end{abstract}
\maketitle

\tableofcontents

\section{Introduction}

\bigskip\noindent
Let $K$ be a finite field extension of $\BQ_p$. The emerging $p$-adic local Langlands program wants to relate on the one hand certain continuous representations of the absolute Galois group $\mathscr{G}_K={\rm Gal}(\olK/K)$ of $K$ on $n$-dimensional $L$-vector spaces for another $p$-adic field $L$, and on the other hand topologically irreducible admissible representations of ${\rm GL}_n(K)$ on finite dimensional $L$-Banach spaces in the sense of \cite{SchneiderTeitelbaum06}. One fundamental difference to the case where $L$ is an $\ell$-adic field with $\ell\ne p$ is that the $\ell$-adic local Langlands correspondence is a bijection of merely discrete sets. In the $p$-adic case the representations vary in families. So one may even speculate about a continuous or analytic correspondence.
At present not even a conjectural formulation of the $p$-adic local Langlands correspondence purely in local terms is known. One of the main tools in the $p$-adic Langlands program is to consider families of representations that admit a dense set of points, where the representations ``come from a global set-up'', as in \cite{CEGGPS} for example. Hence a good understanding of these arithmetic families of $p$-adic Galois representations of $\mathscr{G}_K$ seems to be crucial. This understanding is our aim in the present article: we develop notions of $p$-adic families of $p$-adic Hodge structures (such as filtered $(\phi,N)$-modules) and $p$-adic Galois representations and study the relation between these two. 

The study of such families was begun by Kisin, Pappas and Rapoport in \cite{crysrep,Kisindeform,phimod} and by one of us in \cite{families}, where a universal family of filtered $\phi$-modules was constructed and, building on this, a universal family of crystalline representations with Hodge-Tate weights in $\{0,1\}$. The approach is based on Kisin's integral $p$-adic Hodge theory cf.~\cite{crysrep}. 

In the present article we generalize these results in two directions: 
First we consider more general families of $p$-adic Hodge-structure, namely families of $(\phi,N)$-modules together with a so called \emph{Hodge-Pink lattice}. The inspiration to work with Hodge-Pink lattices instead of filtrations is taken from the analogous theory over function fields; see \cite{Pink,GL11,HartlPSp}. It was already applied to Kisin's integral $p$-adic Hodge theory by Genestier and Lafforgue \cite{GL12} in the absolute case for $\phi$-modules over $\BQ_p$.

Second we generalize \cite{families} to the case of semi-stable representations. In doing so we correct some mistakes made in loc.~cit. The generalization to families with more general Hodge-Tate weights than those in \cite{families} (where the weights are assumed to be in $\{0,1\}$) gives another good reason to work with families of Hodge-Pink lattices: Kisin's theory does not describe $\mathscr{G}_K$-stable $\Z_p$-lattices in a crystalline resp.~semi-stable $\mathscr{G}_K$-representation but all $\mathscr{G}_{K_\infty}$-stable $\Z_p$-lattices, where $K_\infty$ is a certain Kummer extension of $K$ appearing in \cite{crysrep}. 
The $(\phi,N)$-modules with a Hodge-Pink lattice correspond to certain $\mathscr{G}_{K_\infty}$ representation and we describe their moduli space (or stack). This stack turns out to be a vector bundle over a space of filtered $(\phi,N)$-modules. The original space of filtered $(\phi,N)$-modules (corresponding to $\mathscr{G}_K$ rather than $\mathscr{G}_{K_\infty}$-representations) can be recovered as a section defined by a certain transversality condition in this vector bundle. 
Moreover, we consider (following \cite{phimod}) a stack of integral $p$-adic Hodge-structures and a period morphism to the moduli stack of $(\phi,N)$-modules with a Hodge-Pink lattice and describe its image. Once again, this only works using the more general framework of Hodge-Pink lattices. 
\smallskip

The introduction of Hodge-Pink lattices rather than filtrations shows new and interesting phenomena: similarly to the case of filtrations one can define weights of a Hodge-Pink lattice. However, these weights can jump within a family! Whereas for families of $\sG_K$-representations the Hodge-Tate weights should vary continuously. 
On the Galois side there is an explanation of this behavior as follows: there is no notion of Hodge-Tate weights for representations of $\sG_{K_\infty}$, but only for representations of $\sG_K$. 
\smallskip

We can define a notion of weak admissibility for $(\phi,N)$-modules with Hodge-Pink lattice and show that being weakly admissible is an open condition in the set up of adic spaces generalizing the corresponding result for filtered $\phi$-modules in \cite{families}. 
Following the method of \cite{crysrep} and \cite{families} we further cut out an open subspace over which an integral structure for the $(\phi,N)$-modules with Hodge-Pink lattice exists and an open subspace over which a family of $\mathscr{G}_{K_\infty}={\rm Gal}(\olK/K_\infty)$-representation exists. If we restrict ourselves to the subspace of filtered $(\phi,N)$-modules one can promote this family of $\mathscr{G}_{K_\infty}$-representations to the universal family of semi-stable $\mathscr{G}_K$-representations.

\smallskip

We describe our results in more detail. 
Let $K$ be a finite extension of $\Q_p$ with absolute Galois group $\mathscr{G}_K$ and maximal unramified sub-extension $K_0$. Let $\Frob_p$ be the $p$-Frobenius on $K_0$. We consider \emph{families of $(\phi,N)$-modules} over $\Q_p$-schemes $X$, that is finite locally free $\Ocal_X\otimes_{\Q_p}K_0$-modules $D$ together with a $\phi:=\id\otimes\Frob_p$-linear automorphism $\Phi$ and a linear monodromy operator $N\colon D\rightarrow D$ satisfying the usual relation $N\Phi=p\Phi N$. Choosing locally on $X$ a basis of $D$
 and considering $\Phi\in\GL_d\bigl(\Ocal_X\otimes_{\Q_p}K_0\bigr)$ and $N\in{\rm Mat}_{d\times d}(\Ocal_X\otimes_{\Q_p}K_0)$ as matrices, the condition $N\Phi=p\Phi N$ cuts out a closed subscheme $P_{K_0,d}\subset\Res_{K_0/\BQ_p}\GL_{d}\,\times_{\BQ_p}\,\Res_{K_0/\Q_p}{\rm Mat}_{d\times d}$. We can describe the geometry of this scheme as follows.

\medskip\noindent
{\bfseries Theorem~\ref{ThmReduced}.}
{\it
The scheme $P_{K_0,d}$ is equi-dimensional of dimension $[K_0:\Q_p]\,d^2$. It is reduced, Cohen-Macaulay and generically smooth over $\BQ_p$. Its irreducible components are indexed by the possible Jordan types of the (necessarily nilpotent) monodromy operator $N$. 
}

\medskip

Further we consider families of $(\phi,N)$-modules $D$ with a filtration $\Fcal^\bullet$ on $D\otimes_{K_0}K$ and more generally families of $(\phi,N)$-modules with a Hodge-Pink lattice $\Fq$, see Definition~\ref{DefHPLattice} for the precise definitions. Given a cocharacter $\mu$ of the algebraic group $\Res_{K/\Q_p}\GL_{d,K}$ (or more precisely a cocharacter of the Weil restriction of the diagonal torus which is dominant with respect to the Weil restriction of the upper triangular matrices) we define a notion of a filtration $\Fcal^\bullet$ resp.\ a Hodge-Pink lattice with \emph{constant Hodge polygon equal to $\mu$}, and a notion of \emph{boundedness by $\mu$} for a Hodge-Pink lattice $\Fq$. With $\mu$ is associated a reflex field $E_\mu$ which is a finite extension of $\BQ_p$.

\medskip\noindent
{\bfseries Theorem~\ref{ThmModuliStacks}.}
{\it
\begin{enumerate}
\item
The stack $\sH_{\phi,N,\preceq\mu}$ parametrizing rank $d$ families of $(\phi,N)$-modules with Hodge-Pink lattice bounded by $\mu$ on the category of $E_\mu$-schemes is an Artin stack. It is equi-dimensional and generically smooth. Its dimension can be explicitly described in terms of the cocharacter $\mu$ and its irreducible components are indexed by the possible Jordan types of the (nilpotent) monodromy operator.
\item The stack $\sH_{\phi,N,\mu}$  parametrizing rank $d$ families of $(\phi,N)$-modules with Hodge-Pink lattice with constant Hodge polygon equal to $\mu$, is an open and dense substack of $\sH_{\phi,N,\preceq\mu}$. Further it is reduced and Cohen-Macaulay. It admits a canonical map to the stack $\sD_{\phi,N,\mu}$ of filtered $(\phi,N)$-modules with filtration of type $\mu$. This map is representable by a vector bundle.
\end{enumerate}
}

\medskip

If we restrict ourselves to the case of vanishing monodromy, i.e.~the case $N=0$, we cut out a single irreducible component $\sH_{\phi,\preceq\mu}\subset \sH_{\phi,N,\preceq\mu}$ and similarly for the other stacks in the theorem. 
Following \cite{families} we consider the above stacks also as stacks on the category of adic spaces locally of finite type over $\Q_p$, i.e.~we consider the adification $\sH_{\phi,N,\preceq\mu}^{\rm ad}$, etc. Passing from $\Q_p$-schemes to adic spaces allows us to generalize Kisin's comparison between filtered $(\phi,N)$-modules and vector bundles on the open unit disc (together with certain additional structures). To do so we need to fix a uniformizer $\pi$ of $K$ as well as its minimal polynomial $E(u)$ over $K_0$.

\medskip\noindent
{\bfseries Theorem~\ref{ThmEquivDandM}.}
{\it
For every adic space $X$ locally of finite type over $\Q_p$ there is a natural equivalence of categories between the category of $(\phi,N)$-modules with Hodge-Pink lattice over $X$ and the category of $(\phi,N_\nabla)$-modules over $X$, i.e.~the category of vector bundles $\Mcal$ on the product of $X$ with the open unit disc $\Ubb$ over $K_0$ together with a semi-linear map $\Phi_\Mcal\colon \Mcal\rightarrow \Mcal$ that is an isomorphism away from $X\times\{E(u)=0\}\subset X\times\Ubb$ and a differential operator $N_\nabla^\Mcal$ satisfying \[N_\nabla^\CM\circ\Phi_\CM\circ\phi=p\,\tfrac{E(u)}{E(0)}\cdot\Phi_\CM\circ\phi\circ N_\nabla^\CM.\]
}

\smallskip\noindent
{\bfseries Theorem~\ref{ThmHnabla}.}
{\it
The differential operator $N_\nabla^\Mcal$ defines a canonical meromorphic connection on the vector bundle $\Mcal$. The closed substack $\sH_{\phi,N,\mu}^\nabla\subset\sH_{\phi,N,\mu}^\ad$ where this connection is holomorphic coincides with the zero section of the vector bundle $\sH_{\phi,N,\mu}^\ad\rightarrow \sD_{\phi,N,\mu}^\ad$.
}

It should be mentioned that the results of Kisin \cite{crysrep} have a parallel story, earlier developed by Berger \cite{Berger}, using the cyclotomic extension $K(\epsilon_n, n\geq 1)$ for a compatible system $(\epsilon_n)$ of $p^n$-th root of unity, instead of the Kummer extension $K_\infty$.  The above results are very much inspired by loc.~cit.  
\medskip

Similarly to the case of filtered $\phi$-modules in \cite{families} there is a notion of weak admissibility for families of $(\phi,N)$-modules  with Hodge-Pink lattice over an adic space. We show that weak admissibility is an open condition.

\medskip\noindent
{\bfseries Theorem~\ref{ThmWAOpen}.}
{\it
Let $\mu$ be a cocharacter as above, then the groupoid 
\[X\mapsto \{(D,\Phi,N,\q)\in \sH_{\phi,N, \preceq \mu}(X)\mid D\otimes \kappa(x)\ \text{is weakly admissible for all}\ x\in X\}\]
is an open substack $\sH^{\rm ad, wa}_{\phi,N, \preceq \mu}$ of $\sH^\ad_{\phi,N, \preceq \mu}$. 
}

\medskip

Following the construction in \cite{families} we construct an open substack $\sH_{\phi,N,\preceq\mu}^{\rm ad, int}\subset\sH_{\phi,N,\preceq\mu}^{\rm ad, wa}$ where an integral model for the $(\phi,N_{\nabla})$-module over the open unit disc exists. Here integral means with respect to the ring of integers $W$ in $K_0$. Dealing with Hodge-Pink lattices instead of filtrations makes is possible to generalize the period morphism of \cite[\S\,5]{phimod} beyond the miniscule case. That is, we consider a stack $\hat\Ccal_{\preceq\mu,N,K}$ in the category of formal schemes over $\Spf\CO_{E_\mu}$ whose $R$-valued points parameterize tuples $(\Mfrak,\Phi,N)$ where $\Mfrak$ is a finite locally free $(R\otimes_{\Z_p}W)\dbl u\dbr$ module, $\Phi$ is a semi-linear morphism $\Phi\colon \Mfrak\rightarrow \Mfrak$ which is an isomorphism away from $E(u)=0$, whose behavior at $E(u)$ is controlled in terms of $\mu$, and $N$ is an endomorphism of $\Mfrak/u\Mfrak$ satisfying $N\Phi=p\Phi N$; see after Remark~\ref{periodandN} for the precise definition.

Given a $p$-adic formal scheme $\Xcal$ over $\Spf\CO_{E_\mu}$ we construct a period morphism
\[\Pi(\Xcal)\colon \hat\Ccal_{\preceq \mu,N,K}(\Xcal)\longrightarrow \sH^{\rm ad}_{\phi,N,\preceq \mu}(\Xcal^{\rm ad})\]
and the substack $\sH^{\rm ad, int}_{\phi,N,\preceq \mu}$ will serve as the image of this morphism in the following sense. 

\medskip\noindent
{\bfseries Corollary~\ref{CorImagePeriodMap}.}
{\it
Let $X$ be an adic space locally of finite type over the reflex field $E_\mu$ of $\mu$ and let $f\colon X\rightarrow \sH_{\phi,N,\preceq\mu}^{\ad}$ be a morphism defined by $(D,\Phi,N,\q)$. Then $f$ factors over $\sH^{\rm ad, int}_{\phi,N,\preceq\mu}$ if and only if there exists an fpqc-covering $(U_i\rightarrow X)_{i\in I}$  and formal models $\Ucal_i$ of $U_i$ together with $(\Mfrak_i,\Phi_i)\in \hat\Ccal_{\preceq\mu,N,K}(\Ucal_i)$ such that $\Pi(\Ucal_i)(\Mfrak_i,\Phi_i)=(D,\Phi,N,\q)|_{U_i}$. 
}

\medskip

Finally we go back to Galois representations. We prove that there is a canonical open subspace $\sH^{\rm red, ad, adm}_{\phi,N,\preceq\mu}$ of the reduced space underlying $\sH^{\rm ad, int}_{\phi,N,\preceq\mu}$ which carries a family of $\mathscr{G}_{K_\infty}$-representations. This family is universal in a sense made precise in the body of the article.
Roughly this means that a morphism $f\colon X\rightarrow \sH^{\rm ad, int}_{\phi,N,\preceq\mu}$ defined by some $(\Mfrak,\Phi,N)$ over a formal model $\Xcal$ of $X$ factors over $\sH^{\rm ad, adm}_{\phi,N,\preceq\mu}$ if and only there exists a family of $\mathscr{G}_{K_\infty}$-representations $\Ecal$ on $X$ such that the $\phi$-module of $\Ecal$, in the sense of Fontaine, is (up to inverting $p$) given by the $p$-adic completion of $(\Mfrak,\Phi)[1/u]$.
For a finite extension $L$ of $E_\mu$, Kisin's theory implies that we have an equality 
\[\sH^{\rm red, ad,adm}_{\phi,N,\preceq\mu}(L)=\sH^{\rm ad, int}_{\phi,N,\preceq\mu}(L)=\sH^{\rm ad,wa}_{\phi,N,\preceq\mu}(L)\]
of $L$-valued points. 

If we want to promote our family of $\sG_{K_\infty}$-representations to a family of $\sG_K$-representations, we have to restrict ourselves to filtrations rather than Hodge-Pink lattices. The reason for that is that the meromorphic connection $\nabla$ in Theorem~\ref{ThmHnabla} above must be holomorphic in this case. 
In the framework of Berger's work \cite{Berger} with the cyclotomic tower, this is in some sense even more apparent: the connection $\nabla$ comes from the derivation of the $\Gamma$-action. 

\medskip\noindent
{\bfseries Theorem~\ref{ThmUnivFamily}.}
{\it
There is an open substack $\sD_{\phi,N,\mu}^{\rm ad,adm}\subset \sD_{\phi,N,\mu}^{\rm ad}$ over which there exists a family $\Ecal$ of semi-stable $\mathscr{G}_K$-representations such that $D_{\rm st}(\Ecal)=(D,\Phi,N,\Fcal^\bullet)$ is the restriction of the universal family of filtered $(\phi,N)$-modules on $\sD_{\phi,N,\mu}^{\rm ad}$ to $\sD_{\phi,N,\mu}^{\rm ad, adm}$.\\
This family is universal in the following sense: Let $X$ be an adic space locally of finite type over the reflex field $E_\mu$ of $\mu$, and let $\Ecal'$ be a family of semi-stable $\mathscr{G}_K$-representations on $X$ with constant Hodge polygon equal to $\mu$. Then there is a unique morphism $f\colon X\rightarrow \sD_{\phi,N,\mu}^{\rm ad,adm}$ such that $\Ecal'\cong f^\ast\Ecal$ as families of $\mathscr{G}_K$-representations.
}

\medskip

The corresponding result for crystalline $\sG_K$-representations with constant Hodge polygon equal to $\mu$, whose moduli space is $\sD_{\phi,\mu}^{\rm ad,adm}$, is formulated and proved in Corollary~\ref{CorUnivCrystFamily}.
We finally briefly discuss how these results relate to Kisin's construction of potentially semi-stable deformation rings \cite{Kisindeform}. There is a precise relation between our universal family and Kisin's construction discussed in Proposition \ref{comparewKisin} in the body of the paper. It should be mentioned however, that the spirit of our approach differs from Kisin's: first of all our point of view is the study of families of $p$-adic Hodge-structures (i.e.~the side of semi-linear algebra data) in the first place - and then we turn to cut out a subspace defining a Galois representation. Kisin starts with families of Galois representations (provided by deformation rings) and then cuts out a crystalline locus. Moreover, his definition of a crystalline family differs from ours: Kisin defines a family to be semi-stable if its base change to all finite dimensional $\Q_p$-algebras is semi-stable. In contrast we aim at giving a definition that is more in the spirit of Fontaine's definition using period rings. In fact, as we needed to correct some mistakes from the last section of \cite{families}, we also changed the definition of crystalline representations from loc.cit: it seems to be a bit messy to deal with the filtration on a sheafified version of $B_{\rm cris}$, hence we rather use the $\phi$-modules on the open unit disc as our $p$-adic Hodge structures and define the notion of a semi-stable representation using the comparison of a vector bundle on the open unit disc and a Galois representation after tensoring with (a relative version of) $B^+_{\rm cris}$, see Definition \ref{DefSemiStRepNeu}.

\bigskip\noindent
{\bf Notations:} Let $K$ be a finite field extension of the $p$-adic numbers $\BQ_p$ and fix an algebraic closure $\olK$ of $K$. We write $\C_p$ for the $p$-adic completion of $\olK$ and let $\mathscr{G}_K=\Gal(\olK/K)$ be the absolute Galois group of $K$. Let $\norm{K}$ be the Galois closure of $K$ inside $\olK$. Let $K_0$ be the maximal unramified subfield of $K$ and $W$ its ring of integers. Set $f:=[K_0:\BQ_p]$, and let $\Frob_p$ be the Frobenius automorphism of $K_0$ which induces the $p$-power map on the residue field of $K_0$. We fix once and for all a uniformizer $\pi$ of $K$ and its minimal polynomial $E(u)={\rm mipo}_{\pi/K_0}(u)\in W[u]$ over $K_0$. It is an Eisenstein polynomial, and $K=K_0[u]/(E(u))$. We choose a compatible system $\pi_n$ of $p^n$-th roots of $\pi$ in $\olK$ and write $K_\infty$ for the field obtained from $K$ by adjoining all $\pi_n$. 

\bigskip\noindent
{\bf Acknowledgements.}
The second author acknowledges support of the DFG (German Research Foundation) in form of SFB/TR 45 ``Periods, Moduli Spaces and Arithmetic of Algebraic Varieties'' and in form of a Forschungsstipendium He 6753/1-1. Both authors were also supported by SFB 878 ``Groups, Geometry \& Actions'' of the DFG and Germany's Excellence Strategy EXC 2044--390685587 ``Mathematics Münster: Dynamics-Geometry-Structure''. We would further like to thank A. M\'ezard, M. Rapoport, T. Richarz and P. Scholze for helpful discussions.

\section{Families of $(\phi,N)$-modules with Hodge-Pink lattice}\label{SectFamilies}

Let $R$ be a $\BQ_p$-algebra and consider the endomorphism $\phi:=\id_R\otimes\Frob_p$ of $R\otimes_{\BQ_p}K_0$. For an $R\otimes_{\BQ_p}K_0$-module $M$ we set $\phi^\ast M:=M\otimes_{R\otimes_{\BQ_p}K_0,\,\phi}R\otimes_{\BQ_p}K_0$. Similar notation is applied to morphisms between $R\otimes_{\BQ_p}K_0$-modules. We let $\phi^\ast\colon M\to\phi^\ast M$ be the $\phi$-semi-linear map with $\phi^\ast(m)= m\otimes1$.

We introduce the rings \[\BdRplus{R}:=\invlim[i](R\otimes_{\BQ_p}K_0[u])/(E(u)^i)\ \text{and}\ \BdR{R}:=\BdRplus{R}[\frac{1}{E(u)}].\] In a certain sense $\BdRplus{\BQ_p}$ and $\BdR{\BQ_p}$ are the analogs of Fontaine's rings $\mathbf{B}_\dR^+$ and $\mathbf{B}_\dR$ in Kisin's theory \cite{crysrep} of $p$-adic Galois representation. By Cohen's structure theorem \cite[Theorem II.4.2]{SerreLF} the ring $\BdRplus{\BQ_p}=\invlim K_0[u]/(E(u)^i)$ is isomorphic to $K\dbl t\dbr$ under a map sending $t$ to $\tfrac{E(u)}{E(0)}$ (and by Hensel's Lemma the lift of the residue field $K$ to a subring of $\BdRplus{\BQ_p}$ is unique). The rings $\BdRplus{R}$ and $\BdR{R}$ are relative versions over $R$, and are isomorphic to $(R\otimes_{\BQ_p}K)\dbl t\dbr$, respectively $(R\otimes_{\BQ_p}K)\dbl t\dbr[\frac{1}{t}]$. We extend $\phi$ to $R\otimes_{\BQ_p}K_0[u]$ by requiring $\phi(u)=u^p$ and we define $\phi^n(\BdRplus{R}):=\lim\limits_{\longleftarrow\ i}(R\otimes_{\BQ_p}K_0[u])/(\phi^n(E(u))^i)$. Note that we may also identify $\phi^n(\BdRplus{R})$ with $\invlim(R\otimes_{\BQ_p}K(\pi_n)[u])/(1-\tfrac{u}{\pi_n})^i\,=\,(R\otimes_{\BQ_p}K(\pi_n))\dbl 1-\tfrac{u}{\pi_n}\dbr$ under the assignment $\tfrac{E(u)}{E(0)}\mapsto 1-\tfrac{u}{\pi_n}$; compare \cite[(1.1.1)]{crysrep}. 
We extend these rings to sheaves of rings $\phi^n(\BdRplus{X}):=\phi^n(\BdRplus{\CO_X})$ on $\BQ_p$-schemes $X$ or adic spaces $X\in\Ad_{\BQ_p}^\lft$. Here $\Ad_{\BQ_p}^{\lft}$ denotes the category of adic spaces locally of finite type, see \cite{Hu2} for example.
\begin{rem}
Note that $\phi^n(\BdRplus{R})$ is not a subring of $\BdRplus{R}$. If $X=\Spa(R,R^\circ)$ is an affinoid adic space of finite type over $\Q_p$ one should think of $\phi^n(\BdRplus{R})$ as the completion of the structure sheaf on $X\times \Ubb$ along the section defined by $\phi^n(E(u))\in \Ubb$. Here $\Ubb$ denotes the open unit disc over $K_0$.
\end{rem}

\begin{definition}\label{DefPhiModule}
\begin{enumerate}
\item 
A \emph{$\phi$-module $(D,\Phi)$ over $R$} consists of a locally free $R\otimes_{\BQ_p}K_0$-module $D$ of finite rank, and an $R\otimes_{\BQ_p}K_0$-linear isomorphism $\Phi\colon \phi^\ast D\isoto D$. A \emph{morphism $\alpha\colon (D,\Phi)\to(\wt D,\wt\Phi)$ of $\phi$-modules} is an $R\otimes_{\BQ_p}K_0$-homomorphism $\alpha\colon D\to\wt D$ with $\alpha\circ\Phi=\wt\Phi\circ\phi^*\alpha$.
\item 
A \emph{$(\phi,N)$-module $(D,\Phi,N)$ over $R$} consists of a $\phi$-module $(D,\Phi)$ over $R$ and an $R\otimes_{\BQ_p}K_0$-linear endomorphism $N\colon D\to D$ satisfying $N\circ\Phi = p\cdot \Phi\circ\phi^\ast N$. A \emph{morphism $\alpha\colon (D,\Phi,N)\to(\wt D,\wt\Phi,\wt N)$ of $(\phi,N)$-modules} is morphism of $\phi$-modules with $\alpha\circ N=\wt N\circ\alpha$.
\end{enumerate}
The rank of $D$ over $R\otimes_{\BQ_p}K_0$ is called the \emph{rank} of $(D,\Phi)$, resp.\ $(D,\Phi,N)$.
\end{definition}

Every $\phi$-module over $R$ can be viewed as a $(\phi,N)$-module with $N=0$.

\begin{lemma}\label{LemmaNnilpot}
\begin{enumerate}
\item \label{LemmaNnilpotA}
Every $\phi$-module $(D,\Phi)$ over $R$ is Zariski locally on $\Spec R$ free over $R\otimes_{\BQ_p}K_0$.
\item \label{LemmaNnilpotB}
The endomorphism $N$ of a $(\phi,N)$-module over $R$ is automatically nilpotent.
\end{enumerate}
\end{lemma}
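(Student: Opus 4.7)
The plan is to establish (a) first, and then derive (b) from it by a formal argument with characteristic polynomials.

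For part (a), I set $A:=R\otimes_{\BQ_p}K_0$, a finite free $R$-algebra; the map $\Spec A\to\Spec R$ is finite and faithfully flat, hence open. The module $D$ is finitely generated projective over $A$, so its rank function is locally constant on $\Spec A$. The first step is to show this rank is constant on fibres of $\Spec A\to\Spec R$. For $\mathfrak{q}\in\Spec R$, the fibre algebra $\kappa(\mathfrak{q})\otimes_{\BQ_p}K_0$ decomposes as a product $\prod_i L_i$ of finite field extensions of $\kappa(\mathfrak{q})$. Since $K_0/\BQ_p$ is Galois with cyclic group $\langle\phi\rangle$, the set $\Hom_{\BQ_p}(K_0,\bar\kappa(\mathfrak{q}))$ is a $\langle\phi\rangle$-torsor, on which $\Gal(\bar\kappa(\mathfrak{q})/\kappa(\mathfrak{q}))$ acts compatibly; hence $\phi$ permutes the $L_i$ transitively. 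The $\phi$-semi-linear isomorphism $\Phi$, fibre-wise, then forces all $\dim_{L_i}(D\otimes L_i)$ to agree, so the rank is constant on the fibre over $\mathfrak{q}$. Openness of $\Spec A\to\Spec R$ lets me descend the locally constant rank function on $\Spec A$ to a locally constant function on $\Spec R$. After localizing at $\mathfrak{q}$, the ring $A_\mathfrak{q}=R_\mathfrak{q}\otimes_{\BQ_p}K_0$ is semi-local, and $D\otimes_R R_\mathfrak{q}$ is a finitely generated projective module of constant rank over it, hence free (by CRT one chooses a simultaneous basis modulo each maximal ideal and lifts it via Nakayama). Such a basis spreads out to a Zariski open neighbourhood of $\mathfrak{q}$ in $\Spec R$.

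For part (b) I would use (a) to reduce to the case that $D$ is free of rank $n$ over $A$. Then the characteristic polynomial $\chi_N(T)=T^n+c_1T^{n-1}+\ldots+c_n\in A[T]$ is well defined. Rewriting $N\Phi=p\Phi\phi^\ast N$ as $\Phi^{-1}N\Phi=p\cdot\phi^\ast N$ shows that $N$ and $p\cdot\phi^\ast N$ are conjugate via $\Phi$, and therefore share their characteristic polynomial. Now $\chi_{\phi^\ast N}$ is obtained from $\chi_N$ by applying $\phi$ to the coefficients, while $\chi_{pM}(T)=p^n\chi_M(T/p)$ multiplies the coefficient of $T^{n-i}$ by $p^i$. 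Comparing yields $c_i=p^i\phi(c_i)$ for every $i\geq 1$. Iterating $f$ times and using $\phi^f=\id_{K_0}$ gives $c_i=p^{fi}c_i$, i.e.\ $(1-p^{fi})c_i=0$. For $i\geq 1$ the integer $1-p^{fi}$ is nonzero and $\equiv 1\pmod p$, hence a unit in $\BQ_p$, and therefore a unit in the $\BQ_p$-algebra $A$. Thus $c_i=0$ for all $i\geq 1$, so $\chi_N(T)=T^n$, and Cayley--Hamilton gives $N^n=0$.

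The main obstacle lies in part (a): carefully explaining why the rank of $D$ is constant on the fibres of $\Spec A\to\Spec R$ and why this information descends to $\Spec R$. The crucial input is that $K_0/\BQ_p$ is Galois with cyclic Galois group, which provides the transitivity of the $\phi$-action on the fibre factors $L_i$. Once one has landed in the semi-local setting, the freeness is a standard consequence of Nakayama plus CRT, and part (b) is then essentially formal: the defining relation between $N$ and $p\cdot\phi^\ast N$ forces every non-leading coefficient of the characteristic polynomial to be killed by a $p$-adic unit.
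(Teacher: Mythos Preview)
Your proof is correct. Part~(a) follows the same idea as the paper: use the transitive $\phi$-action on the factors of each fibre $\kappa(\Fq)\otimes_{\BQ_p}K_0$ to force the rank to be constant, then apply Nakayama. The paper argues only at maximal ideals and invokes Nakayama directly; you work at arbitrary primes and spell out the semi-local/CRT step, but the substance is identical.

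Part~(b) is where your argument genuinely diverges from the paper's. The paper reduces modulo a prime and passes to an algebraic closure $L$, decomposes $D$ according to embeddings $K_0\hookrightarrow L$, and then shows that $N$ shifts the generalized eigenspaces of $(\Phi^f)_0$ by a factor of $p^f$; since there are only finitely many eigenvalues, $N$ must be nilpotent. Your route is purely algebraic and global: from $\Phi^{-1}N\Phi=p\,\phi^*N$ you read off $c_i=p^i\phi(c_i)$ on the coefficients of $\chi_N$, iterate $f$ times to get $(1-p^{fi})c_i=0$, and kill $c_i$ because $1-p^{fi}\in\BZ_p^\times$. This avoids any reduction to fields or eigenvalue analysis, and yields $N^{\rk D}=0$ directly over $R\otimes_{\BQ_p}K_0$ via Cayley--Hamilton. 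The paper's approach, on the other hand, gives the extra structural picture of $N$ as a shift on the $\Phi^f$-eigenspace decomposition, which is reused later in the analysis of $P_{K_0,d}$; your argument is shorter but does not exhibit that picture.
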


\begin{proof}
\ref{LemmaNnilpotA} Let $\Fm\subset R$ be a maximal ideal. Then $R/\Fm\otimes_{\BQ_p}K_0$ is a direct product of fields which are transitively permuted by $\Gal(K_0/\BQ_p)$. The existence of the isomorphism $\Phi$ implies that $D\otimes_RR/\Fm$ is free over $R/\Fm\otimes_{\BQ_p}K_0$. Now the assertion follows by Nakayama's lemma.
\smallskip

\noindent
\ref{LemmaNnilpotB}
By \ref{LemmaNnilpotA} we may locally on $R$ write $N$ as a matrix with entries in $R\otimes_{\BQ_p}K_0$. Set $d:=\rk D$. If the entries of the $d$-th power $N^d$ lie in $\Rad(0)\otimes_{\BQ_p}K_0$, where $\Rad(0)=\bigcap_{\Fp\subset R\text{ prime}}\Fp$ is the nil-radical, then $N$ is nilpotent. Thus we may check the assertion in $L=\Frac(R/\Fp)^\alg$ for all primes $\Fp\subset R$. We replace $R$ by $L$. Then $D=\prod V_{\psi}$ splits up into a direct product of $d$-dimensional $L$-vector spaces indexed by the embeddings $\psi\colon K_0\hookrightarrow L$. For every fixed embedding $\psi$ the $f$-th power $\Phi^f$ restricts to an endomorphism $\Phi_\psi$ of $V_\psi$ satisfying $N\Phi_\psi=p^f\Phi_\psi N$. If $V(\lambda,\Phi_\psi)$ denotes the generalized eigenspace for some $\lambda\in L^\times$, then $N$ maps $V(\lambda,\Phi_\psi)$ to $V(p^f\lambda,\Phi_\psi)$ and hence $N$ is nilpotent, as there are only finitely many non-zero eigenspaces. This implies that $N^d=0$.
\end{proof}

\begin{remark}\label{RemDecompOfD}
If $R$ is 
even a $K_0$-algebra, we can decompose $R\otimes_{\BQ_p}K_0\cong\prod_{i\in\BZ/f\BZ}R$ where the $i$-th factor is given by the map $R\otimes_{\BQ_p}K_0\to R,\,a\otimes b\mapsto a\Frob_p^{-i}(b)$ for $a\in R, b\in K_0$. For a $(\phi,N)$-module over $R$ we obtain corresponding decompositions $D=\prod_i D_i$ and $\phi^*D=\prod_i(\phi^*D)_i$ with $(\phi^*D)_i=D_{i-1}$, and hence also $\Phi=(\Phi_i\colon D_{i-1}\isoto D_i)_i$ and $N=(N_i\colon D_i\to D_i)_i$ with $p\,\Phi_i\circ N_{i-1}=N_i\circ\Phi_i$, because $(\phi^*N)_i=N_{i-1}$. If we set $\Psi_i:=\Phi_i\circ\ldots\circ\Phi_1=(\Phi\circ\phi^*\Phi\circ\ldots\circ\phi^{(i-1)*}\Phi)_i\colon D_0=(\phi^{i*}D)_i\isoto D_i$ then $p^i\,\Psi_i\circ N_0=N_i\circ\Psi_i$ for all $i$, and $\Psi_f=(\Phi^f)_0$. There is an isomorphism of $(\phi,N)$-modules over $R$
\begin{equation}\label{EqDecomp}
(\id_{D_0},\Psi_1,\ldots,\Psi_{f-1})\colon \Bigl(\prod_i D_0,\,\bigl((\Phi^f)_0,\id_{D_0},\ldots,\id_{D_0}\bigr),\,
(p^i N_0)_i\Bigr)\es\isoto\es\Bigl(\prod_iD_i,\,(\Phi_i)_i,\,(N_i)_i\Bigr)\,.
\end{equation}
Thus $(D,\Phi,N)$ is uniquely determined by $(D_0,(\Phi^f)_0,N_0)$ satisfying $p^f(\Phi^f)_0\circ N_0=N_0\circ(\Phi^f)_0$. Further note that under this isomorphism $\Phi^f$ on $(D,\Phi,N)$ corresponds to $\bigl((\Phi^f)_0,\ldots,(\Phi^f)_0\bigr)$ on the left hand side.
\end{remark}

\begin{definition}\label{DefHPLattice}
\begin{enumerate}
\item 
A \emph{$K$-filtered $(\phi,N)$-module $(D,\Phi,N,\CF^\bullet)$ over $R$} consists of a $(\phi,N)$-module $(D,\Phi,N)$ over $R$ together with a decreasing separated and exhaustive $\BZ$-filtration $\CF^\bullet$ on $D_K:=D\otimes_{K_0}K$ by $R\otimes_{\BQ_p}K$-submodules such that $\gr_\CF^i D_K:=\CF^iD_K/\CF^{i+1}D_K$ is locally free as an $R$-module for all $i$. A \emph{morphism $\alpha\colon (D,\Phi,N,\CF^\bullet)\to(\wt D,\wt\Phi,\wt N,\wt\CF^\bullet)$} is a morphism of $(\phi,N)$-modules with $(\alpha\otimes\id)(\CF^iD_K)\subset \wt\CF^i\wt D_K$.
\item 
A \emph{$(\phi,N)$-module with Hodge-Pink lattice $(D,\Phi,N,\Fq)$ over $R$} consists of a $(\phi,N)$-module $(D,\Phi,N)$ over $R$ together with a \emph{$\BdRplus{R}$-lattice} $\Fq\subset D\otimes_{R\otimes K_0}\BdR{R}$. This means that $\Fq$ is a finitely generated $\BdRplus{R}$-submodule, which is a direct summand as $R$-module satisfying $\BdR{R}\cdot\Fq= D\otimes_{R\otimes K_0}\BdR{R}$. We call $\Fq$ the \emph{Hodge-Pink lattice} of $(D,\Phi,N,\Fq)$. A \emph{morphism} $\alpha\colon (D,\Phi,N,\Fq)\to(\wt D,\wt\Phi,\wt N,\tilde\Fq)$ is a morphism of $(\phi,N)$-modules with $(\alpha\otimes\id)(\Fq)\subset\tilde\Fq$.
\end{enumerate}
\end{definition}
\begin{rem}
Note that the graded pieces $\gr_\CF^i D_K$ in (i) are $R\otimes_{\Q_p}K$-modules that are locally on $\Spec (R\otimes_{\Q_p} K)$ free, but not necessarily of the same rank. Hence they are not locally on $\Spec R$ free as $R\otimes_{\Q_p}K$-modules. However, they are locally on $\Spec R$ free as $R$-modules. 
\end{rem}

For every $(\phi,N)$-module with Hodge-Pink lattice $(D,\Phi,N,\Fq)$ over $R$ we also consider the tautological $\BdRplus{R}$-lattice $\Fp:=D\otimes_{R\otimes K_0}\BdRplus{R}$.

\begin{lemma}\label{LemmaHPLattice}
Let $\Fq\subset D\otimes_{R\otimes K_0}\BdR{R}$ be a $\BdRplus{R}$-submodule. Then $\Fq$ is a $\BdRplus{R}$-lattice if and only if $E(u)^n\Fp\subset\Fq\subset E(u)^{-m}\Fp$ for all $n,m\gg0$ and for any (some) such $n,m$ the quotients $E(u)^{-m}\Fp/\Fq$ and $\Fq/E(u)^n\Fp$ are finite locally free $R$-modules. 

If this is the case then \'etale locally on $\Spec R$ the $\BdRplus{R}$-module $\Fq$ is free of the same rank as $\Fp$.
\end{lemma}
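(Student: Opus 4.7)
The plan is to establish the equivalent characterization by implications in both directions, and then to deduce étale-local freeness via reduction to a point, where the structure theorem over a product of discrete valuation rings applies. Throughout I rely on three preliminary observations: the tautological lattice $\Fp=D\otimes_{R\otimes K_0}\BdRplus{R}$ is Zariski-locally free of rank $d$ over $\BdRplus{R}\cong(R\otimes_{\Q_p}K)\dbl t\dbr$; the quotient $\Fp/E(u)^N\Fp$ is a finite locally free $R$-module for every $N\geq 0$, being an iterated extension of copies of $\Fp/E(u)\Fp\cong D_K$; and $\BdRplus{R}$ is an $R$-module direct summand of $\BdR{R}=\BdRplus{R}[1/E(u)]$ via the decomposition into polar part plus power-series part.

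For the forward implication, suppose $\Fq$ is a $\BdRplus{R}$-lattice. Locally on $\Spec R$, pick a finite $\BdRplus{R}$-generating set of $\Fq$ and a $\BdRplus{R}$-basis of $\Fp$. Expressing each generator of $\Fq$ in this basis with denominator a bounded power of $E(u)$ gives $m$ with $\Fq\subset E(u)^{-m}\Fp$, while the equality $\BdR{R}\cdot\Fq=D\otimes\BdR{R}$ applied to the basis of $\Fp$ produces $n$ with $E(u)^n\Fp\subset\Fq$. Replacing $\Fq$ by $E(u)^m\Fq$ I may assume $\Fq\subset\Fp$; the $R$-module retraction of $D\otimes\BdR{R}$ onto $\Fq$ then restricts to a retraction $\Fp\to\Fq$, so that $\Fq/E(u)^N\Fp$ is an $R$-direct summand of the finite locally free module $\Fp/E(u)^N\Fp$ and is therefore itself finite locally free; untwisting recovers the stated local freeness of both $\Fq/E(u)^n\Fp$ and $E(u)^{-m}\Fp/\Fq$. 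For the reverse implication, finite generation of $\Fq$ over $\BdRplus{R}$ follows by lifting a finite set of $R$-generators of $\Fq/E(u)^n\Fp$ to $\Fq$ and adjoining a $\BdRplus{R}$-basis of $E(u)^n\Fp$; the equality $\BdR{R}\cdot\Fq=D\otimes\BdR{R}$ is immediate from $E(u)^n\Fp\subset\Fq$; and the $R$-module direct-summand property is obtained by splitting $0\to\Fq\to E(u)^{-m}\Fp\to E(u)^{-m}\Fp/\Fq\to 0$ (the cokernel is projective by hypothesis) and composing with the polar-part splitting of $E(u)^{-m}\Fp$ inside $D\otimes\BdR{R}$.

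For the final assertion, I reduce once more to $\Fq\subset\Fp$ by twisting. At every point $x\in\Spec R$ the ring $\BdRplus{\kappa(x)}$ is a finite product of discrete valuation rings, since $K/\Q_p$ is separable and hence $\kappa(x)\otimes_{\Q_p}K$ is a finite product of fields; the finitely generated torsion-free submodule $\Fq_{\kappa(x)}\subset\Fp_{\kappa(x)}$ is therefore free, necessarily of rank $d$ because $(\Fp/\Fq)_{\kappa(x)}$ is $E(u)$-power torsion. Lift a basis of $\Fq_{\kappa(x)}$ to elements $e_1,\ldots,e_d\in\Fq$ defined on an étale neighborhood $\Spec R'\to\Spec R$ over which $\Fp/\Fq$ is free, and consider the induced map $\alpha\colon(\BdRplus{R'})^d\to\Fq_{R'}$. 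Modulo $E(u)^N$ both source and target are finite locally free $R'$-modules (the target being a direct summand of $\Fp/E(u)^N\Fp$), $\alpha$ is an isomorphism at $x$ by construction, and hence, after shrinking $\Spec R'$, an isomorphism modulo every $E(u)^N$ by the standard criterion for finite locally free modules of equal rank; passing to the inverse limit over $N$, using $E(u)$-adic completeness of $\BdRplus{R'}$ and the fact that the $E(u)$-adic topology on $\Fq_{R'}$ is controlled by the sandwich, yields that $\alpha$ itself is an isomorphism. The main obstacle is that $\BdRplus{R}$ is non-Noetherian in general, so Nakayama's lemma does not apply directly; the remedy is precisely the mod-$E(u)^N$ reduction, which converts everything into a question about finite locally free $R'$-modules before lifting.
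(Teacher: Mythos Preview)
Your proof of the equivalence is correct and essentially matches the paper's: both directions exploit the $R$-linear splitting furnished by the direct-summand hypothesis, the paper phrasing it as a section of $D\otimes\BdR{R}\twoheadrightarrow(D\otimes\BdR{R})/\Fq$ and you as a retraction onto $\Fq$.

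For the \'etale-local freeness your route diverges from the paper's and contains a gap. The paper does not lift a basis from a fiber; instead it descends the short exact sequence $0\to\Fq/E(u)^n\Fp\to E(u)^{-m}\Fp/E(u)^n\Fp\to E(u)^{-m}\Fp/\Fq\to 0$ to a finitely generated $\BQ_p$-subalgebra $\wt R\subset R$, so that $\BdRplus{\wt R}$ is noetherian. Over $\wt R$ the fiberwise flatness criterion \cite[IV$_3$, 11.3.10]{EGA} applied at maximal ideals (where the residue ring is a product of DVRs) yields projectivity of $\wt\Fq$, and a change-of-rings spectral sequence computes $\Tor_1^{\BdRplus{\wt R}}(E(u)^{-m}\wt\Fp/\wt\Fq,\BdRplus{R})=0$ to show that $\wt\Fq\otimes_{\BdRplus{\wt R}}\BdRplus{R}\cong\Fq$. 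This noetherian descent is precisely what replaces the Nakayama-type argument you flag as unavailable.

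In your argument the central claim is inconsistent. You assert that $\Fq_{R'}$ modulo $E(u)^N$ is simultaneously a direct summand of $\Fp/E(u)^N\Fp$ and of the same $R'$-rank as $(\BdRplus{R'}/E(u)^N)^d$. But the direct summand you produced in the first part is $\Fq/E(u)^N\Fp$, whose $R'$-rank is $d[K:\BQ_p]N-\rk_R(\Fp/\Fq)$, strictly smaller than the source unless $\Fq=\Fp$; the quotient with the correct rank is $\Fq/E(u)^N\Fq$, which is \emph{not} a submodule of $\Fp/E(u)^N\Fp$. The fix is to observe that $\Fq/E(u)^N\Fq$ is nonetheless locally free of the right rank via the exact sequence
\[
0\longrightarrow E(u)^N\Fp/E(u)^N\Fq\longrightarrow \Fq/E(u)^N\Fq\longrightarrow \Fq/E(u)^N\Fp\longrightarrow 0
\]
together with $E(u)^N\Fp/E(u)^N\Fq\cong\Fp/\Fq$. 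You must then also justify why a single shrinking suffices for all $N$: shrink once so that $\alpha_1$ is an isomorphism, note that $\coker(\alpha_N)/E(u)\coker(\alpha_N)=\coker(\alpha_1)=0$ forces $\coker(\alpha_N)=0$ since $E(u)^N$ annihilates it, and conclude from equal ranks that the surjection $\alpha_N$ is an isomorphism. With these repairs your approach is more elementary than the paper's and avoids the noetherian descent altogether.
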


\begin{proof}
The assertion $E(u)^n\Fp\subset\Fq\subset E(u)^{-m}\Fp$ for all $n,m\gg0$ is equivalent to $\BdR{R}\cdot\Fq= D\otimes_{R\otimes K_0}\BdR{R}$ when $\Fq$ is finitely generated. Consider such $n,m$. If $\Fq$ is a $\BdRplus{R}$-lattice, hence a direct summand of $D\otimes_{R\otimes K_0}\BdR{R}$ there is an $R$-linear section $s$ of the projection ${\rm pr}:D\otimes_{R\otimes K_0}\BdR{R} \onto (D\otimes_{R\otimes K_0}\BdR{R})/\Fq$. The composition of this section with the inclusion $E(u)^{-m}\Fp/\Fq\into (D\otimes_{R\otimes K_0}\BdR{R})/\Fq$ factors through $E(u)^{-m}\Fp$: Indeed,  for $x\in E(u)^{-m}\Fp/\Fq$ the condition ${\rm pr}(s(x))=x$ means that that there exists $x'\in \Fq$ such that $s(x)=x+x'\in E(u)^{-m}\Fp+\Fq=E(u)^{-m}\Fp$.
  
Hence we see that the inclusion $E(u)^{-m}\Fp/\Fq\into (D\otimes_{R\otimes K_0}\BdR{R})/\Fq$ realizes $E(u)^{-m}\Fp/\Fq$ as a direct summand of the $R$-module $E(u)^{-m}\Fp/E(u)^n\Fp$ which is locally free by Lemma~\ref{LemmaNnilpot}\ref{LemmaNnilpotA}. This shows that $E(u)^{-m}\Fp/E(u)^n\Fp\cong (E(u)^{-m}\Fp/\Fq)\oplus(\Fq/E(u)^n\Fp)$ and both $E(u)^{-m}\Fp/\Fq$ and $\Fq/E(u)^n\Fp$ are finite locally free $R$-modules.

Conversely any isomorphism $E(u)^{-m}\Fp/E(u)^n\Fp\cong (E(u)^{-m}\Fp/\Fq)\oplus(\Fq/E(u)^n\Fp)$ together with the decomposition $D\otimes_{R\otimes K_0}\BdR{R}\;\cong\;(E(u)^n\Fp) \,\oplus\,(E(u)^{-m}\Fp/E(u)^n\Fp)\,\oplus\,(D\otimes_{R\otimes K_0}\BdR{R})/E(u)^{-m}\Fp$ realizes $\Fq$ as a direct summand of $D\otimes_{R\otimes K_0}\BdR{R}$: Indeed, we have the following direct sum decompositions of $R$-modules:
\begin{align*}
\Fq&\cong (E(u)^n\Fp)\oplus (\Fq/E(u)^n\Fp)\\
D\otimes_{R\otimes K_0}\BdR{R}&\cong (E(u)^n\Fp)\oplus (\Fq/E(u)^n\Fp)\oplus (E(u)^{-m}\Fp/\Fq)\oplus (D\otimes_{R\otimes K_0}\BdR{R})/E(u)^{-m}\Fp.
\end{align*} 
Since $E(u)^n\Fp$ is finitely generated over $\BdRplus{R}$ and $\Fq/E(u)^n\Fp$ is finitely generated over $R$, also $\Fq$ is finitely generated over $\BdRplus{R}$, hence a $\BdRplus{R}$-lattice.

To prove the local freeness of $\Fq$ we may work locally on $R$ and assume by Lemma~\ref{LemmaNnilpot}\ref{LemmaNnilpotA} that $\Fp$ is free over $\BdRplus{R}$, say of rank $d$, and $\Fq/E(u)^n\Fp$ and $E(u)^{-m}\Fp/\Fq$ are free over $R$. There is a noetherian subring $\wt R$ of $R$ and a short exact sequence 
\begin{equation}\label{EqTildeSeq1}
\xymatrix { 0 \ar[r] & \wt Q \ar[r] & \wt P \ar[r] & \wt N \ar[r] & 0 }
\end{equation}
of $\BdRplus{\wt R}$-modules which are free $\wt R$-modules, such that the tensor product of \eqref{EqTildeSeq1} with $R$ over $\wt R$ is isomorphic to 
\begin{equation}\label{EqSeq1Fq}
\xymatrix { 0 \ar[r] & \Fq/E(u)^n\Fp \ar[r] & E(u)^{-m}\Fp/E(u)^n\Fp \ar[r] & E(u)^{-m}\Fp/\Fq \ar[r] & 0 \,.}
\end{equation}
Indeed, we can take $\wt R$ as the finitely generated $\BQ_p$-algebra containing all the coefficients appearing in matrix representations of the maps in \eqref{EqSeq1Fq} and the action of $K_0[u]/(E(u))^{m+n}$.
Note that, since $\wt P$ is free over $\wt R$, it is contained in $\wt P\otimes_{\wt R}R\cong E(u)^{-m}\Fp/E(u)^n\Fp$, and since the latter is annihilated by $E(u)^{m+n}$, the same is true for $\wt P,\wt Q$ and $\wt N$.  
Let $\wt\Fp$ be a free $\BdRplus{\wt R}$-module of rank $d$ and fix an isomorphism $\wt\Fp\otimes _{\BdRplus{\wt R}}\BdRplus{R}\cong \Fp$. This isomorphism obviously induces an isomorphism 
\[
E(u)^{-m}\wt\Fp\otimes_{\BdRplus{\wt R}}\bigl(\BdRplus{\wt R}/(E(u))^{m+n}\bigr)\isoto\wt P.
\] 
Let the $\BdRplus{\wt R}$-module $\wt\Fq$ be defined by the exact sequence
\begin{equation}\label{EqTildeSeq2}
\xymatrix { 0 \ar[r] & \wt\Fq \ar[r] & E(u)^{-m}\wt\Fp \ar[r] & \wt N \ar[r] & 0 \,.}
\end{equation}
Since $\BdRplus{\wt R}\cong(\wt R\otimes_{\BQ_p}K)\dbl t\dbr$ is noetherian, $\wt\Fq$ is finitely generated. Consider a maximal ideal $\Fm\subset\BdRplus{\wt R}$. Since $t\in\Fm$, it maps to a maximal ideal $\Fn$ of $\wt R$. Since $\Fn$ is finitely generated, $\BdRplus{\wt R}\otimes_{\wt R}\wt R/\Fn\,\cong\,(\wt R/\Fn\otimes_{\BQ_p}K)\dbl t\dbr$ and this is a direct product of discrete valuation rings. Thus $\wt\Fq\otimes_{\wt R}\wt R/\Fn$ is locally free of rank $d$ by the elementary divisor theorem. Since this holds for all $\Fm$, \cite[IV$_3$, Theorem 11.3.10]{EGA} implies that $\wt\Fq$ is a projective $\BdRplus{\wt R}$-module and by \cite[I$_{\rm new}$, Proposition 10.10.8.6]{EGA} it is locally on $\Spec\wt R\otimes_{\BQ_p}K$ free over $\BdRplus{\wt R}$. Let $\{\psi\colon K\into\ol\BQ_p\}$ be the set of all $\BQ_p$-homomorphisms and let $\norm{K}$ be the compositum of all $\psi(K)$ inside $\ol\BQ_p$. Then $\wt R\to\wt R\otimes_{\BQ_p}\norm{K}$ is finite \'etale and the pullback of $\wt\Fq$ under this base change is locally on $\Spec\wt R\otimes_{\BQ_p}\norm{K}\otimes_{\BQ_p}K$ free over $\BdRplus{\wt R\otimes_{\BQ_p}\norm{K}}$. Since $\Spec\wt R\otimes_{\BQ_p}\norm{K}\otimes_{\BQ_p}K=\coprod_\psi\Spec\wt R\otimes_{\BQ_p}\norm{K}$ it follows that the pullback of $\wt\Fq$ is already locally on $\Spec\wt R\otimes_{\BQ_p}\norm{K}$ free over $\BdRplus{\wt R\otimes_{\BQ_p}\norm{K}}$.

To finish the proof it remains to show that $\wt\Fq\otimes_{\BdRplus{\wt R}}\BdRplus{R}\cong\Fq$. Tensoring \eqref{EqTildeSeq2} with $\BdRplus{R}$ over $\BdRplus{\wt R}$ we obtain the top row in the diagram
\[
\xymatrix { 0 \ar[r] & \Tor_1^{\BdRplus{\wt R}}(\wt N,\BdRplus{R}) \ar[r] & \wt\Fq\otimes_{\BdRplus{\wt R}}\BdRplus{R} \ar[r]\ar@{->>}[d] & E(u)^{-m}\Fp \ar[r]\ar@{=}[d] & \wt N\otimes_{\BdRplus{\wt R}}\BdRplus{R} \ar[r]\ar[d]^\cong & 0 \\
 & 0 \ar[r] & \Fq \ar[r] & E(u)^{-m} \Fp \ar[r] & E(u)^{-m}\Fp/\Fq \ar[r] & 0 \,.
}
\]
Abbreviate $\ell:=m+n$. Since the functor $\wt N\otimes_{\BdRplus{\wt R}}\fdot$ equals the composition of the functors $(\BdRplus{\wt R}/t^{\ell})\otimes_{\BdRplus{\wt R}}\fdot$ followed by $\wt N\otimes_{\BdRplus{\wt R}/t^{\ell}}\fdot$, the $\Tor_1$-module on the left can be computed from a change of rings spectral sequence \cite[Theorem 10.71]{Rotman} and its associated 5-term sequence of low degrees, see \cite[Theorem 10.31]{Rotman},
\[
\ldots\;\longto\; \Tor_1^{\BdRplus{\wt R}}(\BdRplus{\wt R}/t^{\ell},\BdRplus{R})\otimes_{\BdRplus{\wt R}/t^{\ell}}\wt N \;\longto\;\Tor_1^{\BdRplus{\wt R}}(\wt N,\BdRplus{R}) \;\longto\;\Tor_1^{\BdRplus{\wt R}/t^{\ell}}(\wt N,\BdRplus{R}/t^{\ell}) \;\longto\;0\,.
\]
The right term in this sequence is zero because $\Tor_1^{\BdRplus{\wt R}/t^{\ell}}(\wt N,\BdRplus{R}/t^{\ell})\;=\;\Tor_1^{\wt R}(\wt N,R)$ and $\wt N$ is flat over $\wt R$. The left term is zero because $t^{\ell}$ is a non-zero-divisor both in $\BdRplus{\wt R}$ and $\BdRplus{R}$. This shows that $\Tor_1^{\BdRplus{\wt R}}(\wt N,\BdRplus{R})\;=\;0$ and proves the lemma.
\end{proof}

\begin{remark}\label{Rem2.4}
(1) Let $R=L$ be a field and let $(D,\Phi,N,\Fq)$ be a $(\phi,N)$-module with Hodge-Pink lattice over $L$. The Hodge-Pink lattice $\Fq$ gives rise to a $K$-filtration $\CF_\Fq^\bullet$ as follows. Consider the natural projection
\[
\Fp\;\onto\;\Fp/E(u)\Fp\;=\;D\otimes_{R\otimes K_0}\BdRplus{R}/(E(u))\;=\;D\otimes_{R\otimes K_0}R\otimes_{\BQ_p}K\;=\;D_K
\]
and let $\CF_\Fq^i D_K$ be the image of $\Fp\cap E(u)^i\Fq$ in $D_K$ for all $i\in\BZ$, that is
\[
\CF_\Fq^i D_K:=\bigl(\Fp\cap E(u)^i\Fq\bigr)\big/\bigl(E(u)\Fp\cap E(u)^i\Fq\bigr)\,.
\]
Since $L$ is a field $(D,\Phi,N,\CF_\Fq^\bullet)$ is a $K$-filtered $(\phi,N)$-module over $L$. Note that this functor does not exist for general $R$, because $\gr_{\CF_\Fq}^i\!D_K$ will not be locally free over $R$ in general. This is related to the fact that the Hodge polygon of $\CF_\Fq^\bullet$ is locally constant on $R$ whereas the Hodge polygon of $\Fq$ is only semi-continuous; see Remark~\ref{RemMuLocConst} below.

\medskip\noindent
(2) However, for general $R$ consider the category of $(\phi,N)$-modules with Hodge-Pink lattice $(D,\Phi,N,\Fq)$ over $R$, such that $\Fp\subset\Fq\subset E(u)^{-1}\Fp$. This category is equivalent to the category of $K$-filtered $(\phi,N)$-modules $(D,\Phi,N,\CF^\bullet)$ over $R$ with $\CF^0D_K=D_K$ and $\CF^2=0$. Namely, defining $\CF_\Fq^\bullet$ as in (1) we obtain 
\[
\gr_{\CF_\Fq}^i D_K\;\cong\;\left\{\begin{array}{c@{\quad\text{for }}l} E(u)^{-1}\Fp/\Fq & i=0\,, \\[1mm]
\Fq/\Fp & i=1\,, \\[1.5mm] 
0 & i\ne0,1\,,  \end{array}\right.
\]
and so $(D,\Phi,N,\CF_\Fq^\bullet)$ is a $K$-filtered $(\phi,N)$-module by Lemma~\ref{LemmaHPLattice}. Conversely, $\Fq$ equals the preimage of $\CF_\Fq^1D_K$ under the morphism $E(u)^{-1}\Fp\xrightarrow{\es\cdot E(u)\;}\Fp\onto D_K$ and this defines the inverse functor.

\medskip\noindent
(3) Now let $(D,\Phi,N,\CF^\bullet)$ be a $K$-filtered $(\phi,N)$-module over $R$. Using that $\BdRplus{R}=(R\otimes_{\BQ_p}K)\dbl t\dbr$ is an $R\otimes_{\BQ_p}K$-algebra, we can define the Hodge-Pink lattice
\[
\Fq\;:=\;\Fq(\Fcal^\bullet)\;:=\;\sum_{i\in \Z} E(u)^{-i}(\CF^iD_K)\otimes_{R\otimes K}\BdRplus{R}.
\]
It satisfies $\CF_\Fq^\bullet=\CF^\bullet$. Using Lemma \ref{LemmaHPLattice} one easily finds that $\Fq(\Fcal^\bullet)$ is indeed a $\BdRplus{R}$-lattice. 
\end{remark}

\begin{example}\label{ExCyclot1}
The $K$-filtered $(\phi,N)$-modules over $R=\BQ_p$ which correspond to the cyclotomic character $\chi_{\rm cyc}\colon\sG_K\to\BZ_p\mal$ are $D_{\rm st}(\chi_{\rm cyc})=(K_0,\Phi=p^{-1},N=0,\CF^\bullet)$ with $\CF^{-1}=K\supsetneq\CF^0=(0)$ and its dual $D^*_{\rm st}(\chi_{\rm cyc})=(K_0,\Phi=p,N=0,\CF^\bullet)$ with $\CF^1=K\supsetneq\CF^2=(0)$. For both there exists a unique Hodge-Pink lattice which induces the filtration. On $D_{\rm st}(\chi_{\rm cyc})$ it is $\Fq=E(u)\Fp$ and on $D^*_{\rm st}(\chi_{\rm cyc})$ it is $\Fq=E(u)^{-1}\Fp$.
\end{example}

\bigskip

We want to introduce Hodge weights and Hodge polygons. Let $d>0$, let $B\subset\GL_d$ be the Borel subgroup of upper triangular matrices and let $T\subset B$ be the maximal torus consisting of the diagonal matrices. Let $\wt G:=\Res_{K/\BQ_p}\GL_{d,K}$, $\wt B=\Res_{K/\BQ_p}B$ and $\wt T:=\Res_{K/\BQ_p}T$ be the Weil restrictions. We consider cocharacters
\begin{equation}\label{mu}
\mu\colon \Gbb_{m,\ol\BQ_p}\longrightarrow \wt T_{\ol\BQ_p}
\end{equation}
which are dominant with respect to the Borel $\wt B$ of $\wt G$. In other words on $\ol\BQ_p$-valued points the cocharacter 
\[\mu\colon \ol\Q_p^\times\longrightarrow \prod_{\psi\colon K\rightarrow \ol\Q_p}T(\ol\Q_p)\,,\]
where $\psi$ runs over all $\BQ_p$-homomorphisms $\psi\colon K\to\ol\BQ_p$, is given by cocharacters
\[\mu_\psi\colon  x\mapsto {\rm diag}(x^{\mu_{\psi,1}},\dots,x^{\mu_{\psi,d}})\]
for some integers $\mu_{\psi,j}\in\Z$ with $\mu_{\psi,j}\ge\mu_{\psi,j+1}$. We define the \emph{reflex field $E_\mu$ of $\mu$} as the fixed field in $\ol\BQ_p$ of $\{\,\sigma\in\sG_{\BQ_p}\colon \mu_{\sigma\psi,j}=\mu_{\psi,j}\es\forall\;j,\psi\,\}$. It is a finite extension of $\BQ_p$ which is contained in the compositum $\norm{K}$ of all $\psi(K)$ inside $\ol\BQ_p$. For each $j$ the locally constant function $\psi\mapsto\mu_{\psi,j}$ on $\Spec \norm{K}\otimes_{\BQ_p}K\cong\coprod_{\psi\colon K\to\norm{K}}\Spec\norm{K}$ descends to a $\BZ$-valued function $\mu_j$ on $\Spec E_\mu\otimes_{\BQ_p}K$, because $\mu_j$ is constant on the fibers of $\Spec\norm{K}\otimes_{\BQ_p}K\to\Spec E_\mu\otimes_{\BQ_p}K$. In particular, the cocharacter $\mu$ is defined over $E_\mu$. If $R$ is an $E_\mu$-algebra we also view $\mu_j$ as a locally constant $\BZ$-valued function on $\Spec R\otimes_{\BQ_p}K$.

\begin{construction}\label{ConstrHodgeWts}
Let $\ulD=(D,\Phi,N,\Fq)$ be a $(\phi,N)$-module with Hodge-Pink lattice of rank $d$ over a field extension $L$ of $\BQ_p$. By Lemma~\ref{LemmaNnilpot}\ref{LemmaNnilpotA} the $L\otimes_{\BQ_p}K_0$-module $D$ is free. Since $L\otimes_{\BQ_p}K$ is a product of fields, $\BdRplus{L}=(L\otimes_{\BQ_p}K)\dbl t\dbr$ is a product of discrete valuation rings and $\Fq$ is a free $\BdRplus{L}$-module of rank $d$. We choose bases of $D$ and $\Fq$. Then the inclusion $\Fq\subset D\otimes_{L\otimes K_0}\BdR{L}$ is given by an element $\gamma$ of $\GL_d(\BdR{L})=\wt G\bigl(L\dpl t\dpr\bigr)$. By the Cartan decomposition for $\wt G$ there is a uniquely determined dominant cocharacter $\mu_L\colon \BG_{m,L}\to\wt T_L$ over $L$ with $\gamma\in\wt G\bigl(L\dbl t\dbr\bigr)\mu_L(t)^{-1}\wt G\bigl(L\dbl t\dbr\bigr)$. This cocharacter is independent of the chosen bases. If $L$ contains $\norm{K}$, it is defined over $\norm{K}$ because $\wt T$ splits over $\norm{K}$. In this case we view it as an element of $X_*(T_{\norm{K}})_\dom$ and denote it by $\mu_\ulD(\Spec L)$. It has the following explicit description. Under the decomposition $L\otimes_{\BQ_p}K=\prod_{\psi\colon K\to \norm{K}}L$ we have $\wt G\bigl(L\dpl t\dpr\bigr)=\prod_\psi\GL_d\bigl(L\dpl t\dpr\bigr)$, $\mu_L=(\mu_\psi)_\psi$, and $\gamma\in\prod_\psi\GL_d\bigl(L\dbl t\dbr\bigr)\mu_\psi(t)^{-1}\GL_d\bigl(L\dbl t\dbr\bigr)$. The $t^{-\mu_{\psi,1}},\ldots,t^{-\mu_{\psi,d}}$ are the elementary divisors of the $\psi$-component $\Fq_\psi$ of $\Fq$ with respect to $\Fp$. That is, there is an $L\dbl t\dbr$-basis $(v_{\psi,1},\ldots,v_{\psi,d})$ of the $\psi$-component $\Fp_\psi$ of $\Fp$ such that $(t^{-\mu_{\psi,1}}\,v_{\psi,1},\ldots,t^{-\mu_{\psi,d}}\,v_{\psi,d})$ is an $L\dbl t\dbr$-basis of $\Fq_\psi$.

Let $(D,\Phi,N,\CF_\Fq^\bullet)$ be the $K$-filtered $(\phi,N)$-module associated with $\ulD$ by Remark~\ref{Rem2.4}(1). Then $\CF_\Fq^iD_{K,\psi}=\langle v_{\psi,j}\colon i-\mu_{\psi,j}\le0\rangle_L$ and
\[
\dim_L\gr_{\CF_\Fq}^i\!D_{K,\psi}\;=\;\#\{j\colon i-\mu_{\psi,j}=0\}\,. 
\]
More generally, for a $K$-filtered $(\phi,N)$-module $(D,\Phi,N,\CF^\bullet)$ over a field extension $L$ of $\norm{K}$ we consider the decomposition $D_K=\prod_\psi D_{K,\psi}$ and define the integers $\mu_{\psi,1}\ge\ldots\ge\mu_{\psi,d}$ by the formula
\[
\dim_L\gr_\CF^iD_{K,\psi}\;=\;\#\{j\colon \mu_{\psi,j}=i\}\,. 
\]
We define the cocharacter $\mu_{(D,\Phi,N,\CF^\bullet)}(\Spec L):=(\mu_\psi)_\psi$ and view it as an element of $X_*(T_{\norm{K}})_\dom$.
\end{construction}

\begin{defn}\label{Defnbndmu}
\begin{enumerate}
\item[(a)]  Let $R$ be a $\norm{K}$-algebra and consider the decomposition $R\otimes_{\BQ_p}K=\prod_{\psi:K\into\norm{K}}R$. Let $\ulD$ be a $(\phi,N)$-module with Hodge-Pink lattice (respectively a $K$-filtered $(\phi,N)$-module) of rank $d$ over $R$. For every point $s\in\Spec R$ we consider the base change $s^*\ulD$ of $\ulD$ to $\kappa(s)$. We call the cocharacter $\mu_\ulD(s):=\mu_{s^*\ulD}(\Spec \kappa(s))$ from Construction~\ref{ConstrHodgeWts} the \emph{Hodge polygon of $\ulD$ at $s$} and we consider $\mu_\ulD$ as a function $\mu_\ulD:\Spec R\to X_*(T_{\norm{K}})_\dom$. The integers $-\mu_{\psi,j}(s)$ are called the \emph{Hodge weights of $\ulD$ at $s$}.
\end{enumerate}
\noindent
Now let $\mu: \ol\BQ_p^\times\rightarrow \wt T(\ol\BQ_p)$ be a dominant cocharacter as in \eqref{mu}, let $E_\mu$ denote the reflex field of $\mu$, and let $R$ be an $E_\mu$-algebra.
\begin{enumerate}
\item[(b)]
Let $\ulD$ be a $(\phi,N)$-module with Hodge-Pink lattice (respectively a $K$-filtered $(\phi,N)$-module) of rank $d$ over $R$. We say that \emph{$\ulD$ has constant Hodge polygon equal to $\mu$} if $\mu_\ulD(s)=\mu$ for every point $s\in\Spec(R\otimes_{E_\mu}\norm{K})$.
\item[(c)]
Let $\ulD=(D,\Phi,N,\Fq)$ be a $(\phi,N)$-module with Hodge-Pink lattice over $\Spec R$. We say that $\ulD$ has \emph{Hodge polygon bounded by $\mu$} if 
\[
\bigwedge^j_{\BdRplus{R}}\Fq\;\subset\; E(u)^{-\mu_{1}-\ldots-\mu_{j}}\cdot\bigwedge^j_{\BdRplus{R}}\Fp
\]
for all $j=1,\ldots,d$ with equality for $j=d$, where the $\mu_i$ are the $\Z$-valued functions on $\Spec R\otimes_{\Q_p}K$ determined by $\mu$; see the discussion before Construction~\ref{ConstrHodgeWts}.
\end{enumerate}
\end{defn}
Equivalently the condition of being bounded by $\mu$ can be described as follows:  Over $\norm{K}$ the cocharacter $\mu$ is described by a decreasing sequence of integers $\mu_{\psi,1}\geq\dots\geq \mu_{\psi,d}$ for every $\BQ_p$-embedding $\psi:K\hookrightarrow \ol\BQ_p$. Let $R'=R\otimes_{E_\mu} \norm{K}$, then $R'\otimes_{\Q_p}K\cong \prod_{\psi:K\rightarrow \norm{K}} R'_\psi$ with each $R'_\psi=R'$ under the isomorphism $a\otimes b\mapsto (a\psi(b))_\psi$, where $\psi:K\rightarrow R'$ is given via the embedding into the second factor of $R'=R\otimes_{E_\mu}\norm{K}$. Especially we view $R'_\psi$ as a $K$-algebra via $\psi$.
Under this isomorphism $D\otimes_{R\otimes K_0}\boldB_{R'}=:\Fp_{R'}[\tfrac{1}{t}]$ decomposes into a product $\prod_\psi \Fp_{R'}[\tfrac{1}{t}]_\psi$, where $\Fp_{R'}[\tfrac{1}{t}]_\psi$ is a free $R'_\psi\dbl t\dbr[\tfrac{1}{t}]$-module and the $\boldB_{R'}^+$-lattice $\Fp_{R'}\subset \Fp_{R'}[\tfrac{1}{t}]$ decomposes into a product of $R'_\psi\dbl t\dbr$-lattices $\Fp_{R',\psi}\subset \Fp_{R'}[\tfrac{1}{t}]_{\psi}$.

Further, under the isomorphism  $D\otimes_{R\otimes K_0}\boldB_{R'}\cong \prod_\psi \Fp_{R'}[\tfrac{1}{t}]_\psi$ the Hodge-Pink lattice $\Fq_{R'}=\Fq\otimes_RR'$ decomposes into a product $\Fq_{R'}=\prod_\psi \Fq_{R',\psi}$, where $\Fq_{R',\psi}$ is an $R'_\psi\dbl t\dbr$-lattice in $\Fp_{R'}[\tfrac{1}{t}]_\psi$.
Then the condition of being bounded by $\mu$ is equivalent to
\begin{equation}\label{EqDefHodgeWts}
\bigwedge^j_{\BdRplus{R'}}\Fq_{R',\psi}\es\subset\es E(u)^{-\mu_{\psi,1}-\ldots-\mu_{\psi,j}}\cdot\bigwedge^j_{\BdRplus{R'}}\Fp_{R',\psi}
\end{equation}
for all $\psi$ and all $j=1,\ldots,d$ with equality for $j=d$. 

Note that by Cramer's rule (e.g.~\cite[III.8.6, Formulas (21) and (22)]{BourbakiAlgebra}) the condition of Definition~\ref{Defnbndmu}(c), respectively \eqref{EqDefHodgeWts} is equivalent to
\[
\bigwedge^j_{\BdRplus{R}}\Fp\;\subset\; E(u)^{\mu_{d-j+1}+\ldots+\mu_{d}}\cdot\bigwedge^j_{\BdRplus{R}}\Fq\,,
\]
respectively
\begin{equation}\label{EqDefHodgeWtsB}
\bigwedge^j_{\BdRplus{R'}}\Fp_{R',\psi}\es\subset\es E(u)^{\mu_{\psi,d-j+1}+\ldots+\mu_{\psi,d}}\cdot\bigwedge^j_{\BdRplus{R'}}\Fq_{R',\psi}
\end{equation}
for all $j=1,\ldots,d$ with equality for $j=d$.

\begin{remark}\label{RemMuLocConst}
The Hodge polygon of a $K$-filtered $(\phi,N)$-module $(D,\Phi,N,\CF^\bullet)$ is locally constant on $R$, because $\gr^i_\CF D_{K,\psi}$ is locally free over $R$ as a direct summand of the locally free $R$-module $\gr^i_\CF D_K$.

In contrast, the Hodge polygon of a $(\phi,N)$-module $\ulD$ with Hodge-Pink lattice over $R$ is not locally constant in general. Nevertheless, for any cocharacter $\mu$ as in \eqref{mu} the set of points $s\in\Spec R$ such that $\mu_\ulD(s)\preceq\mu$ in the Bruhat order, is closed in $\Spec R$. This is a consequence of the next
\end{remark}

\begin{proposition}\label{PropHWts}
Let $\mu\in X_*(T_{\norm{K}})_\dom$ be a dominant cocharacter with reflex field $E_\mu$ and let $R$ be an $E_\mu$-algebra. Let $\ulD=(D,\Phi,N,\Fq)$ be a $(\phi,N)$-module with Hodge-Pink lattice of rank $d$ over $R$.
\begin{enumerate}
\item \label{PropHWts_A}
The condition that $\ulD$ has Hodge polygon bounded by $\mu$ is representable by a finitely presented closed immersion  $(\Spec R)_{\preceq\mu}\hookrightarrow \Spec R$.
\item \label{PropHWts_B}
If $R$ is reduced then $\ulD$ has Hodge polygon bounded by $\mu$ if and only if for all points $s\in\Spec R\otimes_{E_\mu}\norm{K}$ we have $\mu_\ulD(s)\preceq\mu$ in the Bruhat order, that is, for all $\psi$ the vector $\mu_\psi-\mu_\ulD(s)_\psi\in\BZ^d$ is a non-negative linear combination of the positive coroots $\check{\alpha}_j=(\ldots,0,1,-1,0,\ldots)$ having the ``\:$1$'' as $j$-th entry.
\item \label{PropHWts_C}
Let $\mu'$ be another dominant cocharacter such that $\mu'\preceq\mu$ in the Bruhat order. Let $E_{\mu'}$ denote its reflex field and let $E=E_\mu E_{\mu'}\subset \norm{K}$ be the composite field. Assume that $R$ is an $E$-algebra, then $(\Spec R)_{\preceq\mu'}\hookrightarrow (\Spec R)_{\preceq\mu}$ as closed subschemes of $\Spec R$. 
\end{enumerate}
\end{proposition}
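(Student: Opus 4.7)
For part~(a), the strategy is to rewrite each condition defining ``bounded by $\mu$'' as the vanishing of finitely many elements in a suitable finite locally free $R$-module. By Lemma~\ref{LemmaHPLattice} the lattices $\Fp$ and $\Fq$ are \'etale-locally free of rank $d$ over $\BdRplus{R}$, so after an \'etale base change on $\Spec R$ we pick bases. The inclusion $\Fq\subset\Fp\otimes_{R\otimes K_0}\BdR{R}$ is then represented by a matrix $\gamma\in\GL_d(\BdR{R})$, and its $j$-th exterior power encodes $\bigwedge^j\Fq$ inside $\bigwedge^j\Fp\otimes_{R\otimes K_0}\BdR{R}$. Choose integers $m,n\gg 0$ such that $E(u)^n\Fp\subset\Fq\subset E(u)^{-m}\Fp$, hence also $E(u)^{jn}\bigwedge^j\Fp\subset\bigwedge^j\Fq\subset E(u)^{-jm}\bigwedge^j\Fp$ with quotients that are finite locally free over $R$ by Lemma~\ref{LemmaHPLattice}. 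Making $m$ large enough that the locally constant $\BZ$-valued function $a_j:=\mu_1+\cdots+\mu_j$ on $\Spec R\otimes_{\BQ_p}K$ satisfies $a_j\le jm$ everywhere, the inclusion $\bigwedge^j\Fq\subset E(u)^{-a_j}\bigwedge^j\Fp$ is equivalent to the vanishing of the image of $\bigwedge^j\Fq$ in the finite locally free $R$-module
\[
Q_j\;:=\;E(u)^{-jm}\bigwedge^j\Fp\;\Big/\;E(u)^{-a_j}\bigwedge^j\Fp\,.
\]
Since $\bigwedge^j\Fq$ is finitely generated as $\BdRplus{R}$-module (indeed free of rank $\binom{d}{j}$ after our \'etale localization), this image is generated by finitely many elements in $Q_j$, cutting out a finitely presented closed subscheme of $\Spec R$. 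For the reverse inclusion $E(u)^{-a_d}\bigwedge^d\Fp\subset\bigwedge^d\Fq$ required by the equality at $j=d$, I dually consider the vanishing of the image of a generator of $E(u)^{-a_d}\bigwedge^d\Fp$ in the finite locally free $R$-module $E(u)^{-dm}\bigwedge^d\Fp\big/\bigwedge^d\Fq$. The intersection of these finitely many finitely presented closed subschemes descends by \'etale (in fact fpqc) descent to the required closed immersion $(\Spec R)_{\preceq\mu}\hookrightarrow\Spec R$.

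For part~(b), the closed immersion into a reduced scheme is an isomorphism exactly when its defining equations vanish after base change to the residue field of every point, and since $R\to R\otimes_{E_\mu}\norm{K}$ is finite \'etale it preserves reducedness and reflects vanishing of equations. At a field-valued point $s\in\Spec R\otimes_{E_\mu}\norm{K}$, the Cartan decomposition in Construction~\ref{ConstrHodgeWts} expresses the relative position of $\Fq(s)\subset\Fp(s)[1/t]$ through the tuple $\mu_\ulD(s)=(\mu_{\ulD,\psi,j}(s))_{\psi,j}$. Using the explicit elementary-divisor description, the inclusions $\bigwedge^j\Fq(s)\subset E(u)^{-(\mu_{\psi,1}+\cdots+\mu_{\psi,j})}\bigwedge^j\Fp(s)$ (with equality for $j=d$) translate exactly to $\mu_{\ulD,\psi,1}(s)+\cdots+\mu_{\ulD,\psi,j}(s)\le\mu_{\psi,1}+\cdots+\mu_{\psi,j}$ with equality at $j=d$, which is the Bruhat order condition $\mu_\ulD(s)\preceq\mu$. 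Hence reducedness globalizes pointwise Bruhat-dominance to bounded-by-$\mu$ on all of $\Spec R$.

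For part~(c), the Bruhat order condition $\mu'\preceq\mu$ amounts to $\mu'_1+\cdots+\mu'_j\le\mu_1+\cdots+\mu_j$ for $j<d$ with equality at $j=d$, and consequently $E(u)^{-(\mu'_1+\cdots+\mu'_j)}\bigwedge^j\Fp\subset E(u)^{-(\mu_1+\cdots+\mu_j)}\bigwedge^j\Fp$ for $j<d$ together with equality at $j=d$. Therefore any Hodge-Pink lattice bounded by $\mu'$ is automatically bounded by $\mu$, giving the tautological closed immersion $(\Spec R)_{\preceq\mu'}\hookrightarrow(\Spec R)_{\preceq\mu}$ as subschemes of $\Spec R$.

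The main technical obstacle is the equality at $j=d$. A priori, coincidence of two $\BdRplus{R}$-lattices looks like the conjunction of an inclusion (a closed condition) and a unit condition (open in general), which would only give a locally closed rather than closed subscheme. The resolution is precisely Lemma~\ref{LemmaHPLattice}: because both $\bigwedge^d\Fp$ and $\bigwedge^d\Fq$ are $\BdRplus{R}$-lattices whose successive quotients with each other are finite locally free $R$-modules, \emph{both} inclusions at $j=d$ can be phrased as the vanishing of finitely many elements in such finite locally free $R$-modules, making equality visibly a finitely presented closed (rather than merely constructible) condition.
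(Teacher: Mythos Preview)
Your proof is correct and follows essentially the same strategy as the paper: encode each condition of Definition~\ref{Defnbndmu}(c) as the vanishing of finitely many sections in a finite locally free $R$-module, then use that such vanishing is represented by a finitely presented closed subscheme. The pointwise analysis in~(b) via the elementary-divisor description and the inclusion argument in~(c) likewise match the paper's proof.

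One minor point is worth flagging. For the reverse inclusion at $j=d$ you test vanishing in $E(u)^{-dm}\bigwedge^d\Fp\big/\bigwedge^d\Fq$ and assert this module is finite locally free by Lemma~\ref{LemmaHPLattice}. That lemma, however, applies to $\Fq$ itself; invoking it here presupposes that $\bigwedge^d\Fq$ is already a $\BdRplus{R}$-lattice in $(\bigwedge^d D)\otimes\BdR{R}$, which is true but not immediate---your final paragraph asserts rather than proves it. The paper sidesteps this by instead testing vanishing of the image of $\bigwedge^d\Fp$ in
\[
M_0\;:=\;E(u)^{-dn}\,\bigwedge\nolimits^d\Fq\;\Big/\;E(u)^{\mu_1+\cdots+\mu_d}\,\bigwedge\nolimits^d\Fq\,.
\]
Since $\bigwedge^d\Fq$ is \'etale-locally free of rank $1$ over $\BdRplus{R}$ by the last assertion of Lemma~\ref{LemmaHPLattice}, this $M_0$ is manifestly finite locally free over $R$ as a quotient by a power of $E(u)$. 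With this choice the modules $M_j$ are all defined globally on $\Spec R$, so no \'etale descent for the vanishing locus is needed either.
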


\begin{proof}
\ref{PropHWts_A} By Lemma~\ref{LemmaHPLattice} we find a large positive integer $n$ such that $E(u)^n\Fp\subset\Fq\subset E(u)^{-n}\Fp$. This implies $\wedge^j\Fq\subset E(u)^{-jn}\wedge^j \Fp$ for all $j$ and $\wedge^d \Fp\subset E(u)^{-dn}\wedge^d\Fq$. Viewing $\mu_j\colon\Spec R\otimes_{\BQ_p}K\to\BZ$ as locally constant function as in the discussion before Construction~\ref{ConstrHodgeWts}, we consider the modules over $\BdRplus{R}\cong(R\otimes_{\BQ_p}K)\dbl t\dbr$
\begin{eqnarray}\label{EqM_j}
M_0&:=& E(u)^{-dn}\wedge^d\Fq\,\big/\,E(u)^{\mu_1+\ldots+\mu_d}\cdot\wedge^d\Fq\qquad\text{and}\\[1mm]
M_j&:=& E(u)^{-jn}\wedge^j \Fp\,\big/\,E(u)^{-\mu_1-\ldots-\mu_j}\cdot\wedge^j\Fp\qquad\text{for }1\le j\le d\,.\nonumber
\end{eqnarray}
As $R$-modules they are finite locally free. Then $\ulD$ has Hodge polygon bounded by $\mu$ if and only if for all $j=1,\ldots,d$ all generators of $\wedge^j\Fq$ are mapped to zero in $M_j$ and all generators of $\wedge^d\Fp$ are mapped to zero in $M_0$. Since $M:=M_0\oplus\ldots\oplus M_d$ is finite locally free over $R$, this condition is represented by a finitely presented closed immersion into $\Spec R$ by \cite[I$_{\rm new}$, Lemma~9.7.9.1]{EGA}. 

\medskip\noindent
\ref{PropHWts_B} If $R$ is reduced then also the \'etale $R$-algebra $R':=R\otimes_{E_\mu}\norm{K}$ is reduced and $R\hookrightarrow R'\hookrightarrow \prod_{s\in\Spec R'}\kappa(s)$ is injective. Therefore also $M\hookrightarrow M\otimes_R\bigl(\prod_{s\in\Spec R'}\kappa(s)\bigr)$ is injective. So $\ulD$ has Hodge polygon bounded by $\mu$ if and only if this holds for the pullbacks $s^*\ulD$ to $\Spec\kappa(s)$ at all points $s\in\Spec R'$. By definition of $\mu':=\mu_\ulD(s)$ there is a $\kappa(s)\dbl t\dbr$-basis $(v_{\psi,1},\ldots,v_{\psi,d})$ of the $\psi$-component $(s^*\Fp)_\psi$ of $s^*\Fp$ such that $(t^{-\mu'_{\psi,1}}\,v_{\psi,1},\ldots,t^{-\mu'_{\psi,d}}\,v_{\psi,d})$ is a $\kappa(s)\dbl t\dbr$-basis of $(s^*\Fq)_\psi$. Therefore condition~\eqref{EqDefHodgeWts} holds if and only if $\mu_{\psi,1}+\ldots+\mu_{\psi,j}\ge\mu'_{\psi,1}+\ldots+\mu'_{\psi,j}$ for all $\psi$ and $j$ with equality for $j=d$. One easily checks that this is equivalent to $\mu'\preceq\mu$.

\medskip\noindent
\ref{PropHWts_C} Again $\mu'\preceq\mu$ implies $\mu_{\psi,1}+\ldots+\mu_{\psi,j}\ge\mu'_{\psi,1}+\ldots+\mu'_{\psi,j}$ for all $\psi$ and $j$ with equality for $j=d$. We view $\mu_j,\mu'_j$ as locally constant $\BZ$-valued functions on $\Spec E\otimes_{\BQ_p}K$. Then $\mu_1+\ldots+\mu_j\ge\mu'_1+\ldots+\mu'_j$ for all $j$ with equality for $j=d$. In terms of \eqref{EqM_j} the $R$-modules $M_j$ for $\mu$ are quotients of the $R$-modules $M'_j$ for $\mu'$ with $M'_0=M_0$. Therefore $(\Spec R)_{\preceq\mu'}\into\Spec R$ factors through $(\Spec R)_{\preceq\mu}$.
\end{proof}

\begin{rem}
The reader should note that $\mu'\preceq\mu$ does not imply a relation between $E_{\mu'}$ and $E_\mu$ as can be seen from the following example.
Let $d=2$ and $[K:\BQ_p]=2$ and $\{\psi:K\into\norm{K}\}=\Gal(K/\BQ_p)=\{\psi_1,\psi_2\}$. Consider the three cocharacters $\mu,\mu',\mu''$ given by $\mu_{\psi_1}=(2,0),\,\mu_{\psi_2}=(2,0)$ and $\mu'_{\psi_1}=(2,0),\,\mu'_{\psi_2}=(1,1)$ and $\mu''_{\psi_1}=(1,1),\,\mu''_{\psi_2}=(1,1)$. Then $\mu''\preceq\mu'\preceq\mu$. On the other hand we find $E_\mu=E_{\mu''}=\BQ_p$ and $E_{\mu'}=\norm{K}=K$.
\end{rem}

\begin{rem}
In Definition~\ref{Defnbndmu}(a) we assumed that $R$ is a $\norm{K}$-algebra to obtain a well defined Hodge polygon $\mu_\ulD(s)\in X_*(\wt T_\norm{K})$. In Definition~\ref{Defnbndmu}(b) we can lower the ground field over which $R$ is defined to $E_\mu$ because $\Gal(\norm{K}/E_\mu)$ fixes $\mu$. The ground field cannot be lowered further, as one sees from the following
\end{rem}

\begin{proposition}\label{PropReflexField}
Let $\ulD$ be a $(\phi,N)$-module with Hodge-Pink lattice (respectively a $K$-filtered $(\phi,N)$-module) of rank $d$ over a field $L$ such that $\mu_\ulD(s)=\mu$ for all points $s\in\Spec L\otimes_{\BQ_p}\norm{K}$. Then there is a canonical inclusion of the reflex field $E_\mu\into L$.
\end{proposition}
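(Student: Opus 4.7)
The plan is to fix an algebraic closure $\ol L$ of $L$ together with an embedding $\iota_\infty\colon\ol\BQ_p\hookrightarrow\ol L$, set $\iota_0:=\iota_\infty|_{\norm K}\colon\norm K\hookrightarrow\ol L$, and consider the restriction homomorphism
\[\rho\colon\Gal(\ol L/L)\longrightarrow\Gal(\norm K/\BQ_p),\qquad \tau\longmapsto \iota_0^{-1}\circ\tau\circ\iota_0,\]
well defined because $\iota_0(\norm K)$ is the unique copy of $\norm K$ in $\ol L$ and is hence stable under $\Gal(\ol L/L)$. The image of $\rho$ equals $\Gal(\norm K/\iota_0^{-1}(\iota_0(\norm K)\cap L))$, whose fixed field in $\norm K$ is mapped by $\iota_0$ into $L$. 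Consequently it suffices to show that $\rho(\Gal(\ol L/L))\subseteq\Stab_{\Gal(\norm K/\BQ_p)}(\mu)$; for then $\iota_0(E_\mu)$, being the fixed field of $\Stab(\mu)$ inside $\iota_0(\norm K)$, will be contained in $L$, giving the required canonical inclusion $E_\mu\hookrightarrow L$.

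Fix $\tau\in\Gal(\ol L/L)$ and set $\sigma:=\rho(\tau)$, so that $\tau\circ\iota_0=\iota_0\circ\sigma$ as $\BQ_p$-embeddings $\norm K\hookrightarrow\ol L$. These two embeddings determine two $\ol L$-valued points of $\Spec L\otimes_{\BQ_p}\norm K$ that are exchanged by $\tau$, hence lie above a single scheme-theoretic point $s$; by the hypothesis of the proposition $\mu_\ulD(s)=\mu\in X_*(\wt T_{\norm K})_{\dom}$. Base-changing $\ulD$ further to $\ol L$, the decomposition $\ol L\otimes_{\BQ_p}K=\prod_{\chi\colon K\hookrightarrow\ol L}\ol L$ together with the induced splittings $\Fp_{\ol L}=\prod_\chi\Fp_{\ol L,\chi}$ and $\Fq_{\ol L}=\prod_\chi\Fq_{\ol L,\chi}$ produces via elementary divisors a cocharacter $\tilde\mu=(\tilde\mu_\chi)_\chi\in X_*(\wt T_{\ol L})_{\dom}$ which is intrinsic to $\ulD_{\ol L}$ and independent of any further choice.

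Reading $\tilde\mu$ off via Construction~\ref{ConstrHodgeWts} at each of the two $\ol L$-points above $s$ in terms of the single cocharacter $\mu=(\mu_\psi)_{\psi\colon K\hookrightarrow\norm K}$ yields $\tilde\mu_{\iota_0\circ\psi}=\mu_\psi$ (from the embedding $\iota_0$) and $\tilde\mu_{\iota_0\circ\sigma\circ\psi}=\mu_\psi$ (from the embedding $\iota_0\circ\sigma$), for every $\psi\colon K\hookrightarrow\norm K$. Fixing $\psi_0$, setting $\chi:=\iota_0\circ\psi_0$, and substituting $\psi:=\sigma^{-1}\circ\psi_0$ into the second relation gives $\mu_{\psi_0}=\tilde\mu_\chi=\mu_{\sigma^{-1}\circ\psi_0}$; as $\psi_0$ was arbitrary this means $\mu_{\sigma\circ\psi}=\mu_\psi$ for every $\psi$, which is precisely the condition $\sigma\in\Stab(\mu)$. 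Since $\tau$ was arbitrary the image of $\rho$ lies in $\Stab(\mu)$, completing the proof. For the $K$-filtered $(\phi,N)$-module variant the same argument applies with $\tilde\mu_\chi$ extracted instead from $\dim_{\ol L}\gr^i_\CF D_{K,\chi}=\#\{j\colon\tilde\mu_{\chi,j}=i\}$. The principal subtlety is keeping track of how the indexing set $\Hom_{\BQ_p}(K,\norm K)$ is permuted when the embedding $\norm K\hookrightarrow\ol L$ is twisted by $\sigma$, but once $\tilde\mu$ has been isolated as an invariant of $\ulD_{\ol L}$ the agreement of its two descriptions forces the Galois-invariance of $\mu$ directly.
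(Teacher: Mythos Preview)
Your argument is correct and follows essentially the same strategy as the paper's proof: both exploit that the Hodge polygon, being intrinsic to $\ulD$ over $L$, must be fixed by the appropriate Galois group, forcing that group inside $\Stab(\mu)=\Gal(\norm K/E_\mu)$. The paper works with a single point $s\in\Spec L\otimes_{\BQ_p}\norm K$ and the decomposition group $\CG_s\subset\Gal(\norm K/\BQ_p)$, using the cocharacter $\mu_L$ defined directly over $L$; you instead pass to an algebraic closure $\ol L$, fix an embedding $\iota_0$, and use the restriction map $\rho\colon\Gal(\ol L/L)\to\Gal(\norm K/\BQ_p)$, whose image is exactly the $\CG_s$ for the $s$ determined by $\iota_0$. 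So the two arguments are the same computation in slightly different packaging.

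One point you gloss over: you call the resulting inclusion $\iota_0|_{E_\mu}\colon E_\mu\hookrightarrow L$ ``canonical'' without checking that it is independent of the choice of $\iota_\infty$ (equivalently of $\iota_0$). The paper does verify this, by showing that the inclusion $\alpha_s^{-1}\beta_s$ is independent of $s$; since varying $s$ amounts to varying $\iota_0$ by elements of $\Gal(\norm K/\BQ_p)$, this is precisely the independence statement you need. The verification is short, but it is not automatic and is part of what the word ``canonical'' in the proposition asserts.
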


\begin{proof}
Since every $K$-filtered $(\phi,N)$-module arises from a $(\phi,N)$-module with Hodge-Pink lattice as in Remark~\ref{Rem2.4}(3), it suffices to treat the case where $\ulD$ is a $(\phi,N)$-module with Hodge-Pink lattice. We consider the decomposition $\norm{L}:=L\otimes_{\BQ_p}\norm{K}=\prod_{s\in\Spec\norm{L}}\kappa(s)$ and for each $s$ we denote by $\alpha_s:L\into\kappa(s)$ and $\beta_s:\norm{K}\into\kappa(s)$ the induced inclusions. Let $\mu_L:\BG_{m,L}\to\wt T_L$ be the cocharacter over $L$ associated with $\ulD$ in Construction~\ref{ConstrHodgeWts}. The assumption of the proposition means that $\alpha_s(\mu_L)=\beta_s(\mu)$ for all $s$. The Galois group $\CG:=\Gal(\norm{K},\BQ_p)$ acts on $\norm{L}$. The Galois group $\Gal(\kappa(s)/\alpha_s(L))$ can be identified with the decomposition group $\CG_s:=\{\sigma\in\CG:\sigma(s)=s\}$ under the monomorphism $\Gal(\kappa(s)/\alpha_s(L))\into\CG,\,\tau\mapsto\beta_s^{-1}\circ\tau|_{\beta_s(\norm{K})}\circ\beta_s$. Since $\mu_L$ is defined over $L$, each $\tau\in\Gal(\kappa(s)/\alpha_s(L))$ satisfies $\tau(\alpha_s(\mu_L))=\alpha_s(\mu_L)$, and hence $(\beta_s^{-1}\circ\tau|_{\beta_s(\norm{K})}\circ\beta_s)(\mu)=\mu$. By definition of the reflex field $E_\mu$ this implies that $\beta_s^{-1}\circ\tau|_{\beta_s(\norm{K})}\circ\beta_s\in\Gal(\norm{K}/E_\mu)$ and $\tau|_{\beta_s(E_\mu)}=\id$. So $\beta_s(E_\mu)\subset\alpha_s(L)$ and we get an inclusion $\alpha_s^{-1}\beta_s:E_\mu\into L$. To see that this is independent of $s$ choose a $\sigma\in\CG$ with $\sigma(s)=\tilde s$. Then $\alpha_{\tilde s}=\sigma\circ\alpha_s$ and $\beta_{\tilde s}=\sigma\circ\beta_s$.
\end{proof}

\section{Moduli spaces for $(\phi,N)$-modules with Hodge-Pink lattice}

We will introduce and study moduli spaces for the objects introduced in Chapter~\ref{SectFamilies}. Proposition~\ref{PropReflexField} suggests to work over the reflex field.

\begin{definition}\label{DefStacks}
Let $\mu$ be a cocharacter as in \eqref{mu} and let $E_\mu$ be its reflex field. We define fpqc-stacks $\sD_{\phi,N,\mu}$, resp.\ $\sH_{\phi,N,\preceq\mu}$, resp.\ $\sH_{\phi,N,\mu}$ on the category of $E_\mu$-schemes. For an affine $E_\mu$-scheme $\Spec R$
\begin{enumerate}
\item 
the groupoid $\sD_{\phi,N,\mu}(\Spec R)$ consists of $K$-filtered $(\phi,N)$-modules $(D,\Phi,N,\CF^\bullet)$ over $R$ of rank $d$ with constant Hodge polygon equal to $\mu$.
\item 
the groupoid $\sH_{\phi,N,\preceq\mu}(\Spec R)$ consists of $(\phi,N)$-modules with Hodge-Pink lattice $(D,\Phi,N,\Fq)$ over $R$ of rank $d$ with Hodge polygon bounded by $\mu$.
\item 
the groupoid $\sH_{\phi,N,\mu}(\Spec R)$ consists of $(\phi,N)$-modules with Hodge-Pink lattice $(D,\Phi,N,\Fq)$ over $R$ of rank $d$ with Hodge polygon bounded by $\mu$ and constant equal to $\mu$.
\end{enumerate}
Let $\sD_{\phi,\mu}\subset\sD_{\phi,N,\mu}$, resp.\ $\sH_{\phi,\preceq\mu}\subset\sH_{\phi,N,\preceq\mu}$, resp.\ $\sH_{\phi,\mu}\subset\sH_{\phi,N,\mu}$ be the closed substacks on which $N$ is zero. They classify $\phi$-modules with $K$-filtration, resp.\ Hodge-Pink lattice and the corresponding condition on the Hodge polygon.
\end{definition}

We are going to show that these stacks are Artin stacks of finite type over $E_\mu$. 

Locally on $\Spec R$ we may choose an isomorphism $D\cong(R\otimes_{\BQ_p}K_0)^d$ by Lemma~\ref{LemmaNnilpot}\ref{LemmaNnilpotA}. Then $\Phi$ and $N$ correspond to matrices $\Phi\in\GL_d(R\otimes_{\BQ_p}K_0)=(\Res_{K_0/\BQ_p}\GL_{d,K_0})(R)$ and $N\in {\rm Mat}_{d\times d}(R\otimes_{\BQ_p}K_0)=(\Res_{K_0/\BQ_p}{\rm Mat}_{d\times d})(R)$. The relation $\Phi\,\phi^\ast N=p\,N\,\Phi$ is represented by a closed subscheme
\[
P_{K_0,d}\;\subset\;(\Res_{K_0/\BQ_p}\GL_{d,K_0})\times_{\Spec\BQ_p}(\Res_{K_0/\BQ_p}{\rm Mat}_{d\times d})\,.
\]

\begin{theorem}\label{ThmReduced}
\begin{enumerate}
\item[(a)] The $\BQ_p$-scheme $P_{K_0,d}$ is reduced, Cohen-Macaulay, generically smooth and equidimensional of dimension $fd^2$. In the notation of Remark~\ref{RemDecompOfD} the matrix $(\Phi^f)_0$ has no multiple eigenvalues at the generic points of the irreducible components of $P_{K_0,d}$. 
\item[(b)]  The generic points of $P_{K_0,d}$ are in bijection with the partitions $d=k_1+\ldots+k_m$ for integers $m$ and $1\le k_1\le\ldots\le k_m$. To such a partition corresponds the generic point at which the suitably ordered eigenvalues $\lambda_1,\ldots,\lambda_d$ of $(\Phi^f)_0$ satisfy $p^f\lambda_i=\lambda_j$ if and only if $j=i+1$ and $i\notin\{k_1,\,k_1+k_2,\,\ldots\,,\,k_1+\ldots+k_m\}$. Equivalently to such a partition corresponds the generic point at which the nilpotent endomorphism $N_0$, in the notation of \ref{RemDecompOfD}, has Jordan canonical form with $m$ Jordan blocks of size $k_1,\dots, k_m$. 
\end{enumerate}
\end{theorem}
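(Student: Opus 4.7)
My plan is to reduce to an auxiliary scheme $Q_d$ by \'etale base change and then analyze $Q_d$ via a Jordan-type stratification.

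Since $K_0/\BQ_p$ is unramified and hence \'etale, all the desired properties descend from the base change to $K_0$. Over $K_0$, Remark~\ref{RemDecompOfD} identifies points of $P_{K_0,d}$ with tuples $(\Phi_i, N_i)_{i \in \BZ/f\BZ}$ satisfying $N_i \Phi_i = p \Phi_i N_{i-1}$. The recursion $N_i = p \Phi_i N_{i-1} \Phi_i^{-1}$ determines $N_1, \ldots, N_{f-1}$ from $N_0$ and $\Phi_1, \ldots, \Phi_{f-1}$, while the cyclic closure reduces to the single relation $p^f \Psi_f N_0 = N_0 \Psi_f$ with $\Psi_f = (\Phi^f)_0$. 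Substituting $M := \Psi_f$ for $\Phi_0$ gives an isomorphism of $\GL_d^f$ and yields
\[
P_{K_0, d} \times_{\BQ_p} K_0 \;\cong\; \GL_d^{f-1} \times_{K_0} Q_d, \qquad Q_d := \{(M, N) \in \GL_d \times \mathrm{Mat}_{d \times d} : p^f MN = NM\}.
\]
It therefore suffices to prove the corresponding statements for $Q_d$ with dimension $d^2$.

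Stratify $Q_d = \bigsqcup_\pi Q_d^\pi$ by the Jordan type $\pi = (k_1 \le \ldots \le k_m)$ of $N$, which is nilpotent by Lemma~\ref{LemmaNnilpot}\ref{LemmaNnilpotB}. Since $N$ and $p^{-f} N$ share Jordan type, they are $\GL_d$-conjugate, so the fiber of the forgetful map $Q_d^\pi \to O_\pi := \GL_d \cdot N_{\rm std}$ over any $N$ is a coset $M_0 \cdot Z_{\GL_d}(N)$ of the centralizer. Both $O_\pi$ and $Z_{\GL_d}(N_{\rm std})$ (the semidirect product of a Levi $\prod_k \GL_{m_k}$ and a unipotent radical) are smooth and connected in characteristic zero, so $Q_d^\pi \cong \GL_d \times^{Z_{\GL_d}(N_{\rm std})} (M_0 \cdot Z_{\GL_d}(N_{\rm std}))$ is a smooth irreducible homogeneous bundle of dimension $(d^2 - \dim Z_{\GL_d}(N_{\rm std})) + \dim Z_{\GL_d}(N_{\rm std}) = d^2$. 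On the other hand, $Q_d$ is cut out by the $d^2$ matrix entries of $p^f MN - NM$ inside the smooth ambient space $\GL_d \times \mathrm{Mat}_{d \times d}$ of dimension $2d^2$, so Krull's Hauptidealsatz forces every irreducible component of $Q_d$ to have dimension at least $d^2$. Combined with the smooth dense strata, the irreducible components of $Q_d$ must be exactly the closures $\overline{Q_d^\pi}$, each of dimension $d^2$.

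Hence $Q_d$ is a local complete intersection of the expected codimension, and in particular Cohen-Macaulay. Generic smoothness ($(R_0)$) follows from the smoothness of the dense opens $Q_d^\pi$, and together with $(S_1)$ (from Cohen-Macaulay) Serre's criterion yields reducedness. All properties descend to $\BQ_p$; moreover the Galois group $\Gal(K_0/\BQ_p)$ permutes the $N_i$, but the formula $N_i = p^i \Psi_i N_0 \Psi_i^{-1}$ shows that all $N_i$ have the same Jordan type as $N_0$, so each stratum is Galois-stable and the partition indexing descends. This proves (a). For (b), the generic point of $\overline{Q_d^\pi}$ has $M = (\Phi^f)_0$ with $d$ distinct eigenvalues grouped into $m$ chains of lengths $k_1, \ldots, k_m$ with consecutive ratio $p^f$ and no further coincidences, so that $p^f \lambda_i = \lambda_j$ if and only if $j = i+1$ lies in the same chain; equivalently $N_0$ has Jordan form with blocks of sizes $k_1, \ldots, k_m$. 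The main technical obstacle I foresee is verifying that $Q_d^\pi$ is genuinely smooth (not merely set-theoretically a fibration over $O_\pi$); this is handled by the explicit quotient presentation above together with freeness of the $Z_{\GL_d}(N_{\rm std})$-action on $\GL_d$, which works because in characteristic zero all algebraic subgroups are smooth.
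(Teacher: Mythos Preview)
Your approach is genuinely different from the paper's and in many ways more elegant. The paper works directly with the Jordan form of $(\Phi^f)_0$: it uses an explicit one-parameter deformation to force the Jordan blocks to have size $1$, then invokes a combinatorial lemma (Lemma~\ref{LemmaCombinatorics}) in a transcendence-degree count to force the eigenvalues to be distinct, and finally does a bare-hands tangent-space computation for generic smoothness. Your reduction $P_{K_0,d}\times_{\BQ_p}K_0 \cong \GL_d^{f-1}\times Q_d$ and the stratification of $Q_d$ by the Jordan type of $N$ bypass all of this: the homogeneous-bundle description $Q_d^\pi\cong\GL_d\times^{Z}(M_0Z)$ gives the dimension count and the classification of components in one stroke, and the complete-intersection argument for Cohen--Macaulayness is the same as the paper's.

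There is, however, a genuine gap at the step ``generic smoothness ($R_0$) follows from the smoothness of the dense opens $Q_d^\pi$''. Your $Q_d^\pi$ carries the scheme structure of the fibre product $Q_d\times_{\mathrm{Mat}_{d\times d}}O_\pi$, and your argument correctly shows that \emph{this} scheme is smooth. But the closed immersion $Q_d^\pi\hookrightarrow Q_d$ need not be an open immersion near $\eta_\pi$: a priori $\mathcal{O}_{Q_d,\eta_\pi}$ could be a non-reduced Artin local ring surjecting onto the field $\mathcal{O}_{Q_d^\pi,\eta_\pi}=\kappa(\eta_\pi)$. (Think of $\Spec k\hookrightarrow\Spec k[\epsilon]/(\epsilon^2)$.) Cohen--Macaulayness rules out embedded primes but not this kind of thickening. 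What is actually needed is that the Jacobian of the $d^2$ defining equations $p^fMN-NM=0$ has rank $d^2$ at $\eta_\pi$; equivalently, that every tangent vector $(dM,dN)$ to $Q_d$ at the generic point already has $dN$ tangent to the orbit $O_\pi$. This is a short linear-algebra check once one knows the eigenvalues of the generic $M$ on each stratum (which your description supplies), and it is exactly what the paper does in step~4 of its proof. Your self-identified ``main technical obstacle'' --- whether $Q_d^\pi$ itself is scheme-theoretically smooth --- is not the real issue; the homogeneous-bundle description handles that cleanly. The gap is one level up, in transferring that smoothness back to $Q_d$.
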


For the proof we will need the following lemma.

\begin{lemma}\label{LemmaCombinatorics}
Let $r_1,\ldots,r_n$ be integers with $r_1+\ldots+r_n\ge n$. Then $\sum_{i=1}^n r_i^2 \;-\; \sum_{i=1}^{n-1} r_i\,r_{i+1} \,>\, 1$, except for the case when $r_1=\ldots =r_n=1$. 
\end{lemma}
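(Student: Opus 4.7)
The plan is to exploit the algebraic identity
\begin{equation*}
2S \;:=\; 2\Bigl(\sum_{i=1}^n r_i^2 \;-\; \sum_{i=1}^{n-1} r_i r_{i+1}\Bigr) \;=\; r_1^2 \;+\; r_n^2 \;+\; \sum_{i=1}^{n-1}(r_i-r_{i+1})^2,
\end{equation*}
which is immediate by expanding the squares on the right. Since the $r_i$ are integers, $S$ is an integer, so the desired inequality $S>1$ is equivalent to $2S\geq 4$. I shall prove the contrapositive: if $\sum r_i \geq n$ and $2S \leq 3$, then $r_1=\cdots=r_n=1$.

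Suppose $2S \leq 3$. Every summand on the right of the identity is then a non-negative integer square of size at most $3$, hence of size at most $1$. Therefore
\begin{equation*}
r_1,\,r_n \in \{-1,0,1\} \qquad\text{and}\qquad |r_i - r_{i+1}| \leq 1 \text{ for every } i.
\end{equation*}
Let $u$ (respectively $d$) denote the number of indices $i$ with $r_{i+1}-r_i = +1$ (respectively $-1$). Then $\sum(r_i-r_{i+1})^2 = u + d$, the relation $r_n - r_1 = u - d$ holds, and the constraint becomes
\begin{equation*}
r_1^2 + r_n^2 + u + d \;\leq\; 3, \qquad u - d \;=\; r_n - r_1 \;\in\; \{-2,-1,0,1,2\}.
\end{equation*}

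To finish, I bound $\sum r_i$ in each admissible configuration of $(r_1,r_n,u,d)$ by means of the telescoping rewrite
\begin{equation*}
\sum_{i=1}^n r_i \;=\; n\,r_1 + \sum_{j=1}^{n-1}(n-j)(r_{j+1}-r_j),
\end{equation*}
which is maximised by placing the $u$ up-steps at the smallest indices $j$ (where the coefficient $n-j$ is largest) and the $d$ down-steps at the largest. The parity constraints combined with $u+d\leq 3$ leave only eight admissible quadruples $(r_1,r_n,u,d)$. Exactly one of them, $(r_1,r_n,u,d)=(1,1,0,0)$, is the exceptional case where all $r_i=1$ and $\sum r_i = n$; each of the other seven is easily seen to force $\sum r_i \leq \max(n-1,-1) < n$, contradicting the hypothesis.

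The main obstacle is purely bookkeeping in the case analysis; no individual case is harder than a one-line estimate on the telescoping sum. The cocktail of constraints $r_1,r_n\in\{-1,0,1\}$, $|r_i-r_{i+1}|\leq 1$, $u+d\leq 3$ and $u-d=r_n-r_1$ controls the combinatorial complexity and keeps the enumeration finite and short.
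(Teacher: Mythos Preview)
Your proof is correct and follows essentially the same approach as the paper: both start from the identity $2S = r_1^2 + r_n^2 + \sum_{i=1}^{n-1}(r_i-r_{i+1})^2$ and finish by a short case analysis on the small admissible values of the summands. Your bookkeeping via the quadruples $(r_1,r_n,u,d)$ and the telescoping maximisation is a slightly more systematic repackaging of the paper's three-case split on $\sum_i(r_i-r_{i+1})^2\in\{0,1,2\}$, but the substance is the same.
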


\begin{proof}
We multiply the claimed inequality with $2$ and write it as $r_1^2+\sum_{i=1}^{n-1} (r_i-r_{i+1})^2 +r_n^2 >2$. There are the following three critical cases
\begin{enumerate}
\item \label{LemmaCombinatorics_A}
$\sum_i (r_i-r_{i+1})^2=0$,
\item \label{LemmaCombinatorics_B}
$\sum_i (r_i-r_{i+1})^2=1$,
\item \label{LemmaCombinatorics_C}
$\sum_i (r_i-r_{i+1})^2=2$.
\end{enumerate}

In case \ref{LemmaCombinatorics_A} we have $r_1=\ldots=r_n$. Since $r_1=\ldots=r_n=1$ was excluded and $r_1\le0$ contradicts $r_1+\ldots+r_n\ge n$, we have $r_1^2+r_n^2>2$.

In case \ref{LemmaCombinatorics_B} there is exactly one index $1\le i<n$ with $r_1=\ldots =r_i\ne r_{i+1}=\ldots =r_n$ and $|r_i-r_{i+1}|=1$. If $r_1\ne0\ne r_n$ then $r_1^2+\sum_{i=1}^{n-1} (r_i-r_{i+1})^2 +r_n^2 >2$. On the other hand, if $r_1=\pm 1$ and $r_n=0$, then $\sum_\nu r_\nu = \pm i<n$. And if $r_1=0$ and $r_n=\pm 1$, then $\sum_\nu r_\nu=\pm(n-i)<n$. Both are contradictions.

In case \ref{LemmaCombinatorics_C} there are exactly two indices $1\le i<j<n$ with $r_1=\ldots =r_i$ and $r_{i+1}=\ldots =r_j$ and $r_{j+1}=\ldots =r_n$, as well as $|r_i-r_{i+1}|=1=|r_j-r_{j+1}|$. If in addition $r_1=r_n=0$ then $\sum_i r_i = \pm(j-i) <n$, which is a contradiction. Therefore $r_1^2+r_n^2>0$ and $r_1^2+\sum_{i=1}^{n+1} (r_i-r_{i+1})^2 +r_n^2 >2$ as desired.
\end{proof}

\begin{proof}[Proof of Theorem~\ref{ThmReduced}]
We break the proof into several steps.

1. By \cite[IV$_2$, Proposition~6.5.3, Corollaires~6.3.5(ii), 6.1.2, and IV$_4$, Proposition~17.7.1]{EGA} the statement may be checked after the finite \'etale base change $\Spec K_0\to\Spec\BQ_p$. We will use throughout that after this base change, Remark~\ref{RemDecompOfD} allows to decompose $\Phi=(\Phi_i)_i$ and $N=(N_i)_i$ such that $p\,\Phi_i\circ N_i=N_{i+1}\circ\Phi_i$. 

\medskip\noindent
2. We first prove that all irreducible components of $P_{K_0,d}$ have dimension greater or equal to $fd^2$. Sending $(\Phi,N)$ to the entries of the matrices $\Phi_i,N_i$ embeds $P_{K_0,d}\times_{\BQ_p} K_0$ into affine space $\BA_{K_0}^{2fd^2}$ as a locally closed subscheme cut out by the $fd^2$ equations $p\,\Phi_i\circ N_i=N_{i+1}\circ\Phi_i$ for $i=0,\ldots,f-1$. Therefore the codimension of $P_{K_0,d}\times_{\BQ_p} K_0$ in $\BA_{K_0}^{2fd^2}$ is less or equal to $fd^2$ by Krull's principal ideal theorem~\cite[Theorem~10.2]{Eisenbud}, and all irreducible components of $P_{K_0,d}$ have dimension greater or equal to $fd^2$ by \cite[Corollary~13.4]{Eisenbud}. 

\medskip\noindent
3. We next prove the assertion on the generic points. Let $y=(\Phi,N)$ be the generic point of an irreducible component $Y$ of $P_{K_0,d}$. After passing to an algebraic closure $L$ of $\kappa(y)$ we may use Remark~\ref{RemDecompOfD} to find a base change matrix $S\in\GL_d(L\otimes_{\BQ_p}K_0)$ such that $S^{-1}\Phi\,\phi(S)=\bigl((\Phi^f)_0,\Id_d,\ldots,\Id_d)$ and $(\Phi^f)_0$ is a block diagonal matrix in Jordan canonical form
\[
(\Phi^f)_0 \,=\, \left(\raisebox{3ex}{$
\xymatrix @C=0.3pc @R=0.3pc {
J_1 \ar@{.}[drdr] & & \\
& & \\
& & J_r
}$}
\right)
\es\text{with}\es
J_i \,=\, \left(\raisebox{3.5ex}{$
\xymatrix @C=0.3pc @R=0.3pc {
\rho_i \ar@{.}[drdr] & 1 \ar@{.}[dr] & \\
& & 1\\
& & \rho_i
}$}\right)
\quad\text{and}\quad
N_0 \,=\, \left(\raisebox{3ex}{$
\xymatrix @C=0.3pc @R=0.3pc {
N_{11} \ar@{.}[rr]\ar@{.}[dd] & & N_{1r}\ar@{.}[dd]\\
& & \\
N_{r1} \ar@{.}[rr] & & N_{rr}
}$}
\right).
\]
Note that a priori some of the $\rho_i$ can be equal. Let $s_i$ be the size of the Jordan block $J_i$. Then $N_{ij}$ is an $s_i\times s_j$-matrix. The condition $p^f(\Phi^f)_0\circ N_0=N_0\circ(\Phi^f)_0$ is equivalent to $p^f\!J_i\, N_{ij}=N_{ij}J_j$ for all $i,j$. It yields $N_{ij}=(0)$ for $p^f\rho_i\ne\rho_j$. By renumbering the $J_i$ we may assume that $N_{ij}\ne(0)$ implies $i<j$. We set $N_{ij}=\bigl(n^{(ij)}_{\mu,\nu}\bigr)_{\mu=1\ldots s_i,\,\nu=1\ldots s_j}$. When $p^f\rho_i=\rho_j$ it follows from
\[
\left(\raisebox{5.2ex}{$
\xymatrix @C=0.3pc @R=0.3pc {
p^f n_{2,1} \ar@{.}[rr]\ar@{.}[dd] & & p^f n_{2,s_j}\ar@{.}[dd]\\
& & \\
p^f n_{s_i,1} \ar@{.}[rr] & & p^f n_{s_i,s_j}\\
0 \ar@{.}[rr] & & 0
}$}
\right)
\;=\;
p^f( J_i-\rho_i) N_{ij} \;=\;  N_{ij}( J_j-\rho_j) \; = \;
\left(\raisebox{4.5ex}{$
\xymatrix @C=0.3pc @R=0.3pc {
0 \ar@{.}[ddd] &  n_{1,1} \ar@{.}[rr]\ar@{.}[ddd] & &  n_{1,s_j-1}\ar@{.}[ddd]  \\
& & & & \\
& & & & \\
0_{_{}} &  n_{s_i,1} \ar@{.}[rr] & &  n_{s_i,s_j-1} 
}$}
\right) 
\]
that $p^f n^{(ij)}_{\mu,\nu}=n^{(ij)}_{\mu-1,\nu-1}$ for all $\mu,\nu\ge2$ and $n^{(ij)}_{\mu,\nu}=0$ whenever $\mu-\nu>\min\{0,s_i-s_j\}$. We set $s:=\max\{s_i\}$. The assertion of the theorem says that $s=1$ and that all $\rho_i$ are pairwise different.

First assume that $s>1$. We exhibit a morphism $\Spec L[z,z^{-1}]\to P_{K_0,d}$ which sends the point $\{z=1\}$ to $y$ and the generic point $\Spec L(z)$ to a point at which the maximal size of the Jordan blocks is strictly less than $s$. Since $y$ was a generic point of $P_{K_0,d}$ this is impossible. The morphism $\Spec L[z,z^{-1}]\to P_{K_0,d}$ is given by matrices $\wt S$, $(\wt\Phi^f)_0$ and $\wt N_0$ as follows. We set $\wt S:=S$. For all $i$ with $s_i=s$ we set 
\[
\wt J_i \,:=\, \left(\raisebox{5.2ex}{$
\xymatrix @C=0.3pc @R=0.3pc {
\rho_i \ar@{.}[drdr] & 1 \ar@{.}[drdr] & & \\
& & & \\
& & \rho_i & 1\\
& & & z\rho_i
}$}\right)
\]
and for all $i$ with $s_i<s$ we set $\wt J_i:=J_i$. When $p^f\rho_i\ne\rho_j$ we set $\wt N_{ij}:=(0)$. To define $\wt N_{ij}$ when $p^f\rho_i=\rho_j$, and hence $i<j$, we distinguish the following cases
\begin{enumerate}
\item \label{JN_A}
If $s_i,s_j<s$ we set $\wt N_{ij}=N_{ij}$.
\item \label{JN_B}
If $s_i=s>s_j$ we set $\wt N_{ij}=N_{ij}$.
\item \label{JN_C}
If $s_i<s=s_j$ we set $\wt N_{ij}=\bigl(\tilde n^{(ij)}_{\mu,\nu}\bigr)_{\mu,\nu}$ with $\tilde n^{(ij)}_{\mu,s_j}:=n^{(ij)}_{\mu,s_j}$ for all $\mu$, with $\tilde n^{(ij)}_{\mu,\nu}:=0$ whenever $\mu>\nu+s_i-s_j+1$, and with $\tilde n^{(ij)}_{\mu,\nu}:=n^{(ij)}_{\mu,\nu}+(1-z)p^{(s_j-1-\nu)f}\cdot\rho_j\cdot n^{(ij)}_{\mu-\nu+s_j-1,\,s_j}$ for $\nu<s_j$ and $\mu\le\nu+s_i-s_j+1$.
\item \label{JN_D}
If $s_i=s_j=s$ we set $\wt N_{ij}=\bigl(\tilde n^{(ij)}_{\mu,\nu}\bigr)_{\mu,\nu}$ with $\tilde n^{(ij)}_{\mu,s}:= n^{(ij)}_{\mu,s}$ for all $\mu$, with $\tilde n^{(ij)}_{\mu,\nu}:=0$ whenever $\mu>\nu$, and with $\tilde n^{(ij)}_{\mu,\nu}:= n^{(ij)}_{\mu,\nu} +(1-z)p^{(s-1-\nu)f}\cdot\rho_j\cdot n^{(ij)}_{\mu-\nu+s-1,\,s}$ for all $\mu\le\nu<s$.
\end{enumerate}
We have to check that $p^f\!\wt J_i\,\wt N_{ij}=\wt N_{ij}\wt J_j$ for all $i,j$ with $p^f\rho_i=\rho_j$. In case \ref{JN_A} this is obvious and in case \ref{JN_B} it follows from the fact that the bottom row of $N_{ij}$ is zero. For case \ref{JN_C} we compute
\begin{eqnarray*}
p^f(\wt J_i-\rho_i)\wt N_{ij} & = &
\left(\raisebox{5.2ex}{$
\xymatrix @C=0.3pc @R=0.3pc {
p^f\tilde n_{2,1} \ar@{.}[rr]\ar@{.}[dd] & & p^f\tilde n_{2,s_j}\ar@{.}[dd]\\
& & \\
p^f\tilde n_{s_i,1} \ar@{.}[rr] & & p^f\tilde n_{s_i,s_j}\\
0 \ar@{.}[rr] & & 0
}$}
\right)
\es = \\[2mm]
& = &
\left(\raisebox{4.5ex}{$
\xymatrix @C=0.3pc @R=0.3pc {
0 \ar@{.}[ddd] & \tilde n_{1,1} \ar@{.}[rr]\ar@{.}[ddd] & & \tilde n_{1,s_j-2}\ar@{.}[ddd] & \tilde n_{1,s_j-1}+(z-1)\rho_j\,\tilde n_{1,s_j} \ar@{.}[ddd] \\
& & & & \\
& & & & \\
0 & \tilde n_{s_i,1} \ar@{.}[rr] & & \tilde n_{s_i,s_j-2} & \tilde n_{s_i,s_j-1}+(z-1)\rho_j\,\tilde n_{s_i,s_j}
}$}
\right) 
\es=\es
\wt N_{ij}(\wt J_j-\rho_j)\,.
\end{eqnarray*}
Finally for case \ref{JN_D} we compute
\begin{eqnarray*}
p^f(\wt J_i-\rho_i)\wt N_{ij} & = &
\left(\raisebox{5.2ex}{$
\xymatrix @C=0.3pc @R=0.3pc {
p^f\tilde n_{2,1} \ar@{.}[rr]\ar@{.}[dd] & & p^f\tilde n_{2,s-1}\ar@{.}[dd] & p^f\tilde n_{2,s}\ar@{.}[dd]\\
& & & \\
p^f\tilde n_{s,1} \ar@{.}[rr] & & p^f\tilde n_{s,s-1} & p^f\tilde n_{s,s}\\
0 \ar@{.}[rr] & & 0 & (z-1)p^f\!\rho_i\,\tilde n_{s,s}
}$}
\right)\es=\\[2mm]
& = & 
\left(\raisebox{4.5ex}{$
\xymatrix @C=0.3pc @R=0.3pc {
0 \ar@{.}[ddd] & \tilde n_{1,1} \ar@{.}[rr]\ar@{.}[ddd] & & \tilde n_{1,s-2}\ar@{.}[ddd] & \tilde n_{1,s-1}+(z-1)\rho_j\,\tilde n_{1,s} \ar@{.}[ddd] \\
& & & & \\
& & & & \\
0 & \tilde n_{s,1} \ar@{.}[rr] & & \tilde n_{s,s-2} & \tilde n_{s,s-1}+(z-1)\rho_j\,\tilde n_{s,s}
}$}
\right)
\es=\es \wt N_{ij}(\wt J_j-\rho_j)\,.
\end{eqnarray*}
Altogether this defines the desired morphism $\Spec L[z,z^{-1}]\to P_{K_0,d}$.

So we have shown that $s=1$ at the generic point $y$ and that $(\Phi^f)_0$ is a diagonal matrix. We still have to show that all diagonal entries are pairwise different. For this purpose we rewrite $(\Phi^f)_0$ and $N_0$ as 
\[
(\Phi^f)_0 \,=\, \left(\raisebox{3ex}{$
\xymatrix @C=0.3pc @R=0.3pc {
\lambda_1\Id_{r_1} \ar@{.}[drdr] & & \\
& & \\
& & \lambda_n\Id_{r_n}
}$}
\right)
\qquad\text{and}\qquad
N_0 \,=\, \left(\raisebox{3ex}{$
\xymatrix @C=0.3pc @R=0.3pc {
M_{11} \ar@{.}[rr]\ar@{.}[dd] & & M_{1n}\ar@{.}[dd]\\
& & \\
M_{n1} \ar@{.}[rr] & & M_{nn}
}$}
\right).
\]
We denote the multiplicity of the eigenvalue $\lambda_i$ by $r_i\ge1$. Then $M_{ij}$ is an $r_i\times r_j$-matrix. By renumbering the $\lambda_i$ we may assume that there are indices $0=l_0<l_1<\ldots<l_m=d$ such that $p^f\!\lambda_i = \lambda_j$ if and only if $j=i+1$ and $i\notin\{l_1,\ldots,l_m\}$. 

We compute $\dim Y=\trdeg_{\BQ_p}\kappa(y)=\trdeg_{\BQ_p}L$ as follows. The eigenvalues $\lambda_{l_1},\ldots,\lambda_{l_m}$ contribute at most the summand $m$ to $\trdeg_{\BQ_p}L$. 

The matrix $S\in\GL_d(L\otimes_{\BQ_p}K_0)$ is determined only up to multiplication on the right with an element of the $\phi$-centralizer $\CC(L):=\{\,S\in\GL_d(L\otimes_{\BQ_p}K_0)\colon S\bigl((\Phi^f)_0,\Id_d,\ldots,\Id_d\bigr)=\bigl((\Phi^f)_0,\Id_d,\ldots,\Id_d\bigr)\phi(S)\,\}$ of $\bigl((\Phi^f)_0,\Id_d,\ldots,\Id_d\bigr)$. Writing $S=(S_0,\ldots,S_{f-1})$ this condition implies that $S_i\stackrel{!}{=}(\phi(S))_i:=S_{i-1}$ for $i=1,\ldots,f-1$ and $S_0(\Phi^f)_0\stackrel{!}{=}(\Phi^f)_0(\phi(S))_0:=(\Phi^f)_0S_{f-1}=(\Phi^f)_0S_0$. Therefore $\CC$ has dimension $\sum_i r_i^2$ and the entries of $S\in(\Res_{K_0/\BQ_p}\GL_{d,K_0})/\CC$ contribute at most the summand $fd^2-\sum_i r_i^2$ to $\trdeg_{\BQ_p}L$.

The condition $p^f(\Phi^f)_0\circ N_0=N_0\circ(\Phi^f)_0$ is equivalent to $p^f\!\lambda_i\, M_{ij}=\lambda_jM_{ij}$ for all $i,j$. This implies that there is no condition on $M_{ij}$ when $j=i+1$ and $i\notin\{l_1,\ldots,l_m\}$, and that all other $M_{ij}$ are zero. So the entries of the $M_{ij}$ contribute at most the summand $\sum_{i\notin\{l_1,\ldots,l_m\}}r_i\,r_{i+1}$ to $\trdeg_{\BQ_p}L$.

Adding all summands and comparing with our estimate in part 2 above, we obtain
\begin{eqnarray*}
fd^2 \es \le \es \dim Y \es = \es \trdeg_{\BQ_p}\kappa(y) & \le & m + fd^2 \;-\;\sum_{i=1}^n r_i^2 \;+\; \sum_{i\notin\{l_1,\ldots,l_m\}}\!\!\!\!\!\!r_i\,r_{i+1}\\
& = & fd^2+\sum_{\nu=0}^{m-1}\Bigl(1 -\sum_{i=1+l_\nu}^{l_{\nu+1}} r_i^2 \;+ \sum_{i=1+l_\nu}^{l_{\nu+1}-1}r_i\,r_{i+1}\Bigr)\,.
\end{eqnarray*}
By Lemma~\ref{LemmaCombinatorics} the parentheses are zero when all $r_i=1$, and negative otherwise. So we have proved that $r_1=\ldots=r_n=1$. In other words, all diagonal entries of $(\Phi^f)_0$ are pairwise different. Let $k_\nu:=l_\nu-l_{\nu-1}$ for $\nu=1,\ldots,m$. Then the generic point $y$ corresponds to the partition $d=k_1+\ldots+k_m$ under the description of the generic points in the theorem. 
As we have noticed above the $1\times 1$ matrices $M_{ij}$ vanish at $y$ unless $j=i+1$ and $i\notin \{l_1,\dots, l_m\}$ and in the latter case we must have $M_{ij}(y)\neq 0$. This implies the claim on the Jordan type of $N_0$ at the generic points of the irreducible components. 

Moreover, it follows that $\dim Y=fd^2$ for all irreducible components $Y$ of $P_{K_0,d}$. By \cite[Proposition~18.13]{Eisenbud} this also implies that $P_{K_0,d}\times_{\BQ_p} K_0$ is Cohen-Macaulay.

\medskip\noindent
4. It remains to show that $P_{K_0,d}$ is generically smooth over $\BQ_p$. From this it follows that it is reduced, because it is Cohen-Macaulay. Let again $y$ be the generic point of an irreducible component of $P_{K_0,d}\times_{\BQ_p}K_0$ and let $L$ be an algebraic closure of $\kappa(y)$. As above, Remark~\ref{RemDecompOfD} allows us to change the basis over $L$ and assume that $\Phi=\bigl((\Phi^f_0),\id,\ldots,\id\bigr)$ and $N=(p^iN_0)_i$ with $(\Phi^f)_0={\rm diag}(\lambda_1,\ldots,\lambda_d)$ and $\lambda_i\ne\lambda_j$ for all $i\ne j$. We write $F^{(0)}:=(\Phi^f)_0$ and $N_0^{(0)}:=N_0=(n_{ij})_{ij}$. The condition $N_0^{(0)}F^{(0)}=p^fF^{(0)}N_0^{(0)}$ implies that $n_{ij}=0$ if $p^f\lambda_i\ne\lambda_j$. And conversely $n_{ij}\ne0$ if $p^f\lambda_i=\lambda_j$ by our explicit description of $N_0$ at $y$ above.

We claim that for every $n\ge1$, any deformation $(F^{(n-1)},N_0^{(n-1)})\in P_{K_0,d}(L[\epsilon]/\epsilon^n)$ of $(F^{(0)},N_0^{(0)})$ can be lifted further to $(F^{(n)},N_0^{(n)})\in P_{K_0,d}(L[\epsilon]/\epsilon^{n+1})$.  This implies that $P_{K_0,d}$ is smooth at $y$, as it follows that any tangent vector $\Ocal_{P_{K_0,d},y}\rightarrow L[\epsilon]/\epsilon^2$ comes from a map $\Ocal_{P_{K_0,d},y}\rightarrow L\dbl \epsilon\dbr$ and hence the image of 
\[\Spec\big(\bigoplus\nolimits_{i\geq 0} (\mfrak_{P_{K_0,d},y}^i/\mfrak_{P_{K_0,d},y}^{i+1})\big)\rightarrow \Spec\big(\Sym^\bullet (\mfrak_{P_{K_0,d},y}/\mfrak_{P_{K_0,d},y}^2)\big) \]
contains any tangent vector. This means that the tangent cone at $y$ equals the tangent space, and hence by \cite[III, \S 4 Definition 2 and Corollary 1]{Mumford} $P_{K_0,d}$ is smooth at $y$. 
Let us take any deformation $(\wt F,\wt N_0)\in \GL_d(L[\epsilon]/\epsilon^{n+1})\times {\rm Mat}_{d\times d}(L[\epsilon]/\epsilon^{n+1})$ of $(F^{(n-1)},N_0^{(n-1)})$. Then we have $\wt N_0\wt F-p^f\wt F \wt N_0\in \epsilon^n{\rm Mat}_{d\times d}(L)$. Changing $(\wt F,\wt N_0)$ to $(F^{(n)},N_0^{(n)})=(\wt F+\epsilon^nF',\wt N_0+\epsilon^nN'_0)$ with $F',N'_0\in {\rm Mat}_{d\times d}(L)$ we find 
\[N_0^{(n)}F^{(n)} - p^fF^{(n)} N_0^{(n)}= (\wt N_0\wt F-p^f\wt F \wt N_0) +\epsilon^n(N_0^{(0)}F'+N'_0F^{(0)}-p^f(F' N_0^{(0)}+F^{(0)} N'_0))\]
and hence it suffices to show that the map $h:{\rm Mat}_{d\times d}(L)\times {\rm Mat}_{d\times d}(L)\rightarrow {\rm Mat}_{d\times d}(L)$ given by
\[h:(F',N'_0)\longmapsto (N'_0F^{(0)}-p^fF^{(0)} N'_0)+(N_0^{(0)}F'-p^fF' N_0^{(0)})\]
is surjective. For this purpose let $N'_0=(b_{ij})_{ij}$ and $F'={\rm diag}(a_1,\ldots,a_d)$. Then we find that 
\[
h_{ij}(F',N'_0)\;:=\;h(F',N'_0)_{ij}\;=\;(\lambda_j-p^f\lambda_i)b_{ij} + a_j\,n_{ij}-p^fa_i\,n_{ij}\,.
\]
Whenever $\lambda_j-p^f\lambda_i\in L^\times$ and hence $n_{ij}=0$ we obtain the surjectivity of $h_{ij}$.
By permuting the indices we may assume that $n_{ij}\ne0$ implies $j=i+1$. Treating every Jordan block of $N_0^{(0)}$ separately we may further assume that $n_{i,i+1}\ne0$ for all $i$. It then follows that we have 
\[h_{i,i+1}(F',N_0')=(a_{i+1}-p^fa_i)n_{i,i+1}\]
which suffices to see that the map $h$ is surjective. 
\end{proof}

\begin{remark}
The scheme $P_{K_0,d}$ is in general not normal. For example if $K=\BQ_p$ and $d=2$ then $P_{\BQ_p,2}$ has two generic points. 
This was already proven in \cite[Lemma A.3]{KisinFM}.
In one of the generic points $\Phi$ has eigenvalues $\lambda,p\lambda$ and $N\ne0$. In the other $\Phi$ has eigenvalues $\lambda_1,\lambda_2$ with $\lambda_j\ne p\lambda_i$ for all $i,j$ and $N=0$. Both irreducible components meet in the codimension one point where $\lambda_2=p\lambda_1$ and $N=0$.
\end{remark}

Let $\Delta$ denote the set of simple roots (defined over $\ol\Q_p$) of $\wt G:=\Res_{K/\Q_p}\GL_{d,K}$ with respect to the Borel subgroup $\wt B$ and denote by $\Delta_\mu\subset \Delta$ the set of all simple roots $\alpha$ such that $\langle \alpha,\mu\rangle=0$. Here $\langle -,-\rangle$ is the canonical pairing between characters and cocharacters.
We write $P_\mu$ for the parabolic subgroup of $\wt G$ containing $\wt B$ and corresponding to $\Delta_\mu\subset \Delta$. This parabolic subgroup is defined over $E_\mu$, and the quotient by this parabolic is a projective $E_\mu$-variety
\begin{equation}\label{EqFlag}
\Flag_{K,d,\mu}=\wt G_{E_\mu}/P_\mu
\end{equation}
representing the functor 
\[
R\mapsto\{\text{filtrations}\ \Fcal^\bullet\ \text{of}\ R\otimes_{\Q_p}K^{\oplus d}\ \text{with constant Hodge polygon equal to}\ \mu\}
\]
Thus $\sD_{\phi,N,\mu}$ and $\sD_{\phi,\mu}$ are isomorphic to the stack quotients
\begin{equation}\label{sDpresentation}
\begin{aligned}
\sD_{\phi,N,\mu}&\cong&(P_{K_0,d}\,\times_{\Spec\BQ_p}\,\Flag_{K,d,\mu})\;\big/\;(\Res_{K_0/\BQ_p}\GL_{d,K_0})_{E_\mu}\qquad\text{and}\\[2mm]
\sD_{\phi,\mu}&\cong&(\Res_{K_0/\BQ_p}\GL_{d,K_0}\,\times_{\Spec\BQ_p}\,\Flag_{K,d,\mu})\;\big/\;(\Res_{K_0/\BQ_p}\GL_{d,K_0})_{E_\mu}
\end{aligned}
\end{equation}
where $g\in(\Res_{K_0/\BQ_p}\GL_{d,K_0})_{E_\mu}$ acts on $(\Phi,N,\CF^\bullet)\in P_{K_0,d}\,\times_{\Spec\BQ_p}\,\Flag_{K,d,\mu}$ by 
\[
(\Phi,N,\CF^\bullet)\;\longmapsto\;\bigl(g^{-1}\Phi\,\phi(g)\,,\,g^{-1}Ng\,,\,g^{-1}\CF^\bullet\bigr)\,.
\]

\bigskip

We next describe the moduli space for the Hodge-Pink lattice $\Fq$. Fix the integers $m=\max\{\,\mu_{\psi,1}\,|\,\psi:K\to\norm{K}\,\}$ and $n=\max\{\,-\mu_{\psi,d}\,|\,\psi:K\to\norm{K}\,\}$. Then by Cramer's rule $E(u)^n\Fp\subset\Fq\subset E(u)^{-m}\Fp$. So $\Fq$ is determined by the epimorphism
\begin{equation}\label{Eqpr}
pr\colon R\otimes_{\BQ_p}(K[t]/t^{m+n})^{\oplus d} \;\isoto\; E(u)^{-m}\Fp/E(u)^n\Fp \;\onto\; E(u)^{-m}\Fp/\Fq
\end{equation}
which is induced by choosing an isomorphism $D\cong(R\otimes_{\BQ_p}K_0)^d$ locally on $R$. The quotient $E(u)^{-m}\Fp/\Fq$ is a finite locally free $R$-module and of finite presentation over $R\otimes_{\BQ_p}K[t]/t^{m+n}$ by Lemma~\ref{LemmaHPLattice}. Therefore it is an $R$-valued point of Grothendieck's Quot-scheme $\Quot_{\CO^d\,|\,K[t]/t^{m+n}\,|\,\BQ_p}$; see \cite[n$\open$221, Theorem 3.1]{FGA} or \cite[Theorem 2.6]{AltmanKleiman}. This Quot-scheme is projective over $\BQ_p$. The boundedness by $\mu$ is represented by a closed subscheme $Q_{K,d,\preceq\mu}$ of $\Quot_{\CO^d\,|\,K[t]/t^{m+n}\,|\,\BQ_p}\times_{\Spec\BQ_p}\Spec E_\mu$ according to Proposition~\ref{PropHWts}\ref{PropHWts_A}. Thus $\sH_{\phi,N,\preceq\mu}$ and $\sH_{\phi,\preceq\mu}$ are isomorphic to the stack quotients
\begin{eqnarray*}
\sH_{\phi,N,\preceq\mu}&\cong&(P_{K_0,d}\,\times_{\Spec\BQ_p}\,Q_{K,d,\preceq\mu})\;\big/\;(\Res_{K_0/\BQ_p}\GL_{d,K_0})_{E_\mu}\qquad\text{and}\\[2mm]
\sH_{\phi,\preceq\mu}&\cong&(\Res_{K_0/\BQ_p}\GL_{d,K_0}\,\times_{\Spec\BQ_p}\,Q_{K,d,\preceq\mu})\;\big/\;(\Res_{K_0/\BQ_p}\GL_{d,K_0})_{E_\mu}
\end{eqnarray*}
where $g\in(\Res_{K_0/\BQ_p}\GL_{d,K_0})_{E_\mu}$ acts on $(\Phi,N,pr)\in P_{K_0,d}\,\times_{\Spec\BQ_p}\,Q_{K,d,\preceq\mu}$ with $pr$ from \eqref{Eqpr} by 
\[
(\Phi,N,pr)\;\longmapsto\;\bigl(g^{-1}\Phi\,\phi(g)\,,\,g^{-1}Ng\,,\,pr\circ(g\otimes_{\BQ_p}\id_{\BQ_p[t]/t^{m+n}})\bigr)\,.
\]

Let $Q_{K,d,\mu}$ be the complement in $Q_{K,d,\preceq\mu}$ of the image of $\bigcup_{\mu'\prec\mu}Q_{K,d,\preceq\mu'}\times_{\Spec E_{\mu'}}\Spec \norm{K}$ under the finite \'etale projection $Q_{K,d,\preceq\mu}\times_{\Spec E_\mu}\Spec \norm{K}\to Q_{K,d,\preceq\mu}$. Here the union is taken over all dominant cocharacters $\mu':\Gbb_{m, \ol\Q_p}\to \wt T_{\ol\Q_p}$ which are strictly less than $\mu$ in the Bruhat order; see Proposition~\ref{PropHWts}\ref{PropHWts_B}. Since there are only finitely many such $\mu'$ the scheme $Q_{K,d,\mu}$ is an open subscheme of $Q_{K,d,\preceq\mu}$ and quasi-projective over $E_\mu$.
By Proposition~\ref{PropHWts}\ref{PropHWts_A} the stacks $\sH_{\phi,N,\mu}\subset\sH_{\phi,N,\preceq\mu}$ and $\sH_{\phi,\mu}\subset\sH_{\phi,\preceq\mu}$ are therefore open substacks and isomorphic to the stack quotients
\begin{eqnarray*}
\sH_{\phi,N,\mu}&\cong&(P_{K_0,d}\,\times_{\Spec\BQ_p}\,Q_{K,d,\mu})\;\big/\;(\Res_{K_0/\BQ_p}\GL_{d,K_0})_{E_\mu}\qquad\text{and}\\[2mm]
\sH_{\phi,\mu}&\cong&(\Res_{K_0/\BQ_p}\GL_{d,K_0}\,\times_{\Spec\BQ_p}\,Q_{K,d,\mu})\;\big/\;(\Res_{K_0/\BQ_p}\GL_{d,K_0})_{E_\mu}\,.
\end{eqnarray*}

\medskip

There is another description of $Q_{K,d,\preceq\mu}$ in terms of the affine Grassmannian. Consider the infinite dimensional affine group schemes $L^+\!\GL_d$ and $L^+\wt G$ over $\BQ_p$, and the sheaves $L\!\GL_d$ and $L\wt G$ for the fpqc-topology on $\BQ_p$ whose sections over a $\BQ_p$-algebra $R$ are given by
\begin{eqnarray*}
L^+\!\GL_d(R)&=&\GL_d(R\dbl t\dbr)\,, \\[2mm]
L^+\wt G(R)&=&\wt G(R\dbl t\dbr)\es=\es\GL_d(R\otimes_{\BQ_p}K\dbl t\dbr)\es=\es\GL_d(\BdRplus{R})\,,\\[2mm]
L\!\GL_d(R)&=&\GL_d(R\dbl t\dbr[\tfrac{1}{t}])\,, \\[2mm]
L\wt G(R)&=&\wt G(R\dbl t\dbr[\tfrac{1}{t}])\es=\es\GL_d(R\otimes_{\BQ_p}K\dbl t\dbr[\tfrac{1}{t}])\es=\es\GL_d(\BdR{R})\,.
\end{eqnarray*}
$L^+\!\GL_d$ and $L^+\wt G$ are called the \emph{group of positive loops}, and $L\!\GL_d$ and $L\wt G$ are called the \emph{loop group} of $\GL_d$, resp.\ $\wt G$. The \emph{affine Grassmannian} of $\GL_d$, resp.\ $\wt G$ is the quotient sheaf for the fppf-topology on $\BQ_p$
\begin{eqnarray*}
\Gr_{\GL_d}\es :=\es L\!\GL_d/L^+\!\GL_d\,,\qquad\text{resp.}\qquad \Gr_{\wt G}\es :=\es L\wt G/L^+\wt G\,.
\end{eqnarray*}
They are ind-schemes over $\BQ_p$ which are ind-projective; see \cite[\S4.5]{BeilinsonDrinfeld}, \cite{BeauvilleLaszlo}, \cite{LaszloSorger}, \cite{HartlViehmann1}.

We set $\Gr_{\GL_d,\norm{K}}\;:=\;\Gr_{\GL_d}\,\times_{\Spec\BQ_p}\,\Spec \norm{K}$. Then there are morphisms
\begin{eqnarray}\label{EqMapsQ}
Q_{K,d,\preceq\mu}&\longto&\Gr_{\wt G}\,\times_{\Spec\BQ_p}\,\Spec E_\mu\es=:\es \Gr_{\wt G,E_\mu}\qquad\text{and}\\[2mm]
Q_{K,d,\preceq\mu}\,\times_{\Spec E_\mu}\,\Spec \norm{K}&\longto&\prod_{\psi:K\to\norm{K}}\Gr_{\GL_d,\norm{K}}\,,\nonumber
\end{eqnarray}
which are defined as follows. Let $\Fq\subset(\BdR{Q_{K,d,\preceq\mu}})^{\oplus d}$ be the universal Hodge-Pink lattice over $Q_{K,d,\preceq\mu}$. Then by Lemma~\ref{LemmaHPLattice} there is an \'etale covering $f: \Spec R\to Q_{K,d,\preceq\mu}$ such that $f^\ast\Fq$ is free over $\BdRplus{R}$. With respect to a basis of $f^\ast\Fq$ the equality $\BdR{R}\cdot f^\ast\Fq=D\otimes_{R\otimes K_0}\BdR{R}$ corresponds to a matrix $A\in\GL_d(\BdR{R})=L\wt G(R)$. The image of $A$ in $\Gr_{\wt G}(R)$ is independent of the basis and by \'etale descend defines the first factor of the map $Q_{K,d,\preceq\mu}\to\Gr_{\wt G}\,\times_{\Spec\BQ_p}\,\Spec E_\mu$. The base change of this map along the finite \'etale morphism $\Spec \norm{K}\to\Spec E_\mu$ defines the second map in \eqref{EqMapsQ}, using the splitting $\wt G\times_{\BQ_p}\norm{K}=\prod_\psi\GL_{d,\norm{K}}$ which induces similar splittings for $L^+\wt G$, $L\wt G$, and $\Gr_{\wt G}$.

The boundedness by $\mu$ is represented by closed ind-subschemes 
\[
\Gr_{\wt G,E_\mu}^{\preceq\mu}\qquad \text{and}\qquad \Gr_{\wt G,E_\mu}^{\preceq\mu}\,\times_{\Spec E_\mu}\,\Spec \norm{K}\;=\;\prod_\psi\Gr_{\GL_d,\norm{K}}^{\preceq\mu_\psi}
\]
of $\Gr_{\wt G,E_\mu}$, resp.\ $\prod_\psi\Gr_{\GL_d,\norm{K}}$ through which the maps \eqref{EqMapsQ} factor. Conversely the universal matrix $A$ over $L\wt G$ defines a $\BdRplus{L\wt G}$-lattice $\Fq=A\cdot(\BdRplus{L\wt G})^d$. Its restriction to $\Gr_{\wt G,E_\mu}^{\preceq\mu}$ has Hodge polygon bounded by $\mu$ and corresponds to the inverses of the maps \eqref{EqMapsQ}. This yields canonical isomorphisms $Q_{K,d,\preceq\mu}\;\cong\;\Gr_{\wt G,E_\mu}^{\preceq\mu}$ and $Q_{K,d,\preceq\mu}\,\times_{\Spec E_\mu}\,\Spec \norm{K}\;\cong\;\prod_\psi\Gr_{\GL_d,\norm{K}}^{\preceq\mu_\psi}$. These isomorphisms restrict to isomorphisms of open subschemes $Q_{K,d,\mu}\;\cong\;\Gr_{\wt G,E_\mu}^{\mu}$ and $Q_{K,d,\mu}\,\times_{\Spec E_\mu}\,\Spec \norm{K}\;\cong\;\prod_\psi\Gr_{\GL_d,\norm{K}}^{\mu_\psi}$.

In view of \cite[\S4]{HartlViehmann1}, especially Lemma~4.3, the boundedness by $\mu$ on $\prod_\psi\Gr_{\GL_d,\norm{K}}^{\preceq\mu_\psi}$ can be phrased in terms of Weyl module representations of $\GL_{d,\norm{K}}$. In this formulation it was proved by Varshavsky~\cite[Proposition A.9]{Varshavsky} that $\Gr_{\GL_d,\norm{K}}^{\mu_\psi}$ is reduced. Therefore this locally closed subscheme is determined by its underlying set of points. Reasoning with the elementary divisor theorem as in Construction~\ref{ConstrHodgeWts} shows that $\Gr_{\GL_d,\norm{K}}^{\mu_\psi}$ is equal to the locally closed Schubert cell $L^+\!\GL_{d,\norm{K}}\cdot \,\mu_\psi(t)^{-1}\cdot\! L^+\!\GL_{d,\norm{K}}\,\big/\,L^+\!\GL_{d,\norm{K}}$ and is a homogeneous space under $L^+\!\GL_{d,\norm{K}}$. This description descends to $Q_{K,d,\mu}$ and shows that the latter is reduced and isomorphic to the locally closed Schubert cell $L^+\wt G_{E_\mu}\cdot \mu(t)^{-1}\cdot L^+\wt G_{E_\mu}\,\big/\,L^+\wt G_{E_\mu}$ which is a homogeneous space under $L^+\wt G_{E_\mu}:=L^+\wt G\times_{\Spec\BQ_p}\Spec E_\mu$. 

These homogeneous spaces can be described more explicitly. Set 
\begin{eqnarray*}
S_{\GL_d,\mu_\psi}&:=&L^+\!\GL_{d,\norm{K}}\es \cap\es \mu_\psi(t)^{-1}\cdot L^+\!\GL_{d,\norm{K}}\cdot \;\mu_\psi(t)\es\subset\es L^+\!\GL_{d,\norm{K}}\qquad\text{and}\\[2mm]
S_{\wt G,\mu}&:=&L^+\wt G_{E_\mu}\es \cap\es  \mu(t)^{-1}\cdot L^+\wt G_{E_\mu}\cdot \mu(t)\es\subset\es L^+\wt G_{E_\mu}\,.
\end{eqnarray*}
These are closed subgroup schemes and the homogeneous spaces are isomorphic to the quotients
\begin{eqnarray*}
L^+\!\GL_{d,\norm{K}}/S_{\GL_d,\mu_\psi}&\isoto&L^+\!\GL_{d,\norm{K}}\cdot \,\mu_\psi(t)^{-1}\cdot\! L^+\!\GL_{d,\norm{K}}\,\big/\,L^+\!\GL_{d,\norm{K}}\qquad\text{and}\\[2mm]
L^+\wt G_{E_\mu}/S_{\wt G,\mu}&\isoto&L^+\wt G_{E_\mu}\cdot \mu(t)^{-1}\cdot L^+\wt G_{E_\mu}\,\big/\,L^+\wt G_{E_\mu}\es\cong\es Q_{K,d,\mu}\,.
\end{eqnarray*}
Consider the closed normal subgroup $L^{++}\wt G_{E_\mu}(R)\;:=\;\{\,A\in L^+\wt G_{E_\mu}(R): A\equiv 1\mod t\,\}$. Then the parabolic subgroup $P_\mu$ from \eqref{EqFlag} equals
\[
P_\mu\es=\es S_{\wt G,\mu}\cdot L^{++}\wt G_{E_\mu}\,\big/\,L^{++}\wt G_{E_\mu}\es\subset\es L^+\wt G_{E_\mu}\,\big/\,L^{++}\wt G_{E_\mu} \es=\es \wt G_{E_\mu}
\]
and this yields a morphism
\begin{equation}\label{EqMorphQFlag}
Q_{K,d,\mu}\;=\; L^+\wt G_{E_\mu}/S_{\wt G,\mu}\;\onto\; L^+\wt G_{E_\mu}/S_{\wt G,\mu}\cdot L^{++}\wt G_{E_\mu}\;=\;\wt G_{E_\mu}/P_\mu\;=\; \Flag_{K,d,\mu}\,,
\end{equation}
with fibers isomorphic to $S_{\wt G,\mu}\cdot L^{++}\wt G_{E_\mu}/S_{\wt G,\mu}$. The latter is an affine space because we may consider the base change from $E_\mu$ to $\norm{K}$ and the decomposition 
\[
\bigl(S_{\wt G,\mu}\cdot L^{++}\wt G_{E_\mu}/S_{\wt G,\mu}\bigr)\times_{\Spec E_\mu}\Spec\norm{K}\;=\;\prod_\psi\bigl(S_{\GL_d,\mu_\psi}\cdot L^{++}\GL_{d,\norm{K}}/S_{\GL_d,\mu_\psi}\bigr).
\]
Each component is an affine space whose $R$-valued points are in bijection with the matrices
\[
\begin{pmatrix} 1 \\ a_{21} & 1 \\ \vdots & \ddots & \ddots \\ a_{d1} & \ldots & a_{d,d-1} & 1
\end{pmatrix}
\]
where $a_{ij}\in \bigoplus_{k=1}^{\mu_{\psi,j}-\mu_{\psi,i}-1}t^k R$. The Galois group $\Gal(\norm{K}/E_\mu)$ canonically identifies the components with the same values for $\mu_\psi$. Therefore $S_{\wt G,\mu}\cdot L^{++}\wt G_{E_\mu}/S_{\wt G,\mu}$ is an affine space.

We show that $Q_{K,d,\mu}$ is a geometric vector bundle over $\Flag_{K,d,\mu}$ by exhibiting its zero section. The projection $L^+\wt G_{E_\mu}\onto \wt G_{E_\mu}$ has a section given on $R$-valued points by the map $\wt G_{E_\mu}(R)\to L^+\wt G_{E_\mu}(R)=\wt G_{E_\mu}(R\dbl t\dbr)$ induced from the natural inclusion $R\into R\dbl t\dbr$. Since $L^+P_\mu\subset S_{\wt G,\mu}$ by definition of $P_\mu$, this section induces a section 
\[
\Flag_{K,d,\mu}\;\longto\; L^+\wt G_{E_\mu}/L^+P_\mu\;\longto\; L^+\wt G_{E_\mu}/S_{\wt G,\mu}\;=\;Q_{K,d,\mu}\,.
\]
This is the zero section of the geometric vector bundle $Q_{K,d,\mu}$ over $\Flag_{K,d,\mu}$. Using lattices the section coincides (on $L$-valued points for a field $L$) with the map $\CF^\bullet\mapsto\Fq(\CF^\bullet)$ defined in Remark~\ref{Rem2.4}(3) and the projection $Q_{K,d,\mu}\to\Flag_{K,d,\mu}$ coincides with the map $\Fq\mapsto\CF^\bullet_\Fq$ from Remark~\ref{Rem2.4}(1). Let us summarize.

\begin{proposition}\label{PropQ}
\begin{enumerate}
\item 
$Q_{K,d,\preceq\mu}$ is projective over $E_\mu$ of dimension $\sum_{\psi,j} (d+1-2j)\mu_{\psi,j}$ and contains $Q_{K,d,\mu}$ as dense open subscheme. Both schemes are irreducible.
\item 
$Q_{K,d,\mu}$ is smooth over $E_\mu$ and isomorphic to the homogeneous space $L^+\wt G_{E_\mu}/S_{\wt G,\mu}$ which is a geometric vector bundle over $\Flag_{K,d,\mu}$.
\end{enumerate}
\end{proposition}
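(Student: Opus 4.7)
The plan is to collect the pieces already assembled in the discussion preceding the proposition. The key building blocks are: the description $Q_{K,d,\preceq\mu}$ as a closed subscheme of the projective Quot-scheme; its identification with the closed Schubert ind-subscheme $\Gr^{\preceq\mu}_{\wt G,E_\mu}$; and the homogeneous-space description $Q_{K,d,\mu}\cong L^+\wt G_{E_\mu}/S_{\wt G,\mu}$ coming from Varshavsky's reducedness of $\Gr^{\mu_\psi}_{\GL_d,\norm{K}}$ combined with Galois descent.

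For part (b), I would first record that $Q_{K,d,\mu}=L^+\wt G_{E_\mu}/S_{\wt G,\mu}$ fibers over $\Flag_{K,d,\mu}=\wt G_{E_\mu}/P_\mu$ via \eqref{EqMorphQFlag}, with fiber isomorphic to $S_{\wt G,\mu}\cdot L^{++}\wt G_{E_\mu}/S_{\wt G,\mu}$. The text has already given the explicit affine-space coordinates on each fiber after base change to $\norm{K}$, and the section $\CF^\bullet\mapsto\Fq(\CF^\bullet)$ of Remark~\ref{Rem2.4}(3) supplies a zero-section defined over $E_\mu$. These two ingredients together realize $Q_{K,d,\mu}$ as a geometric vector bundle over $\Flag_{K,d,\mu}$, and smoothness of $Q_{K,d,\mu}$ over $E_\mu$ is then immediate since $\Flag_{K,d,\mu}$ is smooth projective homogeneous.

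For the dimension, I would add the rank of the fiber to $\dim\Flag_{K,d,\mu}$. The explicit matrix description of the fiber gives rank $\sum_\psi\sum_{i>j,\ \mu_{\psi,j}>\mu_{\psi,i}}(\mu_{\psi,j}-\mu_{\psi,i}-1)$ after base change to $\norm{K}$, while $\dim\Flag_{K,d,\mu}=\sum_\psi\#\{(i,j):i<j,\ \mu_{\psi,i}>\mu_{\psi,j}\}$. Their sum telescopes to $\sum_\psi\sum_{i<j}(\mu_{\psi,i}-\mu_{\psi,j})$, and the standard identity
\[
\sum_{i<j}(\mu_{\psi,i}-\mu_{\psi,j})\;=\;\langle 2\rho,\mu_\psi\rangle\;=\;\sum_{j=1}^d(d+1-2j)\,\mu_{\psi,j}
\]
yields the stated formula. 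Irreducibility of $Q_{K,d,\mu}$ follows from the vector bundle structure over the irreducible variety $\Flag_{K,d,\mu}$.

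For part (a), projectivity of $Q_{K,d,\preceq\mu}$ is immediate from the closed embedding into the projective scheme $\Quot_{\CO^d|K[t]/t^{m+n}|\BQ_p}\times_{\BQ_p}\Spec E_\mu$. Irreducibility and density are the only remaining points: I would argue by base change to $\norm{K}$, where $Q_{K,d,\preceq\mu}\times_{E_\mu}\Spec\norm{K}\cong\prod_\psi\Gr^{\preceq\mu_\psi}_{\GL_d,\norm{K}}$ is a product of classical Schubert varieties in the $\GL_d$ affine Grassmannian. Each factor is well known to be irreducible with $\Gr^{\mu_\psi}_{\GL_d,\norm{K}}$ as a dense open subscheme (the Bruhat-order strata of smaller dimension fill up the complement, since $\langle 2\rho,\mu-\mu'\rangle>0$ whenever $\mu'\prec\mu$). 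Galois descent from $\norm{K}$ to $E_\mu$ transfers both properties to $Q_{K,d,\preceq\mu}$. The only mildly subtle step in the whole plan is the transition from the Schubert-cell presentation over $\norm{K}$ to one defined over the reflex field $E_\mu$; this is already taken care of by Proposition~\ref{PropReflexField} and the explicit Galois-equivariance of the zero-section construction, so nothing new is required.
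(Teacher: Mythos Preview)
Your proposal is correct and follows essentially the same approach as the paper: both collect the pieces from the preceding discussion (projectivity via the Quot-scheme embedding, the Schubert-variety identification, the homogeneous-space description, and the vector-bundle structure over $\Flag_{K,d,\mu}$). The only difference is that where the paper cites \cite[4.5.8 and 4.5.12]{BeilinsonDrinfeld} for the dimension formula and the density of $Q_{K,d,\mu}$ in $Q_{K,d,\preceq\mu}$, you supply these directly---computing the dimension as (fiber rank) $+$ $\dim\Flag_{K,d,\mu}$ via the explicit matrix coordinates and the identity $\sum_{i<j}(\mu_{\psi,i}-\mu_{\psi,j})=\langle 2\rho,\mu_\psi\rangle$, and deducing density from the standard Bruhat stratification of affine Schubert varieties. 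One small remark: your invocation of Proposition~\ref{PropReflexField} for the descent step is not quite the right reference---that proposition goes the other way (it shows $E_\mu\subset L$). The descent from $\norm{K}$ to $E_\mu$ is rather built into the fact that $\mu$, and hence $S_{\wt G,\mu}$, $P_\mu$, and the zero section, are all defined over $E_\mu$ from the outset, as the text before the proposition already establishes.
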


\begin{proof}
Everything was proved above, except the formula for the dimension and the density of $Q_{K,d,\mu}$ which follow from \cite[4.5.8 and 4.5.12]{BeilinsonDrinfeld}. The irreducibility of $Q_{K,d,\preceq\mu}$ is a consequence of the density statement.
\end{proof}

\begin{theorem}\label{ThmModuliStacks}
\begin{enumerate}
\item \label{ThmModuliStacks_A}
The moduli stacks $\sD_{\phi,N,\mu}$, $\sD_{\phi,\mu}$, $\sH_{\phi,N,\preceq\mu}$, $\sH_{\phi,\preceq\mu}$, $\sH_{\phi,N,\mu}$ and $\sH_{\phi,\mu}$ are noetherian Artin stacks of finite type over $E_\mu$.
\item \label{ThmModuliStacks_B}
The stack $\sH_{\phi,N,\mu}$ is a dense open substack of $\sH_{\phi,N,\preceq\mu}$ and projects onto $\sD_{\phi,N,\mu}$. The morphism $\sH_{\phi,N,\mu}\to\sD_{\phi,N,\mu}$ has a section and is relatively representable by a vector bundle.
\item \label{ThmModuliStacks_C}
The stack $\sH_{\phi,\mu}$ is a dense open substack of $\sH_{\phi,\preceq\mu}$ and projects onto $\sD_{\phi,\mu}$. The morphism $\sH_{\phi,\mu}\to\sD_{\phi,\mu}$ has a section and is relatively representable by a vector bundle.
\item \label{ThmModuliStacks_D}
The stacks $\sH_{\phi,\preceq\mu}$, $\sH_{\phi,\mu}$ are irreducible of dimension $\sum_{\psi,j} (d+1-2j)\mu_{\psi,j}$, and $\sD_{\phi,\mu}$ is irreducible of dimension $\sum_\psi\#\{(i,j):\mu_{\psi,i}>\mu_{\psi,j}\}$. The stacks $\sH_{\phi,\mu}$ and $\sD_{\phi,\mu}$ are smooth over $E_\mu$.
\item \label{ThmModuliStacks_E}
The stacks $\sH_{\phi, N, \preceq\mu}$, $\sH_{\phi, N,\mu}$ are equi-dimensional of dimension $\sum_{\psi,j} (d+1-2j)\mu_{\psi,j}$, and $\sD_{\phi,N, \mu}$ is equi-dimensional of dimension $\sum_\psi\#\{(i,j):\mu_{\psi,i}>\mu_{\psi,j}\}$. The stacks $\sH_{\phi,N,\mu}$ and $\sD_{\phi,N,\mu}$ are reduced, Cohen-Macaulay and generically smooth over $E_\mu$. The irreducible components of $\sH_{\phi, N, \preceq\mu}$, $\sH_{\phi, N,\mu}$ and $\sD_{\phi,N, \mu}$ are indexed by the possible Jordan types of the nilpotent endomorphism $N$. 
\end{enumerate}
\end{theorem}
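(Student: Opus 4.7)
The plan is to combine Theorem~\ref{ThmReduced} on the geometry of $P_{K_0,d}$ with Proposition~\ref{PropQ} on the Hodge-Pink lattice scheme $Q_{K,d,\mu}\subset Q_{K,d,\preceq\mu}$ via the quotient-stack presentations displayed just before the statement. Throughout, set $G:=(\Res_{K_0/\BQ_p}\GL_{d,K_0})_{E_\mu}$, a smooth affine $E_\mu$-group of dimension $fd^2$. Part (a) is then immediate: each of the six stacks is of the form $[X/G]$ for $X$ a noetherian $E_\mu$-scheme of finite type, since $P_{K_0,d}$ and $\Res_{K_0/\BQ_p}\GL_{d,K_0}$ are of finite type over $\BQ_p$ while $Q_{K,d,\preceq\mu}$, $Q_{K,d,\mu}$ and $\Flag_{K,d,\mu}$ are (quasi-)projective over $E_\mu$ by Proposition~\ref{PropQ} and~\eqref{EqFlag}; smoothness of $G$ then forces the quotients to be noetherian Artin stacks of finite type over $E_\mu$.

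For (b) and (c), I would use that the map $Q_{K,d,\mu}\to\Flag_{K,d,\mu}$ from~\eqref{EqMorphQFlag} is a geometric vector bundle with a canonical zero section (Proposition~\ref{PropQ}(ii)), and is $G$-equivariant by construction (both are built out of the standard $\Res_{K_0/\BQ_p}\GL_{d,K_0}$-action on $\wt G$). Crossing $G$-equivariantly with $P_{K_0,d}$, respectively with $\Res_{K_0/\BQ_p}\GL_{d,K_0}$, and descending to the quotient yields morphisms $\sH_{\phi,N,\mu}\to\sD_{\phi,N,\mu}$ and $\sH_{\phi,\mu}\to\sD_{\phi,\mu}$ that are relatively representable by vector bundles, with the induced zero sections. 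The openness and density of $\sH_{\phi,N,\mu}\subset\sH_{\phi,N,\preceq\mu}$ (and the analogue for the $\phi$-case) transport directly from those of $Q_{K,d,\mu}\subset Q_{K,d,\preceq\mu}$ established in Proposition~\ref{PropQ}(i).

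For (d), both $\Res_{K_0/\BQ_p}\GL_{d,K_0}$ and $Q_{K,d,\mu}$ (resp.\ $\Flag_{K,d,\mu}$) are smooth and irreducible over $E_\mu$; hence so are their products, and quotienting by the smooth group $G$ of dimension $fd^2$ gives smooth, irreducible Artin stacks of the stated dimensions, using Proposition~\ref{PropQ}(i) and the dimension formula $\sum_\psi\#\{(i,j):\mu_{\psi,i}>\mu_{\psi,j}\}$ for the partial flag variety $\Flag_{K,d,\mu}$. The corresponding statement for $\sH_{\phi,\preceq\mu}$ follows from the density of $\sH_{\phi,\mu}$ and the irreducibility of $Q_{K,d,\preceq\mu}$. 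Finally for (e), Theorem~\ref{ThmReduced} gives that $P_{K_0,d}$ is reduced, Cohen-Macaulay, generically smooth, equi-dimensional of dimension $fd^2$, with irreducible components indexed by Jordan types of $N$. Taking products with the smooth irreducible schemes $Q_{K,d,\mu}$ or $\Flag_{K,d,\mu}$ preserves all of these properties (a smooth morphism is flat with geometrically regular fibers, so smooth base change preserves reducedness, generic smoothness and Cohen-Macaulayness), and the bijection between irreducible components and Jordan types survives. For $\sH_{\phi,N,\preceq\mu}$ only equi-dimensionality and the classification of components are claimed, which is all one can extract because $Q_{K,d,\preceq\mu}$ need not be smooth; this is precisely the content of the statement. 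Descent along the smooth $G$-action then transports these properties from the presenting scheme to the quotient stack.

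The main obstacle is not a single difficult step but rather the bookkeeping of verifying that the vector-bundle structure in~\eqref{EqMorphQFlag} descends canonically through the $G$-quotient (for (b), (c)), and that Cohen-Macaulayness and generic smoothness survive the product with the non-smooth factor $P_{K_0,d}$ (for (e)); the first point is automatic from the canonical nature of the construction, and the second reduces to the standard commutative-algebra fact that smooth base change preserves both properties, so in the end the proof is a clean assembly of Theorem~\ref{ThmReduced} and Proposition~\ref{PropQ}.
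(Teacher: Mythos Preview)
Your proposal is correct and follows essentially the same route as the paper: present each stack as a quotient of a product scheme by the smooth group $G=(\Res_{K_0/\BQ_p}\GL_{d,K_0})_{E_\mu}$, then read off (a)--(e) from Theorem~\ref{ThmReduced} and Proposition~\ref{PropQ}. The only point you leave slightly implicit in (e) is why the quotient by $G$ does not merge distinct irreducible components of $P_{K_0,d}$; the paper spells this out by observing that the Jordan type of $N$ is a conjugation invariant, so components with distinct Jordan types cannot lie in the same $G$-orbit.
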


\begin{proof}
\ref{ThmModuliStacks_A} The stacks are quotients of noetherian schemes of finite type over $E_\mu$ by the action of the smooth group scheme $(\Res_{K_0/\BQ_p}\GL_{d,K_0})_{E_\mu}$ and hence are noetherian Artin stacks of finite type by \cite[4.6.1, 4.7.1, 4.14]{LaumonMB}. 

\smallskip\noindent
\ref{ThmModuliStacks_B} and \ref{ThmModuliStacks_C} follow from the corresponding statements for $Q_{K,d,\mu}$ in Proposition~\ref{PropQ}.

\smallskip\noindent
\ref{ThmModuliStacks_D} The covering spaces 
\begin{align*}
\Res_{K_0/\BQ_p}\GL_{d,K_0}\,&\times_{\Spec\BQ_p}\,Q_{K,d,\preceq\mu},\ \text {resp.}\\ 
\Res_{K_0/\BQ_p}\GL_{d,K_0}\,&\times_{\Spec\BQ_p}\,Q_{K,d,\mu},\  \text{resp.}\\ 
\Res_{K_0/\BQ_p}\GL_{d,K_0}\,&\times_{\Spec\BQ_p}\,\Flag_{K,d,\mu}
\end{align*}
 of these stacks are irreducible because $\Res_{K_0/\BQ_p}\GL_{d,K_0}$ is geometrically irreducible. This implies the irreducibility of the stacks. The formulas for the dimension follow from \cite[pp.~98f]{LaumonMB} and Proposition~\ref{PropQ}, respectively the well known dimension formula for partial flag varieties. The smoothness follows from the smoothness of $\Res_{K_0/\BQ_p}\GL_{d,K_0}\,\times_{\Spec\BQ_p}\,Q_{K,d,\mu}$, resp.\ $\Res_{K_0/\BQ_p}\GL_{d,K_0}\,\times_{\Spec\BQ_p}\,\Flag_{K,d,\mu}$ by \cite[4.14]{LaumonMB}.
 
\smallskip\noindent
\ref{ThmModuliStacks_E}
As in \ref{ThmModuliStacks_D} these results are direct consequences of the corresponding results on the covering spaces, which follow from Theorem~\ref{ThmReduced}. We only need to convince ourselves that the action of $(\Res_{K_0/\BQ_p}\GL_{d,K_0})_{E_\mu}$ does not identify irreducible components of $P_{K_0,d}$. However this follows from the fact that the Jordan canonical forms of the nilpotent endomorphism $N$ at two distinct generic points $y_1$ and $y_2$ of $P_{K_0,d}$ are distinct by the description in Theorem \ref{ThmReduced}. 
\end{proof}

\begin{remark}
These stacks are not separated. Namely, let $\ulD,\ulD'$ be two $(\phi,N)$-modules with Hodge-Pink lattice (respectively two $K$-filtered $(\phi,N)$-modules) over $R$. Then $\Isom(\ulD,\ulD')$ is representable by an algebraic space, separated and of finite type over $R$; see \cite[Lemme~4.2]{LaumonMB}. The above stacks are separated over $E_\mu$ if and only if all these algebraic spaces $\Isom(\ulD,\ulD')$ are proper. This is not the case in general. For example let $R$ be a discrete valuation ring with fraction field $L$, let $D=D'=R\otimes_{\BQ_p}K_0^d$ with $\Phi=\id$ and $N=0$. Then every element $f\in L$ is an automorphism of $D\otimes_RL$, compatible with $\Phi$ and $N$. However, it extends to an automorphism of $D$ only if $f\in R\mal$.
\end{remark}

\section{Vector bundles on the open unit disc} \label{VBonopenunitdisc}

In \cite{crysrep} Kisin related $K$-filtered $(\phi,N)$-modules over $\BQ_p$ to vector bundles on the open unit disc. This was generalized in \cite[\S5]{families} to families of $K$-filtered $\phi$-modules with Hodge-Tate weights $0$ and $-1$. In this section we generalize it to arbitrary families of $(\phi,N)$-modules with Hodge-Pink lattice. For this purpose we work in the category $\Ad_{\BQ_p}^\lft$ of adic spaces locally of finite type over $\Spa(\BQ_p,\BZ_p)$; see \cite{contval,Hu2,Huber} and \cite[\S2.2]{families}. Since the stacks $\sD_{\phi,\mu}$, $\sD_{\phi,N,\mu}$, $\sH_{\phi,\preceq\mu}$, $\sH_{\phi,N,\preceq\mu}$, $\sH_{\phi,\mu}$ and $\sH_{\phi,N,\mu}$ are quotients of quasi-projective schemes over $E_\mu$ they give rise to stacks on $\Ad_{E_\mu}^\lft$ which we denote by $\sH_{\phi,N,\mu}^\ad$, etc.

For $0\leq r<1$ we write $\boldB_{[0,r]}$ for the closed disc of radius $r$ over $K_0$ in the category of adic spaces and denote by 
\[\Ubb=\lim\limits_{\substack{\longrightarrow \\ r\rightarrow 1}} \boldB_{[0,r]}\]
the open unit disc. This is an open subspace of the closed unit disc (which is \emph{not} identified with the set of all points $x$ in the closed unit disc with $|x|<1$ in the adic setting). 
In the following we will always write $u$ for the coordinate function on $\boldB_{[0,r]}$ and $\Ubb$, i.e.\ we view 
\[\bB^{[0,r]}:=\Gamma(\boldB_{[0,r]},\Ocal_{\boldB_{[0,r]}})\]
 and $\bB^{[0,1)}:=\Gamma(\Ubb,\Ocal_{\Ubb})$ as sub-rings of $K_0\dbl u\dbr$.

Let $X\in\Ad_{\BQ_p}^\lft$ be an adic space over $\Q_p$ and write
\begin{align*}
\sB_X^{[0,r]}&=\Ocal_X\widehat{\otimes}_{\Q_p}\bB^{[0,r]}=\pr_{X,\ast}\Ocal_{X\times \boldB_{[0,r]}} \\
\sB_X^{[0,1)}&=\Ocal_X\widehat{\otimes}_{\Q_p}\bB^{[0,1)}=\pr_{X,\ast}\Ocal_{X\times \Ubb}
\end{align*}
for the sheafified versions of the rings $\bB^{[0,r]}$ and $\bB^{[0,1)}$ where $\pr_X$ is the projection onto $X$. These are sheaves of topological $\Ocal_X$-algebras on $X$.

We introduce the function
\begin{equation}\label{lambda}
\lambda\;:=\;\prod_{n\geq 0}\phi^n(E(u)/E(0))\;\in\;\bB^{[0,1)}\,.
\end{equation}
and the differential operator $N_\nabla:=-u\lambda\frac{d}{du}:\bB_{[0,1)}\to\bB_{[0,1)}$. 
For any adic space $X\in\Ad_{\BQ_p}^\lft$ we view $\lambda$ as a section of $\sB_X^{[0,1)}$ and $N_\nabla$ as a differential operator on $\sB_X^{[0,1)}$. The Frobenius $\phi$ on $\CO_X\otimes_{\BQ_p}K_0[u]$ extends to a Frobenius endomorphism of $\sB_X^{[0,1)}$ again denoted by $\phi$ by means of $\phi(u)=u^p$. These operators satisfy the relation
\begin{equation}\label{EqNnabla}
N_\nabla\,\phi=p\,\tfrac{E(u)}{E(0)}\cdot\phi\, N_\nabla\,.
\end{equation}

\begin{definition}\label{DefPhiModOnU}
A \emph{$(\phi,N_\nabla)$-module} $(\CM,\Phi_\CM,N_\nabla^\CM)$ over an adic space $X\in\Ad_{\BQ_p}^\lft$ consists of a locally free sheaf $\CM$ of finite rank on $X\times_{\BQ_p}\BU$, a differential operator $N_\nabla^\CM:\CM\to\CM[\tfrac{1}{\lambda}]$ over $N_\nabla$, that is $N_\nabla^\CM(fm)=-u\lambda\frac{df}{du}\cdot m+f\cdot N_\nabla^\CM(m)$ for all sections $f$ of $\CO_{X\times_{\BQ_p}\BU}$ and $m$ of $\CM$, and an $\CO_{X\times_{\BQ_p}\BU}$-linear isomorphism $\Phi_\CM:(\phi^\ast\CM)[\tfrac{1}{E(u)}]\isoto\CM[\tfrac{1}{E(u)}]$, satisfying $N_\nabla^\CM\circ\Phi_\CM\circ\phi=p\,\tfrac{E(u)}{E(0)}\cdot\Phi_\CM\circ\phi\circ N_\nabla^\CM$.

A \emph{morphism} $\alpha :(\CM,\Phi_\CM,N_\nabla^\CM)\to(\CN,\Phi_\CN,N_\nabla^\CN)$ between $(\phi,N_\nabla)$-modules over $X$ is a morphism $\alpha :\CM\to\CN$ of sheaves satisfying $\alpha \circ\Phi_\CM=\Phi_\CN\circ\phi^\ast(\alpha )$ and $N_\nabla^\CN\circ \alpha =\alpha \circ N_\nabla^\CM$.
\end{definition}

\begin{rem}\label{RemConnection}
(1) 
Note that it is not clear whether a $(\phi,N_\nabla)$-module $\Mcal$ is locally on $X$ free over $X\times \Ubb$ and hence it is not clear whether $\pr_{X,\ast}\Mcal$ is locally on $X$ a free $\sB_X^{[0,1)}$-module. However it follows from \cite[Proposition~2.1.15]{KPX} that $\pr_{X,\ast}\Mcal$ is a finitely presented $\sB_X^{[0,1)}$-module.

\smallskip\noindent
(2) The differential operator $N_\nabla^\CM$ can be equivalently described as a connection $\nabla_\CM:\CM\to\CM\otimes u^{-1}\Omega^1_{X\times\BU/X}[\tfrac{1}{\lambda}]$ when we set $\nabla_{\!\SSC\CM}(m):=-\tfrac{1}{\lambda}N_\nabla^\CM(m)\otimes\tfrac{du}{u}$. Then $N_\nabla^\CM$ is recovered as the composition of $\nabla_{\!\SSC\CM}$ followed by the map $u^{-1}\Omega^1_{X\times\BU/X}[\tfrac{1}{\lambda}]\to\CM,\,du\mapsto-u\lambda$.
\end{rem}

Let $X\in\Ad_{\BQ_p}^\lft$ be an adic space. We will show that the category of $(\phi,N_\nabla)$-modules over $X$ is equivalent to the category of $(\phi,N)$-modules with Hodge-Pink lattice over $X$ by defining two mutually quasi-inverse functors $\ul\CM$ and $\ulD$.

To define $\ul\CM$ let $\ulD=(D,\Phi,N,\Fq)$ be a $(\phi,N)$-module with Hodge-Pink lattice over $X$. We denote by $\pr:X\times_{\BQ_p}\Ubb\rightarrow X\times_{\Q_p}K_0$ the projection and set $(\CalD,\Phi_\CalD):=\pr^\ast(D,\Phi)$. Then 
\[
\pr_{X,\ast}(\CalD,\Phi_\CalD)=(D,\Phi)\otimes_{(\CO_X\otimes K_0)}\sB_X^{[0,1)}.
\] 
We choose a $\BdRplus{\CO_X}$-automorphism $\eta_\ulD$ of $\Fp:=D\otimes_{\CO_X\otimes K_0}\BdRplus{\CO_X}$ and we let $\iota_0:\CalD\into D\otimes_{\CO_X\otimes K_0}\BdRplus{\CO_X}$ be the embedding obtained as the composition of the natural inclusion $D\otimes_{\CO_X\otimes K_0}\sB^{[0,1)}_X\into D\otimes_{\CO_X\otimes K_0}\BdRplus{\CO_X}$ composed with the automorphism $\eta_\ulD$. Here we follow Kisin~\cite[\S\,1.2]{crysrep} and choose
\begin{eqnarray}\label{EqTheta}
\eta_\ulD:\; D\otimes_{\CO_X\otimes K_0}\BdRplus{\CO_X} & \isoto & D\otimes_{\CO_X\otimes K_0}\BdRplus{\CO_X}\,,\nonumber \\
 d_0\otimes f & \longmapsto & \TS\sum_iN^i(d_0)\otimes  \tfrac{(-1)^i}{i!}\log\bigl(1-\tfrac{E(u)}{E(0)}\bigr)^i\cdot f\,.
\end{eqnarray}

\begin{remark}\label{RemOtherTheta1}
(1) Actually, Kisin introduces a formal variable $\ell_u$ over $\sB_X^{[0,1)}$ which formally acts like $\log u$. He extends $\phi$ to $\sB_X^{[0,1)}[\ell_u]$ via $\phi(\ell_u)=p\,\ell_u$, extends $N_\nabla$ to a derivation on $\sB_X^{[0,1)}[\ell_u]$ via $N_\nabla(\ell_u)=-\lambda$, and defines $N$ as the $\sB_X^{[0,1)}$-linear derivation on $\sB_X^{[0,1)}[\ell_u]$ that acts as the differentiation of the formal variable $\ell_u$. Under the $\Phi$-equivariant identification 
\begin{equation*}\label{EqDl_uROT}
\xymatrix @R=0pc {
D[\ell_u]^{N=0} \es:=\es \bigl\{\,\sum_{i=0}^{<\infty}d_i\ell_u^i:\es d_i\in D\text{ with }N(\TS\sum_id_i\ell_u^i)=0\,\bigr\} & D \ar[l]_{\qquad\qquad\qquad\qquad\qquad\qquad\quad\cong}\\
\qquad\qquad\sum_{i=0}^{<\infty}\tfrac{(-1)^i}{i!}N^i(d_0)\ell_u^i & d_0 \ar@{|->}[l]
}
\end{equation*}
Kisin's map $\iota_0:D[\ell_u]^{N=0}\otimes_{\CO_X\otimes K_0}\sB_X^{[0,1)}\into\Fp\,,\es\sum_i d_i\ell_u^i\otimes f \mapsto \sum_i d_i \otimes f\cdot(\log\tfrac{u}{\pi})^i$ corresponds to our $\iota_0$, because we identify $\tfrac{E(u)}{E(0)}$ with $1-\tfrac{u}{\pi}$.

\medskip\noindent
(2) Instead of the above $\eta_\ulD$ one could also choose $\eta_\ulD=\id_\Fp$. This would lead to a few changes which we will comment on in Remark~\ref{RemOtherTheta}. Note that our $\eta_\ulD$ from \eqref{EqTheta} is different from $\id_\Fp$ if $N\ne0$.
\end{remark}

\medskip

For all $n\ge0$ we now consider the map
\[
\pr_{X,\ast}\CalD[\tfrac{1}{\lambda}] \xrightarrow{\;\Phi_\CalD^{-j}}  \pr_{X,\ast}\phi^{j\ast}(\CalD[\tfrac{1}{\lambda}]) =\!=\!= \pr_{X,\ast}\CalD[\tfrac{1}{\lambda}] \otimes_{\sB_X^{[0,1)},\,\phi^j}\sB_X^{[0,1)} \xrightarrow{\;\phi^{j*}\iota_0\,} \Fp[\tfrac{1}{E(u)}]\otimes_{\BdRplus{\CO_X}}\phi^j(\BdRplus{\CO_X})
\]
where we write $\phi^{j*}\iota_0$ for $\iota_0\otimes\id$. We set
\begin{equation}\label{EqCM}
\pr_{X,\ast}\CM\;:=\;\bigl\{\,m\in\pr_{X,\ast}\CalD[\tfrac{1}{\lambda}]:\es \phi^{j*}\iota_0\circ\Phi_\CalD^{-j}(m)\in\Fq\otimes_{\BdRplus{\CO_X}}\phi^j(\BdRplus{\CO_X})\text{ for all }j\ge0\,\bigr\}\,.
\end{equation}
and we let $\CM$ be the induced sheaf on $X\times_{\BQ_p}\BU$. Since $\lambda=\tfrac{E(u)}{E(0)}\phi(\lambda)$ the isomorphism $\Phi_\CalD$ induces an isomorphism $\Phi_\CM:(\phi^\ast\CM)[\tfrac{1}{E(u)}]\isoto\CM[\tfrac{1}{E(u)}]$. 

We want to show that $\CalD$ and $\CM$ are locally free sheaves of finite rank on $X\times_{\BQ_p}\BU$. For $\CalD$ this follows from $\CalD|_{X\times\BB_{[0,r]}}=D\otimes_{(\CO_X\otimes K_0)}\CO_{X\times\BB_{[0,r]}}$. We work on a covering of $X$ by affinoids $Y=\Spa(A,A^+)$. Let $h\in\BZ$ be such that $\Fq\subset E(u)^{-h}\Fp$ on $Y$ and let $n$ be maximal such that $\phi^n(E(u))$ is not a unit in $\bB_{[0,r]}$, that is, such that $r^{p^n}\ge|\pi|$. Then $\CM|_{Y\times\BB_{[0,r]}}$ is defined by the exact sequence
\[
0 \longto \CM|_{Y\times\BB_{[0,r]}} \longto \lambda^{-h}\CalD|_{Y\times\BB_{[0,r]}} \xrightarrow{\;\bigoplus_{j=0}^n\,\phi^{j*}\iota_0\circ\Phi_\CalD^{-j}\,} \DS\bigoplus_{j=0}^n (E(u)^{-h}\Fp/\Fq)\otimes_{\BdRplus{A},\,\phi^j}\phi^j(\BdRplus{A}) \longto 0\,.
\]
The $A\otimes_{\BQ_p}K_0[u]$-module $E(u)^{-h}\Fp/\Fq$ is locally free over $A$, say of rank $k$. The endomorphism $\phi:K_0[u]\to K_0[u]$ makes the target $K_0[u]$ into a free module of rank $p$ over the source $K_0[u]$. Therefore $(E(u)^{-h}\Fp/\Fq)\otimes_{\BdRplus{A},\,\phi^j}\phi^j(\BdRplus{A})$ is locally free over $A$ of rank $p^jk$. Since the affinoid algebra $A$ is noetherian and $\bB_{[0,r]}$ is a principal ideal domain by \cite[Corollary of Proposition~4]{Lazard} also $\Gamma(Y\times\BB_{[0,r]},\CO_{Y\times\BB_{[0,r]}})\,=\,A\wh\otimes_{\BQ_p}\bB_{[0,r]}$ is noetherian. So $\Gamma(Y\times\BB_{[0,r]},\CM)$ is finitely generated over $A\wh\otimes_{\BQ_p}\bB_{[0,r]}$ and flat over $A$. The residue field of each maximal ideal $\Fm\subset A\wh\otimes_{\BQ_p}\bB_{[0,r]}$ is finite over $\BQ_p$ by \cite[Corollary 6.1.2/3]{BoschGR}. Therefore $\Fn=\Fm\cap A$ is a maximal ideal of $A$. By the elementary divisor theorem $A/\Fn\otimes_A\Gamma(Y\times\BB_{[0,r]},\CM)$ is free over the product of principal ideal domains $A/\Fn\otimes_{\BQ_p}\bB_{[0,r]}$. Therefore $\Gamma(Y\times\BB_{[0,r]},\CM)$ is locally free of rank $d$ over $A\wh\otimes_{\BQ_p}\bB_{[0,r]}$ by \cite[IV$_3$, Theorem 11.3.10]{EGA}. This shows that $\CM$ is a locally free sheaf of rank $d$ on $X\times_{\BQ_p}\BU$. 

We equip $\CM$ with a differential operator $N_\nabla^\CM$ over $N_\nabla$. On $\lambda^{-h}\CalD\;=\; D\otimes_{(\CO_X\otimes K_0)}\lambda^{-h}\sB_X^{[0,1)}$ we have the differential operator $N_\nabla^\CalD:=N\otimes\lambda+\id_D\otimes N_\nabla$
\begin{equation}\label{EqDNnabla}
\xymatrix @R=0pc {
\lambda^{-h}\CalD \ar[r]^{N\otimes\lambda+\id_D\otimes N_\nabla} & \lambda^{-h}\CalD\\
d\otimes \lambda^{-h}f \ar@{|->}[r] & N(d)\otimes\lambda^{1-h}f+d\otimes(hu\lambda^{-h}f\tfrac{d\lambda}{du}-u\lambda^{1-h}\tfrac{df}{du})
}
\end{equation}
with $d\in D$ and $f\in\sB_X^{[0,1)}$. Its image lies in $\lambda^{-h}\CalD$. If $E(u)^n\Fp\subset\Fq\subset E(u)^{-h}\Fp$ then $\lambda^n\CalD\subset\CM\subset\lambda^{-h}\CalD$. Thus $N_\nabla^\CalD(\CM)\subset\lambda^{-h}\CalD\subset\lambda^{-h-n}\CM$ and we let $N_\nabla^\CM$ be the restriction of $N_\nabla^\CalD$ to $\CM$. The equation $N_\nabla^\CM\circ\Phi_\CM\circ\phi=p\,\tfrac{E(u)}{E(0)}\cdot\Phi_\CM\circ\phi\circ N_\nabla^\CM$ is satisfied because it is satisfied on $\CalD$ by \eqref{EqNnabla}. Therefore we have constructed a $(\phi,N_\nabla)$-module $\ul\CM(D,\Phi,N,\Fq)\,:=\,(\CM,\Phi_\CM,N_\nabla^\CM)$ over $X$. Note that in terms of Kisin's description of $\CalD\cong D[\ell_u]^{N=0}\otimes_{(\CO_X\otimes K_0)}\sB_X^{[0,1)}$ the differential operator $N_\nabla^\CM$ is given as $\id_D\otimes N_\nabla$.

\begin{example}\label{ExCyclot2}
The $(\phi,N)$-modules with Hodge-Pink lattice from Example~\ref{ExCyclot1}, corresponding to the cyclotomic character, give rise to the following $(\phi,N_\nabla)$-modules of rank $1$ over $X=\Spa(\BQ_p,\BZ_p)$. For $\ulD=(K_0,\Phi=p^{-1},N=0,\Fq=E(u)\Fp)$ we obtain $\CalD=(\sB_X^{[0,1)},\Phi_\CalD=p^{-1},N_\nabla)$ and $\CM=\lambda\sB_X^{[0,1)}$. On the basis vector $\lambda$ of $\CM$ the actions of $\Phi_\CalD$ and $N_\nabla$ are given by $\Phi_\CalD(\phi(\lambda))=p^{-1}\phi(\lambda)=\tfrac{E(0)}{pE(u)}\,\lambda$ and $N_\nabla(\lambda)=-u\tfrac{d\lambda}{du}\,\lambda$. So we find $\ul\CM(\ulD)\cong(\sB_X^{[0,1)},\Phi_\CM=\tfrac{E(0)}{pE(u)},N_\nabla^\CM)$ with $N_\nabla^\CM(f)=N_\nabla(f)-u\tfrac{d\lambda}{du}\,f$. Similarly for $\ulD=(K_0,\Phi=p,N=0,\Fq=E(u)^{-1}\Fp)$ we obtain $\CalD=(\sB_X^{[0,1)},\Phi_\CalD=p,N_\nabla)$ and $\CM=\lambda^{-1}\sB_X^{[0,1)}$ which leads to $\ul\CM(\ulD)\cong(\sB_X^{[0,1)},\Phi_\CM=\tfrac{pE(u)}{E(0)},N_\nabla^\CM)$ with $N_\nabla^\CM(f)=N_\nabla(f)+u\tfrac{d\lambda}{du}\,f$.
\end{example}

\bigskip

To define the quasi-inverse functor $\ulD$ let $(\CM,\Phi_\CM,N_\nabla^\CM)$ be a $(\phi,N_\nabla)$-module over $X$. We denote by $e:X\times_{\Q_p}K_0\rightarrow X\times_{\BQ_p}\Ubb$ the isomorphism $x\mapsto (x,0)$ onto the closed subspace defined by $u=0$. Let $(D,\Phi,N):=e^\ast(\CM,\Phi_\CM,N_\nabla^\CM)$. It is a $(\phi,N)$-module over $X$ because $N$ is clearly $\CO_X\otimes_{\BQ_p}K_0$-linear and $e^\ast\bigl(\tfrac{E(u)}{E(0)}\bigr)=1$ implies $N\circ\Phi=p\cdot \Phi\circ\phi^\ast N$. By \cite[Proposition 5.2]{phimod} there is a unique $\CO_{X\times_{\BQ_p}\BU}$-linear isomorphism
\begin{equation}\label{EqXi}
\xi:\pr^\ast\!D[\tfrac{1}{\lambda}] \isoto \CM[\tfrac{1}{\lambda}]
\end{equation}
satisfying $\xi\circ \pr^\ast\Phi=\Phi_\CM\circ\phi^\ast\xi$ and $e^\ast\xi=\id_D$. In particular the composition $\pr^\ast\Phi\circ(\phi^\ast\xi)^{-1}=\xi^{-1}\circ\Phi_\CM$ induces an isomorphism $\phi^\ast\CM\otimes\BdRplus{\CO_X}\isoto D\otimes_{(\CO_X\otimes K_0)}\BdRplus{\CO_X}=\Fp$ of $\BdRplus{\CO_X}$-modules. We set $\Fq:=\eta_\ulD\circ(\xi\otimes\id_{\BdR{\CO_X}})^{-1}(\CM\otimes\BdRplus{\CO_X})$. Then $\ulD(\CM,\Phi_\CM,N_\nabla^\CM)\,:=\,(D,\Phi,N,\Fq)$ is a $(\phi,N)$-module with Hodge-Pink lattice over $X$ by Lemma~\ref{LemmaHPLattice} and the following lemma.

\begin{lemma}\label{LemmaCokerPhiM}
Locally on a covering of $X$ by affinoids $Y=\Spa(A,A^+)$ there exist integers $h,n$ with $E(u)^n\Phi_\CM(\phi^\ast\CM)\subset\CM\subset E(u)^{-h}\Phi_\CM(\phi^\ast\CM)$ such that the quotients \[E(u)^{-h}\Phi_\CM(\phi^\ast\CM)/\CM\ \text{and}\  \CM/ E(u)^n\Phi_\CM(\phi^\ast\CM)\] are finite locally free over $A$.
\end{lemma}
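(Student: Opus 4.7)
Plan. First I would cover $X$ by affinoids $Y = \Spa(A, A^+)$ and pick $r < 1$ with $r \ge |\pi|$, so that the unique zero $u = \pi$ of $E(u)$ in $\BU$ lies in $\boldB_{[0,r]}$. After further shrinking $Y$, using that $\CM$ is locally free of rank $d$ on $X \times_{\BQ_p} \BU$ by the argument preceding the lemma, both $\CM|_{Y \times \boldB_{[0,r]}}$ and $(\phi^*\CM)|_{Y \times \boldB_{[0,r]}}$ become free of rank $d$ over the noetherian ring $A_r := A \wh\otimes_{\BQ_p} \bB^{[0,r]}$. Choosing compatible bases, $\Phi_\CM$ is represented there by a matrix $M \in \GL_d(A_r[\tfrac{1}{E(u)}])$. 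Since both $M$ and $M^{-1}$ have entries in the localization $A_r[\tfrac{1}{E(u)}]$, clearing denominators produces nonnegative integers $n, h$ with $E(u)^n M$ and $E(u)^h M^{-1}$ in ${\rm Mat}_{d\times d}(A_r)$. Translated back to lattice language, this yields exactly the desired chain of inclusions
\[
E(u)^n \Phi_\CM(\phi^*\CM)\;\subset\;\CM\;\subset\;E(u)^{-h}\Phi_\CM(\phi^*\CM).
\]

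Now set $Q_1 := E(u)^{-h}\Phi_\CM(\phi^*\CM)/\CM$ and $Q_2 := \CM/E(u)^n\Phi_\CM(\phi^*\CM)$. Both are annihilated by $E(u)^{h+n}$, so they are finitely generated modules over $A_r/E(u)^{h+n} \cong A \otimes_{\BQ_p} K[u]/E(u)^{h+n}$, which is finite free of rank $(h+n)[K:\BQ_p]$ over $A$. Noetherianity of $A_r$ then yields finite presentation of $Q_1$ and $Q_2$ as $A$-modules.

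Local freeness over $A$ is the substantive point. The plan is to invoke Grothendieck's constant-rank criterion (\cite[IV$_3$, Theorem~11.3.10]{EGA}): a finitely presented $A$-module is locally free iff its rank at every residue field is locally constant on $\Spec A$. For any maximal ideal $\Fn \subset A$ the residue field $\kappa(\Fn)$ is finite over $\BQ_p$, and $\kappa(\Fn) \wh\otimes_{\BQ_p} \bB^{[0,r]}$ is a Tate algebra in one variable over $\kappa(\Fn)$, hence a principal ideal domain; its localizations at the finitely many maximal ideals above $(E(u))$ are therefore discrete valuation rings, exactly as in the proof of Lemma~\ref{LemmaHPLattice}. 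The elementary divisor theorem applied to the pullback $M \otimes_A \kappa(\Fn)$ then expresses $\dim_{\kappa(\Fn)} Q_i \otimes_A \kappa(\Fn)$ in terms of the $E(u)$-adic valuations of the elementary divisors of $E(u)^h M$ over $\kappa(\Fn)$.

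The main obstacle is to make these dimensions constant in $\Fn$. This reduces to constancy of the $E(u)$-adic valuation of $\det(E(u)^h M) \in A_r$ modulo each residue field. Writing $\det(E(u)^h M) = E(u)^{v_0} g$ with $g \in A_r$ not divisible by $E(u)$ in $A_r$, the valuation at $\Fn$ equals $v_0$ exactly when the image of $g$ in $A_r/E(u) \cong A \otimes_{\BQ_p} K$ does not vanish modulo $\Fn$. Since $g \bmod E(u)$ is a section of a finite free $A$-module, this locus is Zariski open in $\Spec A$, and by passing to a rational subdomain of $Y$ containing any given point $x \in X$ one can arrange it to be all of $Y$. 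On such a neighborhood the ranks of $Q_1$ and $Q_2$ at residue fields are constant, and Grothendieck's criterion completes the proof.
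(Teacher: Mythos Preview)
Your argument has a genuine gap at the last step. The criterion ``a finitely presented $A$-module is locally free if and only if its fiber rank is locally constant on $\Spec A$'' is valid only when $A$ is \emph{reduced}. The lemma, however, is stated for an arbitrary $(\phi,N_\nabla)$-module over an arbitrary adic space $X\in\Ad_{\BQ_p}^{\lft}$, with no reducedness hypothesis; and later applications (e.g.\ the end of the proof of Theorem~\ref{ThmHnabla}) use it over Artinian thickenings. Over a non-reduced affinoid $A$ the criterion simply fails: already for $A=\kappa[\epsilon]/(\epsilon^2)$ the module $A/(\epsilon)$ has constant fiber rank~$1$ but is not free. The citation \cite[IV$_3$, Theorem~11.3.10]{EGA} does not assert what you claim; it gives openness of the free locus, not the implication ``constant rank $\Rightarrow$ locally free''. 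So no amount of shrinking $Y$ to make the $E(u)$-adic valuation of the determinant constant will finish the argument.

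The paper's proof replaces this step by a direct flatness check via the local criterion \cite[Theorem~6.8]{Eisenbud}, which needs no reducedness. For each maximal ideal $\Fm\subset A$ with residue field $L=A/\Fm$, tensor the exact sequence
\[
0\;\longrightarrow\;E(u)^n\,\wt{\phi^*\CM}\;\xrightarrow{\ \Phi_\CM\ }\;\wt\CM\;\longrightarrow\;Q_2\;\longrightarrow\;0
\]
with $L$ over $A$. Over the product of principal ideal domains $L\otimes_{\BQ_p}\bB_{[0,r]}$ the two outer terms become free of the same rank and the cokernel is torsion (killed by $E(u)^{n+h}$); the elementary divisor theorem then forces $\id_L\otimes\Phi_\CM$ to be injective, whence $\Tor_1^A(L,Q_2)=0$. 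This gives flatness, hence local freeness, of $Q_2$; the other quotient $Q_1$ follows from a short exact sequence. Your setup for producing $h,n$ and finite presentation is fine, and the determinant trick would indeed control fiber ranks in the reduced case, but you should swap the constant-rank appeal for this $\Tor_1$ computation.
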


\begin{proof}
We may assume that $X=Y=\Spa(A,A^+)$ is affinoid. Then the existence of $h$ and $n$ follows from the finiteness of $\CM$ and $\phi^\ast\CM$. Let $\Fm\subset A$ be a maximal ideal and set $L=A/\Fm$. 
Let $|\pi|<r<1$ and set $\wt\CM:=\Gamma(Y\times_{\BQ_p}\BB_{[0,r]},\CM)$ and $\wt{\phi^\ast\CM}:=\Gamma(Y\times_{\BQ_p}\BB_{[0,r]},\phi^\ast\CM)$. Then $\CM/ E(u)^n\Phi_\CM(\phi^\ast\CM)\cong\wt\CM/ E(u)^n\Phi_\CM(\wt{\phi^\ast\CM})$. 
Consider the exact sequence
\[
\xymatrix {0 \ar[r] & E(u)^n\wt{\phi^\ast\CM} \ar[r]^{\qquad\Phi_\CM} & \wt\CM \ar[r] & \CM/ E(u)^n\Phi_\CM(\phi^\ast\CM) \ar[r] & 0
}
\]
in which the first map is injective because $E(u)$ is a non-zero-divisor in $A\wh\otimes_{\BQ_p}\bB_{[0,r]}$. We tensor the sequence with $L$ over $A$ to obtain the exact sequence of $L\otimes_{\BQ_p}\bB_{[0,r]}$-modules
\[0\longrightarrow T\longrightarrow L\otimes_A E(u)^n\wt{\phi^\ast\CM} \xrightarrow{\id_L\otimes\Phi_\CM} L\otimes_A\wt\CM \longrightarrow L\otimes_A\bigl(\CM/ E(u)^n\Phi_\CM(\phi^\ast\CM)\bigr)\longrightarrow 0
\]
with $T= \Tor_1^A\bigl(L,\,\CM/ E(u)^n\Phi_\CM(\phi^\ast\CM)\bigr)$. Since $L\otimes_{\BQ_p}K_0$ is a product of fields, $L\otimes_{\BQ_p}\bB_{[0,r]}$ is a product of principal ideal domains by \cite[Corollary of Proposition~4]{Lazard}. Since $E(u)^{n+h}$ annihilates $L\otimes_A\CM/ E(u)^n\Phi_\CM(\phi^\ast\CM)$ the latter is a torsion module over $L\otimes_{\BQ_p}\bB_{[0,r]}$. It follows that $\id_L\otimes\Phi_\CM$ is a morphism of free modules of the same rank over a product of principle ideal domains whose cokernel is a torsion module. It is a direct consequence of the classification of finitely generated modules over a principle ideal domain that the map $\id_L\otimes\Phi_\CM$ then has to be injective. It follows that 
\[0=T= \Tor_1^A\bigl(L,\,\CM/ E(u)^n\Phi_\CM(\phi^\ast\CM)\bigr)= \Tor_1^{A_\mfrak}\bigl((A_\mfrak/\mfrak A_\mfrak),\,(\CM/ E(u)^n\Phi_\CM(\phi^\ast\CM))_\mfrak\bigr).\]
Since $(\CM/ E(u)^n\Phi_\CM(\phi^\ast\CM))_\mfrak$ is finite over the noetherian local ring $A_\mfrak$ it is locally free by the local criterion of flatness \cite[Theorem~6.8]{Eisenbud}. It follows that $\CM/ E(u)^n\Phi_\CM(\phi^\ast\CM)$ is locally free as an $A$-module.
Finally, the two last objects in the short exact sequence
\[0\longrightarrow E(u)^{-h}\Phi_\CM(\phi^\ast\CM)/\CM \xrightarrow{\cdot E(u)^{n+h}\es}\CM/E(u)^{n+h}\CM\longrightarrow \CM/ E(u)^n\Phi_\CM(\phi^\ast\CM)\longrightarrow 0 \]
are flat and hence so is the first (all its higher $\Tor$-terms have to vanish). As $E(u)^{-h}\Phi_\CM(\phi^\ast\CM)/\CM$ is also finite as an $A$-module it follows that it is finite and locally free over $A$.
\end{proof}

\begin{theorem}\label{ThmEquivDandM}
For every adic space $X\in\Ad_{\BQ_p}^\lft$ the functors $\ul\CM$ and $\ulD$ constructed above are mutually quasi-inverse equivalences between the category of $(\phi,N)$-modules with Hodge-Pink lattice over $X$ and the category of $(\phi,N_\nabla)$-modules over $X$.
\end{theorem}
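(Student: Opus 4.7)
The plan is to verify that the two compositions $\ulD\circ\ul\CM$ and $\ul\CM\circ\ulD$ are naturally isomorphic to the identity; the constructions preceding the statement already establish well-definedness of both functors (local freeness of $\CM$ via the pointwise elementary-divisor argument, the lattice property of $\Fq$ via Lemma~\ref{LemmaHPLattice} and Lemma~\ref{LemmaCokerPhiM}, and the commutation relations by direct calculation). Functoriality is immediate: a morphism $\alpha\colon(D,\Phi,N,\Fq)\to(\wt D,\wt\Phi,\wt N,\tilde\Fq)$ induces $\pr^*\alpha$ on $\CalD$ which preserves the defining condition \eqref{EqCM}; conversely a morphism $\beta\colon\CM\to\wt\CM$ restricts to $e^*\beta$ on $D$, and compatibility with $\Fq$ follows from the definition of $\Fq$ via $\xi$.

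For $\ulD\circ\ul\CM\cong\id$, start with $\ulD=(D,\Phi,N,\Fq)$ and form $\CM\subset\CalD[\tfrac{1}{\lambda}]=\pr^*D[\tfrac{1}{\lambda}]$. Since $\lambda|_{u=0}=\prod_{n\ge0}\phi^n(E(0)/E(0))=1$, a neighborhood of $X\times\{u=0\}$ avoids every divisor $\phi^j(E(u))=0$, so $\lambda$ is a unit there and the conditions \eqref{EqCM} are vacuous; hence $\CM$ agrees with $\CalD$ near $u=0$, giving $e^*\CM=D$ with the inherited $\Phi$, and $e^*N_\nabla^\CalD=e^*(N\otimes\lambda)+e^*(\id_D\otimes N_\nabla)=N$. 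To recover $\Fq$, observe $\CM[\tfrac{1}{\lambda}]=\CalD[\tfrac{1}{\lambda}]=\pr^*D[\tfrac{1}{\lambda}]$, so the identity satisfies both defining properties of the isomorphism $\xi$ in \eqref{EqXi}; by the uniqueness in \cite[Proposition~5.2]{phimod}, $\xi=\id$. The reconstructed lattice is then $\Fq'=\eta_\ulD(\CM\otimes_{\sB^{[0,1)}_X}\BdRplus{\CO_X})$. Near $E(u)=0$ the divisors $\phi^j(E(u))=0$ for $j\ge1$ are disjoint, so the factors $\phi^j(E(u))$ are units there and only the $j=0$ condition $\eta_\ulD(m)\in\Fq$ survives. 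Hence the $E(u)$-completion of $\CM$ equals $\eta_\ulD^{-1}(\Fq)$, whence $\Fq'=\Fq$.

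For $\ul\CM\circ\ulD\cong\id$, start with $(\CM,\Phi_\CM,N_\nabla^\CM)$, let $(D,\Phi,N,\Fq):=\ulD(\CM)$ with $\xi$ as in \eqref{EqXi}, and set $\CM':=\ul\CM(D,\Phi,N,\Fq)\subset\pr^*D[\tfrac{1}{\lambda}]$. The claim is that $\xi$ restricts to an isomorphism $\CM'\isoto\CM$. For $m\in\pr^*D[\tfrac{1}{\lambda}]$ the $j=0$ condition $\eta_\ulD(m)\in\Fq$ unwinds, by the definition $\Fq=\eta_\ulD((\xi\otimes\id)^{-1}(\CM\otimes\BdRplus{\CO_X}))$, precisely to $\xi(m)$ lying in the $E(u)$-completion of $\CM$. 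For $j\ge1$, iterated $\Phi$-equivariance $\xi\circ\pr^*\Phi^j=\Phi_\CM^j\circ\phi^{j*}\xi$ transports the condition at $\phi^j(E(u))=0$ for $\Phi_\CalD^{-j}(m)$ to the analogous one for $\Phi_\CM^{-j}(\xi(m))$; since $\phi^j(E(u))$ differs from $\phi^i(E(u))$ for all $0\le i<j$, the map $\Phi_\CM^{-j}$ is an isomorphism at $\phi^j(E(u))=0$, and this condition becomes $\xi(m)$ lying in the $\phi^j(E(u))$-completion of $\CM$. As $\CM$ and $\CM[\tfrac{1}{\lambda}]$ only differ along the divisors $\phi^j(E(u))=0$, intersecting over all $j\ge0$ yields $m\in\CM'\iff\xi(m)\in\CM$, so $\xi(\CM')=\CM$. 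Compatibility with $\Phi$ is built into $\xi$, and $N_\nabla$-compatibility follows from the choice of $\eta_\ulD$: under Kisin's identification from Remark~\ref{RemOtherTheta1}(1), our operator $N_\nabla^\CalD=N\otimes\lambda+\id_D\otimes N_\nabla$ corresponds to $\id_D\otimes N_\nabla$, and the same uniqueness argument as in \cite[Proposition~5.2]{phimod}, applied now to the pair $(\xi,N_\nabla)$, gives the required equivariance.

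The main obstacle is this simultaneous translation of lattice conditions across all Frobenius-twisted divisors $\phi^j(E(u))=0$ via $\xi$, together with verifying that the log-normalization $\eta_\ulD$ is precisely the one that makes $\xi$ intertwine the two $N_\nabla$-operators; once these are in hand the remaining compatibilities reduce to bookkeeping.
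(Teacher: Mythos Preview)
Your argument for $\ulD\circ\ul\CM\cong\id$ agrees with the paper's. For the sheaf equality in $\ul\CM\circ\ulD\cong\id$, your direct verification --- translating condition~\eqref{EqCM} at each divisor $\phi^j(E(u))=0$ via $\xi$ and the iterated $\Phi$-equivariance --- is correct, but the paper takes a shorter route: it checks only that $\CM':=\ul\CM(\ulD(\CM))$ and $\xi^{-1}(\CM)$ agree after $E(u)$-adic completion (the $j=0$ condition), and then invokes that both subsheaves of $\pr^*D[\tfrac{1}{\lambda}]$ carry a Frobenius which is an isomorphism outside $\Var(E(u))$, so equality propagates automatically to all $\phi^j(E(u))=0$. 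Your approach makes the matching explicit at every $j$; the paper's replaces this bookkeeping with a single structural observation.

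The $N_\nabla$-compatibility step, however, is underspecified. Citing ``the same uniqueness argument as in \cite[Proposition~5.2]{phimod}'' points at the right technique (Frobenius contraction) but does not constitute a proof: that proposition concerns uniqueness of a $\Phi$-equivariant \emph{isomorphism} reducing to $\id_D$ at $u=0$, whereas here one must show that the $\CO_{X\times\BU}$-linear map $\sigma:=\xi\circ N_\nabla^{\pr^*D}-N_\nabla^\CM\circ\xi$ vanishes. Following \cite[Lemma~1.2.12(3)]{crysrep}, the paper verifies the two ingredients that make the contraction run: first $\sigma(D)\subset u\,\CM[\tfrac{1}{\lambda}]$, because both operators reduce to $N$ modulo $u$; second the \emph{twisted} relation $\sigma\circ\Phi_{\pr^*D}\circ\phi=p\,\tfrac{E(u)}{E(0)}\cdot\Phi_\CM\circ\phi\circ\sigma$, deduced from the $N_\nabla$--$\Phi$ commutation on each side. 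Neither is automatic, and the twist by $p\,\tfrac{E(u)}{E(0)}$ means this is not literally an instance of the uniqueness of $\xi$. Your remarks on $\eta_\ulD$ and Kisin's identification explain why $N_\nabla^{\CalD}$ has the form it does, but they do not address why $\xi$ intertwines it with the \emph{given} operator $N_\nabla^\CM$; that is precisely what the $\sigma$-argument supplies.
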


\begin{proof}
We must show that the functors are mutually quasi-inverse. To prove one direction let $(D,\Phi,N,\Fq)$ be a $(\phi,N)$-module with Hodge-Pink lattice over $X$ and let $(\CM,\Phi_\CM,N_\nabla^\CM)=\ul\CM(D,\Phi,N,\Fq)$. By construction $e^\ast\CM=D$, and under this equality $e^\ast\Phi_\CM$ corresponds to $\Phi$. Since $e^\ast\lambda=1$, formula \eqref{EqDNnabla} shows that $e^\ast N_\nabla^\CM$ corresponds to $N$ on $D$. By the uniqueness of the map $\xi$ from \eqref{EqXi}, its inverse $\xi^{-1}$ equals the inclusion $\CM\into\CalD[\tfrac{1}{\lambda}]$, by which we defined $\CM$. This shows that $\eta_\ulD\circ(\xi\otimes\id_{\BdR{\CO_X}})^{-1}(\CM\otimes\BdRplus{\CO_X})$ equals $\Fq$ and that $\ulD\circ\ul\CM=\id$.

Conversely let $(\CM,\Phi_\CM,N_\nabla^\CM)$ be a $(\phi,N_\nabla)$-module over $X$ and let $(D,\Phi,N,\Fq)=\ulD(\CM,\Phi_\CM,N_\nabla^\CM)$. Via the isomorphism $\xi$ from \eqref{EqXi}, $\CM$ is a $\phi$-submodule of $\pr^\ast\!D[\tfrac{1}{\lambda}]$. By construction of $\Fq$ and 
\[
\underline{\CM}(\ulD(\CM,\Phi_\CM,N_\nabla^\CM))\subset \pr^\ast\!D[\tfrac{1}{\lambda}],
\] 
the latter submodule coincides with $\CM$ modulo all powers of $E(u)$. Since both submodules have a Frobenius which is an isomorphism outside $\Var(E(u))$ they are equal on all of $X\times_{\BQ_p}\BU$. It remains to show that $N_\nabla^\CM$ is compatible with $N_\nabla^{\pr^\ast\!D}$ under the isomorphism $\xi:\pr^\ast\!D[\tfrac{1}{\lambda}]\isoto\CM[\tfrac{1}{\lambda}]$. We follow \cite[Lemma 1.2.12(3)]{crysrep} and let $\sigma:=\xi\circ N_\nabla^{\pr^\ast\!D}-N_\nabla^\CM\circ\xi$. Then $\sigma:\pr^\ast\!D[\tfrac{1}{\lambda}]\to\CM[\tfrac{1}{\lambda}]$ is $\CO_{X\times_{\BQ_p}\BU}$-linear and it suffices to show that $\sigma(D)=0$. By \eqref{EqDNnabla} both $N_\nabla^{\pr^\ast\!D}$ and $N_\nabla^\CM$ reduce to $N$ modulo $u$. Therefore $\sigma(D)\subset u\CM[\tfrac{1}{\lambda}]$. One checks that $\sigma\circ\Phi_{\pr^\ast\!D}\circ\phi=p\tfrac{E(u)}{E(0)}\cdot\Phi_\CM\circ\phi\circ\sigma$ and this implies 
\[
\sigma(D)\;=\;\sigma\circ\Phi_{\pr^\ast\!\!D}(\phi^\ast D)\;=\;p\tfrac{E(u)}{E(0)}\cdot\Phi_\CM\circ\phi^\ast(u\CM[\tfrac{1}{\lambda}])\;\subset\;u^p\CM[\tfrac{1}{\lambda}]\,.
\]
By induction $\sigma(D)\subset u^{p^i}\CM[\tfrac{1}{\lambda}]$ for all $i$ and hence $\sigma(D)=0$. This shows that also $\ul\CM\circ\ulD$ is isomorphic to the identity and proves the theorem.
\end{proof}

\begin{corollary}\label{CorEquivDandM}
The stack $\sH_{\phi,N,\preceq\mu}^\ad$ is isomorphic to the stack whose groupoid of $X$-valued points for $X\in\Ad_{E_\mu}^\lft$ consists of $(\phi,N_\nabla)$-modules $(\CM,\Phi_\CM,N_\nabla^\CM)$ over $X$ satisfying 
\[
\bigwedge^j_{\CO_{X\times\BU}}\CM\;\subset\; E(u)^{-\mu_1-\ldots-\mu_j}\cdot\bigwedge^j_{\CO_{X\times\BU}}\Phi_\CM(\phi^\ast\CM)
\]
with equality for $j=\rk\CM$. Here $\mu_i$ is viewed as a $\BZ$-valued function on $X\times_{\BQ_p}K$.
\end{corollary}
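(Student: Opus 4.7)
The plan is to reduce the corollary to Theorem~\ref{ThmEquivDandM} by translating the Hodge-Pink lattice boundedness condition of Definition~\ref{Defnbndmu}(c) into the asserted inclusion on $\CM$ and $\Phi_\CM(\phi^\ast\CM)$. Since the equivalence of categories is already established, only the compatibility of the two boundedness conditions remains to be verified.

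First I would note that $\Phi_\CM$ is an isomorphism on $X\times_{\BQ_p}\BU\smallsetminus \Var(E(u))$, so $\CM=\Phi_\CM(\phi^\ast\CM)$ there and the claimed inclusion is trivial off $\Var(E(u))$, with the equality condition at $j=\rk\CM$ holding automatically after inverting $E(u)$. It therefore suffices to verify the inclusion after $E(u)$-adic completion, i.e.\ as a relation of $\BdRplus{\CO_X}$-modules inside $\CM\otimes_{\CO_{X\times\BU}}\BdR{\CO_X}$.

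The key step is to identify these completions with $\Fq$ and $\Fp$. With $\CalD=\pr^\ast D$ and $\CM\subset\CalD[1/\lambda]$ from the construction of $\ul\CM$, the $j=0$ condition in \eqref{EqCM} (the only one visible in the formal neighborhood of $\Var(E(u))$) gives
\[
\CM\otimes_{\CO_{X\times\BU}}\BdRplus{\CO_X}\;=\;\eta_{\ulD}^{-1}(\Fq)\es\subset\es\Fp[\tfrac{1}{E(u)}]\,.
\]
On the other hand, since $\lambda=(E(u)/E(0))\cdot\phi(\lambda)$ and $\phi^n(E(u))$ for $n\ge 1$ specializes at $u=\pi$ to the unit $E(\pi^{p^n})\in\CO_X\otimes K$, the poles of $\phi^\ast\CM\subset\phi^\ast\CalD[1/\phi(\lambda)]$ all lie off $\Var(E(u))$, so $\phi^\ast\CM=\phi^\ast\CalD$ on a formal neighborhood of $\Var(E(u))$. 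Combined with the fact that $\Phi_\CalD\colon\phi^\ast\CalD\isoto\CalD$ is a genuine isomorphism of sheaves on all of $X\times\BU$ (and $\Phi_\CM$ is the restriction of $\Phi_\CalD$ to $\phi^\ast\CM$), this yields
\[
\Phi_\CM(\phi^\ast\CM)\otimes_{\CO_{X\times\BU}}\BdRplus{\CO_X}\;=\;\Phi_\CalD(\phi^\ast\CalD)\otimes\BdRplus{\CO_X}\;=\;\CalD\otimes\BdRplus{\CO_X}\;=\;\Fp\,.
\]

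Since $\eta_{\ulD}$ is a $\BdRplus{\CO_X}$-automorphism of $\Fp$, it commutes with taking exterior powers and multiplying by $E(u)^{-\mu_1-\ldots-\mu_j}$. The completed inclusion therefore becomes
\[
\bigwedge\nolimits^j\Fq\;\subset\; E(u)^{-\mu_1-\ldots-\mu_j}\cdot\bigwedge\nolimits^j\Fp\,,
\]
with equality at $j=d=\rk\CM=\rk D$, which is exactly condition \eqref{EqDefHodgeWts} defining Hodge polygon bounded by $\mu$. The converse translation (from $\Fq$ to $\CM$) is the same argument run backwards, using the quasi-inverse functor $\ul\CM$, and this proves the isomorphism of stacks claimed in the corollary.

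The main obstacle is the identification $\Phi_\CM(\phi^\ast\CM)\otimes\BdRplus{\CO_X}=\Fp$: even though $\Phi_\CM$ is only an isomorphism after inverting $E(u)$, one needs that $\Phi_\CalD$ is globally an isomorphism \emph{and} that $\phi^\ast\CM$ coincides with $\phi^\ast\CalD$ near $\Var(E(u))$, which holds because applying $\phi$ shifts the singular locus of $\CM$ off $\{E(u)=0\}$. Once this geometric input is granted, the rest of the argument is a bookkeeping exercise in faithful flatness and the automorphism $\eta_{\ulD}$.
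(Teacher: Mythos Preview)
Your proof is correct and is essentially a careful unpacking of the paper's one-line argument. The paper simply points to ``the definition of the functor $\ulD$, in particular the definition of the Hodge-Pink lattice'': in that direction one has by construction $\Fq=\eta_{\ulD}\circ\xi^{-1}(\CM\otimes\BdRplus{\CO_X})$ and $\xi^{-1}\circ\Phi_\CM\colon\phi^\ast\CM\otimes\BdRplus{\CO_X}\isoto\Fp$, so the identification of the completions with $\Fq$ and $\Fp$ is literally the definition and no separate argument about $\phi(\lambda)$ being a unit near $\Var(E(u))$ is needed; your $\ul\CM$-side approach reaches the same identifications with a little extra (but correct) work.
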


\begin{proof}
This follows from the definition of the functor $\ulD$, in particular the definition of the Hodge-Pink lattice.
\end{proof}

\begin{definition}\label{DefHnabla}
We define substacks $\sH_{\phi,N,\mu}^\nabla\subset\sH_{\phi,N,\mu}^\ad$, resp.\ $\sH_{\phi,N,\preceq\mu}^\nabla\subset\sH_{\phi,N,\preceq\mu}^\ad$, resp.\ $\sH_{\phi,\mu}^\nabla\subset\sH_{\phi,\mu}^\ad$, resp.\ $\sH_{\phi,\preceq\mu}^\nabla\subset\sH_{\phi,\preceq\mu}^\ad$. For an adic space $X\in\Ad_{E_\mu}^\lft$ the groupoid $\sH_{\phi,N,\mu}^\nabla(X)$ consists of those $(D,\phi,N,\Fq)\in\sH_{\phi,N,\mu}^\ad(X)$ for which the associated $(\phi,N_\nabla)$-module $(\CM,\Phi_\CM,N_\nabla^\CM)$ satisfies $N_\nabla^\CM(\CM)\subset\CM$. The groupoids $\sH_{\phi,N,\preceq\mu}^\nabla(X)$, resp.\ $\sH_{\phi,\mu}^\nabla(X)$, resp.\ $\sH_{\phi,\preceq\mu}^\nabla(X)$ are defined by the same condition. (Note that on the latter two $N=0$, but $N_\nabla^\CM\ne0$.)
\end{definition}

\begin{theorem}\label{ThmHnabla}
The substacks $\sH_{\phi,N,\mu}^\nabla\subset\sH_{\phi,N,\mu}^\ad$, resp.\ $\sH_{\phi,N,\preceq\mu}^\nabla\subset\sH_{\phi,N,\preceq\mu}^\ad$, resp.\ $\sH_{\phi,\mu}^\nabla\subset\sH_{\phi,\mu}^\ad$, resp.\ $\sH_{\phi,\preceq\mu}^\nabla\subset\sH_{\phi,\preceq\mu}^\ad$ are Zariski closed substacks. The substack $\sH_{\phi,N,\mu}^\nabla$ coincides with the image of the zero section of the vector bundle $\sH_{\phi,N,\mu}^\ad\to\sD_{\phi,N,\mu}^\ad$.
\end{theorem}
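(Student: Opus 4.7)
The plan has two parts: (i) reduce the holomorphy condition $N_\nabla^\CM(\CM)\subset\CM$ to a closed condition on the Hodge-Pink lattice in a formal neighbourhood of $\{E(u)=0\}\subset X\times\BU$, and (ii) identify this condition for $\sH_{\phi,N,\mu}^\nabla$ with the zero section of the affine bundle $Q_{K,d,\mu}\to\Flag_{K,d,\mu}$ under the presentations of $\sH_{\phi,N,\mu}^\ad$ and $\sD_{\phi,N,\mu}^\ad$ from Section~3.

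\emph{(i) Closedness.} The twisted commutation relation $N_\nabla^\CM\circ\Phi_\CM\circ\phi=p\tfrac{E(u)}{E(0)}\cdot\Phi_\CM\circ\phi\circ N_\nabla^\CM$, combined with the fact that $\Phi_\CM$ is an isomorphism and $E(u)/E(0)$ a unit away from $\{E(u)=0\}$, shows inductively that absence of poles of $N_\nabla^\CM$ along $\phi^{j-1}(E(u))$ implies the same along $\phi^j(E(u))$. Therefore it suffices to impose the condition in the formal completion at $\{E(u)=0\}$. There the uniqueness of $\xi$ in the construction of $\ulD$ forces $\xi=\id$, so $\CM\otimes_{\sB_X^{[0,1)}}\BdRplus{\CO_X}=\eta_\ulD^{-1}(\Fq)\subset\Fp[\tfrac{1}{E(u)}]$, and $N_\nabla^\CM$ restricts to an $\CO_X$-linear map $\Fq\to E(u)^{-n}\Fq$ on any affinoid where $E(u)^n\Fp\subset\Fq\subset E(u)^{-h}\Fp$. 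Holomorphy is then equivalent to the vanishing of the composition $\Fq\to E(u)^{-n}\Fq\onto E(u)^{-n}\Fq/\Fq$, whose target is a finite locally free $\CO_X$-module by Lemma~\ref{LemmaHPLattice}. This is a Zariski closed condition, and the argument applies uniformly to all four variants of $\sH^\nabla$.

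\emph{(ii) The transported connection.} A direct computation using $N_\nabla(\log(1-t))=-\lambda$ (where $t:=E(u)/E(0)$) shows that after conjugation by the automorphism $\eta_\ulD$, the operator becomes $\eta_\ulD\circ N_\nabla^\CM\circ\eta_\ulD^{-1}=1\otimes N_\nabla$ on $\Fp$; in other words, the original basis $\{e_j\otimes 1\}$ of $\Fp$ is flat, with the $N$-contribution absorbed by the conjugation. Under the identification $\BdRplus{\CO_X}\cong(\CO_X\otimes K)\dbl t\dbr$ one has $N_\nabla|_{\BdRplus{\CO_X}}=c\cdot t\tfrac{d}{dt}$ with $c\in\BdRplus{\CO_X}^\times$, so the condition becomes $t\tfrac{d}{dt}(\Fq)\subset\Fq$. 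For a transverse lattice $\Fq(\CF^\bullet)=\sum_i E(u)^{-i}\CF^i\otimes_{\CO_X\otimes K}\BdRplus{\CO_X}$ this is immediate from $t\tfrac{d}{dt}(t^{-i}v\cdot f)=t^{-i}v\cdot(t\tfrac{df}{dt}-if)\in t^{-i}\CF^i\otimes\BdRplus{\CO_X}$ for $v\in\CF^i$; hence the zero section is contained in $\sH_{\phi,N,\mu}^\nabla$.

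\emph{(iii) Converse.} For the reverse inclusion I would argue locally on $\Flag_{K,d,\mu}$ using the affine cell description of $Q_{K,d,\mu}$ from Proposition~\ref{PropQ}. After choosing a basis $\{e_j\}$ of $\Fp$ adapted to the universal filtration (so that $\CF^iD_K=\langle\bar e_j:\mu_j\ge i\rangle$), the lattice $\Fq$ admits a basis
\[
g_j\;=\;t^{-\mu_j}e_j\;+\sum_{i>j,\,\mu_i<\mu_j}a_{ij}\,t^{-\mu_j}e_i\,,\qquad a_{ij}\in\bigoplus_{k=1}^{\mu_j-\mu_i-1}t^k\CO_X\,,
\]
and the zero section corresponds to $a_{ij}=0$. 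A direct computation yields
\[
t\tfrac{d}{dt}(g_j)+\mu_j g_j\;=\;\sum_i\bigl(t\tfrac{da_{ij}}{dt}\bigr)\,t^{-\mu_j}e_i\,,
\]
whose $e_i$-component is a polynomial in $t$ with $t$-adic valuation in $[1-\mu_j,-\mu_i-1]$. The $t$-exponent $-\mu_i-1$ is strictly below $-\mu_i$, which is the lowest $t$-exponent in the $e_i$-coordinate of any element of $\Fq$ once the $g_k$ with $k\le j$ have been eliminated by an induction on $j$ (those with $\mu_k>\mu_j$ being killed first by comparing $t$-valuations in the $e_j$-coordinate). This valuation comparison forces $t\tfrac{da_{ij}}{dt}=0$, hence $a_{ij}=0$. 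The main technical hurdle is exactly this inductive bookkeeping of $t$-exponents across the matrix coordinates $a_{ij}$; once it is carried out, $\sH_{\phi,N,\mu}^\nabla$ is identified with the zero section of $\sH_{\phi,N,\mu}^\ad\to\sD_{\phi,N,\mu}^\ad$.
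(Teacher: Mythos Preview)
Your argument is correct and takes a genuinely different route from the paper in both the closedness step and the identification with the zero section.

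For closedness, the paper simply checks the vanishing of the images of generators of $\CM$ in the finite locally free $\CO_X$-module $(\lambda^{-h}\CM/\CM)\otimes\bigl(\sB_X^{[0,1)}/(\phi^n(E(u))^h)\bigr)$ separately for each $n\ge 0$ and intersects these closed conditions. Your Frobenius-propagation argument, reducing the whole question to the single condition at the formal completion along $E(u)=0$, is a valid and cleaner simplification that the paper does not make; it is also what allows your parts (ii)--(iii) to take place entirely inside $\BdRplus{\CO_X}$.

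For the identification $\sH_{\phi,N,\mu}^\nabla=$ zero section, the paper proceeds by reduction to Kisin: it first obtains set-theoretic equality by applying \cite[Lemma~1.2.12(4)]{crysrep} at $L$-valued points for finite $L/E_\mu$, and then upgrades to scheme-theoretic equality by restricting scalars from each Artinian thickening $\wh\CO_{X,x}/(\Fm_x^n+I)$ down to $\BQ_p$ and re-invoking Kisin's lemma there. Your approach instead computes directly in the affine coordinates $a_{ij}$ on the fibre of $Q_{K,d,\mu}\to\Flag_{K,d,\mu}$: once the conjugation by $\eta_\ulD$ has turned the holomorphy condition into $t\tfrac{d}{dt}(\Fq)\subset\Fq$, the inductive comparison of $t$-valuations shows that the defining ideal of $\sH^\nabla$ contains each coefficient $a_{ij,k}$ over any base ring. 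The induction does go through cleanly: after killing $c_l$ for $l\le j$ by looking at the vanishing $e_l$-components of $v_j$, the $e_i$-component for $i>j$ reads $c_it^{-\mu_i}=(t\tfrac{da_{ij}}{dt})t^{-\mu_j}$ modulo terms $c_la_{il}t^{-\mu_l}$ with $j<l<i$ already shown to lie in the ideal, and the top $t$-exponent $-\mu_i-1$ on the right is strictly below the lowest exponent $-\mu_i$ on the left, forcing both sides to vanish identically. This is more elementary and self-contained than the paper's argument, which is shorter to state but outsources the essential content to \cite{crysrep}. The $\wt G_{E_\mu}$-equivariance of both closed subschemes then globalizes your cell-by-cell computation over $\Flag_{K,d,\mu}$.
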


\begin{rem}\label{familyonDadphiN}
We can consider a family of $(\phi,N_\nabla)$-modules over $\sD_{\phi,N,\mu}^{\rm ad}$ namely we pull back the canonical family of $(\phi,N_\nabla)$-modules on $\sH_{\phi,N,\mu}^{\rm ad}$ along the zero section. 
Then for $x\in \sD_{\phi,N,\mu}^{\rm ad}(\Q_p)$ the fiber of this family at $x$ coincides with the $(\phi,N_\nabla)$-module that Kisin \cite{crysrep} associates with the filtered $(\phi,N)$-module defined by $x$. 
\end{rem}

\begin{remark}\label{RemOtherTheta}
If instead of the isomorphism $\eta_\ulD$ from \eqref{EqTheta} we choose $\eta_\ulD=\id_\Fp$ as in Remark~\ref{RemOtherTheta1}(2), the above results remain valid, except that $\sH_{\phi,N,\mu}^\nabla$ coincides with the image of a different section. This section is obtained by composing the zero section with the inverse $\eta_\ulD^{-1}:d\otimes f\mapsto\TS\sum_iN^i(d)\otimes  \tfrac{1}{i!}\log\bigl(1-\tfrac{E(u)}{E(0)}\bigr)^i\cdot f$ of the automorphism $\eta_\ulD$. It sends a filtration $\CF^\bullet$ to $\eta_\ulD^{-1}\bigl(\sum_{i\in \Z} E(u)^{-i}(\CF^iD_K)\otimes_{R\otimes K}\BdRplus{R}\bigr)$. Note that both sections coincide on the closed substack $\sD_{\phi,\mu}^\ad$ where $N=0$.
\end{remark}

\begin{proof}[Proof of Theorem~\ref{ThmHnabla}]
To prove that the substacks are closed let $\ulD\in\sH_{\phi,N,\preceq\mu}^\ad(X)$ for an adic space $X\in\Ad_{E_\mu}^\lft$ and let $(M,\Phi_\CM,N_\nabla^\CM)=\ul\CM(\ulD)$ be the associated $(\phi,N_\nabla)$-module over $X$. Locally on $X$ there is an integer $h$ with $N_\nabla^\CM(\CM)\subset\lambda^{-h}\CM$ by Lemma~\ref{LemmaHPLattice} and the construction of $N_\nabla^\CM$. The quotients $\bigl(\lambda^{-h}\CM/\CM\bigr)\otimes\bigl(\sB_X^{[0,1)}/(\phi^n(E(u))^h)\bigr)$ are finite locally free as $\CO_X$-modules for all $n\ge0$. Now the condition $N_\nabla^\CM(\CM)\subset\CM$ is equivalent to the vanishing of the images under $N_\nabla^\CM$ of a set of generators of $\CM$ in $\bigl(\lambda^{-h}\CM/\CM\bigr)\otimes\bigl(\sB_X^{[0,1)}/(\phi^n(E(u))^h)\bigr)$ for each $n\ge0$. Due to \cite[I$_{\rm new}$, Lemma~9.7.9.1]{EGA} the latter is represented by a Zariski closed subspace of $X$.

We show that the closed substack $\sH_{\phi,N,\mu}^\nabla$ of $\sH_{\phi,N,\mu}^\ad$ coincides with the image of the zero section. Since $N_\nabla^\CalD$ on $\CalD:=D\otimes_{\CO_X\otimes K_0}\sB_{\CO_X}^{[0,1)}$ induces the differential operator $\id_D\otimes N_\nabla$ on $\Fp:=D\otimes_{\CO_X\otimes K_0}\BdRplus{\CO_X}$ under the map $i_0=\eta_\ulD\circ\text{inclusion}:\CalD\into\Fp$ from \eqref{EqTheta}, it follows directly that the image of the zero section is contained in $\sH_{\phi,N,\mu}^\nabla$. To prove the converse we may work on the coverings $X:=(P_{K_0,d}\,\times_{\BQ_p}\,Q_{K,d,\mu})^\ad$ of $\sH_{\phi,N,\mu}^\ad$ and $(P_{K_0,d}\,\times_{\BQ_p}\,\Flag_{K,d,\mu})^\ad$ of $\sD_{\phi,N,\mu}^\ad$ because the zero section and $\sH_{\phi,N,\mu}^\nabla$ are both invariant under the action of $(\Res_{K_0/\BQ_p}\GL_{d,K_0})_{E_\mu}$. We first claim that both have the same underlying topological space. By \cite[Corollary 6.1.2/3]{BoschGR} this can be checked on $L$-valued points of $X$ for finite extensions $L$ of $E_\mu$. For those it was proved by Kisin~\cite[Lemma~1.2.12(4)]{crysrep} that the universal Hodge-Pink lattice $\Fq$ at $L$ lies in the image of the zero section if the pullback $\ul\CM$ to $L$ of the universal $(\phi,N_\nabla)$-module on $X$ has holomorphic $N_\nabla^\CM$. From this our claim follows.

To prove equality as closed subspaces of $X$ we look at a closed point $x\in X$ and its complete local ring $\wh\CO_{X,x}$. Let $\Fm_x\subset\wh\CO_{X,x}$ be the maximal ideal, let $I\subset\wh\CO_{X,x}$ be the ideal defining $\sH_{\phi,N,\mu}^\nabla$, and set $R_n:=\wh\CO_{X,x}/(\Fm_x^n+I)$. Then $R_n$ is a finite dimensional $\BQ_p$-vector space by \cite[Corollary 6.1.2/3]{BoschGR}. We consider the universal $\ulD_{R_n}=(D,\Phi,N,\Fq)$ over $R_n$ by restriction of scalars from $R_n$ to $\BQ_p$ as a $(\phi,N)$-module $\wt\ulD$ with Hodge-Pink lattice over $\BQ_p$ of rank $(\dim_{\BQ_p}R_n)(\rk_{R_n\otimes K_0}D)=\dim_{K_0}D$. It is equipped with a ring homomorphism $R_n\to\End(\wt\ulD)$. Since $N_\nabla^\CM$ is holomorphic on $\ul\CM(\ulD_{R_n})$, Kisin~\cite[Lemma~1.2.12(4)]{crysrep} tells us again that $\Fq=\Fq(\CF^\bullet)$ for the filtration $\CF^\bullet=\CF^\bullet_\Fq$ from Remark~\ref{Rem2.4}. This shows that the ideal $J$ defining the zero section in $X$ vanishes in $R_n$. Since this holds for all $n$, the ideals $I$ and $J$ are equal in $\wh\CO_{X,x}$. As $x$ was arbitrary, they coincide on all of $X$ and this proves the theorem.
\end{proof}

\section{Weak admissibility}
Similar to the case of filtrations, one can define a notion of weak admissibility for $(\phi,N)$-modules with Hodge-Pink lattice and develop a Harder-Narasimhan formalism. Compare also \cite[\S\,2]{Hellmann} for the following. 
Recall that $f=[K_0:\Q_p]$ and $e=[K:K_0]$.

\begin{defn}
Let $L$ be a field with a valuation $v_L:L\rightarrow \Gamma_L\cup\{0\}$ in the sense of Huber, see \cite[\S\,2, Definition]{contval} and set $\Gamma_L^\Q:=\Gamma_L\otimes_\Z\Q$.\\
\noindent (i) Let $\ulD=(D,\Phi,N)$ be a $(\phi,N)$-module over $L$. Then define 
\[t_N(\ulD)\;:=\;v_L(\det\nolimits_{\SSC L} \Phi^f)^{1/f^2}\;\in\;\Gamma_L^\Q.\]
\quad\es If $L\supset K_0$ we are in the situation of Remark~\ref{RemDecompOfD} and have $t_N(\ulD)=v_L(\det\nolimits_{\SSC L} (\Phi^f)_0)^{1/f}$.\\[2mm]
\noindent (ii) Let $\ulD=(D,\Phi,N,\Fcal^\bullet)$ be a $K$-filtered $(\phi,N)$-module over $L$. Then
\[t_H(\ulD)\;:=\; \tfrac{1}{ef} \sum_{i\in \Z}i\dim_L(\Fcal^iD_K/\Fcal^{i+1}D_K)\;\in\;\Q.\]
\noindent (iii) Let $\ulD=(D,\Phi,N,\Fq)$ be a $(\phi,N)$-module with Hodge-Pink lattice of rank $d$ over $L$. Then we set
\[
t_H(\ulD)\;:=\;\tfrac{1}{ef}\bigl(\dim_L(\Fq/t^n\Fp)-\dim_L(\Fp/t^n\Fp)\bigr)\;=\;\tfrac{1}{ef}\dim_L(\Fq/t^n\Fp)\,-\,n\rk\ulD\;\in\;\Q
\]
for $n\gg0$, which is independent of $n$ whenever $t^n\Fp\subset\Fq$. If $L$ is an extension of $\norm{K}$ and $(\mu_\psi)_\psi=\mu_\ulD(\Spec L)$ is the Hodge polygon of $\ulD$, see Definition~\ref{Defnbndmu}, then $\TS t_H(\ulD):= \tfrac{1}{ef} \sum_\psi\,\mu_{\psi,1}+\ldots+\mu_{\psi,d}$. If the $\psi$-component $\Fq_\psi$ satisfies $\wedge^d\Fq_\psi=t^{-h_\psi}\wedge^d\Fp_\psi$ then $t_H(\ulD)=\tfrac{1}{ef}\sum_\psi h_\psi$. Moreover $t_H(\ulD)=t_H(D,\Phi,N,\CF^\bullet_\Fq)$.

\smallskip\noindent (iv) Let $\ulD$ be a $(\phi,N)$-module with Hodge-Pink lattice (resp.\ a $K$-filtered $(\phi,N)$-module) over $L$. Then its \emph{slope} is defined to be
\[\slope(\ulD)\;:=\; (v_L(p)^{t_H(\ulD)}\cdot t_N(\ulD)^{-1})^{1/d}\;\in\;\Gamma_L^\Q.\]
\end{defn}
\begin{defn}
\noindent (i)
A $(\phi,N)$-module with Hodge-Pink lattice $\ulD=(D,\Phi,N,\Fq)$ over a field $L$ endowed with a valuation is called \emph{semi-stable} if $\slope(\ulD')\geq \slope(\ulD)$ for all $\ulD'=\bigl(D',\Phi|_{\phi^*D'},N|_{D'},\Fq\cap D'\otimes_{L\otimes_{\Q_p}K_0}\BdR{L}\bigr)$ where $D'\subset D$ is a free $L\otimes_{\BQ_p}K_0$-submodule stable under $\Phi$ and $N$. 

\smallskip\noindent (ii) A $K$-filtered $(\phi,N)$-module $\ulD=(D,\Phi,N,\Fcal^\bullet)$ over $L$ is called \emph{semi-stable} if   $\slope(\ulD')\geq \slope(\ulD)$ for all $\ulD'=\bigl(D',\Phi|_{\phi^*D'},N|_{D'},\Fcal^\bullet\cap D'_K\bigr)$ where $D'\subset D$ is a free $L\otimes_{\BQ_p}K_0$-submodule stable under $\Phi$ and $N$. 

\smallskip\noindent (iii) A $(\phi,N)$-module with Hodge-Pink lattice (resp.\ a $K$-filtered $(\phi,N)$-module) is called \emph{weakly admissible} if it is semi-stable of slope $1$.
\end{defn} 

\begin{lem}\label{waundersection}
Let $(D,\Phi,N,\Fcal^\bullet)$ be a $K$-filtered $(\phi,N)$-module over a valued field $L$ and let $(D,\Phi,N,\Fq)$ denote the $(\phi,N)$-module with Hodge-Pink lattice associated to $(D,\Phi,N,\Fcal^\bullet)$ by the zero section $\Fcal^\bullet\mapsto \Fq=\Fq(\Fcal^\bullet)$ of Remark~\ref{Rem2.4}(3). Then $(D,\Phi,N,\Fcal^\bullet)$ is weakly admissible if and only if $(D,\Phi,N,\Fq)$ is.   
\end{lem}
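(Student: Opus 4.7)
The plan is to reduce the equivalence to a pure comparison of Hodge numbers $t_H$. Indeed, the Newton number $t_N$ depends only on $(D,\Phi,N)$, so it agrees in the two settings both on $\ulD$ itself and on every $(\Phi,N)$-stable sub-$L\otimes_{\Q_p}K_0$-module $D'\subset D$. Moreover the subobjects tested by the two definitions of semi-stability are literally the same: free, $\Phi$- and $N$-stable $L\otimes_{\Q_p}K_0$-submodules $D'\subset D$, equipped with the induced filtration $\Fcal^\bullet\cap D'_K$ on the filtered side and the induced Hodge-Pink lattice $\Fq':=\Fq(\Fcal^\bullet)\cap D'\otimes_{L\otimes_{\Q_p}K_0}\BdR{L}$ on the Hodge-Pink side. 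What has to be shown is therefore the identity $t_H(D',\Fcal^\bullet\cap D'_K)=t_H(D',\Fq')$ for every such $D'$.

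For $D'=D$ this identity is built into the definitions: by Remark~\ref{Rem2.4}(1),(3) one has $\Fcal^\bullet_{\Fq(\Fcal^\bullet)}=\Fcal^\bullet$, and the last line of Definition~(iii) records that $t_H$ of a Hodge-Pink lattice coincides with $t_H$ of its associated filtration. For a proper sub $D'\subsetneq D$ I will exploit additivity of $t_H$ in the short exact sequence $0\to D'\to D\to D/D'\to 0$ on both sides, and then reduce to the already-settled ambient identity applied to $D/D'$. Since $L\otimes_{\Q_p}K_0$ is a product of fields, $D/D'$ is again free over $L\otimes_{\Q_p}K_0$, and we may pick an $L\otimes_{\Q_p}K_0$-linear splitting $D\cong D'\oplus\ol D$. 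Writing $\ol\Fcal^\bullet$ for the quotient filtration on $(D/D')_K$ and $\ol\CL$ for the image of $\Fq(\Fcal^\bullet)$ in $(D/D')\otimes_{L\otimes_{\Q_p}K_0}\BdR{L}$, the explicit formula $\Fq(\Fcal^\bullet)=\sum_i E(u)^{-i}(\Fcal^iD_K)\otimes_{L\otimes K}\BdRplus{L}$ from Remark~\ref{Rem2.4}(3) shows at once that $\ol\CL=\Fq(\ol\Fcal^\bullet)$, whence the ambient identity applied to $D/D'$ yields $t_H(D/D',\ol\CL)=t_H(D/D',\ol\Fcal^\bullet)$.

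The remaining work, and the main technical step, is additivity of $t_H$ on both sides. On the filtered side it is routine: taking the $\Fcal$-graded pieces of $0\to D'_K\to D_K\to (D/D')_K\to 0$ gives $\dim_L\gr^i_\Fcal D_K=\dim_L\gr^i_{\Fcal\cap D'_K}D'_K+\dim_L\gr^i_{\ol\Fcal}(D/D')_K$ in every degree, and hence $t_H(D,\Fcal^\bullet)=t_H(D',\Fcal^\bullet\cap D'_K)+t_H(D/D',\ol\Fcal^\bullet)$. On the Hodge-Pink side I will fix $n\gg 0$ so that $t^n\Fp$, $t^n\Fp'$, $t^n\ol\Fp$ are contained in $\Fq(\Fcal^\bullet)$, $\Fq'$, $\ol\CL$ respectively, where $\Fp$, $\Fp'$, $\ol\Fp$ denote the tautological $\BdRplus{L}$-lattices on $D$, $D'$, $D/D'$. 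Using the $\BdRplus{L}$-decomposition $\Fp=\Fp'\oplus\ol\Fp$ induced by the splitting of $D$, I will verify that
\[
0\longrightarrow \Fq'/t^n\Fp'\longrightarrow \Fq(\Fcal^\bullet)/t^n\Fp\longrightarrow \ol\CL/t^n\ol\Fp\longrightarrow 0
\]
is an exact sequence of finite-dimensional $L$-vector spaces (injectivity of the first map uses $\Fq'\cap t^n\Fp=t^n\Fp'$, which follows from the direct-sum decomposition), and then take $L$-dimensions to deduce $t_H(D,\Fq(\Fcal^\bullet))=t_H(D',\Fq')+t_H(D/D',\ol\CL)$. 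Combining these two additivity identities with the ambient identity on $D$ and with $t_H(D/D',\ol\CL)=t_H(D/D',\ol\Fcal^\bullet)$ forces the desired equality $t_H(D',\Fcal^\bullet\cap D'_K)=t_H(D',\Fq')$. Together with the matching of $t_N$ this identifies the slopes of every subobject, so the two notions of semi-stability, and hence of weak admissibility, coincide.
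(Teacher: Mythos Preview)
Your proof is correct, but it takes a different and more circuitous route than the paper. The paper proves directly the stronger lattice identity
\[
\Fq(\Fcal^\bullet\cap D'_K)\;=\;\Fq(\Fcal^\bullet)\cap\bigl(D'\otimes_{L\otimes_{\Q_p}K_0}\BdR{L}\bigr)
\]
by choosing an $L\otimes_{\BQ_p}K$-basis of $D_K$ adapted simultaneously to the submodules $\Fcal^iD'_K$ and $\Fcal^iD_K$; in such a basis $\Fq(\Fcal^\bullet)$ is visibly a direct sum of one-variable lattices $t^{-w_j}\BdRplus{L}$, and intersecting with $\Fp'[\tfrac{1}{t}]$ just picks out the basis vectors lying in $D'_K$. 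This immediately gives the desired equality of $t_H$ on every subobject.

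You avoid the adapted-basis computation by passing to the quotient: the compatibility $\ol\CL=\Fq(\ol\Fcal^\bullet)$ is indeed automatic (image of a sum is a sum of images), and you recover the subobject statement via the two additivity identities. The trade-off is that you do more bookkeeping (checking exactness of the $\BdRplus{L}$-sequence modulo $t^n$) and you only obtain the numerical equality of $t_H$, not the identification of lattices. The paper's argument is shorter and yields a sharper conclusion that is occasionally useful elsewhere (e.g.\ in Remark~\ref{Rem3.8}); your argument, on the other hand, would generalize more readily to settings where one cannot split all the filtration steps simultaneously.
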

\begin{proof}
It is obvious from the definitions that $t_H(D,\Phi,N,\Fcal^\bullet)=t_H(D,\Phi,N,\Fq(\Fcal^\bullet))$. Further we have to test on the same sub-objects $D'\subset D$. Hence the claim follows from the fact
\[\Fq(\Fcal^\bullet \cap D'_K)=\Fq(\Fcal^\bullet)\cap D'\otimes_{L\otimes_{\Q_p}K_0}\BdR{L},\]
which is obvious from the description of $\Fq(-)$ in Remark~\ref{Rem2.4}(3) by choosing an $L\otimes_{\BQ_p}K$-basis of $D_K$ adapted to the submodules $\CF^iD'_K$ and $\CF^iD_K$.
\end{proof}

\begin{prop}
Let $(D,\Phi,N,\q)$ be a $(\phi,N)$-module with Hodge-Pink lattice defined over some valued field $L$. Then there is a unique Harder-Narasimhan filtration \[0=D_0\subset D_1\subset \dots\subset D_r=D\] of $(D,\Phi,N,\q)$,  by free $L\otimes_{\Q_p}K_0$-submodules stable under $\Phi$ and $N$ such that the subquotients $D_i/D_{i-1}$ with their induced Hodge-Pink lattice are semi-stable of slope $\slope_i\in \Gamma_L\otimes \Q$ and $\slope_1<\slope_2<\dots<\slope_r$.
\end{prop}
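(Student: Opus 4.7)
The plan is to derive the result from the standard Harder-Narasimhan formalism adapted to $(\phi,N)$-modules with Hodge-Pink lattice, along the lines of Hellmann~\cite{Hellmann}~\S2 in the filtered case. Three ingredients must be verified: first, that subobjects inherit a genuine Hodge-Pink lattice; second, that $t_N$ and $t_H$ are additive in short exact sequences; third, a ``see-saw'' inequality for pairs of subobjects. For any free $L\otimes_{\Q_p}K_0$-submodule $D'\subset D$ stable under $\Phi$ and $N$, I would define $\Fp':=D'\otimes_{L\otimes K_0}\BdRplus{L}$ and $\q':=\q\cap (D'\otimes_{L\otimes K_0}\BdR{L})$, and use Lemma~\ref{LemmaHPLattice} together with the fact that $\Fp'$ is a direct summand of $\Fp$ to check that $\q'$ is again a $\BdRplus{L}$-lattice; the quotient $D''=D/D'$ inherits the image lattice $\q''$, and the sequence $0\to\q'\to\q\to\q''\to 0$ is exact.

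Additivity in the short exact sequence $0\to\ulD'\to\ulD\to\ulD''\to 0$ is then straightforward: $t_N$ is additive by multiplicativity of the determinant of $(\Phi^f)_0$ applied after the decomposition of Remark~\ref{RemDecompOfD}, while $t_H$ is additive via the snake lemma applied to the rows $0\to\q'\to\q\to\q''\to 0$ and $0\to t^{-n}\Fp'\to t^{-n}\Fp\to t^{-n}\Fp''\to 0$ for $n\gg 0$. For the see-saw, given $\Phi,N$-stable free submodules $D_1,D_2\subset D$, the sum and intersection are again $\Phi,N$-stable and free (as the categories of $L\otimes K_0$-modules and of $\BdRplus{L}$-modules satisfy the required local freeness); one then obtains
\[
t_N(D_1)+t_N(D_2)=t_N(D_1\cap D_2)+t_N(D_1+D_2)
\]
from the exact sequence $0\to D_1\cap D_2\to D_1\oplus D_2\to D_1+D_2\to 0$, together with the inequality
\[
t_H(D_1)+t_H(D_2)\;\le\;t_H(D_1\cap D_2)+t_H(D_1+D_2),
\]
the latter because $\q\cap((D_1+D_2)\otimes\BdR{L})$ only contains the image of $(\q\cap D_1\otimes\BdR{L})+(\q\cap D_2\otimes\BdR{L})$, with possibly strict containment. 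Translated to slopes, this gives the standard see-saw: if $\slope(D_1),\slope(D_2)\le c$ then $\slope(D_1+D_2)\le c$ as well.

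Granted these ingredients, the existence and uniqueness of the filtration follow by a now routine argument. The values of $t_H$ lie in $\tfrac{1}{ef}\BZ$ and are bounded in absolute value by the size of $\q$ relative to $\Fp$; the values of $t_N$ for subobjects of a fixed rank are similarly controlled in $\Gamma_L^{\Q}$ by $v_L(\det\Phi^f)$ on $D$ and by Cramer's rule applied to the matrices of $\Phi^f$. Hence for each rank $r\le d$, the set of slopes of $\Phi,N$-stable free subobjects of rank $r$ has a minimum. Taking the subobject $D_1$ of minimal slope $\slope_1$ and of maximal rank among those of minimal slope, the see-saw inequality forces $D_1$ to be unique (any two such submodules have a sum of the same slope, hence coincide with it by maximality of rank) and semi-stable (any proper subobject would violate the minimality of $\slope_1$). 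Iterating the construction on the quotient $\ulD/\ulD_1$, which inherits its own Hodge-Pink lattice by the first step, yields the filtration with strictly increasing slopes. Uniqueness follows by the standard argument: for any filtration satisfying the statement, the first step must have slope $\slope_1$ by semi-stability, hence coincide with $D_1$.

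The main technical obstacle is the see-saw inequality for $t_H$ in Step~3: unlike for filtrations, where the associated graded is an exact functor, Hodge-Pink lattices interact non-trivially with sums and intersections inside $D\otimes\BdR{L}$, so one must check directly at the level of $\BdRplus{L}$-lattices that the ``defect'' between $(\q\cap D_1\otimes\BdR{L})+(\q\cap D_2\otimes\BdR{L})$ and $\q\cap((D_1+D_2)\otimes\BdR{L})$ does not reverse the inequality. This reduces to a short computation with elementary divisors over the product of discrete valuation rings $\BdRplus{L}$, using that $L\otimes_{\Q_p}K$ is a product of fields.
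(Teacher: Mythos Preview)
Your proposal is correct and follows exactly the approach the paper takes: the paper's proof is a one-line appeal to the general Harder--Narasimhan formalism (citing \cite[\S3]{FargFont} and \cite[Proposition~2.19]{Hellmann}), and you have accurately unpacked the verifications those references require in the Hodge--Pink setting, including the key see-saw inequality for $t_H$ coming from $\Fq_1+\Fq_2\subset\Fq\cap\bigl((D_1+D_2)\otimes\BdR{L}\bigr)$.
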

\begin{proof}
This is the usual Harder-Narasimhan formalism, see \cite[3]{FargFont} for a fairly general exposition. See also \cite[Proposition~2.19]{Hellmann}.
\end{proof}
\begin{cor}\label{waandBC}
Let $(D,\Phi,N,\q)$ be a $(\phi,N)$-module with Hodge-Pink lattice over $L$ and let $L'$ be an extension of $L$ with valuation $v_{L'}$ extending the valuation $v_L$. Then $(D,\Phi,N,\q)$ is weakly admissible if and only if $(D',\Phi',N',\q')=(D\otimes_LL',\Phi\otimes \id,N\otimes\id,\q\otimes_LL')$ is weakly admissible. 
\end{cor}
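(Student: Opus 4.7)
The plan is to reduce the equivalence to the standard fact that the Harder--Narasimhan filtration is compatible with base change of the valued field, exploiting existence and uniqueness from the preceding proposition. The key preliminary observation is that the numerical invariants $t_H$, $t_N$, $\rk$, and hence $\slope$, are all preserved under the passage $L\to L'$. Indeed, $\det\Phi^f\in L$ and $v_{L'}|_L=v_L$ give $t_N(\ulD')=t_N(\ulD)$; flatness of $\BdRplus{L}\to\BdRplus{L'}$ preserves the $L$-dimensions entering the definition of $t_H$, giving $t_H(\ulD')=t_H(\ulD)$. In particular $\slope(\ulD)=\slope(\ulD')$, and the same identity holds verbatim for any sub-object and its base change.

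With this in hand, the direction $(\Leftarrow)$ is immediate. Any sub-object $\ulD_1\subset\ulD$ over $L$ base changes to $\ulD_1':=\ulD_1\otimes_LL'\subset\ulD'$, and flatness of $L\to L'$ ensures that the induced Hodge-Pink lattice $(\q\cap D_1\otimes_{L\otimes_{\Q_p}K_0}\BdR{L})\otimes_LL'$ coincides with $\q'\cap(D_1\otimes_LL')\otimes_{L'\otimes_{\Q_p}K_0}\BdR{L'}$, so $\ulD_1'$ is a genuine sub-object in the sense required by the definition of semi-stability. Weak admissibility of $\ulD'$ then yields $\slope(\ulD_1)=\slope(\ulD_1')\ge 1$ and $\slope(\ulD)=\slope(\ulD')=1$, whence $\ulD$ is weakly admissible.

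For $(\Rightarrow)$, the idea is to match the HN filtrations of $\ulD$ and $\ulD'$. Since $\ulD$ is weakly admissible, i.e.\ semi-stable of slope $1$, its HN filtration is the trivial one $0\subset D$ with single slope $1$. Base changing produces a filtration of $\ulD'$ with the same single slope $1$, and by uniqueness of the HN filtration this is the HN filtration of $\ulD'$ provided $\ulD'$ itself is semi-stable. Hence the corollary reduces to preservation of semi-stability under the extension $L\to L'$. This is the main obstacle and the only genuinely nontrivial step. I would handle it along the lines of the standard Harder--Narasimhan base-change argument (compare \cite[3]{FargFont}): using the already-established direction $(\Leftarrow)$, one may enlarge $L'$ and assume that it contains an algebraic closure $\ol L$ of $L$ equipped with the compatibly extended valuation; by uniqueness, the first (maximal destabilizing) step of the HN filtration of $\ulD\otimes_L\ol L$ is $\Gal(\ol L/L)$-invariant, and since $L$ has characteristic $0$ it descends via Galois descent to a sub-object of $\ulD$ over $L$. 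A destabilizing sub-object over $\ol L$ would thus yield a destabilizing sub-object over $L$, contradicting semi-stability of $\ulD$; hence $\ulD\otimes_L\ol L$, and a fortiori $\ulD'$, is semi-stable. Together with the preservation of $\slope$ this gives weak admissibility of $\ulD'$.
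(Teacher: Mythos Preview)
Your direction $(\Leftarrow)$ and the preliminary slope-preservation remarks are fine and match the paper. The problem is in $(\Rightarrow)$, specifically in the last two sentences.

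After enlarging $L'$ so that $\ol L\subset L'$, you run Galois descent for the extension $\ol L/L$ and conclude that $\ulD\otimes_L\ol L$ is semi-stable. So far so good. But then you write ``and a fortiori $\ulD'$, is semi-stable.'' This is not a fortiori at all: the extension $\ol L\subset L'$ is (in the cases of interest, e.g.\ points of adic spaces) genuinely transcendental, and passing semi-stability \emph{up} from $\ol L$ to $L'$ is exactly the direction $(\Rightarrow)$ you are trying to prove, now for the pair $(\ol L,L')$ instead of $(L,L')$. Your Galois descent handles only the algebraic part of the extension, and the transcendental step is left untouched. A secondary issue you also skip: elements of $\Gal(\ol L/L)$ need not preserve the chosen extension of $v_L$ to $\ol L$, so it is not automatic that they preserve $t_N$ and hence slopes of sub-objects; this has to be argued.

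The paper avoids both problems by working directly with $\Aut(L'/L)$ rather than $\Gal(\ol L/L)$. One first reduces to $L'$ finitely generated over $L$ (a destabilizing sub-object is defined over a finitely generated subextension), then enlarges $L'$ so that $\Aut(L'/L)$ is large enough for descent to $L$. One then checks, following \cite[Corollary~2.22]{Hellmann}, that $\Aut(L'/L)$ preserves slopes of $(\Phi',N')$-stable sub-objects; hence the HN filtration of $\ulD'$ is $\Aut(L'/L)$-invariant and descends to a filtration of $\ulD$ over $L$. Semi-stability of $\ulD$ forces this filtration to be trivial, so $\ulD'$ is semi-stable. The point is that this argument treats the transcendental part of $L'/L$ on the same footing as the algebraic part, which your approach does not.
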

\begin{proof}
This is similar to \cite[Corollary~2.22]{Hellmann}. 

If $(D',\Phi',N',\q')$ is weakly admissible, then every $(\Phi,N)$-stable subobject $D_1\subset D$ defines a $(\Phi',N')$-stable subobject $D'_1=D_1\otimes_LL'$ of $D'$ such that 
\[\slope(D_1,\Phi|_{\phi^*D_1},N|_{D_1},\q\cap D_1\otimes_{L\otimes K_0}\BdR{L} )=\slope(D'_1,\Phi'|_{\phi^*D'_1},N'|_{D'_1},\q'\cap D'_1\otimes_{L'\otimes K_0}\BdR{L'} ).\]
It follows that $(D,\Phi,N,\q)$ is weakly admissible, as $(D',\Phi',N',\q')$ is.

Now assume that $(D,\Phi,N,\q)$ is weakly admissible. We may reduce to the case where $L'$ is finitely generated over $L$ and ${\rm Aut}(L'/L)$ is large enough. As in the proof of \cite[Corollary~2.22]{Hellmann} one shows that the action of ${\rm Aut}(L'/L)$ preserves the slope of $\Phi'$-stable sub-objects of $D'$. Hence the Harder-Narasimhan filtration of $D'$ descends to $D$. As $D$ is weakly admissible, this filtration can only have one step. 
\end{proof}

\begin{theo}\label{ThmWAOpen}
Let $\mu$ be a cocharacter as in $(\ref{mu})$ with reflex field $E_\mu$. Then the groupoid 
\[X\mapsto \{(D,\Phi,N,\q)\in \sH_{\phi,N, \preceq \mu}(X)\mid D\otimes \kappa(x)\ \text{is weakly admissible for all}\ x\in X\}\]
is an open substack $\sH^{\rm ad, wa}_{\phi,N, \preceq \mu}$ of $\sH^\ad_{\phi,N, \preceq \mu}$ on the category of adic spaces locally of finite type over $E_\mu$. 
\end{theo}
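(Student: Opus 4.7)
The plan is to follow the strategy of \cite[Theorem~4.1]{families} for filtered $\phi$-modules, combining the Harder-Narasimhan formalism (Corollary~\ref{waandBC}) with a properness argument on a Grassmannian of sub-objects. The starting point is that on $\sH_{\phi,N,\preceq\mu}^\ad$ the Hodge degree $t_H$ is globally constant, equal to $t_\mu := \tfrac{1}{ef}\sum_{\psi,j}\mu_{\psi,j}$: this is forced by the equality clause $\bigwedge^d\Fq = E(u)^{-\sum_j\mu_j}\bigwedge^d\Fp$ in Definition~\ref{Defnbndmu}(c). The Newton degree $t_N(x) = v_x(\det\Phi^f)^{1/f^2}$, being the valuation of a unit in $\CO_X\otimes_{\BQ_p}K_0$, varies continuously with $x\in X$. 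The condition that the total slope of $\ulD$ equals $1$ amounts to comparing $t_N$ to the fixed value $t_\mu$; using that by semi-stability $t_H\leq t_N$ holds generically, the equality with the maximum value $t_\mu$ cuts out an open subset of $X$.

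The main geometric input is a parametrisation of potential destabilising sub-objects. For each rank $1\leq d'<d$ and each dominant cocharacter $\mu'$ of rank $d'$ bounded by the restriction of $\mu$ (only finitely many such occur, by the description of $Q_{K,d',\preceq\mu'}$), I would introduce the adic moduli space
\[
Z_{d',\mu'} \;\subset\; \sH_{\phi,N,\preceq\mu}^\ad \times_{E_\mu} \sH_{\phi,N,\preceq\mu'}^\ad
\]
classifying embeddings $\ulD'\hookrightarrow\ulD$ of ranks $d'$ into $d$, compatible with $\Phi$ and $N$, with $\ulD'$ endowed with the Hodge-Pink lattice inherited from $\ulD$. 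The sub-modules of rank $d'$ in the universal $D$ are classified by a relative Grassmannian, which is projective; the further constraints ($\Phi$- and $N$-stability, inheritance of the Hodge-Pink lattice, boundedness by $\mu'$) are all closed conditions. Hence the forgetful morphism $\pi_{d',\mu'}\colon Z_{d',\mu'}\to \sH_{\phi,N,\preceq\mu}^\ad$ is proper.

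The key semi-continuity statement is that the destabilising locus $F_{d',\mu'} = \{z\in Z_{d',\mu'}\colon \slope(\ulD'_z)<\slope(\ulD_z)\}$ is closed in $Z_{d',\mu'}$. Because $t_H$ of a sub-object is not locally constant in the Hodge-Pink lattice setting (in contrast to the filtered case of \cite{families}), the argument there cannot be transplanted directly. Instead one combines the continuity of $t_N$ of the universal sub-object with the upper semi-continuity of the Hodge polygon of sub-objects established in Proposition~\ref{PropHWts}\ref{PropHWts_A}, to conclude that the slope function $z\mapsto \slope(\ulD'_z)$ is lower semi-continuous and $F_{d',\mu'}$ is closed. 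Once this is in place, properness of $\pi_{d',\mu'}$ makes $\pi_{d',\mu'}(F_{d',\mu'})$ closed in $\sH_{\phi,N,\preceq\mu}^\ad$, and $\sH_{\phi,N,\preceq\mu}^{\rm ad,\,wa}$ is the complement of the finite union $\bigcup_{d',\mu'}\pi_{d',\mu'}(F_{d',\mu'})$ inside the open substack of total slope $1$. The hardest step is the semi-continuity of the slope of sub-objects in the Hodge-Pink lattice setting; one has to adapt the arguments of \cite[\S\,2]{Hellmann} and exploit the global boundedness by $\mu$ that is built into the stack $\sH_{\phi,N,\preceq\mu}^\ad$.
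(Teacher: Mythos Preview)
Your overall architecture matches the paper's: work on the cover $P_{K_0,d}\times_{\BQ_p}Q_{K,d,\preceq\mu}$, parametrise $(\Phi,N)$-stable sub-objects by a scheme proper over the base, and exhibit the non-weakly-admissible locus as a finite union of proper images of closed sets. You also correctly isolate the crux: in the Hodge-Pink setting $t_H$ of a sub-object is not locally constant, so a semi-continuity argument is needed beyond what \cite{families} does for filtrations.

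The gap is in how you propose to obtain that semi-continuity. Proposition~\ref{PropHWts}\ref{PropHWts_A} does not apply to the inherited lattice $\Fq'=\Fq\cap(D'\otimes\BdR{})$ on the universal sub-object: over a non-field base this intersection is in general \emph{not} a Hodge-Pink lattice in the sense of Definition~\ref{DefHPLattice}, because the quotient $E(u)^{-h}\Fp'/\Fq'$ need not be locally free (the criterion of Lemma~\ref{LemmaHPLattice} fails). So you cannot speak of ``boundedness by $\mu'$'' for $\Fq'$ in families, nor of the map $Z_{d',\mu'}\to\sH_{\phi,N,\preceq\mu'}^\ad$. Moreover, even pointwise, the Bruhat comparison in Proposition~\ref{PropHWts} has the equality $\wedge^{d'}\Fq'=E(u)^{-\sum_j\mu'_j}\wedge^{d'}\Fp'$ built in, which fixes $t_H$; closedness of Bruhat strata therefore says nothing about how $t_H(\ulD')$ varies.

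The paper bypasses this entirely. On the space $Z_i\times Q_{K,d,\preceq\mu}$ of rank-$i$ sub-objects it writes $t_H(\ulD'_y)=\tfrac{1}{ef}\dim_{\kappa(y)}T_1(\kappa(y))-ni$, where $T_1$ is the kernel functor of the explicit two-term complex of finite locally free sheaves
\[
P_1:=E(u)^{-h}\Fp/E(u)^n\Fp\;\xrightarrow{\ \delta\ }\;\bigl(E(u)^{-h}\Fp/\Fq\bigr)\oplus\bigl((D/U)\otimes E(u)^{-h}\BdRplus{}/E(u)^n\BdRplus{}\bigr)=:P_0\,.
\]
Upper semi-continuity of $y\mapsto\dim_{\kappa(y)}T_1(\kappa(y))$ is then standard \cite[III$_2$, 7.6.9]{EGA}, giving closed sets $Y_{i,m}=\{h_i\ge m\}$. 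The destabilising condition on $Y_{i,m}$ becomes $v_y(f_i)>v_y(p)^{f^2m}$, which is closed in the adic topology (complement of a rational domain, $p$ and $f_i$ being units); its image under the proper projection is closed, and the finite union over $(i,m)$ yields the complement of the weakly admissible locus. Replacing your appeal to Proposition~\ref{PropHWts} by this direct computation of $t_H(\ulD')$ as a fibre dimension fixes the argument.
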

\begin{proof}
This is similar to the proof of \cite[Theorem 4.1]{families}.

It follows from Corollary \ref{waandBC} that $\sH^{\rm ad, wa}_{\phi,N,\preceq \mu}$ is indeed a stack, i.e.\ weak admissibility may be checked over an fpqc-covering. Hence it suffices to show that the weakly admissible locus is open in
\[X_\mu:= P_{K_0,d} \times_{\BQ_p} Q_{K,d,\preceq \mu}.\]
Let us denote by $Z_i$ the projective $P_{K_0,d}$-scheme whose $S$-valued points are given by pairs $(x,U)$ with $x=(g,N)\in P_{K_0,d}(S)\subset (\Res_{K_0/\BQ_p}\GL_{d,K_0})\times (\Res_{K_0/\Q_p}{\rm Mat}_{d\times d})$ and an $\Ocal_S\otimes K_0$-subspace $U\subset \Ocal_S\otimes K_0^{\oplus d}$ which is locally on $S$ free of rank $i$, a direct summand as $\CO_S$-module, and stable under the action of $\Phi_g=g\cdot\phi$ and $N$. This is a closed subscheme of the product $P_{K_0,d}\times_{\BQ_p}\Quot_{\CO^d\,|\,K_0\,|\,\BQ_p}$ (where $\Quot_{\CO^d\,|\,K_0\,|\,\BQ_p}$ is Grothendieck's Quot-scheme which is projective over $\BQ_p$; see \cite[n$\open$221, Theorem 3.1]{FGA} or \cite[Theorem 2.6]{AltmanKleiman}), cut out by the invariance conditions under $\Phi_g$ and $N$. 
Further write $f_i\in \Gamma(Z_i,\Ocal_{Z_i})$ for the global section defined by
\[f_i(g,U)=\det(g\cdot\phi)^f|_U=\det\bigl(g\cdot\phi(g)\cdot\ldots\cdot\phi^{f-1}(g)\bigr)^f|_U,\]
where $f=[K_0:\Q_p]$, and where the determinant is the determinant as $\CO_{Z_i}$-modules. Write $U$ for the pullback of the universal $(\Phi,N)$-invariant subspace on $Z_i$ to the product $Z_i\times Q_{K,d,\preceq\mu}$, write $\Fq$ for the pullback of the universal $\BdRplus{}$-lattice on $Q_{K,d,\preceq\mu}$ to $Z_i\times Q_{K,d,\preceq\mu}$, and write $\Fp=(\BdRplus{})^{\oplus d}$ for the pullback of the tautological $\BdRplus{}$-lattice $D\otimes\BdRplus{}$ on $P_{K_0,d}$ to $Z_i\times Q_{K,d,\preceq\mu}$. Fix integers $n,h$ with $t^n\Fp\subset\Fq\subset t^{-h}\Fp$ and consider the complex of finite locally free sheaves on $Z_i\times Q_{K,d,\preceq\mu}$
\[
P_\bullet:\quad P_1\,:=\, t^{-h}\Fp/t^n\Fp \;\xrightarrow{\es\delta\;}\;t^{-h}\Fp/\Fq \oplus \bigl(D/U\otimes t^{-h}\BdRplus{}/t^n\BdRplus{}\bigr)\,=:\,P_0
\]
given by the canonical projection $D\onto D/U$ in the second summand. Let $T_1$ be the functor from the category of quasi-coherent sheaves on $Z_i\times Q_{K,d,\preceq\mu}$ to itself defined by
\[
T_1: M\longmapsto\ker\bigl(\delta\otimes\id_M:P_1\otimes M \to P_0\otimes M\bigr)\,.
\]
If $M=\kappa(y)$ for a point $y=(g_y,N_y,U_y,\Fq_y)\in Z_i\times Q_{K,d,\preceq\mu}$ then $T_1(\kappa(y))=(\Fq_y\cap\Fp_{i,y}[\tfrac{1}{t}])/t^n\Fp_{i,y}$ where we write $\Fp_{i,y}:=U_y\otimes_{\kappa(y)\otimes K_0}\BdRplus{\kappa(y)}$. We consider the function 
\begin{eqnarray}\label{Hodgeslope}
h_i:Z_i\times Q_{K,d,\preceq\mu} & \longto & \Q, \nonumber\\
y & \longmapsto & \tfrac{1}{ef}\dim_{\kappa(y)}T_1(\kappa(y))\,-\,ni\;=\;t_H\bigl(U_y,g_y(\id\otimes\phi)|_{U_y},N|_{U_y},\Fq_y\cap\Fp_{i,y}[\tfrac{1}{t}]\bigr)\,.
\end{eqnarray}
We write $Z_i^{\rm ad}$ resp.\ $Q_{K,d,\preceq\mu}^{\rm ad}$ for the adic spaces associated to the varieties $Z_i$ and $Q_{K,d,\preceq\mu}$.  Similarly we write $h_i^{\rm ad}$ for the function on the adic spaces $Z_i^{\ad}\times Q^{\ad}_{K,d,\preceq\mu}$ defined by the same formula as in $(\ref{Hodgeslope})$. By semi-continuity \cite[III$_2$, Th\'eor\`eme~7.6.9]{EGA}, the sets 
\[Y_{i,m}=\{y\in Z_i^\ad\times Q^\ad_{K,d,\preceq\mu}\mid h_i(y)\geq m\}\]
are closed and hence proper over $X_\mu^\ad=P^\ad_{K_0,d}\times_{\BQ_p} Q^\ad_{K,d,\leq\mu}$. We write 
\[\pr_{i,m}:Y_{i,m}\longrightarrow X^\ad_\mu\]
for the canonical, proper projection. 

If we write $X_0\subset X^\ad_\mu$ for the open subset of all $(D,\Phi,N,\q)$ such that $\slope(D,\Phi,N,\q)=1$, then 
\begin{equation}\label{walocus}
X_0\setminus X_\mu^{\rm wa}=X_0\cap \bigcup_{i,m} \pr_{i,m}\big(\big\{y\in Y_{i,m}\mid v_y(f_i)> v_y(p)^{f^2m} \big\}\big),
\end{equation}
where the union runs over $1\leq i\leq d-1$ and $m\in \Z$. 
Indeed: Let $x=(D,\Phi,N,\q)$ be an $L$-valued point of $X_0$, then any proper $(\Phi,N)$-stable subspace of $D'\subset D$ defines (for some $1\leq i\leq d-1$) a point $y=(D',\q)$ of $Z_i\times Q_{K,d,\leq\mu}$ mapping to $x$. This subspace violates the weak admissibility condition if and only if   
\[v_y(f_i)=t_N(D',\Phi|_{\phi^*D'})^{f^2}>v_y(p)^{f^2\,t_H(D',\Phi,\q\cap (D'\otimes\BdR{\kappa(x)}))}=v_y(p)^{f^2h_i(y)},\]
and hence $(\ref{walocus})$ follows. On the other hand the union
\[\bigcup_{i,m} \pr_{i,m}\big(\big\{y\in Y_{i,m}\mid v_y(f_i)> v_y(p)^{f^2m} \big\}\big)\]
is a finite union, because $Y_{i,m}=\emptyset$ for $m>hi$ and $Y_{i,m}= Z_i^\ad\times Q^\ad_{K,d,\preceq\mu}$ for $m\le-ni$. Therefore the union is closed by properness of the map $\pr_{i,m}$ and the definition of the topology on an adic space. The theorem follows from this. 
\end{proof}

We define sub-groupoids  $\sH_{\phi,\preceq\mu}^{\rm ad, wa}\subset \sH_{\phi,\preceq\mu}^{\rm ad}$, resp.\ $\sH_{\phi,N,\mu}^{\rm ad, wa}\subset \sH_{\phi,N,\mu}^{\rm ad}$, resp.\ $\sH_{\phi,\mu}^{\rm ad, wa}\subset \sH_{\phi,\mu}^{\rm ad}$, resp.\ $\sD_{\phi,N,\mu}^{\rm ad,wa}\subset \sD_{\phi,N,\mu}^{\rm ad}$, resp.\ $\sD_{\phi,\mu}^{\rm ad,wa}\subset \sD_{\phi,\mu}^{\rm ad}$ as follows: Given an adic space $X$ and $(D,\Phi,N,\Fq)\in \sH_{\phi,\preceq\mu}^{\rm ad}$, we say that $(D,\Phi,N,\Fq)\in \sH_{\phi,\preceq\mu}^{\rm ad, wa}$ if and only if $(D,\Phi,N,\Fq)\otimes \kappa(x)$ is weakly admissible for all points $x\in X$. The other sub-groupoids are defined in the same manner.
\begin{cor}\label{Cor3.7}
The sub-groupoids $\sH_{\phi,\preceq\mu}^{\rm ad, wa}\subset \sH_{\phi,\preceq\mu}^{\rm ad}$, resp.\ $\sH_{\phi,N,\mu}^{\rm ad, wa}\subset \sH_{\phi,N,\mu}^{\rm ad}$, resp.\ $\sH_{\phi,\mu}^{\rm ad, wa}\subset \sH_{\phi,\mu}^{\rm ad}$, resp.\ $\sD_{\phi,N,\mu}^{\rm ad,wa}\subset \sD_{\phi,N,\mu}^{\rm ad}$, resp.\ $\sD_{\phi,\mu}^{\rm ad,wa}\subset \sD_{\phi,\mu}^{\rm ad}$ are open substacks.
\end{cor}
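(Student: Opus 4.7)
The plan is to deduce all five openness statements from Theorem~\ref{ThmWAOpen} together with the structural description of the various moduli stacks in Section~3 and with Lemma~\ref{waundersection}, which compares weak admissibility for filtered $(\phi,N)$-modules and for $(\phi,N)$-modules with Hodge-Pink lattice. First I will note that each of the five sub-groupoids is in fact a stack (not merely a pre-stack) by exactly the argument used at the start of the proof of Theorem~\ref{ThmWAOpen}: weak admissibility is defined fibre-wise and by Corollary~\ref{waandBC} is preserved and reflected under arbitrary extensions of the residue field, hence satisfies fpqc descent. So the content to be established is openness of each inclusion.

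For the three Hodge-Pink lattice cases I will observe that fibre-wise weak admissibility of $(D,\Phi,N,\Fq)$ only depends on the datum itself and is unaffected by the conditions cutting out $\sH^{\rm ad}_{\phi,\preceq\mu}$, $\sH^{\rm ad}_{\phi,N,\mu}$ and $\sH^{\rm ad}_{\phi,\mu}$ inside $\sH^{\rm ad}_{\phi,N,\preceq\mu}$. Consequently each of the sub-groupoids is the fibre product of its ambient stack with $\sH^{\rm ad,wa}_{\phi,N,\preceq\mu}$ over $\sH^{\rm ad}_{\phi,N,\preceq\mu}$, and openness follows immediately from Theorem~\ref{ThmWAOpen} since the class of open immersions of stacks is stable under base change.

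For the two remaining cases I will use the section $s\colon\sD^{\rm ad}_{\phi,N,\mu}\hookrightarrow\sH^{\rm ad}_{\phi,N,\mu}$ of the vector bundle projection furnished by Theorem~\ref{ThmModuliStacks}\ref{ThmModuliStacks_B}, which on points sends $(D,\Phi,N,\CF^\bullet)$ to $(D,\Phi,N,\Fq(\CF^\bullet))$ in the notation of Remark~\ref{Rem2.4}(3). Lemma~\ref{waundersection} provides the key identity
\[
\sD^{\rm ad,wa}_{\phi,N,\mu}\;=\;s^{-1}\bigl(\sH^{\rm ad,wa}_{\phi,N,\mu}\bigr),
\]
and since $s$ is a morphism of adic stacks, openness will follow from the openness of $\sH^{\rm ad,wa}_{\phi,N,\mu}$ established in the previous paragraph. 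The case $\sD^{\rm ad,wa}_{\phi,\mu}\subset\sD^{\rm ad}_{\phi,\mu}$ will be handled analogously, either by restricting $s$ to the closed substack $N=0$ or by intersecting $\sD^{\rm ad}_{\phi,\mu}$ with the already-established open substack $\sD^{\rm ad,wa}_{\phi,N,\mu}$ of $\sD^{\rm ad}_{\phi,N,\mu}$.

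I do not anticipate a real obstacle: every ingredient has already been assembled, and the argument is essentially one of bookkeeping. The one subtle point worth stating clearly is that the transfer along $s$ requires both directions of Lemma~\ref{waundersection}; with only one implication one would obtain $\sD^{\rm ad,wa}_{\phi,N,\mu}\subset s^{-1}(\sH^{\rm ad,wa}_{\phi,N,\mu})$ but not the equality needed to conclude openness.
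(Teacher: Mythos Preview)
Your proposal is correct and follows essentially the same approach as the paper: pull back the open substack $\sH_{\phi,N,\preceq\mu}^{\rm ad,wa}\subset\sH_{\phi,N,\preceq\mu}^{\rm ad}$ along the natural morphisms from each of the five stacks, invoking Lemma~\ref{waundersection} for the two $\sD$-cases to identify the pullback along the zero section with the filtered weak-admissibility locus. Your observation that both directions of Lemma~\ref{waundersection} are needed is well taken and makes explicit what the paper's phrase ``preserve weak admissibility'' leaves implicit.
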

\begin{proof}
This follows by pulling back $\sH_{\phi,N,\preceq\mu}^{\rm ad,wa}\subset\sH_{\phi,N,\preceq\mu}^{\rm ad}$ along the morphisms $\sH_{\phi,\preceq\mu}^{\rm ad}\rightarrow \sH_{\phi,N,\preceq\mu}^{\rm ad}$, resp.\ $\sH_{\phi,N,\mu}^{\rm ad}\rightarrow \sH_{\phi,N,\preceq\mu}^{\rm ad}$, resp.\ $\sH_{\phi,\mu}^{\rm ad}\rightarrow \sH_{\phi,N,\preceq\mu}^{\rm ad}$, resp.\ $\sD_{\phi,N,\mu}^{\rm ad}\rightarrow \sH_{\phi,N,\preceq\mu}^{\rm ad}$, resp.\ $\sD_{\phi,\mu}^{\rm ad}\rightarrow \sH_{\phi,N,\preceq\mu}^{\rm ad}$. 
Here we use the fact that the zero sections $\sD_{\phi,N,\mu}^{\rm ad}\rightarrow \sH_{\phi,N,\preceq\mu}^{\rm ad}$ and $\sD_{\phi,\mu}^{\rm ad}\rightarrow \sH_{\phi,N,\preceq\mu}^{\rm ad}$ preserve weak admissibility by Lemma \ref{waundersection}. 
\end{proof}

\begin{rem}\label{Rem3.8}
Note that the projection
\[\pr:\sH_{\phi,N,\mu}^{\rm ad}\longrightarrow \sD_{\phi,N,\mu}^{\rm ad}\]
does not preserve weak admissibility. We always have $\pr^{-1}(\sD_{\phi,N,\mu}^{\rm ad, wa})\subset \sH_{\phi,N,\mu}^{\rm ad, wa}$ and hence especially any section of the vector bundle $\sH_{\phi,N,\mu}^{\rm ad}\rightarrow \sD_{\phi,N,\mu}^{\rm ad}$ maps the weakly admissible locus to the weakly admissible locus.

 Indeed, let $\ulD=(D,\Phi,N,\Fq)$ be a point of $\sH_{\phi,N,\mu}^\ad$ over a field $L$ whose image $(D,\Phi,N,\CF^\bullet_\Fq)$ in $\sD_{\phi,N,\mu}^\ad$ is weakly admissible. Let $D'\subset D$ be an $L\otimes_{\BQ_p}K_0$-submodule which is stable under $\Phi$ and $N$, and set $\Fp':=D'\otimes_{L\otimes K_0}\BdRplus{L}$. Then $\Fq':=\Fq\cap\Fp'[\tfrac{1}{t}]$ satisfies $t^i\Fq'\cap\Fp'\subset t^i\Fq\cap\Fp$ and $\CF^i_{\Fq'}D'_K\subset \CF^i_\Fq D_K\cap D'_K$. This implies 
\[
t_H(D',\Phi|_{\phi^*D'},N|_{D'},\Fq')\;=\;t_H(D',\Phi|_{\phi^*D'},N|_{D'},\CF^\bullet_{\Fq'})\;\le\; t_H(D',\Phi|_{\phi^*D'},N|_{D'},\CF^\bullet_\Fq\cap D'_K)
\]
with equality for $D'=D$, and
\[
v_L(p)^{t_H(D',\Phi|_{\phi^*D'},N|_{D'},\Fq')}\;\ge\;v_L(p)^{t_H(D',\Phi|_{\phi^*D'},N|_{D'},\CF^\bullet_\Fq\cap D'_K)}\;\ge\; t_N(D',\Phi|_{\phi^*D'})
\]
with equality for $D'=D$, because $(D,\Phi,N,\CF^\bullet_\Fq)$ is weakly admissible. Therefore also $\ulD$ is weakly admissible. This proves that  $\pr^{-1}(\sD_{\phi,N,\mu}^{\rm ad, wa})\subset \sH_{\phi,N,\mu}^{\rm ad, wa}$. However, in general this inclusion is strict as can be seen from the following example. 
\end{rem}

\begin{example}
Let $K=K_0=\BQ_p$, $d=2$ and $\mu=(2,0)$. We consider points $\ulD=(D,\Phi,N,\CF^\bullet)$ in $\sD_{\phi,N,\mu}^\ad$ over a field $L$ with $\Phi=p\Id_2$ and $N=0$. The filtration is of the form $D=\CF^0D\supset\CF^1D=\CF^2D=\bigl(\begin{smallmatrix}u\\v\end{smallmatrix}\bigr)\cdot L\supset\CF^3D=(0)$ for some $(\begin{smallmatrix}u\\v\end{smallmatrix}\bigr)\in D$. None of these points is weakly admissible, because the subspace $D'=\bigl(\begin{smallmatrix}u\\v\end{smallmatrix}\bigr)\cdot L\subset D$ has $t_N(\ulD')=v_L(p)$ and $\CF^2D'=D'$, whence $t_H(\ulD')=2$ and $\slope(\ulD')=v_L(p)<1$.

The preimage of such a point in $\sH_{\phi,N,\mu}^\ad$ is given by a Hodge-Pink lattice $\Fq$ with $\Fp\subset\Fq\subset t^{-2}\Fp$ with Hodge weights $0$ and $-2$. This means that $\Fq=\Fp+\bigl(\begin{smallmatrix}u+tu'\\v+tv'\end{smallmatrix}\bigr)\cdot t^{-2}\BdRplus{L}$ for some $(\begin{smallmatrix}u'\\v'\end{smallmatrix}\bigr)\in D$. If the vectors $\bigl(\begin{smallmatrix}u\\v\end{smallmatrix}\bigr)$ and $\bigl(\begin{smallmatrix}u'\\v'\end{smallmatrix}\bigr)$ are linearly dependent over $L$ then $\ulD=(D,\Phi,N,\Fq)$ is not weakly admissible, because the subspace $D'=\bigl(\begin{smallmatrix}u\\v\end{smallmatrix}\bigr)\cdot L\subset D$ has $t_N(\ulD')=v_L(p)$ and $\Fq':=\Fq\cap D'\otimes_L\BdR{L}=t^{-2}D'\otimes_L\BdRplus{L}$, whence $t_H(\ulD')=2$ and $\slope(\ulD')=v_L(p)<1$.

On the other hand, if the vectors $\bigl(\begin{smallmatrix}u\\v\end{smallmatrix}\bigr)$ and $\bigl(\begin{smallmatrix}u'\\v'\end{smallmatrix}\bigr)$ are linearly \emph{independent} over $L$ then $\ulD=(D,\Phi,N,\Fq)$ is weakly admissible, because then $\Fq'\subset t^{-1}D'\otimes_L\BdRplus{L}$ for any subspace $D'=\bigl(\begin{smallmatrix}a\\b\end{smallmatrix}\bigr)\cdot L\subset D$, whence $t_N(\ulD')=v_L(p)$, $t_H(\ulD')\le1$ and $\slope(\ulD')\ge1$. Indeed, $\bigl(\begin{smallmatrix}a\\b\end{smallmatrix}\bigr)\cdot t^{-2}\in\Fq'$ would imply that $\bigl(\begin{smallmatrix}a\\b\end{smallmatrix}\bigr)\cdot t^{-2}\equiv\bigl(\begin{smallmatrix}u+tu'\\v+tv'\end{smallmatrix}\bigr)\cdot t^{-2}\cdot(c+tc')\equiv c\bigl(\begin{smallmatrix}u\\v\end{smallmatrix}\bigr)\cdot t^{-2}+(c'\bigl(\begin{smallmatrix}u\\v\end{smallmatrix}\bigr)+c\bigl(\begin{smallmatrix}u'\\v'\end{smallmatrix}\bigr))t^{-1}\mod\Fp$ for $c,c'\in L$. This implies $\bigl(\begin{smallmatrix}a\\b\end{smallmatrix}\bigr)=c\bigl(\begin{smallmatrix}u\\v\end{smallmatrix}\bigr)$ and $c'\bigl(\begin{smallmatrix}u\\v\end{smallmatrix}\bigr)+c\bigl(\begin{smallmatrix}u'\\v'\end{smallmatrix}\bigr)=0$ contradicting the linear independence.

Thus the weakly admissible locus $\sH_{\phi,\mu}^{\ad,\rm wa}$ in the fiber of $\sH_{\phi,N,\mu}$ over the point $(\Phi,N)=(p\Id_2,0)$ in $P_{\BQ_p,2}$ equals the complement of the zero section, while this fiber in $\sD_{\phi,N,\mu}^{\ad,\rm wa}$ is empty; see also Lemma~\ref{waundersection}.
\end{example}

We end this section by remarking that the weakly admissible locus is determined by the rigid analytic points, i.e.\ those points of an adic space whose residue field is a finite extension of $\Q_p$. 
\begin{lem}\label{maxmod}
Let $X$ be an adic space locally of finite type over $E_\mu$ and let $f: X\rightarrow \sH_{\phi,N,\preceq\mu}^{\rm ad}$ be a morphism defined by a $(\phi,N)$-module with Hodge-Pink lattice $\ulD$. Then $f$ factors over $\sH_{\phi,N,\preceq\mu}^{\rm ad,wa}$ if and only if $\ulD\otimes \kappa(x)$ is weakly admissible for all rigid analytic points $x\in X$. 
\end{lem}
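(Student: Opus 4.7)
The forward implication is immediate: if $f$ factors through $\sH^{\rm ad,wa}_{\phi,N,\preceq\mu}$, then the fiber $\ulD\otimes\kappa(x)$ is weakly admissible at every $x\in X$, and in particular at every rigid analytic point. For the converse, assume $\ulD\otimes\kappa(x)$ is weakly admissible at every rigid analytic $x\in X$; since $\sH^{\rm ad,wa}_{\phi,N,\preceq\mu}$ is open by Theorem~\ref{ThmWAOpen}, the locus $Z:=f^{-1}\bigl(\sH^{\rm ad}_{\phi,N,\preceq\mu}\setminus\sH^{\rm ad,wa}_{\phi,N,\preceq\mu}\bigr)$ is closed in $X$, and it suffices to prove $Z=\emptyset$.

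Working locally on $X$ and unwinding the explicit description of the non-weakly-admissible locus in the proof of Theorem~\ref{ThmWAOpen}, the plan is to write $Z$ as a finite union of closed subsets of the form $\pr^X_{i,m}(C_{i,m})$, where $\pr^X_{i,m}\colon Y^X_{i,m}\to X$ is the proper morphism obtained from $\pr_{i,m}$ by base change along $f$, and
\[C_{i,m}\;:=\;\bigl\{y\in Y^X_{i,m}\mid v_y(f_i)>v_y(p)^{f^2m}\bigr\}\]
is the closed subset of $Y^X_{i,m}$ cut out by the destabilizing valuation inequality. Since proper morphisms of adic spaces locally of finite type over $\BQ_p$ map rigid analytic points to rigid analytic points, any rigid analytic point of $C_{i,m}$ would project to a rigid analytic point of $Z\subset X$; by hypothesis no such point exists, so $C_{i,m}$ contains no rigid analytic point.

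To conclude $C_{i,m}=\emptyset$, I would invoke the maximum modulus principle on fibers combined with the density of rigid analytic points on the base. For each rigid analytic $x\in X$, the fiber $(Y^X_{i,m})_x$ is a proper adic space over the $p$-adic field $\kappa(x)$; by the classical maximum modulus principle, the closed subset $C_{i,m}\cap(Y^X_{i,m})_x$ defined by $|f_i|>|p|^{f^2m}$ is empty as soon as it contains no classical point of the fiber, which is our case. Next, the fibrewise spectral supremum
\[\psi(x)\;:=\;\sup_{y\in(Y^X_{i,m})_x}|f_i(y)|\,|p(y)|^{-f^2m}\]
is upper semi-continuous on $X$ by properness of $\pr^X_{i,m}$, and the open locus $\{\psi>1\}\subset X$ must be empty because it has no rigid analytic point and rigid analytic points are dense in adic spaces locally of finite type over $\BQ_p$. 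Hence $\psi\le 1$ globally, i.e., $C_{i,m}=\emptyset$, so $Z=\emptyset$ and $f$ factors through $\sH^{\rm ad,wa}_{\phi,N,\preceq\mu}$. The main obstacle will be justifying this last density-plus-semi-continuity step rigorously and explaining why the real-valued spectral supremum $\psi$ faithfully detects the higher-rank valuation inequality defining $C_{i,m}$ (ruling out ``infinitesimal'' strict inequality at non-classical higher-rank points); this is the content of the maximum modulus principle for proper morphisms of adic spaces locally of finite type over $\BQ_p$ alluded to by the label \textsf{maxmod} of the lemma.
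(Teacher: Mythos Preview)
Your approach is correct in spirit and matches the paper's strategy (the paper's proof is a one-line reference to the maximum modulus principle and \cite[Proposition~4.3]{families}), but your detour through the fibrewise supremum $\psi$ creates precisely the obstacle you flag at the end, and this obstacle is avoidable.

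The clean argument works directly on $Y^X_{i,m}$ rather than on fibers. Give $Y^X_{i,m}$ its reduced closed-subspace structure inside the base change of $(Z_i\times Q_{K,d,\preceq\mu})^{\rm ad}$ to $X$; it is then an adic space locally of finite type over $\BQ_p$, covered by affinoids of the form $\Spa(B,B^\circ)$. Set $g=f_i\cdot p^{-f^2m}$. By your own observation, every rigid point $y\in Y^X_{i,m}$ lies over a rigid point of $X$, so the hypothesis forces $|g(y)|\le 1$ at all rigid $y$. Now on each affinoid piece $\Spa(B,B^\circ)$ the maximum modulus principle says
\[
|g(y)|\le 1\text{ for all rigid }y\ \Longleftrightarrow\ \|g\|_{\sup,B}\le 1\ \Longleftrightarrow\ g\in B^\circ\,,
\]
and by the very definition of $\Spa(B,B^\circ)$ the last condition is equivalent to $v_y(g)\le 1$ for \emph{every} point $y$, including higher-rank ones. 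Thus $C_{i,m}=\emptyset$ on each affinoid piece, hence globally, and $Z=\emptyset$. The semi-continuity of $\psi$ and the question of whether the real-valued supremum detects higher-rank inequalities never arise.
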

\begin{proof}
One implication is obvious and the other one is an easy application of the maximum modulus principle. It is proven along the same lines as \cite[Proposition 4.3]{families}.
\end{proof}
\begin{rem}
The analogous statements for the stacks $\sH_{\phi,\preceq\mu}^{\rm ad, wa}\subset \sH_{\phi,\preceq\mu}^{\rm ad}$, resp.\ $\sH_{\phi,N,\mu}^{\rm ad, wa}\subset \sH_{\phi,N,\mu}^{\rm ad}$, resp.\ $\sH_{\phi,\mu}^{\rm ad, wa}\subset \sH_{\phi,\mu}^{\rm ad}$, resp.\ $\sD_{\phi,N,\mu}^{\rm ad,wa}\subset \sD_{\phi,N,\mu}^{\rm ad}$, resp.\ $\sD_{\phi,\mu}^{\rm ad,wa}\subset \sD_{\phi,\mu}^{\rm ad}$ are also true and are a direct consequence of their construction. 
\end{rem}

\section{The \'etale locus}
Let us denote by $\boldB_{[r,s]}$ the closed annulus over $K_0$ of inner radius $r$ and outer radius $s$ for some $r,s\in [0,1)\cap p^\Q$. For an adic space $X\in \Ad^{\rm lft}_{\Q_p}$ we write
\begin{align*}
\sA_X^{[0,1)}&=\pr_{X,\ast}\Ocal_{X\times \Ubb}^+\subset \sB_X^{[0,1)}=\pr_{X,\ast}\Ocal_{X\times\Ubb}\\
\sA_X^{[r,s]}&=\pr_{X,\ast}\Ocal_{X\times \boldB_{[r,s]}}^+\subset \sB_X^{[r,s]}=\pr_{X,\ast}\Ocal_{X\times\boldB_{[r,s]}}
\end{align*}
The Frobenius $\phi$ on $\sB_X^{[0,1)}$ restricts to a ring homomorphism $\phi$ on $\sA_X^{[0,1)}$. For this section we adapt the notation from \cite{families} and write $r_i=r^{1/p^i}$. Then $\phi$ restricts to a homomorphism 
\[\phi:\sB_X^{[r,s]}\longrightarrow \sB_X^{[r_1,s_1]}.\]

\begin{defn}\label{Def4.2}
A \emph{$\phi$-module of finite height} over $\sA_X^{[0,1)}$ is an $\sA_X^{[0,1)}$-module $\Mfrak$ which is locally on $X$ free of finite rank over $\sA_X^{[0,1)}$ together with an injective morphism $\Phi:\phi^\ast\Mfrak\rightarrow \Mfrak$ of $\sA_X^{[0,1)}$-modules such that $\coker \Phi$ is killed by some power of $E(u)\in W\dbl u\dbr\subset \sA_X^{[0,1)}$.
\end{defn}

\newcommand{\ddd}{n}
Inspired by Example~\ref{ExCyclot2} we define the $(\phi,N_\nabla)$-module $\sB_X^{[0,1)}(1)$ over $X$ to be $(\sB_X^{[0,1)},\Phi_\CM=\tfrac{pE(u)}{E(0)},N_\nabla^\CM)$ with $N_\nabla^\CM(f)=N_\nabla(f)+u\tfrac{d\lambda}{du}\,f$. For an integer $\ddd\in\Z$ we set $\sB_X^{[0,1)}(\ddd):=\sB_X^{[0,1)}(1)^{\otimes\ddd}=(\sB_X^{[0,1)},\bigl(\tfrac{pE(u)}{E(0)}\bigr)^\ddd,N_\nabla^\CM)$ with $N_\nabla^\CM(f)=N_\nabla(f)+\ddd u\tfrac{d\lambda}{du}\,f$. Given a $(\phi,N_\nabla)$-module $(\Mcal,\Phi_\Mcal)$ on $X$ we write $(\Mcal,\Phi_\Mcal)(\ddd)$ for the twist $\Mcal\otimes_{\sB_X^{[0,1)}}\sB_X^{[0,1)}(\ddd)$. Note that $\tfrac{p}{E(0)}\in W\mal$ since $E(u)$ is an Eisenstein polynomial. Thus for $\ddd\ge0$ we have an obvious integral model $\sA_X^{[0,1)}(\ddd)$ for $\sB^{[0,1)}(\ddd)$ which is a $\phi$-module of finite height over $\sA_X^{[0,1)}$ (by forgetting the $N_\nabla$-action). Further we write $\bA^{[0,1)}(\ddd)=\sA_{\Spa(\Q_p,\Z_p)}(\ddd)$ for the $W\dbl u \dbr$-module of rank $1$ with basis $e$ on which $\Phi$ acts via $\Phi(e)=\bigl(\tfrac{E(u)}{pE(0)}\bigr)^\ddd e$. 

\begin{defn}\label{DefEtale}
Let $(\Mcal,\Phi_\Mcal,N_\nabla^\Mcal)$ be a $(\phi,N_\nabla)$-module over an adic space $X\in \Ad_{\Q_p}^{\rm lft}$. \\
\noindent (i) The module $\Mcal$ is called \emph{\'etale} if there exists an fpqc-covering $(U_i\rightarrow X)$, an integer $\ddd\geq 0$ and $\phi$-modules $(\Mfrak_i,\Phi_{\Mfrak_i})$ of finite height over $\sA_{U_i}^{[0,1)}$ such that 
\[(\Mcal,\Phi_\Mcal)(\ddd)|_{U_i}=(\Mfrak_i,\Phi_{\Mfrak_i})\otimes_{\sA_{U_i}^{[0,1)}}\sB_{U_i}^{[0,1)}.\]
\noindent (ii) Let $x\in X$, then $\Mcal$ is called \emph{\'etale at $x$} if there exists an integer $\ddd\geq 0$ and a $(\kappa(x)^+\otimes_{\Z_p}W)\dbl u\dbr$-lattice $\Mfrak\subset \Mcal(\ddd)\otimes \kappa(x)$ such that $E(u)^h\Mfrak\subset \Phi_{\Mcal(\ddd)}(\phi^\ast\Mfrak)\subset \Mfrak$ for some integer $h\geq 0$.
\end{defn}

\begin{theo}\label{etale}
Let $X$ be an adic space locally of finite type over $\Q_p$ and let $(\Mcal,\Phi)$ be a $(\phi,N_\nabla)$-module. Then the subset 
\[X^{\rm int}=\{x\in X\mid \Mcal\ \text{is \'etale at}\ x\}\]
is open and the restriction $\Mcal|_{X^{\rm int}}$ is \'etale.
\end{theo}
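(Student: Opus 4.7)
The proof should consist of two interlocking components: establishing openness of $X^{\rm int}$ and constructing the integral model on it. Both rely on a local matrix analysis of $\Phi_{\Mcal}$, together with the pointwise uniqueness of integral structures on étale $(\phi,N_\nabla)$-modules.

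Since the claim is local on $X$, I would first reduce to the case where $X=\Spa(R,R^+)$ is affinoid of finite type over $\Q_p$ and $\Mcal$ is free of rank $d$ on $X\times\boldB_{[0,r]}$ for some $r\in p^{\Q}\cap(0,1)$ close to $1$. Fix such a basis so that $\Phi_{\Mcal}$ is given by a matrix $A$, and apply a uniform twist by $\sB^{[0,1)}_X(n)$ for some $n\gg 0$; after this twist I may assume $A\in M_d(\sA^{[0,r]}_R)$ and $E(u)^h\!\cdot\! A^{-1}\in M_d(\sA^{[0,r]}_R)$ for a fixed integer $h\geq 0$. In this setup the étale condition at a point $x\in X$ translates into the concrete linear-algebra statement that there exists a base change $S\in \GL_d(\kappa(x)\otimes_{\Q_p}K_0\dbl u\dbr[\tfrac{1}{u}])$ with $S^{-1}A\,\phi(S)\in M_d\bigl((\kappa(x)^+\otimes_{\Z_p}W)\dbl u\dbr\bigr)$ whose cokernel is killed by $E(u)^h$.

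To prove openness I would use the maximum modulus principle (Lemma \ref{maxmod}) to reduce to rigid analytic points $x$, where $\kappa(x)$ is a finite extension of $\Q_p$. There, the existence of such an $S$ is equivalent to the Newton polygon of the $\phi$-module defined by $A$ over the Robba ring of $\kappa(x)$ lying above the line determined by the twist $n$, which is a consequence of Kedlaya's slope-filtration theorem. The corresponding condition on the supremum norms of the Frobenius iterates $A\cdot\phi(A)\cdots\phi^{k-1}(A)$ is, for each $k$, an open condition in the adic topology on $X$, and an elementary quasi-compactness argument on $X\times\boldB_{[0,r]}$ shows that it suffices to impose it for finitely many $k$; openness of $X^{\rm int}$ follows.

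To construct the global integral model on an affinoid $U\subset X^{\rm int}$, I would take $\Mfrak$ to be the $\sA^{[0,1)}_U$-submodule of $\Mcal(n)|_U$ consisting of sections whose Frobenius iterates all have supremum norm at most $1$ on each closed sub-disk $U\times\boldB_{[0,s]}$ with $s<1$; this is the maximal $\phi$-stable integral lattice. The étale condition, now valid uniformly on $U$, ensures that $\Mfrak$ is finitely generated and of the expected rank. The main obstacle of the proof is showing that $\Mfrak$ is locally free over $\sA^{[0,1)}_U$ rather than merely finitely presented: I would address this by restricting to each $U\times\boldB_{[0,s]}$, where $\sB^{[0,s]}_U$ is Noetherian and, for any point $y$, $\kappa(y)\otimes_{\Q_p}\bB^{[0,s]}$ is a product of principal ideal domains; the elementary divisor theorem together with \cite[IV$_3$, Theorem 11.3.10]{EGA} then gives local freeness at each radius, and these assemble into local freeness over $\sA^{[0,1)}_U$ in the inverse limit. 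Independence from the choice of basis and gluing of the locally constructed $\Mfrak$'s (at least fpqc-locally, as required by Definition \ref{Def4.2}(i)) follow from the uniqueness, up to canonical isomorphism, of the maximal finite-height integral model — a standard feature of étale $\phi$-module theory.
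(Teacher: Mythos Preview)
Your outline has a genuine gap in the openness argument, and the construction of $\Mfrak$ glosses over the steps that carry all the weight.

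For openness you invoke Lemma~\ref{maxmod}, but that lemma is specific to the \emph{weakly admissible} locus; it says nothing about the \'etale locus. There is no a priori reason that ``\'etale at $x$'' is detected by rigid points alone, since the condition involves a lattice over $(\kappa(x)^+\otimes_{\Z_p}W)\dbl u\dbr$ and $\kappa(x)^+$ can be a highly non-discrete valuation ring. Your proposed translation of the \'etale condition into open inequalities on sup norms of the iterates $A\cdot\phi(A)\cdots\phi^{k-1}(A)$ is also not established: Kedlaya's slope theory characterizes slopes via growth of Frobenius iterates on the Robba ring, but turning this into a finite system of open conditions uniform over an affinoid base is exactly the hard part, and the quasi-compactness sketch you give does not supply it. The paper avoids all of this by working first on a fixed closed annulus $\boldB_{[r,r_2]}$ near the boundary, where $\Phi$ is an isomorphism: Theorem~\ref{loket} takes an \'etale lattice at a single point $x$ and \emph{lifts} it, by a successive-approximation argument on the base-change matrix, to an \'etale lattice over an open neighbourhood of $x$. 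Openness falls out of this construction directly, with no reduction to rigid points.

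For the integral model you take $\Mfrak$ to be the ``maximal $\Phi$-bounded lattice''. Even granting that this is $\Phi$-stable, you have not shown it is finitely generated over $\sA^{[0,1)}_U$, nor that it is locally free: the appeal to \cite[IV$_3$, Theorem 11.3.10]{EGA} presupposes that the fibre rank is already constant, which is essentially what you are trying to prove. The paper instead extends the annulus lattice $N_r$ from Theorem~\ref{loket} inward to $\boldB_{[0,r_2]}$ by forming the intersection $M_r=(\Mcal_r\cap N_r)\otimes\bA^{[r,r_2]}\,\cap\,(\Mcal_r\cap N_r)[\tfrac{1}{p}]$ and then applying a Raynaud--Gruson flattening blow-up plus a Nakayama argument on the reduced special fibre (Proposition~\ref{intmodel}); finally it glues $\Mfrak_{[0,r_2]}$ with its successive $\phi$-pullbacks to cover all of $\BU$. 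This inductive Frobenius-gluing step is what produces a model on the whole open disc, and there is no analogue of it in your sketch.
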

This is similar to the proof of \cite[Theorem 7.6]{families}. However, we need to make a few generalizations as we cannot rely on a reduced universal case. 
Given an affinoid algebra $A$ and $r,s\in [0,1)\cap p^\Q$ we write  
\begin{align*}
\bB_A^{[r,s]}=\Gamma(\Spa(A,A^\circ),\sB_{\Spa(A,A^\circ)}^{[r,s]})=A\widehat{\otimes}_{\Q_p}\bB^{[r,s]}=A_W\langle T/s, r/T \rangle,\\
\bA_A^{[r,s]}=\Gamma(\Spa(A,A^\circ),\sA_{\Spa(A,A^\circ)}^{[r,s]})=A^\circ\widehat{\otimes}_{\Z_p}\bA^{[r,s]}=A_W^\circ\langle T/s, r/T \rangle.
\end{align*}
The following is the analogue of \cite[Theorem 6.9]{families} in the non-reduced case.

\begin{theo}\label{loket}
Let $X$ be an adic space locally of finite type over $\Q_p$ and let $\Ncal$ be a family of free $\phi$-modules of rank $d$ over $\sB_X^{[r,r_2]}$. Assume that there exists $x\in X$ and an $\sA_X^{[r,r_2]}\otimes \kappa(x)^\circ$-lattice $N_x\subset \Ncal\otimes \kappa(x)$ such that $\Phi$ induces an isomorphism
\begin{equation}\label{etonannulus1}
\Phi:\phi^\ast(N_x\otimes_{\sA_X^{[r,r_2]}}\sA_X^{[r,r_1]})\isoto N_x\otimes_{\sA_X^{[r,r_2]}}\sA_X^{[r_1,r_2]}.
\end{equation}
Then there exists an open neighborhood $U\subset X$ of $x$ and a locally free $\sA_U^{[r,r_2]}$-submodule $N\subset \Ncal$ of rank $d$ such that 
\begin{align*}
N\otimes \kappa(x)^\circ &= N_x\\ 
\Phi(\phi^\ast N|_{U\times \boldB_{[r,r_1]}})&=N|_{U\times \boldB_{[r_1,r_2]}}\\
N\otimes_{\sA_U^{[r,r_2]}}\sB_U^{[r,r_2]}&=\Ncal|_U.
\end{align*}
\end{theo}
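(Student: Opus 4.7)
The plan is to adapt the successive approximation argument from the reduced case \cite[Theorem~6.9]{families} to the non-reduced setting. First I would shrink $X$ around $x$ so that $X=\Spa(A,A^\circ)$ is affinoid and $\Ncal$ is free over $\sB_A^{[r,r_2]}$ with a chosen basis $\underline{e}$; let $F\in\GL_d(\sB_A^{[r_1,r_2]})$ be the matrix of $\Phi$ with respect to this basis. After further shrinking, I may assume $N_x$ is free over $\sA_{\kappa(x)^\circ}^{[r,r_2]}$ with a basis of the form $S_x\cdot\underline{e}|_x$ for some $S_x\in\GL_d(\sB_{\kappa(x)}^{[r,r_2]})$, so the hypothesis \eqref{etonannulus1} translates to the integrality statement $S_x^{-1}\,F|_x\,\phi(S_x)\in\GL_d(\sA_{\kappa(x)^\circ}^{[r_1,r_2]})$.

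The next step is to lift $S_x$ to $\tilde S\in\GL_d(\sB_A^{[r,r_2]})$. Since the image of $\sB_A^{[r,r_2]}$ in $\sB_{\kappa(x)}^{[r,r_2]}$ is dense for the Gauss norm, such a lift exists with arbitrary prescribed precision after shrinking $X$ to a sufficiently small rational neighborhood $U$ of $x$, and since $\GL_d$ is open I may arrange $\tilde S\in\GL_d$. On $U$ the matrix $\tilde F:=\tilde S^{-1}F\phi(\tilde S)\in\GL_d(\sB_U^{[r_1,r_2]})$ decomposes as $\tilde F=F_0+F_1$ with $F_0\in\GL_d(\sA_U^{[r_1,r_2]})$ and $\Vert F_1\Vert$ arbitrarily small at every point of $U$.

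The core of the argument is an iterative correction $\tilde S\leadsto\tilde S(1+\Delta_1)(1+\Delta_2)\cdots$ with each $\Delta_n\in\mathrm{Mat}_d(\sB_U^{[r,r_2]})$ chosen to cancel the leading non-integral part of the current Frobenius matrix. To linear order this amounts to solving $F_0\phi(\Delta_n)-\Delta_n F_0\equiv -F_1 \pmod{\text{higher-order terms}}$, which is inverted using the contraction properties of $\phi$: iterating $\Delta\mapsto F_0^{-1}\phi(\Delta)F_0$ converges on sections of sufficiently small norm on the annulus $[r_1,r_2]$, because $\phi$ raises the radius of convergence and thereby contracts Gauss norms geometrically. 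One then shows that $\Vert\Delta_n\Vert$ decreases geometrically in $\sB_U^{[r,r_2]}$, so the product $S:=\tilde S\prod_n(1+\Delta_n)$ converges in $\GL_d(\sB_U^{[r,r_2]})$ to a matrix with $S^{-1}F\phi(S)\in\GL_d(\sA_U^{[r_1,r_2]})$. Setting $N:=\sA_U^{[r,r_2]}\cdot S\cdot\underline{e}\subset\Ncal|_U$, the three required properties are immediate: the $\Phi$-invariance from the integrality of $S^{-1}F\phi(S)$, the equality $N\otimes\sB_U^{[r,r_2]}=\Ncal|_U$ from $S\in\GL_d(\sB_U^{[r,r_2]})$, and the fiber identity $N\otimes\kappa(x)^\circ=N_x$ from an additional pointwise integral correction absorbed into the initial $\tilde S$.

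The main obstacle is the uniformity of the iteration over $U$. In \cite{families} the reduced case permitted reducing integrality checks to rigid points via a maximum-modulus argument in the spirit of Lemma~\ref{maxmod}, since the rigid-analytic points form a topologically dense subset which detects all elements of a reduced affinoid algebra. Here, however, the nilpotents of $A^\circ$ are invisible at rigid points, so all of the norm estimates must be carried out directly in the Banach algebras $\sA_A^{[r,r_2]}$ and $\sB_A^{[r,r_2]}$. This requires a more careful initial choice of $\tilde S$ (one sufficient, for example, to make $\Vert F_1\Vert$ smaller than the square of the operator norm of $\phi^{-1}$ times the conjugation by $F_0$) and a uniform estimate on the decay of $\Vert\Delta_n\Vert$ across the whole neighborhood $U$, using the geometric contraction of $\phi$ on the Laurent expansions of the $\Delta_n$ about the zeros of $\det F_0$.
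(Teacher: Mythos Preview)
Your proposal is essentially correct and follows the same line as the paper's proof, which likewise lifts the integral data at $x$ to a neighborhood and then runs the successive-approximation argument of \cite[Theorem~6.9]{families}. The paper organizes the lifting slightly differently---it lifts the integral Frobenius matrix $D_0\in\GL_d(\sA^{[r,r_2]}_{\kappa(x)^\circ})$ directly to an invertible integral matrix $D$ (checking that $\det D$ is a unit in the integral ring after shrinking, since only finitely many Laurent coefficients of $(\det D)^{-1}$ can fail to be small), and then separately lifts the change-of-basis matrix $V$; your decomposition $\tilde F=F_0+F_1$ amounts to the same thing.

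One point worth making explicit: the paper's handling of the non-reduced case is not to redo the iteration estimates with nilpotents in mind, but rather to \emph{choose} a Banach norm $\|\cdot\|$ on $A$ and work throughout with $A^+:=\{a\in A:\|a\|\le1\}\subset A^\circ$ in place of $A^\circ$. Since $\Spa(A,A^+)=\Spa(A,A^\circ)$, this changes nothing on the level of adic spaces, but now integrality is literally a Banach-norm condition and the estimates from \cite[Theorem~6.9]{families} apply verbatim. Your final paragraph correctly identifies the issue (nilpotents invisible at points) but overcomplicates the fix: rather than tracking nilpotents through the contraction argument, one simply replaces $A^\circ$ by $A^+$ from the outset.
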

\begin{proof}
We may assume that $X=\Spa(A,A^\circ)$ is affinoid and we may choose a Banach norm $||\cdot||$ and a $\Z_p$-subalgebra $A^+=\{x\in A\mid ||x||\leq 1\}\subset A^\circ$ such that $A=A^+[1/p]$ and $X=\Spa(A,A^+)=\Spa(A,A^\circ)$.

Choose a basis $\underline{e}_x$\ of $N_x$ and denote by $D_0\in \GL_d(\sA_X^{[r,r_2]}\otimes \kappa(x)^\circ)$ the matrix of $\Phi$ in this basis. After shrinking $X$ if necessary we may lift the matrix $D_0$ to a matrix $D$ with coefficients in $\Gamma(X,\sA_X^{[r,r_2]})$. Localizing further we may assume that $D$ is invertible over $\Gamma(X,\sA_X^{[r,r_2]})$, as we only need to ensure that the inverse of its determinant has coefficients $a_i\in A^+$, i.e.\ $||a_i||\leq 1$ for some Banach norm $||\cdot||$ corresponding to $A^+$. 
Let us write $f\in A_w\langle T/s, r/T\rangle$ for this determinant and write $f=f^++f^-$ with
\begin{align*}
f^+&=\sum_{i\geq 0} \alpha_i (\tfrac{T}{s})^i\in A_w\langle \tfrac{T}{s}\rangle\\
f^-&=\sum_{i\geq 0} \beta_i (\tfrac{r}{T})^{i}\in A_w\langle \tfrac{r}{T}\rangle.
\end{align*}
We claim that $\alpha_i,\beta_i\in A^\circ_W$ for all $i$. But as $\alpha_i,\beta_i\xrightarrow{i\rightarrow \infty}0$ this is clear for all but finitely many $i$. Moreover for all $i\geq 0$ we have $\alpha_i(x),\beta_i(x)\in k(x)^\circ_W$. Hence after localization on $X$ we may assume that all coefficients are integral.

Fixing a basis $\underline{b}$ of $\Ncal$ we denote by $S\in \GL_d(\bB_A^{[r,r]})$ the matrix of $\Phi$ in this basis. Further we denote by $V$ a lift of the change of basis matrix from the basis $\underline{e}_x$ to the basis $\underline{b}\mod x$. From now on the proof is the same as the proof of \cite[Theorem 6.9]{families}.
\end{proof}

\begin{prop}\label{intmodel}
Let $X=\Spa\,(A,A^+)$ be an affinoid adic space of finite type over $\Q_p$.  
Let $r>|\pi|$ with $r\in p^\Q$ and set $r_i=r^{1/p^i}$. Let $\Mcal_r$ be a free vector bundle on $X\times\boldB_{[0,r_2]}$ together with an injection 
\[\Phi:\phi^\ast (\Mcal_r|_{X\times \boldB_{[0,r_1]}})\longrightarrow \Mcal_r\]
with cokernel supported at the point defined by $E(u)$.
Assume that there exists a free $\bA_A^{[r,r_2]}=A^+\langle T/r_2,r/T\rangle$ submodule 
\[N_r\subset \Ncal_r:=\Mcal_r\otimes_{\bB_A^{[0,r_2]}}\bB_A^{[r,r_2]}\]
of rank $d$, containing a basis of $\Ncal_r$ such that 
\[\Phi(\phi^\ast(N_r\otimes_{\bA_A^{[r,r_2]}} \bA_A^{[r,r_1]}))=N_r\otimes_{\bA_A^{[r,r_2]}} \bA_A^{[r_1,r_2]}. \]
Then fpqc-locally on $X$ there exists a free $\bA_A^{[0,r_2]}$-submodule $M_r\subset \Mcal_r$ of rank $d$, containing a basis of $\Mcal_r$ such that
\begin{equation}\label{phicond}
\Phi:\phi^\ast(M_r\otimes_{\bA_A^{[0,r_2]}}\bA_A^{[0,r_1]})\longrightarrow M_r
\end{equation}
is injective with cokernel killed by some power of $E(u)$.
\end{prop}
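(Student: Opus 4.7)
Plan: The proof extends the integral lattice $N_r$ on the annulus $[r,r_2]$ to a lattice $M_r$ on the full disc $[0,r_2]$ by iterated use of the Frobenius structure, adapting the scheme of \cite[Theorem~7.6]{families}. After passing to a suitable fpqc cover of $X$ I may assume that $X=\Spa(A,A^+)$ is affinoid, that $\Mcal_r$ is free over $\bB_A^{[0,r_2]}$ with basis $\ul e=(e_1,\ldots,e_d)$, and that $N_r$ is free over $\bA_A^{[r,r_2]}$ with basis $\ul n=\ul e\cdot V$ for some $V\in\GL_d(\bB_A^{[r,r_2]})$. Let $S\in{\rm Mat}_{d\times d}(\bB_A^{[0,r_2]})$ be the matrix of $\Phi$ in the basis $\ul e$. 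Since $\Phi$ is an isomorphism away from the zero locus of $E(u)$ and $r>|\pi|$, the matrix $S$ is invertible on the annulus $[r,r_2]$; the hypothesis on $N_r$ is then equivalent to the Frobenius matrix in the $\ul n$-basis, namely $W:=V|_{[r_1,r_2]}^{-1}\cdot S|_{[r_1,r_2]}\cdot\phi(V|_{[r,r_1]})$, lying in $\GL_d(\bA_A^{[r_1,r_2]})$.

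What I want is a matrix $U\in\GL_d(\bB_A^{[0,r_2]})$ with $U|_{[r,r_2]}=V\cdot X$ for some $X\in\GL_d(\bA_A^{[r,r_2]})$, so that $M_r:=\ul e\cdot U\cdot(\bA_A^{[0,r_2]})^d$ satisfies $M_r|_{[r,r_2]}=N_r$, and such that $U^{-1}\cdot S\cdot\phi(U)$ lies in ${\rm Mat}_{d\times d}(\bA_A^{[0,r_2]})$ with cokernel killed by $E(u)$. I construct $U$ iteratively: setting $\rho_k:=r^{p^k}$ (so $\rho_0=r$ and $\rho_k\searrow 0$), I produce matrices $U^{(k)}\in\GL_d(\bB_A^{[\rho_k,r_2]})$ starting from $U^{(0)}:=V$. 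In the inductive step one extends $U^{(k)}$ on $[\rho_k,r_2]$ to a matrix $U^{(k+1)}$ on $[\rho_{k+1},r_2]$ by solving the Frobenius equation $S\cdot\phi(U^{(k+1)})=U^{(k+1)}\cdot W^{(k+1)}$ on the new sub-annulus $[\rho_{k+1},\rho_k]$, with $W^{(k+1)}$ required to be integral over $\bA_A^{[\rho_k,\rho_{k-1}]}$: on annuli where $|\pi|\notin[\rho_{k+1},\rho_k]$, the matrix $S$ is invertible there and the equation can be solved by Frobenius descent along the finite flat map $\phi\colon\bB_A^{[\rho_{k+1},\rho_k]}\to\bB_A^{[\rho_k,\rho_{k-1}]}$ of degree $p$. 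The limit matrix $U\in\GL_d(\bB_A^{[0,r_2]})$ is then obtained via the gluing identity $\bB_A^{[0,r_2]}=\bigcap_k\bB_A^{[\rho_k,r_2]}$ inside the common Laurent series over-ring, provided the entries of the $U^{(k)}$ satisfy the appropriate $p$-adic convergence estimates controlled by the fixed matrix $S$.

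The main obstacle is the single inductive step at the critical index $k_0$ for which $|\pi|\in[\rho_{k_0+1},\rho_{k_0}]$: on this annulus $S$ fails to be invertible and one cannot directly solve the Frobenius equation in the naive way. At this step I use the hypothesis that the cokernel of $\Phi$ is killed by $E(u)$, together with an elementary divisor analysis of $S$ on the critical annulus (possibly after a further fpqc cover of $A$), to exhibit an integral extension of $U^{(k_0)}$ to $U^{(k_0+1)}$ for which the Frobenius condition is satisfied only up to a multiple of $E(u)$; this is precisely what is required. After this critical step the remaining inductive stages all occur on annuli avoiding the zero locus of $E(u)$ and proceed routinely, and a final verification using the limit Frobenius equation shows that $U^{-1}\cdot S\cdot\phi(U)\in{\rm Mat}_{d\times d}(\bA_A^{[0,r_2]})$ has cokernel killed by $E(u)$, completing the construction.
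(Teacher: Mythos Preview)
Your approach differs fundamentally from the paper's and contains a real gap. The paper does not iterate the Frobenius to extend inward. Instead it sets $M'_r:=\{m\in\Mcal_r: m|_{[r,r_2]}\in N_r\}$, saturates to $M_r:=(M'_r\otimes\bA_A^{[r,r_2]})\cap M'_r[\tfrac{1}{p}]$, and then invokes the Bosch--L\"utkebohmert flattening theorem (an admissible formal blow-up of $\Spf A^+$) to force the strict transform of $M_r$ to be flat over the base. Freeness and the $E(u)$-power bound on $\coker\Phi$ are then verified fibre-by-fibre at closed points of the base (where they reduce to the known pointwise theory), with a separate descending induction on powers of the nilradical of $A^+$ to treat the non-reduced case. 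The Frobenius is used only \emph{afterwards}, in the proof of Theorem~\ref{etale}, to extend the resulting model \emph{outward} from $[0,r_2]$ to $[0,r_3]$, $[0,r_4]$, and so on.

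The gap in your proposal is the inward extension step itself. The ring map $\phi\colon\bB_A^{[\rho_{k+1},\rho_k]}\hookrightarrow\bB_A^{[\rho_k,\rho_{k-1}]}$ is finite flat but not surjective: its image is the subring of series in $u^p$ (with Frobenius-twisted $K_0$-coefficients). Reading your equation on $[\rho_k,\rho_{k-1}]$ --- the only place all ingredients are defined once one imposes $U^{(k+1)}|_{[\rho_k,r_2]}=U^{(k)}$ --- it determines $\phi\bigl(U^{(k+1)}|_{[\rho_{k+1},\rho_k]}\bigr)=S^{-1}U^{(k)}W^{(k+1)}$, but not $U^{(k+1)}|_{[\rho_{k+1},\rho_k]}$: you would need $S^{-1}U^{(k)}W^{(k+1)}$ to lie in the image of $\phi$ while simultaneously $W^{(k+1)}\in\GL_d(\bA_A^{[\rho_k,\rho_{k-1}]})$. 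No mechanism is given for arranging both, and descent along a purely inseparable (non-\'etale) extension does not furnish one for free. The Frobenius naturally propagates integral lattices outward, not inward; the passage from the annulus to the full disc genuinely requires a different idea, and that is what the paper's intersection-and-flatten construction supplies.
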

\begin{proof}
This is the generalization of \cite[Proposition 7.7]{families} to our context.
We also write $\Mcal_r$ for the global sections of the vector bundle. Write $M'_r=\Mcal_r\cap N_r\subset \Ncal_r$. This is an $A^+\langle T/r_2\rangle$-module. Further we set
\[M_r={\rm Im}(M'_r\widehat{\otimes}_{\bA_A^{[0,r_2]}}\bA_A^{[r,r_2]}\rightarrow \Ncal_r)\cap M'_r[\tfrac{1}{p}]\subset \Ncal_r.\]
Then $M_r$ is a finitely generated $A^+\langle T/r_2\rangle$-module as the ring is noetherian. 
First we need to make some modification in order to assure that $M_r$ is flat. Let $\Ycal=\Spf W\langle T/r_2\rangle$ denote the formal model of $\boldB_{[0,r_2]}$ and let $\Ycal'=\Spf W\langle T/r_2, r/T\rangle$ denote the formal model of $\boldB_{[r,r_2]}$. Note that $M_r[1/p]=\Mcal_r$ and hence $M_r$ is rig-flat.
By \cite[Theorem 4.1]{BoschLuetke} there exists a blow-up $\wt\Xcal$ of $\Spf(A)$ such that the strict transform $\wtM_r$ of $M_r$ in $\wt\Xcal\times \Ycal$ is flat over $\wt\Xcal$.
We write $\Mcal_{r,\wt\Xcal}$ (resp.\ $N_{r,\wt\Xcal}$) for the pullback of $\Mcal_r$ (resp.\ $N_r$) to the generic fiber of $\wt\Xcal\times\Ycal$ (resp.\ to $\wt\Xcal\times\Ycal'$). If we set $\wtM_r'=\Mcal_{r,\wt\Xcal}\cap N_{r,\wt\Xcal}$ then one easily finds
\[\wtM_r=(\wtM'_r\otimes_{\sA_{\wt\Xcal}^{[0,r_2]}}\sA_{\wt\Xcal}^{[r,r_2]})\cap \wtM'_r[\tfrac{1}{p}].\]
If follows that $\wtM_r$ is stable under $\Phi$. Further, as $\wtM_r$ is flat, it has no $p$-power torsion and hence we find that the formation $(\Mcal_{r,\wt\Xcal}, N_{r,\wt\Xcal})\mapsto \wtM_r$ commutes with base change $\Spf \Ocal\hookrightarrow \wt\Xcal$ for any finite flat $\Z_p$-algebra $\Ocal$, compare the proof of  \cite[Proposition 7.7]{families}. Especially this pullback is free over $\Ocal\otimes_{\Z_p}W\langle T/r_2\rangle$ and the cokernel of $\Phi$ is annihilated by $E(u)^{k_\mu}$ for some $k_\mu\gg 0$ depending only on the Hodge polygon $\mu$ (for an arbitrary finite flat $\Z_p$-algebra $\Ocal$ this follows by forgetting the $\Ocal$-structure and only considering the $\Z_p$-structure).

It follows that the restriction of $\wtM_r$ to the reduced special fiber $\wt\Xcal_0$ of $\wt\Xcal$ is locally free over $\wt\Xcal_0\times \Abb^1$ and hence, as in the proof of \cite[Proposition 7.7]{families} we may locally lift a basis and find that $\wtM_r$ is locally on $\wt\Xcal$ free over $\wt\Xcal\times\Ycal$.

It is only left to show that $E(u)^{k_\mu}\coker \Phi=0$ over $\wt\Xcal$. To do so we may localize and assume that $\wt\Xcal$ is affine. By abuse of language we denote it again by $\Spf A^+$ and write $N=E(u)^{k_\mu}\coker\Phi$. If $I$ denotes the ideal of nilpotent elements in $A^+$, we need to show that the multiplication $I\otimes_{A^+} N\rightarrow N$ is the zero map. Indeed, if $IN=0$, then $N$ does not change if we pull back the situation to the reduced ring $A^+/I$. However, for $A/I$ Nakayama's lemma implies that $E(u)^{k_\mu}\coker \Phi$ vanishes if it vanishes after all possible pullbacks $\Spf\,\Ocal\hookrightarrow \tilde\Xcal$. We already remarked above that in the case $A^+=\Ocal$ a finite flat $\Z_p$-algebra the cokernel is killed by $E(u)^{k_\mu}$.

We now show that $IN=0$. For some $k\gg 0$ we know that $I^k\otimes_{A^+} N\rightarrow N$ is the zero map, as $I$ is nilpotent. Then $N=N/I^k$ and the multiplication map $I^{k-1}\otimes_{A^+}N\rightarrow N$ factors over $I^{k-1}/I^k\otimes_{A^+} N\rightarrow N$ and this is a map of finitely generated $A^+/I^k$-modules which vanishes after pulling back to a quotient $A^+/I^k\rightarrow \Ocal_L$ onto the ring of integers in some finite extension $L$ of $\Q_p$. This can be seen as follows. The map on this pull back is induced by the pullback of the multiplication to a quotient of $A^+$ which is finite flat over $\Z_p$, and where $N$ is known to vanish by the above.
It follows that $I^{k-1}\otimes_{A^+}N\rightarrow N$ is the zero map and by descending induction we find that $I$ acts trivially on $N$.
\end{proof}

\begin{proof}[Proof of Theorem \ref{etale}]
Fix some $r>|\pi|$ and re-define 
\[X^{\rm int}=\left\{x\in X\left |
\begin{array}{*{20}c}
\Mcal|_{X\times\boldB_{[r,r_2]}}\otimes \kappa(x)\ \text{contains an}\ \Acal_X^{[r,r_2]}\otimes \kappa(x)^\circ \ \text{lattice}\ N_x\\ \text{such that}\ \Phi\ \text{induces an isomorphism}\\ \phi^\ast(N_x\otimes_{\sA_X^{[r,r_2]}}\sA_X^{[r,r_1]})\isoto N_x\otimes_{\sA_X^{[r,r_2]}}\sA_X^{[r_1,r_2]}
\end{array}\right.\right\}.\]
By Theorem \ref{loket} this subset is open and we need to show that the restriction $\Mcal|_{X^{\rm int}}$ is \'etale. Then it follows directly that $X^{\rm int}$ coincides with the characterization in the theorem, as the notion of being \'etale at points may be checked fpqc-locally by \cite[Proposition 6.14]{families}.

However Proposition \ref{intmodel} provides (locally on $X^{\rm int}$) an integral model $\Mfrak_{[0,r_2]}$ over $X\times \boldB_{[0,r_2]}$. Now we can glue $\Mfrak_{[0,r_2]}$ and $\phi^\ast \Mfrak_{[0,r_2]}$ over $X\times\boldB_{[r_2,r_3]}$ along the isomorphism $\Phi$. Hence we can extend $\Mfrak_{[0,r_2]}$ to a model $\Mfrak_{[0,r_3]}$ over $X\times \boldB_{[0,r_3]}$. Proceeding by induction we get a model $\Mfrak$ on $X\times \Ubb$ and \cite[Proposition 6.5]{families} guarantees that $\Mfrak$ is locally in $X$ free over $\sA_X^{[0,1)}$ (loc.~cit.~assumes that $\Ncal$ is free. However, its proof only uses the fact that the restriction of $\Ncal$ to an annulus $X\times \boldB_{[r, r^{1/p^2}]}$ is free. This is always true after localizing on $X$, see \cite{Luetkebohmert}). Hence it is the desired \'etale model. 
\end{proof}

\begin{cor}\label{Cor4.7}
Let $\mu$ be a cocharacter as in $(\ref{mu})$ with reflex field $E_\mu$. Then there is an open substack $\sH_{\phi,N,\preceq\mu}^{\rm ad, int}\subset \sH_{\phi,N,\preceq\mu}^{\rm ad}$ such that $f:X\rightarrow \sH_{\phi,N,\preceq\mu}^{\rm ad}$ factors over $\sH_{\phi,N,\preceq\mu}^{\rm ad, int}$ if and only if the family $(\Mcal,\Phi_{\Mcal},N_\nabla^\Mcal)$ defined by $f$ and $\underline{\Mcal}$ is \'etale. 
\end{cor}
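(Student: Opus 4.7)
The plan is to transport the pointwise \'etale locus from a smooth atlas of $\sH_{\phi,N,\preceq\mu}^{\rm ad}$ and observe that Theorem~\ref{etale} identifies it with the locus where the associated $(\phi,N_\nabla)$-module is \'etale in the sense of Definition~\ref{DefEtale}(i). Concretely, the stack $\sH_{\phi,N,\preceq\mu}^{\rm ad}$ is the quotient $[Y/G]$ with
\[
Y\;:=\;(P_{K_0,d}\times_{\BQ_p}Q_{K,d,\preceq\mu})^{\rm ad}\times_{\Spa\BQ_p}\Spa(E_\mu,\Ocal_{E_\mu})
\qquad\text{and}\qquad G\;:=\;(\Res_{K_0/\BQ_p}\GL_{d,K_0})_{E_\mu}^{\rm ad}.
\]
On $Y$ there is a tautological family $(D^{\rm univ},\Phi^{\rm univ},N^{\rm univ},\Fq^{\rm univ})$, and Theorem~\ref{ThmEquivDandM} produces a universal $(\phi,N_\nabla)$-module $\Mcal^{\rm univ}$ on $Y\times_{\Spa\BQ_p}\Ubb$.

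First I would apply Theorem~\ref{etale} to $\Mcal^{\rm univ}$, obtaining the open subset
\[
Y^{\rm int}\;:=\;\{\,y\in Y\mid \Mcal^{\rm univ}\text{ is \'etale at }y\,\}\;\subset\;Y
\]
on which $\Mcal^{\rm univ}|_{Y^{\rm int}}$ is \'etale in the sense of Definition~\ref{DefEtale}(i). Next one checks that $Y^{\rm int}$ is $G$-stable: the $G$-action on $Y$ is by change of basis of $D$, and functoriality of $\underline{\Mcal}$ translates this into an isomorphism of the associated $(\phi,N_\nabla)$-modules. Since \'etale-ness at a point is invariant under isomorphism (and more generally under fpqc descent, by \cite[Proposition~6.14]{families}), $Y^{\rm int}$ is indeed $G$-invariant. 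One then sets
\[
\sH_{\phi,N,\preceq\mu}^{\rm ad,int}\;:=\;[Y^{\rm int}/G]\;\subset\;\sH_{\phi,N,\preceq\mu}^{\rm ad},
\]
which is open because $Y^{\rm int}\subset Y$ is open and $Y\to\sH_{\phi,N,\preceq\mu}^{\rm ad}$ is a smooth surjection.

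For the universal property, given $f\colon X\to\sH_{\phi,N,\preceq\mu}^{\rm ad}$ defined by $(D,\Phi,N,\Fq)$, a local choice of basis of $D$ lifts $f$ to $\tilde f\colon X'\to Y$ along a smooth surjection $X'\to X$, with $\tilde f^{\ast}\Mcal^{\rm univ}\cong\Mcal|_{X'}$ where $\Mcal=\underline{\Mcal}(D,\Phi,N,\Fq)$. Then $f$ factors through $\sH_{\phi,N,\preceq\mu}^{\rm ad,int}$ iff $\tilde f(X')\subset Y^{\rm int}$, iff $\Mcal$ is \'etale at every point of $X'$, iff (by fpqc descent of the pointwise condition) $\Mcal$ is \'etale at every point of $X$, iff (by Theorem~\ref{etale} applied directly to $X$) $\Mcal$ is \'etale on all of $X$ in the sense of Definition~\ref{DefEtale}(i).

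The real work is already absorbed into Theorem~\ref{etale}; the main point to be careful about here is the compatibility of the equivalence $\underline{\Mcal}$ of Theorem~\ref{ThmEquivDandM} with the $G$-action and with arbitrary pullbacks, together with the fpqc-local nature of \'etale-ness at a point, so that $Y^{\rm int}$ descends to a well-defined open substack characterizing the global \'etale condition rather than merely the pointwise one.
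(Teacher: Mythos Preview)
Your argument is correct and essentially the same as the paper's: both define $\sH_{\phi,N,\preceq\mu}^{\rm ad,int}$ as the pointwise \'etale locus of the universal $(\phi,N_\nabla)$-module (the paper phrases this directly on the stack, you make it explicit on the atlas $Y$ and descend via $G$-invariance), then invoke Theorem~\ref{etale} and \cite[Proposition~6.14]{families} to match the pointwise and global \'etale conditions. The only difference is presentational.
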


\begin{proof}
Let $\ul\CM(\ulD)$ be the universal $(\phi,N_\nabla)$-module over $\sH_{\phi,N,\preceq\mu}^{\rm ad}$. By Theorem~\ref{etale} the set $\sH_{\phi,N,\preceq\mu}^{\rm ad, int}:=\{x\in \sH_{\phi,N,\preceq\mu}^{\rm ad}:\ul\CM(\ulD) \text{ is \'etale at }x\}$ is open and above it $\ul\CM(\ulD)$ is \'etale. If $f$ factors over $\sH_{\phi,N,\preceq\mu}^{\rm ad, int}$ then $(\Mcal,\Phi_{\Mcal},N_\nabla^\Mcal)$ is the pullback of the universal $\ul\CM(\ulD)$ and hence is \'etale. Conversely if $(\Mcal,\Phi_{\Mcal},N_\nabla^\Mcal)$ is \'etale, then it is \'etale at all points and $f$ factors over $\sH_{\phi,N,\preceq\mu}^{\rm ad, int}$, because the notion of being \'etale at points may be checked fpqc-locally by \cite[Proposition 6.14]{families}.
\end{proof}

\begin{prop}\label{PropIntWA}
Let $L$ be a finite extension of $E_{\mu}$, then $\sH_{\phi,N,\preceq\mu}^{\rm ad, int}(L)=\sH_{\phi,N,\preceq\mu}^{\rm ad, wa}(L)$ and hence $\sH_{\phi,N,\preceq\mu}^{\rm ad, int}\subset \sH_{\phi,N,\preceq\mu}^{\rm ad, wa}$.
\end{prop}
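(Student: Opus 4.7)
The plan proceeds in two stages: first establish the equality on $L$-valued points, then deduce the inclusion of open substacks from it.

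\emph{Stage 1: Reduction of the inclusion to the $L$-point equality.} Both $\sH^{\rm ad, int}_{\phi,N,\preceq\mu}$ and $\sH^{\rm ad, wa}_{\phi,N,\preceq\mu}$ are open substacks of $\sH^{\rm ad}_{\phi,N,\preceq\mu}$. By Lemma~\ref{maxmod} a morphism $f\colon X\to \sH^{\rm ad}_{\phi,N,\preceq\mu}$ factors through $\sH^{\rm ad, wa}_{\phi,N,\preceq\mu}$ as soon as the pullback of the universal object is weakly admissible at every rigid analytic point of $X$. Since the residue field of a rigid analytic point is a finite extension of $E_\mu$, the equality of groupoids of $L$-valued points for all finite $L/E_\mu$ immediately yields the global inclusion.

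\emph{Stage 2: The equality of $L$-valued points via Kisin's theory.} For $L$ finite over $E_\mu$, an $L$-valued point of $\sH^{\rm ad, int}_{\phi,N,\preceq\mu}$ is a $(\phi,N)$-module with Hodge-Pink lattice $\ulD=(D,\Phi,N,\q)$ over $L$ whose associated $(\phi,N_\nabla)$-module $\Mcal=\ul\Mcal(\ulD)$ (Theorem~\ref{ThmEquivDandM}) admits, after a Tate twist by some $n\ge 0$, a $\phi$-stable lattice $\Mfrak\subset\Mcal(n)$ over $\CO_L\otimes_{\Z_p}W\dbl u\dbr$ with $E(u)^h\Mfrak\subset \Phi(\phi^*\Mfrak)\subset\Mfrak$. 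Such $\Mfrak$ is a Breuil-Kisin module of finite $E(u)$-height in the classical sense. The equality then becomes Kisin's main theorem \cite[Proposition~1.3.13, Lemma~2.1.15]{crysrep} transported to our Hodge-Pink lattice framework: for the direction ``$\text{int}\Rightarrow\text{wa}$'', $\Mfrak$ gives rise (via Kisin) to a semi-stable $\sG_{K_\infty}$-representation, and comparing Newton slopes (computed from $\det(\Phi)^f$) with Hodge slopes (bounded in terms of the $E(u)$-height via $t_H(\ulD')\geq v_{v_L(p)}(t_N(\ulD'))$ for every $(\Phi,N)$-stable subobject $D'\subset D$, with equality for $D'=D$) yields weak admissibility of $\ulD$; conversely, for ``$\text{wa}\Rightarrow\text{int}$'', one constructs the required $\Mfrak$ inside $\Mcal(n)$ by taking a $\phi$-invariant integral lattice, where weak admissibility provides the slope bounds ensuring finite $E(u)$-height.

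\emph{Main obstacle.} The principal subtlety is that Kisin's original arguments are formulated for $K$-filtered $(\phi,N)$-modules, whereas our $\ulD$ carries the more general Hodge-Pink lattice $\q$, which in general does not lie in the zero section of $\sH^{\rm ad}_{\phi,N,\mu}\to\sD^{\rm ad}_{\phi,N,\mu}$ (Theorem~\ref{ThmHnabla}). Moreover, by Remark~\ref{Rem3.8}, weak admissibility of $\q$ is strictly weaker than weak admissibility of the associated filtration $\CF^\bullet_\q$. The resolution is that Kisin's construction of $\Mfrak$ works directly at the level of the $(\phi,N_\nabla)$-module $\Mcal$: Theorem~\ref{ThmEquivDandM} provides a bijection between Hodge-Pink lattices and $(\phi,N_\nabla)$-modules, and the slope/admissibility conditions translate into conditions on $\Mcal$ (namely conditions on the $E(u)$-height of a would-be integral model) that do not require $\q$ to come from a filtration. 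Thus one obtains the equivalence for general Hodge-Pink lattices by following Kisin's argument applied to the pair $(\Mcal,\Phi_\Mcal)$ and reading off the weak admissibility of $\ulD$ directly from the HP-lattice definition of $t_H$.
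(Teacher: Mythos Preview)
Your Stage~1 is correct and matches the paper exactly: the inclusion of open substacks follows from the $L$-point equality via Lemma~\ref{maxmod}.

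Your Stage~2, however, misses the key technical tool that the paper uses. The paper does \emph{not} argue by directly comparing Newton and Hodge slopes of sub-$(\phi,N)$-modules as you propose. Instead, the paper shows that under the equivalence of Theorem~\ref{ThmEquivDandM}, weak admissibility of $\ulD$ translates into the condition that $\ul\Mcal(\ulD)$ becomes \emph{pure of slope zero} over the Robba ring, in the sense of Kedlaya's slope filtration theory \cite{Kedlaya}. The argument then proceeds as in \cite[Theorem~1.3.8]{crysrep}: one checks that the functor $\ul\Mcal$ preserves slopes, and that Kedlaya's slope filtration on the base change of $\ul\Mcal(\ulD)$ to the Robba ring extends to all of $\ul\Mcal(\ulD)$ (compare \cite[Proposition~1.3.7]{crysrep}). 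The equivalence ``\'etale $\Leftrightarrow$ pure of slope zero'' is then Kedlaya's theorem.

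Your direct approach has genuine gaps in both directions. For ``$\text{int}\Rightarrow\text{wa}$'', the existence of an integral lattice $\Mfrak$ does not by itself give you the inequality $t_N(\ulD')\le v_L(p)^{t_H(\ulD')}$ for every $(\Phi,N)$-stable subobject $\ulD'$; you would need to produce compatible integral lattices in all sub- and quotient-objects, and this is precisely what Kedlaya's slope theory organizes. (Also, the phrase ``semi-stable $\sG_{K_\infty}$-representation'' is not meaningful here; Breuil--Kisin modules give $\sG_{K_\infty}$-representations, full stop.) For ``$\text{wa}\Rightarrow\text{int}$'', the sentence ``one constructs the required $\Mfrak$ by taking a $\phi$-invariant integral lattice, where weak admissibility provides the slope bounds'' is exactly the hard step; without passing through the Robba ring and invoking Kedlaya's theorem that slope-zero modules are \'etale, there is no mechanism that produces such a lattice from the slope inequalities alone.

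Your ``Main obstacle'' paragraph correctly identifies that Kisin's original statements are phrased for filtrations rather than Hodge--Pink lattices, and that the resolution is to work directly at the level of $(\Mcal,\Phi_\Mcal)$. This is right, and it is what the paper does---but the actual content of ``following Kisin's argument'' is the Robba-ring slope analysis, not a direct Newton--Hodge slope comparison.
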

\begin{proof}
We show that being weakly admissible translates into being pure of slope zero over the Robba ring (in the sense of \cite{Kedlaya}) under the equivalence of categories from Theorem~\ref{ThmEquivDandM}.
However, the proof is the same as in \cite[Theorem 1.3.8]{crysrep}. One easily verifies that the functor $\underline{\Mcal}$ preserves the slope and that the slope filtration on the base change of $\underline\Mcal(D,\Phi,N,\Fq)$ to the Robba ring extends to all of $\underline\Mcal(D,\Phi,N,\Fq)$. Compare \cite[Proposition 1.3.7]{crysrep}.

As in \cite[Theorem 7.6 (ii)]{families} the second part is now a consequence of the fact that $\sH_{\phi,N,\preceq\mu}^{\rm ad, wa}\subset \sH_{\phi,N,\preceq\mu}^{\rm ad}$ is the maximal open subspace whose rigid analytic points are exactly the weakly admissible ones, see Lemma \ref{maxmod}.
\end{proof}

Pappas and Rapoport \cite[5.b]{phimod} define a \emph{period morphism} from a stack of integral data to a stack of filtered $\phi$-modules as follows. 
Let $d>0$ and let $\mu:\Gbb_{m, \ol\Q_p}\to \wt T_{\ol\Q_p}$ be a cocharacter as in \eqref{mu}. 
Pappas and Rapoport \cite[3.d]{phimod} define an fpqc-stack $\hat\Ccal_{\mu,K}$ on the category ${\rm Nil}_{\CO_{E_\mu}}$ of schemes over the ring of integers $\CO_{E_\mu}$ of $E_\mu$ on which $p$ is locally nilpotent. If $R$ is an $\CO_{E_\mu}$-algebra, we set $R_W=R\otimes_{\Z_p}W$ and denote by $\phi:R_W\dpl u\dpr\rightarrow R_W\dpl u\dpr$ the ring homomorphism that is the identity on $R$, the $p$-Frobenius on $W$ and that maps $u$ to $u^p$. Now the $R$-valued points of the stack $\hat\Ccal_{\mu,K}$ are given by a subset
\[\hat\Ccal_{\mu,K}(R)\subset\{\Mfrak,\;\Phi\colon\phi^\ast \Mfrak[1/u]\isoto\Mfrak[1/u]\}\]
where $\Mfrak$ is an $R_W\dbl u\dbr=(R\otimes_{\Z_p}W)\dbl u\dbr$-module that is fpqc-locally on $\Spec R$ free as an $R_W\dbl u\dbr$-module of rank $d$.  This subset is cut out by a condition prescribing the relative position of $\Phi(\phi^\ast\Mfrak)$ with respect to $\Mfrak$ at the locus $E(u)=0$ in terms of the cocharacter $\mu$, see \cite[3.c,d]{phimod} for the precise definition.

If $\mu$ is minuscule they define a \emph{period map}
\[ \Pi(\Xcal):\hat\Ccal_{\mu,K}(\Xcal)\longrightarrow \sD^{\rm ad}_{\phi,\mu}(\Xcal^{\rm rig}),
\]
see \cite[(5.37)]{phimod}. Note that $\hat\Ccal_{\mu,K}$ is a substack of $\hat\Ccal_{d,K}$ of loc.cit.~if and only if $\mu$ is minuscule. Moreover, the period morphism of loc. cit.~maps the closed substack $\hat\Ccal_{\mu,K}$ to the corresponding closed substack $\sD^{\rm ad}_{\phi,\mu}$ of their target $\sD_{d,K}$.

If $\mu$ is not miniscule we can not hope for a period map with target $ \sD^{\rm ad}_{\phi,\mu}$. 
However, if we replace the target by  $\sH^{\rm ad}_{\phi,\preceq\mu}$, then we can again define a period map as follows (note that $ \sD^{\rm ad}_{\phi,\mu}= \sH^{\rm ad}_{\phi,\preceq\mu}$ if $\mu$ is miniscule). Let $R$ be a $p$-adically complete $\CO_{E_\mu}$-algebra topologically of finite type over $\CO_{E_\mu}$ and let $(\Mfrak,\Phi)\in\hat\Ccal_{\mu,K}(\Spf R)$.  
The construction of section \ref{VBonopenunitdisc} associates to
 \begin{equation}\label{vbonU}
 (\Mcal,\Phi_\Mcal)=(\Mfrak,\Phi)\otimes_{R_W\dbl u\dbr}\sB^{[0,1)}_{\Spa(R[1/p],R)}
 \end{equation}
 a $\phi$-module with Hodge-Pink lattice over $\Spa(R[1/p],R)$. Given a formal scheme $\Xcal$ locally topologically of finite type over $\CO_{E_\mu}$, this yields a period functor
 \begin{equation}\label{periodmap}
 \Pi(\Xcal):\hat\Ccal_{\mu,K}(\Xcal)\longrightarrow \sH^{\rm ad}_{\phi,\preceq\mu}(\Xcal^{\rm rig}) , 
 \end{equation}
 where $\Xcal^{\rm rig}$ denotes the generic fiber of the formal scheme $\Xcal$ in the sense of rigid geometry (or in the sense of adic spaces).
We point out that we can not define a period map mapping to $\sD^{\ad}_{\phi,\mu}$ if $\mu$ is not miniscule, as the family of vector bundles on the open unit disc defined by $(\ref{vbonU})$ is not necessarily associated to a filtered $\phi$-module: the monodromy operator $N^\Mcal_\nabla$ is not necessarily holomorphic. 
When $\CX=\Spf \CO_L$ for a finite field extension $L$ of $E_\mu$, it was shown by Genestier and Lafforgue \cite[Th\'eor\`eme~0.6]{GL12} that $\Pi(\Spf \CO_L)\otimes_{\BZ_p}\BQ_p$ is fully faithful, and surjective onto $\sH^{\rm ad,wa}_{\phi,\preceq\mu}(L)=\sH_{\phi,\preceq\mu}^{\rm ad, int}(L)$.

  \begin{rem}\label{periodandN}
 From the point of view of Galois representations it is not surprising that we can not define a general period morphism using filtered $\phi$-modules. If $R$ is finite over $\CO_{E_\mu}$, then the points of $\hat\Ccal_{\mu,K}(R)$ correspond to $\sG_{K_\infty}$-representations rather than to $\sG_K$-representations. This also explains why the target of the period map is $\mathscr{H}^\ad_{\phi,\preceq\mu}$ instead of $\mathscr{H}^{\ad}_{\phi,N,\preceq\mu}$: the $\sG_{K_\infty}$-representation does not see the monodromy. 
 \end{rem}

If we want to take the monodromy into account we have to consider a stack $\hat \Ccal_{ \mu, N, K}$ whose $\Xcal$-valued points are given by $(\Mfrak,\Phi,N)$ with $(\Mfrak,\Phi)\in \hat\Ccal_{ \mu,K}(\Xcal)$ and $N:\Mfrak/u\Mfrak\rightarrow \Mfrak/u\Mfrak$ satisfying 
 \begin{equation}\label{NPhi=pPhiN}
 N\circ \ol\Phi(\ddd)=p\cdot\ol\Phi(\ddd)\circ N.
 \end{equation}
 Here $(\Mfrak(n),\Phi(n))=(\Mfrak,\Phi)\otimes_{W\dbl u\dbr} \bA^{[0,1)}(n)$ is the twist of $(\Mfrak,\Phi)$ with the object $\bA^{[0,1)}(n)$ defined before Definition~\ref{DefEtale} 
and $n\gg 0$ is some integer such that $\Phi(\ddd)(\phi^\ast\Mfrak)\subset \Mfrak$ and $\ol\Phi$ denotes the reduction of $\Phi$ modulo $u$.
  Note that given $\mu$ we may choose an $\ddd$ like that for all $(\Mfrak,\Phi)\in \hat\Ccal_{\mu,K}(\Xcal)$ and the map $\ol\Phi$ (and hence the equation $(\ref{NPhi=pPhiN})$) makes sense after this twist. Further the condition defined by $(\ref{NPhi=pPhiN})$ is independent of the chosen $\ddd$. 
 \begin{rem}
 \noindent (i) Using \eqref{EqDefHodgeWtsB} we observe that if $\mu_{\psi,d}\ge 0$ for all $\psi$, and if $L$ is a finite extension of $E_\mu$, a $\Spf \Ocal_L$-valued point of the stack $ \hat\Ccal_{\mu,N,K}$ gives rise to an object of the category ${\rm Mod}_{/\mathfrak{S}}^{\phi,N}$ in the sense of Kisin \cite[(1.3.12)]{crysrep}. We only use the twist in order to define the stack in the general case (i.e. if $\Phi(\phi^\ast\Mfrak)$ is not contained in $\Mfrak$). Kisin's definition takes place in the generic fiber. However, we can not use this as a good definition as our stack is defined for $p$-power torsion objects. 
 
\smallskip \noindent 
(ii) Note that we do not know much about the stack $\hat\Ccal_{\mu,N,K}$ and its definition is rather ad hoc. Especially we doubt that it is flat over $\Spf \Z_p$. 
 This means that  there is no reason to expect that we can reconstruct Kisin's semi-stable deformation rings \cite{Kisindeform} by using a similar construction as in \cite[\S\,4]{phimod}.
  \end{rem}
 In this general case described above we obtain a similar period morphism
 \begin{equation}\label{periodmorphwithN}
 \hat\Ccal_{\mu,N,K}(\Xcal)\longrightarrow \sH^{\rm ad}_{\phi, N ,\preceq\mu}(\Xcal^{\rm rig}).
 \end{equation}

As in \cite[Theorem 7.8]{families} the above allows us to determine the image of the period morphism. 
Recall that a valued field $(L,v_L)$ over $\Q_p$ is called \emph{of $p$-adic type} if it is complete, topologically finitely generated over $\Q_p$ and if for all $f_1,\dots, f_m\in L$ the closure of $\Q_p[f_1\dots, f_m]$ inside $L$ is a Tate algebra, i.e.\ the quotient of some $\Q_p\langle T_1,\dots, T_{m'}\rangle$.

\begin{cor}\label{CorImagePeriodMap}
The substack $\sH^{\rm ad, int}_{\phi,N,\preceq\mu}$ is the image of the period morphism $(\ref{periodmorphwithN})$ in the following sense:\\
\noindent {\rm (i)} If $\Xcal$ is a $p$-adic formal scheme and $(\Mfrak,\Phi,N)\in \hat\Ccal_{\mu,N,K}(X)$, then $\Pi(\Xcal)(\Mfrak,\Phi,N)\in \sH^{\rm ad, int}_{\phi,N,\preceq\mu}(\Xcal^{\rm rig})$.\\
\noindent {\rm (ii)} Let $L$ be a field of $p$-adic type over $E_\mu$ and $(D,\Phi,N,\q)\in \sH_{\phi,N,\preceq\mu}(L)$. Then there exists $(\Mfrak,\Phi,N)\in \hat\Ccal_{\mu,N,K}(\Spf\, L^+)$ such that $\Pi(\Spf\,L^+)(\Mfrak,\Phi,N)=(D,\Phi,N,\q)$ if and only if
\[\underline{\Mcal}(D)=\Mfrak\otimes_{L^+_W\dbl u\dbr}\sB_L^{[0,1)}. \]
is \'etale, if and only if  $\Spa(L,L^+)\rightarrow \sH^{\rm ad}_{\phi,N,\preceq\mu}$ factors over $\sH_{\phi,N,\preceq\mu}^{\rm ad, int}$.\\
\noindent {\rm (iii)} Let $X\in \Ad^{\rm lft}_{E_\mu}$ and let $f:X\rightarrow \sH_{\phi,N,\preceq\mu}^{\ad}$ be a morphism defined by $(D,\Phi,N,\q)$. Then $f$ factors over $\sH^{\rm ad, int}_{\phi,N,\preceq\mu}$ if and only if there exists a fpqc-covering $(U_i\rightarrow X)_{i\in I}$  and formal models $\Ucal_i$ of $U_i$ together with $(\Mfrak_i,\Phi_i,N)\in \hat\Ccal_{\mu,N,K}(\Ucal_i)$ such that $\Pi(\Ucal_i)(\Mfrak_i,\Phi_i,N)=(D,\Phi,N,\q)|_{U_i}$. 
\end{cor}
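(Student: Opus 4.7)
The strategy is to exploit Corollary~\ref{Cor4.7}, which identifies $\sH^{\rm ad, int}_{\phi,N,\preceq\mu}$ with the locus where the associated $(\phi,N_\nabla)$-module $\ul\Mcal(D,\Phi,N,\q)$ is \'etale in the sense of Definition~\ref{DefEtale}, combined with the fact that \'etaleness may be checked fpqc-locally by \cite[Proposition 6.14]{families}. Statement (iii) will follow by combining (i) with the fpqc-local version of the integral model extraction.

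For (i), the period morphism assigns to $(\Mfrak,\Phi,N) \in \hat\Ccal_{\preceq\mu,N,K}(\Xcal)$ the $(\phi,N_\nabla)$-module $\Mcal := \Mfrak \otimes_{R_W\dbl u\dbr} \sB_{\Xcal^{\rm rig}}^{[0,1)}$ together with the $(\phi,N)$-module with Hodge-Pink lattice obtained via the equivalence of Theorem~\ref{ThmEquivDandM}. The boundedness condition imposed in $\hat\Ccal_{\preceq\mu,K}$ on the relative position of $\Phi(\phi^*\Mfrak)$ with respect to $\Mfrak$ translates directly into the boundedness by $\mu$ of the Hodge-Pink lattice via Corollary~\ref{CorEquivDandM}, so the image lies in $\sH^{\rm ad}_{\phi,N,\preceq\mu}(\Xcal^{\rm rig})$. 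Moreover, after twisting by some $\ddd \geq 0$ chosen so that $\Phi(\ddd)$ preserves $\Mfrak(\ddd)$, the $\phi$-module of finite height $\Mfrak(\ddd)$ over $\sA_{\Xcal^{\rm rig}}^{[0,1)}$ realizes $\Mcal(\ddd)$ as \'etale in the sense of Definition~\ref{DefEtale}(i); Corollary~\ref{Cor4.7} yields the desired factorization.

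For the converses in (ii) and (iii), suppose $f \colon X \to \sH^{\rm ad}_{\phi,N,\preceq\mu}$ factors through the integral locus. By Corollary~\ref{Cor4.7} the $(\phi,N_\nabla)$-module $\Mcal(\ddd) = \ul\Mcal(D,\Phi,N,\q)(\ddd)$ is \'etale for some $\ddd \geq 0$, so fpqc-locally on $X$ we obtain a $\phi$-module of finite height $\Mfrak' \subset \Mcal(\ddd)$ over $\sA_{U_i}^{[0,1)}$. In case (ii), where $X = \Spa(L,L^+)$ with $L$ of $p$-adic type, Theorem~\ref{etale} and \cite[Proposition 6.14]{families} together with the noetherianity of $L^+_W \dbl u \dbr$ provide such a model directly over $\Spf L^+$ without passing to a cover. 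In case (iii), choosing a formal model $\Ucal_i$ of $U_i$ and descending $\Mfrak'$ from $\sA_{U_i}^{[0,1)}$ to an $\CO_{\Ucal_i} \wh\otimes_{\Z_p} W \dbl u \dbr$-module gives the required $(\Mfrak_i,\Phi_i)$; untwisting and checking that $\Pi(\Ucal_i)(\Mfrak_i,\Phi_i,N) = (D,\Phi,N,\q)|_{U_i}$ then follows from the uniqueness of the reconstruction in Theorem~\ref{ThmEquivDandM}.

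The main obstacle is the promotion of $(\Mfrak,\Phi)$ to an element of $\hat\Ccal_{\preceq\mu, N, K}$, i.e.\ to produce a monodromy operator $N$ on $\Mfrak/u\Mfrak$ satisfying the commutation~\eqref{NPhi=pPhiN} and recovering the given $N$ on $D = \Mfrak/u\Mfrak[1/p]$. Since $N$ on $D$ is the reduction modulo $u$ of the differential operator $N_\nabla^\Mcal$ on $\Mcal$ (see the proof of Theorem~\ref{ThmEquivDandM}), the plan is to replace $\Mfrak$ by the smallest $N$-stable lattice $\sum_{i\geq 0} N^i(\Mfrak)$, which is a finite sum by the nilpotence established in Lemma~\ref{LemmaNnilpot}\ref{LemmaNnilpotB}. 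The twisted commutation $N \ol\Phi(\ddd) = p \ol\Phi(\ddd) N$, the integrality of $\Phi(\ddd)$, and the unit $p/E(0) \in W^\times$ jointly control the denominators introduced in this procedure, ensuring that the enlarged lattice remains a $\phi$-module of finite height and that the boundedness by $\mu$ is preserved, in the spirit of the integrality arguments in Kisin~\cite[\S\,1.3]{crysrep}.
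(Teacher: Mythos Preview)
Your overall strategy---reducing to Corollary~\ref{Cor4.7} via the equivalence in Theorem~\ref{ThmEquivDandM}, and invoking \cite[Proposition~6.14]{families} for fpqc-locality of \'etaleness---is correct and is precisely what the paper intends: the paper's proof is nothing more than the sentence ``As in \cite[Theorem~7.8]{families} the above allows us to determine the image of the period morphism.'' Parts~(i) and the forward implications in (ii) and (iii) go through exactly as you outline.

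The gap is in your treatment of the monodromy for the converse directions of (ii) and (iii). The expression $\sum_{i\ge 0} N^i(\Mfrak)$ is not well-defined: the operator $N$ lives on $D=(\Mfrak/u\Mfrak)[1/p]$, not on $\Mfrak$ itself. Interpreting your construction charitably as replacing the lattice $\Lambda_0=\Mfrak/u\Mfrak\subset D$ by $\Lambda=\sum_i N^i\Lambda_0$, two problems remain. First, you must lift $\Lambda$ back to an $L^+_W\dbl u\dbr$-lattice $\Mfrak'\subset\Mcal$ that is still a $\phi$-module of finite height, i.e.\ with $\Phi(\phi^*\Mfrak')$ and $\Mfrak'$ commensurable through powers of $E(u)$ and not merely of $p$; a naive change-of-basis $\Mfrak'=g\cdot\Mfrak$ with $g\in\GL_d(L\otimes K_0)$ does not achieve this in general. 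Second, the commutation you invoke runs the wrong way: from $N\ol\Phi(\ddd)=p\,\ol\Phi(\ddd)\phi^*N$ one gets $\ol\Phi(\ddd)(\phi^*\Lambda)=\sum_i p^{-i}N^i\ol\Phi(\ddd)(\phi^*\Lambda_0)$, so the denominators grow rather than being controlled. The reference to \cite[\S\,1.3]{crysrep} is the right place to look, but Kisin's argument there (for $L/\BQ_p$ finite) relies on a canonical choice of $\Mfrak$ and does not transfer by the mechanism you sketch; the paper itself does not spell this step out either.
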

\begin{rem}
If we consider the period morphism without monodromy, then we obtain a similar characterization of the stack $\sH^{\rm ad, int}_{\phi,\preceq \mu}\subset \sH^{\rm ad}_{\phi,\preceq \mu}$ as the image of the period morphism $(\ref{periodmap})$.
\end{rem}
\section{Sheaves of period rings and the admissible locus}
We recall the definition of some sheafified period rings from \cite{families}. In doing so we will also correct mistakes in loc.~cit.~(in particular the proofs of Corollary 8.8, the definition of a family of crystalline representations, and the proof of Proposition 8.24 in \cite{families}).
 
Let $R=\lim\limits_{\longleftarrow}\,\Ocal_{\C_p}/p\Ocal_{\C_p}$ be the inverse limit with transition maps given by the $p$-th power. Given a 
reduced $p$-adically complete $\Z_p$-algebra $A^+$ topologically of finite type, we define 
\[A^+\wh\otimes_{\Z_p} W(R)=\lim_{\longleftarrow\,i} A^+\wh\otimes_{\Z_p}W_i(R),\]
where the completed tensor product on the right hand side means completion with respect to the canonical topology on the truncated Witt vectors $W_i(R)$ and the discrete topology on $A^+/p^iA^+$.

If $X$ is a reduced adic space locally of finite type over $\Q_p$, then there are sheaves $\Ocal_X^+\wh\otimes W(R)$ and $\Ocal_X\wh\otimes W(R)$ whose sections over an affinoid open $U=\Spa(A,A^+)\subset X$ are given by
\begin{align*}
\Gamma(U,\Ocal_X^+\wh\otimes W(R))&=A^+\wh\otimes_{\Z_p} W(R)\\
\Gamma(U,\Ocal_X\wh\otimes W(R))&=\big (A^+\wh\otimes_{\Z_p} W(R)\big)[\tfrac{1}{p}].
\end{align*} 
In the same fashion we can define sheaves of topological rings $\Ocal_X^+\wh\otimes W(\Frac R)$ and $\Ocal_X\wh\otimes W(\Frac R)$.

Let $\bA^{[0,1)}=W\dbl u\dbr$ and let $\bA$ denote the $p$-adic completion of $W\dpl u\dpr$.  Further let $\bB=\bA[1/p]$. We fix an element $\pi^\flat=(\pi_n)_n\in R$ with $\pi_0=\pi$. Depending on this element there are embeddings of $\bA^{[0,1)}$, $\bA$ and $\bB$ into $W({\rm Frac}\,R)[1/p]$ sending $u$ to the Teichm\"uller representative $[\pi^\flat]\in W(R)$ of $\pi^\flat$.
We write $\wt\bA$ for the ring of integers in the completion $\wt\bB$ of the maximal unramified extension of $\bB$ inside $W({\rm Frac}\,R)[1/p]$. Finally we set $\wt\bA^{[0,1)}=\wt\bA\cap W(R)\subset W({\rm Frac}\, R)$.
All these rings come along with a Frobenius endomorphism $\phi$ which is induced by the canonical Frobenius on $W({\rm Frac}\,R)$. 
Note that all these rings have a canonical topology induced from the one on $W({\rm Frac}\,R)$. 

\begin{rem}
We warn (and apologize to) the reader that the notations used in this paragraph do often not agree with the notations that are nowadays standard in $p$-adic Hodge theory. However, we often refer to \cite{families} and it seems to cause less confusion using the notations used there.
\end{rem}

We define sheafified versions of these rings as follows, compare \cite[8.1]{families}. Let $X$ be a reduced adic space locally of finite type over $\Q_p$.  We define the sheaves $\sA_X$, resp.\ $\wtsA_X$, resp.\ $\sA_X^{[0,1)}$, resp.\ $\wtsA_X^{[0,1)}$ by specifying their sections on open affinoids $U=\Spa(A,A^+)\subset X$: we define $\Gamma(U,\sA_X)$, resp.\ $\Gamma(U,\wtsA_X)$, resp.\ $\Gamma(U,\sA_X^{[0,1)})$, resp.\ $\Gamma(U,\wtsA_X^{[0,1)})$ to be the closure (with respect to the natural, i.e.~$(p, [{\pi}^\flat])$-adic, topology) of $A^+\otimes_{\Z_p}\bA$, resp.\ $A^+\otimes_{\Z_p}\wt\bA$, resp.\ $A^+\otimes_{\Z_p}\bA^{[0,1)}$, resp.\ $A^+\otimes_{\Z_p}\wt\bA^{[0,1)}$ in $\Gamma(U,\Ocal_X^+\wh\otimes W({\rm Frac}\,R))$.

Further we consider the rational analogues $\sB_X$, resp.\ $\wt\sB_X$, resp.\ $\sB_X^{[0,1)}$, resp.\ $\wt\sB_X^{[0,1)}$ of these sheaves given by inverting $p$ in $\sA_X$, resp.\ $\wtsA_X$, resp.\ $\sA_X^{[0,1)}$, resp.\ $\wtsA_X^{[0,1)}$. 

Finally we recall the construction of the sheaf $\Ocal_X\wh\otimes B_{\rm cris}$ from \cite[8.1]{families}. For a reduced adic space $X$ the map $\theta:W(R)\rightarrow \Ocal_{\mathbb{C}_p}$ given by $[(x,x^{1/p},x^{1/p^2},\dots)]\mapsto x$ extends to an $\Ocal_X^+$-linear map 
\[\theta_X:\Ocal_X^+\wh\otimes W(R)\rightarrow \Ocal_X^+\wh\otimes \Ocal_{\mathbb{C}_p},\]
where the completed tensor product denotes the $p$-adic completion. We define $\Ocal_X^+\wh\otimes A_{\rm cris}$ to be the $p$-adic completion of the divided power envelope of $\Ocal_X^+\wh\otimes W(R)$ with respect to the kernel of $\theta_X$. We claim that $\Ocal_X^+\wh\otimes A_{\rm cris}$ equals the $p$-adic completion of the tensor product $\Ocal_X^+\otimes_{\BZ_p} A_{\rm cris}$. Namely, the kernel of $\theta_X$ is generated by the kernel of $\theta$. The latter in turn is generated by $p-[p^\flat]$, where $p^\flat=(x_n)_n\in R$ with $x_0=p$ and $[\,.\,]$ denotes the Teichm\"uller lift. Therefore, the divided power envelope is constructed by adjoining $(p-[p^\flat])^n/n!$ for all $n\in\BN$, and this proves our claim. Finally we set
\begin{align*}
\Ocal_X\wh\otimes B^+_{\rm cris}&=\big(\Ocal_X^+\wh\otimes A_{\rm cris}\big)[1/p],
\\
\Ocal_X\wh\otimes B_{\rm cris}&=\big(\Ocal_X\wh\otimes B^+_{\rm cris}\big)[1/t], 
\\
\Ocal_X\wh\otimes B_{\rm st}^+&=\big(\Ocal_X\wh\otimes B_{\rm cris}^+\big)[\ell_u],
\\
\Ocal_X\wh\otimes B_{\rm st}&=\big(\Ocal_X\wh\otimes B_{\rm cris}\big)[\ell_u].
\end{align*}
Here $t={\rm log}[(1,\epsilon_1,\epsilon_2,\dots )]\in B_{\rm cris}$ is the period of the cyclotomic character (where $(\epsilon_i)$ is a compatible system of $p^i$-th roots of unity) and $\ell_u$ is an indeterminate thought of as a formal logarithm of $[\pi^\flat]$.
\begin{rem} 
The indeterminate $\ell_u$ considered here is the same indeterminate as in section 2.2.(b) and we identify both indeterminates. That is, the inclusion $\mathbf{B}^{[0,1)}\subset B^+_{\rm cris}$ given by $u\mapsto [\pi^\flat]$ will be extended to $\mathbf{B}^{[0,1)}[\ell_u]\hookrightarrow B^+_{\rm st}$ by means of $\ell_u\mapsto \ell_u$ and similarly for the sheafified versions. 
\end{rem}

\begin{lem}\label{finitebc}
Let $Y=\Spa(B,B^+)$ be an reduced adic space that is finite over $X=\Spa(A,A^+)$. Then we have canonical isomorphisms 
\begin{align*}
 \wt\sB_Y &\cong  \wt\sB_X\otimes_{\Ocal_X}\Ocal_Y & \sB_Y^{[0,1)}&\cong \sB_X^{[0,1)}\otimes_{\Ocal_X}\Ocal_Y \\
\Ocal_Y\wh\otimes B^+_{\rm cris}&\cong (\Ocal_X\wh\otimes B^+_{\rm cris})\otimes_{\Ocal_X}\Ocal_Y  & \Ocal_Y\wh\otimes B_{\rm cris}&\cong (\Ocal_X\wh\otimes B_{\rm cris})\otimes_{\Ocal_X}\Ocal_Y \\
 \Ocal_Y\wh\otimes B^+_{\rm st}&\cong (\Ocal_X\wh\otimes B^+_{\rm st})\otimes_{\Ocal_X}\Ocal_Y  & \Ocal_Y\wh\otimes B_{\rm st}&\cong (\Ocal_X\wh\otimes B_{\rm st})\otimes_{\Ocal_X}\Ocal_Y
\end{align*} 
\end{lem}
\begin{proof}
This is a direct consequence of the construction (and the fact that we do not have to complete tensor products with finitely generated modules). 
\end{proof}

We can consider these sheaves also on non-reduced spaces by locally embedding $X$ into a reduced space $Y$ and restricting the corresponding sheaves from $Y$ to $X$, i.e.~by applying $-\otimes_{\Ocal_Y}\Ocal_X$. Thanks to the above lemma, the sheaves like $\Ocal_X\wh\otimes B_{\rm cris}^+$ then do not depend on the choice of an embedding.
With this definition the claim of Lemma \ref{finitebc} also holds true for non-reduced adic spaces. 
\begin{rem}
For non-reduced spaces we make this slightly involved definition for the following reason: the construction of rings like $A_{\rm cris}$ involves a $p$-adic completion. But the rings of integral elements $A^+$ for an adic space $\Spa(A,A^+)$ (i.e.~the power bounded elements in $A$) are not $p$-adically complete: their $p$-adic completion would kill the nilpotent elements!
\end{rem}

On $\Ocal_X\wh\otimes B_{\rm cris}$ there is a canonical Frobenius $\phi$ induced by the Frobenius on $\Ocal_X^+\wh\otimes W(R)$. 
This endomorphism extends to a morphism 
\[\phi:\Ocal_X\wh\otimes B_{\rm st}\longrightarrow \Ocal_X\wh\otimes B_{\rm st},\]
where $\phi(\ell_u)=p\ell_u$. Further $N=\tfrac{d}{d\ell_u}$ defines an endomorphism of $\Ocal_X\wh\otimes B_{\rm st}$ which satisfies $N\phi=p\phi N$. 

Finally the continuous $\sG_K$-action on $\Ocal_X^+\wh\otimes W(R)$ extends to $\Ocal_X\wh\otimes B_{\rm cris}$ and we further extend this action to $\Ocal_X\wh\otimes B_{\rm st}$ by means of $\gamma\cdot \ell_u= \ell_u+c(\gamma)t$, where $c:\sG_K\rightarrow \Z_p$ is defined by $\gamma(\pi_n)=\pi_n\cdot(\epsilon_n)^{c(\gamma)}$ for all $n\ge0$.

\begin{lemma}\label{LemmaPowerSeriesExp}
Let $Y=\Spa(A,A^+)$ be an adic space locally of finite type over $\BQ_p$. 
\begin{enumerate}
\item \label{LemmaPowerSeriesExp_B}
Let $g\in \Gamma(Y,\Ocal_Y\wh\otimes B_{\rm cris}^+)$. Then $g\in \Gamma(Y,\CO_Y)\subset\Gamma(Y,\Ocal_Y\wh\otimes B_{\rm cris}^+)$ if and only if for every quotient $A\twoheadrightarrow A'$ onto a finite dimensional $\BQ_p$-algebra $A'$ the element 
\[
g\otimes 1 \in \Gamma(Y,\Ocal_Y\wh\otimes B_{\rm cris}^+)\otimes_A A'\cong A'\otimes B_{\rm cris}^+
\]
actually lies in $A'\subset A'\otimes_{\Q_p}B_{\rm cris}^+$.
\item \label{LemmaPowerSeriesExp_A}
Let $g\in \Gamma(Y,\wt\sB_Y)$. Then $g\in \Gamma(Y,\CO_Y)\subset\Gamma(Y,\wt\sB_Y)$ if and only if for every quotient $A\twoheadrightarrow A'$ onto a finite dimensional $\BQ_p$-algebra $A'$ the element
\[g\otimes 1\in \Gamma(\Spa(A',A'^+),\wt\sB_Y\otimes_A A')\cong A'\otimes_{\Q_p}\wt\bB\]
actually lies in $A'\subset A'\otimes_{\Q_p}\wt\bB$.
\item \label{LemmaPowerSeriesExp_C}
Assume that $A$ is reduced. Let $g\in \Gamma(Y,\wtsA^{[0,1)}_Y)$. Then $g\in \Gamma(Y,\sA^{[0,1)}_Y)\subset\Gamma(Y,\wtsA^{[0,1)}_Y)$ if and only if for every rigid analytic point $y\in Y$ the element $g(y):=g\otimes_{A^+}\kappa(y)^+\in \kappa(y)^+\wh\otimes_{\Z_p}\wt\bA^{[0,1)}$ actually lies in $\kappa(y)^+\wh\otimes_{\Z_p}\bA^{[0,1)}\subset \kappa(y)^+\wh\otimes_{\Z_p}\wt\bA^{[0,1)}$.
\end{enumerate}
\end{lemma}
Note that the identifications
\[
\Gamma(\Spa(A',A'^+),(\wt\sB_Y)\otimes_A A')\cong A'\otimes_{\Q_p}\wt\bB\qquad \text{and}\qquad \Gamma(Y,\Ocal_Y\wh\otimes B_{\rm cris}^+)\otimes_A A'\cong A'\otimes B_{\rm cris}^+
\] used in the formulation of the lemma are a direct consequence of Lemma \ref{finitebc} and the remark following it.

\begin{proof}
\ref{LemmaPowerSeriesExp_B} 
Clearly the condition is necessary. We now show that it is sufficient. Let us choose a closed immersion $Y=\Spa(A,A^+)\into X=\Spa(C,C^+)$ with $C$ a reduced Tate ring topologically of finite type over $\Q_p$. Then $C\onto A$ and our definitions imply that $A\wh\otimes B_{\rm cris}^+$ is the quotient of $C\wh\otimes B_{\rm cris}^+$ by the kernel of $C\to A$. We choose elements $b_i\in A_{\rm cris}$ with $b_0=1$ whose images $\bar b_i$ in $A_{\rm cris}/pA_{\rm cris}$ form an $\BF_p$-basis of $A_{\rm cris}/pA_{\rm cris}$. Recall that we remarked after the definition of $\Gamma(X,\Ocal_X\wh\otimes A_{\rm cris})$ that it equals the $p$-adic completion $C^+\wh\otimes_{\BZ_p}A_{\rm cris}$ of $C^+\otimes_{\BZ_p}A_{\rm cris}$. We start with the following

\medskip\noindent
{\itshape Claim.}
For every element $c\in C^+\wh\otimes_{\BZ_p}A_{\rm cris}$ there are uniquely determined elements $a_i\in C^+$ for $i\in I$ such that for every $n\in\BN$ the set $\{\,i\in I\colon a_i\notin p^n C^+\,\}$ is finite and $c=\sum_{i\in I}a_i\otimes b_i$ in $C^+\wh\otimes_{\BZ_p}A_{\rm cris}$.

\medskip\noindent
To establish the claim one proves by induction that for every $n$ there are elements $a_{i,n}\in C^+$ for all $i\in I$, only finitely many of which are non-zero, such that $c-\sum_{i\in I}a_{i,n}\otimes b_i\in p^n C^+\wh\otimes_{\BZ_p}A_{\rm cris}$ and such that $a_{i,n}-a_{i,n-1}\in p^{n-1} C^+$. Namely, for $n=0$ one can take $a_{i,0}=0$ for all $i\in I$. In the induction step from $n$ to $n+1$ one considers an element $c'\in C^+\wh\otimes_{\BZ_p}A_{\rm cris}$ with $c-\sum_{i\in I}a_{i,n}\otimes b_i=p^n c'$. Then the image of $c'$ in $C^+\wh\otimes_{\BZ_p}A_{\rm cris}/(p)=C^+/p C^+\otimes_{\BF_p} A_{\rm cris}/p A_{\rm cris}$ can be written as $\sum_i\bar\alpha_i\otimes \bar b_i$ with uniquely determined elements $\bar\alpha_i\in C^+/p C^+$ which are zero for all but finitely many $i$. After choosing lifts $\alpha_i\in C^+$, the elements $a_{i,n+1}:=a_{i,n}+p^n\alpha_i$ satisfy the assertion. Now taking $a_i$ as the limit of $a_{i,n}$ for $n\to\infty$ establishes the existence of the $a_i\in C^+$.

To prove the uniqueness, we must show that $\sum_{i\in I}a_i\otimes b_i=0$ implies $a_i=0$ for all $i$. It suffices to show that $a_i\in p^n C^+$ for all $n$ and $i$. This follows by induction on $n$, trivially starting with $n=0$. If it holds for some $n$, we can write $a_i=p^na'_i$ for $a'_i\in C^+$. Then $p^n\cdot\sum_ia'_i\otimes b_i=\sum_i a_i\otimes b_i=0$, and hence $\sum_ia'_i\otimes b_i=0$, because $C^+\wh\otimes_{\BZ_p}A_{\rm cris}$ has no $p$-torsion by \cite[Chapitre~III, \S\,5, no.~2, Th\'eor\`eme~1(v)]{BourbakiAlgCom} as $C^+$ and $A_{\rm cris}$ are flat over $\BZ_p$. Considering the images $\bar a'_i$ of $a'_i$ in $C^+/p C^+$, the equation $\sum_i\bar a'_i\otimes\bar b_i=0$ in $C^+\wh\otimes_{\BZ_p}A_{\rm cris}/(p)=C^+/p C^+\otimes_{\BF_p} A_{\rm cris}/p A_{\rm cris}$ implies that $\bar a'_i=0$ in $C^+/p C^+$, whence $a'_i\in p C^+$ and $a_i\in p^{n+1}C^+$ as desired. This establishes our claim.

Furthermore we note that this claim (and in particular the uniqueness part) also applies if we replace $C^+$ by a finite free $\Z_p$-algebra (that is not necessarily reduced). 
\smallskip

We lift $g$ to an element $\tilde g\in C\wh\otimes_{\BQ_p}A_{\rm cris}[1/p]$. After multiplying with a power of $p$ we can assume that $\tilde g\in C^+\wh\otimes_{\BZ_p}A_{\rm cris}$. By the claim we obtain uniquely determined elements $a_i\in C^+$ for all $i\in I$ with $\tilde g=\sum_i a_i\otimes b_i$ in $C^+\wh\otimes_{\BZ_p}A_{\rm cris}$. We show that $a_i\in {\rm ker}(C\rightarrow A)$ for all $i\ne0$ which obviously implies $g\in \Gamma(Y,\CO_Y^+)=A^+$.
\smallskip

As $C$ is noetherian the latter may be checked at completions $\hat C_{\mathfrak{m}}$ of $C$ with respect to maximal ideals $\mathfrak{m}$ of $C$. If the point defined by $\mathfrak{m}$ is not in $\Spa(A,A^+)\subset \Spa(C,C^+)$ this claim is obvious. Otherwise we consider the surjections $C\twoheadrightarrow A\twoheadrightarrow A/\mathfrak{m}^nA=A'$ onto the finite dimensional $\Q_p$-algebra $A'$, and let $A'^+$ denote the image of $C^+$ in $A'$. Then $A'^+$ is a finite $\Z_p$-algebra and we write $\bar a_i\in A'^+$ for the image of $a_i$. By what we noted above the expansion $\bar g=\sum \bar a_i\otimes b_i\in A'^+\hat\otimes_{\Z_p}A_{\rm cris}=A'^+\otimes_{\Z_p}A_{\rm cris}$ is unique and by assumption lies in $A'^+\subset A'^+\otimes_{\Z_p}A_{\rm cris}$. It follows that $\bar a_i=0$ for all $i\neq 0$.  
We have shown that $a_i$ for $i\neq 0$ vanishes in $A'=A/\mathfrak{m}^n$ for all $n$ and the $a_i$ for $i\neq 0$ vanish in $A_\mathfrak{m}$. 

\bigskip\noindent
\ref{LemmaPowerSeriesExp_A}, \ref{LemmaPowerSeriesExp_C} 
We denote the residue field of $W$ by $k$ and let $k'$ be either $k$ for proving \ref{LemmaPowerSeriesExp_C} or $\BF_p$ for proving \ref{LemmaPowerSeriesExp_A}. We view the residue field $k\dpl u\dpr^\sep=\wt\bA/p\wt\bA$ of $\wt\bA$ as a $k'\dpl u\dpr$-vector space. We denote the integral closure of $k'\dbl u\dbr$ in $k\dpl u\dpr^\sep$ by $k\dbl u\dbr^\sep$. It is a free $k'\dbl u\dbr$-module: we can write $k\dpl u\dpr^\sep$ as union of finite extensions $E_i$ of $k'\dpl u\dpr$, where $E_i\subset E_{i+1}$, then $k\dbl u\dbr^\sep$ is the increasing union of the rings of integers $\mathcal{O}_{E_i}$ which are free, and $\Ocal_{E_i}$ is a direct summand of $\Ocal_{E_{i+1}}$. Choosing the basis successively yields a basis for $k\dbl u\dbr^\sep$. 

We choose a $k'\dbl u\dbr$-basis $(\bar g_i)_{i\in I}$ of $k\dbl u\dbr^\sep$ with $\bar g_0=1$ and we lift the $\bar g_i$ to elements $g_i\in\wt\bA^{[0,1)}$ with $g_0=1$. 

We first prove \ref{LemmaPowerSeriesExp_C} and use $k'=k$. The image of $g$ in $\Gamma(Y,\wtsA^{[0,1)}_Y)/(p)=(A^+/pA^+\otimes_{\BF_p}k)\otimes_{k}k\dbl u\dbr^\sep$ can be written as $\sum_i\sum_{j=0}^\infty\bar\alpha_{i,j,0}\otimes u^j\bar g_i$ with uniquely determined elements $\bar\alpha_{i,j,0}\in A^+/pA^+\otimes_{\BF_p}k$ which are non-zero only for finitely many $i$ but possibly for all $j\ge0$. After choosing lifts $\alpha_{i,j,0}\in A^+\otimes_{\BZ_p}W$, the image of $\tfrac{1}{p}\cdot(g-\sum_{i,j}\alpha_{i,j,0}\otimes u^j g_i)$ in $\Gamma(Y,\wtsA^{[0,1)}_Y)/(p)$ can likewise be written as $\sum_{i,j}\bar\alpha_{i,j,1}\otimes u^j\bar g_i$ with uniquely determined elements $\bar\alpha_{i,j,1}\in A^+/pA^+\otimes_{\BF_p}k$. Note for this that $\Gamma(Y,\wtsA^{[0,1)}_Y)$ has no $p$-torsion by \cite[Chapitre~III, \S\,5, no.~2, Th\'eor\`eme~1(v)]{BourbakiAlgCom}, because $\wt\bA^{[0,1)}$ and $A^+$ are flat over $\BZ_p$. Continuing in this way, we obtain elements $\alpha_{i,j}:=\sum_{k=0}^\infty\alpha_{i,j,k}\,p^k\in A^+\otimes_{\BZ_p}W$ such that for every $n\ge1$ the equality $g=\sum_{i,j}\alpha_{i,j}\otimes u^j g_i$ holds in $\Gamma(Y,\wtsA^{[0,1)}_Y)/(p^n)$, although the sum does in general not converge in $\Gamma(Y,\wtsA^{[0,1)}_Y)$. 

The elements $\alpha_{i,j}$ are uniquely determined by $g$ because the equality $g=\sum_{i,j}\alpha_{i,j}\otimes u^j g_i$ in $\Gamma(Y,\wtsA^{[0,1)}_Y)/(p^n)$ shows that the images of $\alpha_{i,j}$ in $A^+\otimes_{\BZ_p}W/(p^n)$ are uniquely determined for every $n$. The uniqueness of the $\alpha_{i,j}$ then follows from the fact that $A^+\otimes_{\BZ_p}W$ is $p$-adically separated. We conclude that the element $g$ lies in $\Gamma(Y,\sA^{[0,1)}_Y)$ if and only if $\alpha_{i,j}=0$ whenever $i\ne0$ or $j<0$.

Now $g\otimes 1\in \kappa(y){}^+\otimes_{\Z_p}\bA^{[0,1)}$ implies that $\alpha_{i,j}\otimes 1=0$ in $\kappa(y){}^+\otimes_{\BZ_p}W$ whenever $i\ne0$ or $j<0$. If this holds for every rigid analytic point $y$, then $\alpha_{i,j}=0$ whenever $i\ne0$ or $j<0$, because $A^+\otimes_{\BZ_p}W$ is reduced. This implies $g\in \Gamma(Y,\sA^{[0,1)}_Y)$. 

\medskip\noindent
\ref{LemmaPowerSeriesExp_A}
Again the condition is necessary and we show that it is sufficient. Let us choose a closed immersion $Y=\Spa(A,A^+)\into X=\Spa(C,C^+)$ with $C$ a reduced Tate ring topologically of finite type over $\Q_p$. Then again our definitions imply that $\Gamma(Y,\wt\sB_Y)$ is the quotient of $\Gamma(X,\wt\sB_X)$ by the kernel of the epimorphism $C\onto A$.
We lift $g$ to an element $\tilde g\in C\wh\otimes_{\BQ_p}\wt\bB$. After multiplying with a power of $p$ we can assume that $\tilde g\in C^+\wh\otimes_{\BZ_p}\wt\bA$, where the complete tensor product denotes completion with respect to the $(p,u)$-adic topology. 

We use the elements $g_i\in \wt\bA^{[0,1)}\subset\wt\bA$ with $g_0=1$ from the proof of \ref{LemmaPowerSeriesExp_C} above (with $k'=\BF_p$), whose residues $\bar g_i\in k\dbl u\dbr^\sep\subset k\dpl u\dpr^\sep$ modulo $p$ form an $\BF_p\dbl u\dbr$-basis of $k\dbl u\dbr^\sep$, and hence also an $\BF_p\dpl u\dpr$-basis of $k\dpl u\dpr^\sep$. Then the image of $\tilde g$ in $C^+\wh\otimes_{\BZ_p}\wt\bA/(p)=C^+/pC^+\otimes_{\BF_p}k\dbl u\dbr^\sep$ can be written as $\sum_{i,j}\bar\alpha_{i,j,0}\otimes u^j\bar g_i$ with uniquely determined elements $\bar\alpha_{i,j,0}\in C^+/pC^+$ which are zero for all but finitely many $i$ and for $j\ll0$. After choosing lifts $\alpha_{i,j,0}\in C^+$, the image of $\tfrac{1}{p}\cdot(g-\sum_{i,j}\alpha_{i,j,0}\otimes u^j g_i)$ in $C^+\wh\otimes_{\BZ_p}\wt\bA/(p)$ can likewise be written as $\sum_{i,j}\bar\alpha_{i,j,1}\otimes u^j\bar g_i$ with uniquely determined elements $\bar\alpha_{i,j,1}\in C^+/pC^+$. Note for this that $C^+\wh\otimes_{\BZ_p}\wt\bA$ has no $p$-torsion by \cite[Chapitre~III, \S\,5, no.~2, Th\'eor\`eme~1(v)]{BourbakiAlgCom}, because $\wt\bA$ and $C^+$ are flat over $\BZ_p$. Continuing in this way, we obtain elements $\alpha_{i,j}:=\sum_{k=0}^\infty\alpha_{i,j,k}\,p^k\in C^+$ such that for every $n\ge1$ the equality $g=\sum_{i,j}\alpha_{i,j}\otimes u^j g_i$ holds in $C^+\wh\otimes_{\BZ_p}\wt\bA/(p^n)$, although the sum does in general not converge in $C^+\wh\otimes_{\BZ_p}\wt\bA$. The elements $\alpha_{i,j}$ are uniquely determined by $g$ by reasoning like in \ref{LemmaPowerSeriesExp_C} above. We conclude that the element $g$ lies in $C^+$ if and only if $\alpha_{i,j}=0$ whenever $(i,j)\ne(0,0)$.

As $C$ is noetherian the latter may be checked at completions $\hat C_{\mathfrak{m}}$ of $C$ with respect to maximal ideals $\mathfrak{m}$ of $C$. If the point defined by $\mathfrak{m}$ is not in $\Spa(A,A^+)\subset \Spa(C,C^+)$ this claim is obvious. Otherwise we consider the surjections $C\twoheadrightarrow A\twoheadrightarrow A/\mathfrak{m}^nA=A'$ onto the finite dimensional $\Q_p$-algebra $A'$. Then our assumptions imply that the image of $a_{i,j}$ in $A'$ vanishes for $(i,j)\ne(0,0)$ by a similar reasoning as above for $A'{}^+$ in place of $C^+$. We have shown that the image of $a_{i,j}$ in $\hat C_\mfrak$ lie in the kernel of $\hat C_\mfrak\rightarrow \hat A_\mfrak$ for all maximal ideals of $C$ and all $(i,j)\ne(0,0)$. The claim follows from this.
\end{proof}
\forget{
Before we establish basic properties of these sheaves we state the following easy lemma. 

\begin{lemma}\label{LemmaBasisModP}
Let $\CO$ be a complete discrete valuation ring with residue field $k$ and uniformizer $\pi$. Let $A$ and $B$ be $\pi$-adically complete and separated $\CO$-algebras and let $C:=A\wh\otimes_{\CO}B:=\lim\limits_{\longleftarrow\,n}(A\otimes_{\CO}B)/(\pi^n)$. Let $b_i\in B$ for $i\in I$ be elements whose images $\bar b_i$ in $B/\pi B$ form a basis of the $k$-vector space $B/\pi B$.
\begin{enumerate}
\item\label{LemmaBasisModP_A}
For every element $c\in C$ there are elements $a_i$ for $i\in I$ such that for every $n\in\BN$ the set $\{\,i\in I\colon a_i\notin\pi^n A\,\}$ is finite and $c=\sum_{i\in I}a_i\otimes b_i$ in $C$.
\item \label{LemmaBasisModP_B}
If moreover $C$ has no $\pi$-torsion, then the elements $a_i$ in \ref{LemmaBasisModP_A} are uniquely determined.
\end{enumerate}
\end{lemma}

Note that the sum in \ref{LemmaBasisModP_A} converges $\pi$-adically in $C$, and that the condition in \ref{LemmaBasisModP_B} holds by \cite[Chapitre~III, \S\,5, no.~2, Th\'eor\`eme~1(v)]{BourbakiAlgCom} if $A$ and $B$ are flat over $\CO$.

\begin{proof}
\ref{LemmaBasisModP_A}
One proves by induction that for every $n$ there are elements $a_{i,n}\in A$ for all $i\in I$, only finitely many of which are non-zero, such that $c-\sum_{i\in I}a_{i,n}\otimes b_i\in\pi^n C$ and such that $a_{i,n}-a_{i,n-1}\in\pi^{n-1} A$. Namely, for $n=0$ one can take $a_{i,0}=0$ for all $i\in I$. In the induction step from $n$ to $n+1$ one considers an element $c'\in C$ with $c-\sum_{i\in I}a_{i,n}\otimes b_i=\pi^n c'$. Then the image of $c'$ in $C/\pi C=A/\pi A\otimes_k B/\pi B$ can be written as $\sum_i\bar\alpha_i\otimes \bar b_i$ with uniquely determined elements $\bar\alpha_i\in A/\pi A$ which are zero for all but finitely many $i$. After choosing lifts $\alpha_i\in A$, the elements $a_{i,n+1}:=a_{i,n}+\pi^n\alpha_i$ satisfy the assertion.

Now taking $a_i$ as the limit of $a_{i,n}$ for $n\to\infty$ establishes \ref{LemmaBasisModP_A}.

\medskip\noindent
\ref{LemmaBasisModP_B}
We must show that $\sum_{i\in I}a_i\otimes b_i=0$ implies $a_i=0$ for all $i$. It suffices to show that $a_i\in\pi^n A$ for all $n$ and $i$. This follows by induction on $n$, trivially starting with $n=0$. If it holds for some $n$, we can write $a_i=\pi^n\alpha_i$ for $\alpha_i\in A$. Then $\pi^n\cdot\sum_i\alpha_i\otimes b_i=\sum_i a_i\otimes b_i=0$, and hence $\sum_i\alpha_i\otimes b_i=0$, because $C$ has no $\pi$-torsion. Considering the images $\bar\alpha_i$ of $\alpha_i$ in $A/\pi A$, the equation $\sum_i\bar\alpha_i\otimes\bar b_i=0$ in $C/\pi C=A/\pi A\otimes_k B/\pi B$ implies that $\bar\alpha_i=0$ in $A/\pi A$, whence $\alpha_i\in\pi A$ and $a_i\in \pi^{n+1}A$ as desired.
\end{proof}

\begin{lemma}\label{LemmaPowerSeriesExp}
Let $Y=\Spa(A,A^+)$ be an adic space locally of finite type over $\BQ_p$. 
\begin{enumerate}
\item \label{LemmaPowerSeriesExp_A}
Let $g\in \Gamma(Y,\wt\sB_Y)$. Then $g\in \Gamma(Y,\CO_Y)\subset\Gamma(Y,\wt\sB_Y)$ if and only if for every quotient $A\twoheadrightarrow A'$ onto a finite dimensional $\BQ_p$-algebra $A'$ the element
\[g\otimes 1\in \Gamma(\Spa(A',A'^+),\wt\sB_Y\otimes_A A')\cong A'\otimes_{\Q_p}\wt\bB\]
actually lies in $A'\subset A'\wh\otimes_{\Q_p}\wt\bB$.
\item \label{LemmaPowerSeriesExp_B}
Let $g\in \Gamma(Y,\Ocal_Y\wh\otimes B_{\rm cris})$. Then $g\in \Gamma(Y,\CO_Y)\subset\Gamma(Y,\Ocal_Y\wh\otimes B_{\rm cris})$ if and only if for every quotient $A\twoheadrightarrow A'$ onto a finite dimensional $\BQ_p$-algebra $A'$ the element $g\otimes 1 \in A'\otimes B_{\rm cris}$ actually lies in $A'\subset A'\otimes_{\Q_p}B_{\rm cris}$.

\item \label{LemmaPowerSeriesExp_C}
Assume that $A$ is reduced. 
Let $g\in \Gamma(Y,\wtsA^{[0,1)}_Y)$. Then $g\in \Gamma(Y,\sA^{[0,1)}_Y)\subset\Gamma(Y,\wtsA^{[0,1)}_Y)$ if and only if for each finite quotient $A'$ of $A$ the element $g\otimes1\in A'^+\otimes_{\Z_p}\wt\bA^{[0,1)}$ actually lies in \[A'^+\otimes_{\Z_p}\bA^{[0,1)}\subset A'^+\otimes_{\Z_p}\wt\bA^{[0,1)}.\] Here $A'^+\subset A$ denotes the image of $A^+$ in $A'$.
\end{enumerate}
\end{lemma}
Note that the identifications
\[\Gamma(\Spa(A',A'^+),(\wt\sB_Y)\otimes_A A')\cong A'\otimes_{\Q_p}\wt\bB\qquad \text{and}\qquad \Gamma(Y,\Ocal_Y\wh\otimes B_{\rm cris})\otimes_A A'\cong A'\otimes B_{\rm cris}\] used in the formulation of the Lemma are a direct consequence of Lemma \ref{finitebc} and the remark following it.

\begin{proof}
\ref{LemmaPowerSeriesExp_A}, \ref{LemmaPowerSeriesExp_B} 
Clearly the condition is necessary. We now show that it is sufficient. To prove \ref{LemmaPowerSeriesExp_A} we let $B=\wt\bA$ and to prove \ref{LemmaPowerSeriesExp_B} we let $B=A_{\rm cris}$.
Let us choose $C\twoheadrightarrow A$ with $C$ a reduced Tate ring topologically of finite type over $\Q_p$. 
Then our definitions imply that $A\wh\otimes B_{\rm cris}$ is the quotient of $C\wh\otimes B_{\rm cris}$ by the kernel of $C\rightarrow A$.

We lift $g$ to an element $\tilde g\in C\wh\otimes_{\BQ_p}B$. After multiplying with a power of $p$ (and also a power of $t$ for proving \ref{LemmaPowerSeriesExp_B}) we can assume that $\tilde g\in C^+\wh\otimes_{\BZ_p}B$. We choose elements $b_i\in B$ with $b_0=1$ whose images $\bar b_i$ in $B/pB$ form an $\BF_p$-basis of $B/pB$. The latter equals $\BF_p\dpl u\dpr^\sep$ in \ref{LemmaPowerSeriesExp_A}, but is more complicated in \ref{LemmaPowerSeriesExp_B}. By Lemma~\ref{LemmaBasisModP} we obtain uniquely determined elements $a_i\in C^+$ for all $i\in I$ with $\tilde g=\sum_i a_i\otimes b_i$ in $C^+\wh\otimes_{\BZ_p}B$. The elements $g\in \Gamma(Y,\CO_Y^+)=A^+$ are characterized by $a_i\in {\rm ker}(C\rightarrow A)$ for all $i\ne0$.

As $C$ is noetherian the latter may be checked at completions $\hat C_{\mathfrak{m}}$ of $C$ with respect to maximal ideals $\mathfrak{m}$ of $C$. If the point defined by $\mathfrak{m}$ is not in $\Spa(A,A^+)\subset \Spa(C,C^+)$ this claim is obvious.
Otherwise we consider the surjections $C\twoheadrightarrow A\twoheadrightarrow A/\mathfrak{m}^nA=A'$ onto the finite dimensional $\Q_p$-algebra $A'$. Then our assumptions imply that the image of $a_i$ in $A'$ vanishes for $i\in I\backslash\{0\}$ (to see this, write $A'^+\subset A'$ for the image of $C^+$. Then the same reasoning as above shows that the image of $\tilde g$ can be written uniquely as $\sum_i \bar a_i \otimes b_i$, where $\bar a_i$ denotes the image of $a_i$ in $A'$).
We have shown that the image of $a_i$ in $\hat C_\mfrak$ lie in the kernel of $\hat C_\mfrak\rightarrow \hat A_\mfrak$ for all maximal ideals of $C$. The claim follows from this.

\medskip\noindent
\ref{LemmaPowerSeriesExp_C} 
Denote the residue field of $W$ by $k$. We view the residue field $k\dpl u\dpr^\sep$ of $\wt\bA$ as a $k\dpl u\dpr$-vector space and choose a basis $(\bar g_i)_{i\in I}$ with $\bar g_0=1$. We denote the integral closure of $k\dbl u\dbr$ in $k\dpl u\dpr^\sep$ by $k\dbl u\dbr^\sep$. By multiplying them with powers of $u$ we may assume that all $\bar g_i\in k\dbl u\dbr^\sep$, and we lift them to elements $g_i\in\wt\bA^{[0,1)}$ with $g_0=1$. Then the image of $g$ in $\Gamma(Y,\wtsA^{[0,1)}_Y)/(p)=(A^+/pA^+\otimes_{\BF_p}k)\otimes_{k}k\dbl u\dbr^\sep$ can be written as $\sum_{i,j}\bar\alpha_{i,j,0}\otimes u^j\bar g_i$ with uniquely determined elements $\bar\alpha_{i,j,0}\in A^+/pA^+\otimes_{\BF_p}k$ which are zero for all but finitely many $i$ and for $j\ll0$. After choosing lifts $\alpha_{i,j,0}\in A^+\otimes_{\BZ_p}W$, the image of $\tfrac{1}{p}\cdot(g-\sum_{i,j}\alpha_{i,j,0}\otimes u^j g_i)$ in $\Gamma(Y,\wtsA^{[0,1)}_Y)/(p)$ can likewise be written as $\sum_{i,j}\bar\alpha_{i,j,1}\otimes u^j\bar g_i$ with uniquely determined elements $\bar\alpha_{i,j,1}\in A^+/pA^+\otimes_{\BF_p}k$. Note for this that $\Gamma(Y,\wtsA^{[0,1)}_Y)$ has no $p$-torsion by \cite[Chapitre~III, \S\,5, no.~2, Th\'eor\`eme~1(v)]{BourbakiAlgCom}, because $\wt\bA^{[0,1)}$ and $A^+$ are flat over $\BZ_p$. Continuing in this way, we obtain elements $\alpha_{i,j}:=\sum_{k=0}^\infty\alpha_{i,j,k}\,p^k\in A^+\otimes_{\BZ_p}W$ such that for every $n\ge1$ in $\Gamma(Y,\wtsA^{[0,1)}_Y)/(p^n)$ the equality $g=\sum_{i,j}\alpha_{i,j}\otimes u^j g_i$ holds, although the sum does in general not converge in $\Gamma(Y,\wtsA^{[0,1)}_Y)$. The elements $\alpha_{i,j}$ are uniquely determined by $g$ by reasoning like in Lemma~\ref{LemmaBasisModP}\ref{LemmaBasisModP_B}. Then the element $g\in \Gamma(Y,\sA^{[0,1)}_Y)$ is characterized by $\alpha_{i,j}=0$ whenever $i\ne0$ or $j<0$.

Now $g\otimes 1\in A'{}^+\otimes_{\Z_p}\bA^{[0,1)}$ implies that $\alpha_{i,j}\otimes 1=0$ in $A'{}^+\otimes_{\BZ_p}W$ whenever $i\ne0$ or $j<0$. If this holds for every surjection onto a finite-dimensional $\BQ_p$-algebra $A'$, then $\alpha_{i,j}=0$ whenever $i\ne0$ or $j<0$. This implies $g\in \Gamma(Y,\sA^{[0,1)}_Y)$. 
\end{proof}
}
\begin{rem}
Assume that in the situation of Lemma \ref{LemmaPowerSeriesExp} the ring $A$ is reduced. We remark that it is then enough to check the conditions for surjections $A\twoheadrightarrow \kappa(y)$ for all rigid analytic points $y\in Y$. 
We only need to argue (in the situation of the proof above) that $g(y)\in\kappa(y)^+\subset\kappa(y)^+\wh\otimes_{\BZ_p}B$ implies that $ a_i(y)=0$ in $\kappa(y)^+$ for all $i\ne0$ for $B=A_{\rm cris}$, respectively $B=\wt\bA$. Here we write $g(y)=1\otimes g\in \kappa(y)^+\wh\otimes_{\BZ_p}B$ and so on. If this holds for every rigid analytic point $y\in Y$, then $ a_i=0$ for all $i\ne0$, because $Y$ is reduced. This implies $g\in \Gamma(Y,\CO_Y^+)$. 
\end{rem}

\begin{rem}
It is also possible to define $\Z$-filtrations $\Fil^i(\Ocal_X\wh\otimes B_{\rm cris})$ resp.\ $\Fil^i(\Ocal_X\wh\otimes B_{\rm st})$ on $\Ocal_X\wh\otimes B_{\rm cris}$ resp.\ $\Ocal_X\wh\otimes B_{\rm st}$. The most natural procedure seems to be the following:
given $i\in\Z$ and an adic space $X=\Spa(A,A^+)$, a section $f\in \Gamma(X,\Ocal_X\wh\otimes B_{\rm cris})$ lies in $\Gamma(X,\Fil^i(\Ocal_X\wh\otimes B_{\rm cris}))$, if $f\otimes 1\in \Fil^i{B}_{\rm cris}\otimes_{\Q_p}B$ for all surjections $A\twoheadrightarrow B$ of A onto finite dimensional $\Q_p$-algebras $B$. Here $\Fil^i B_{\rm cris}$ is the usual filtration on $B_{\rm cris}$ induced by restricting the  $t$-adic filtration on Fontaine's ring ${B}_{\rm dR}$ to $B_{\rm cris}$.
This construction obviously globalizes and defines a filtration of the sheaf $\Ocal_X\wh\otimes B_{\rm cris}$.
 A similar construction also applies to the filtration on $\Ocal_X\wh\otimes B_{\rm st}$. 
 However some issues with this filtration seem to be a bit involved, in particular dealing with families. One main reason is, that $\Ocal_X\wh\otimes B_{\rm cris}^+$ is much better behaved than $\Ocal_X\wh\otimes B_{\rm cris}$, but $\Fil^0 B_{\rm cris}$ does not give back $B^+_{\rm cris}$.
 Hence we will not consider this filtration on $\Ocal_X\wh\otimes B_{\rm cris}$ explicitly. 
 \end{rem}

\smallskip

\begin{prop} \label{periodsheavesatpoints}
Let $X$ be an adic space locally of finite type over $\Q_p$.
The canonical inclusions induce equalities 
\begin{align*}
\wt\sB_X^{\Phi={\rm id}}&=\Ocal_X\\
\big(\Ocal_X\wh\otimes B_{\rm cris}^+\big)^{ \Phi={\rm id}}&=\Ocal_X \\
\big(\Ocal_X\wh\otimes B_{\rm st}^+\big)^{\Phi={\rm id}, N=0}&=\Ocal_X.
 \end{align*}
Moreover one has
\begin{align*}
(\Ocal_X\hat\otimes B_{\rm cris}^+)^{\mathscr{G}_K}=(\Ocal_X\hat\otimes B_{\rm st}^+)^{\mathscr{G}_K}=\Ocal_X\otimes_{\Q_p}K_0
\end{align*} 
\end{prop}
\begin{proof}
It is clear that in all cases $\Ocal_X$ injects onto the sheaves of invariants, resp.~that $\Ocal_X\otimes_{\Q_p}K_0$ injects into $(\Ocal_X\hat\otimes B_{\rm cris})^{\mathscr{G}_K}$. Let us prove the converse. 
Let $U=\Spa(A,A^+)\subset X$ be an affinoid open and let $f\in \Gamma(U,\wt\sB_X)$ be a section that is invariant under $\Phi$. Then for each quotient $A\twoheadrightarrow A'$ with $A'$ a finite dimensional $\Q_p$ algebra the element $f\otimes 1\in \Gamma(U,\wt\sB_X)\otimes_AA'=A'\otimes_{\Q_p} \wt{\mathbf{B}}$ is invariant under $\Phi$ and hence $f\otimes 1\in A'\subset A'\otimes_{\Q_p} \wt{\mathbf{B}}$. Now Lemma \ref{LemmaPowerSeriesExp} implies $f\in \Gamma(U,\Ocal_X)$.
The other claims are proven using the same argument.
\end{proof}

\begin{defn}
Let $\sG$ denote a compact topological group. A \emph{family of $\sG$-representations} on an adic space $X$ consists of a vector bundle $\Ecal$ on $X$ together with an $\Ocal_X$-linear action of the group $\sG$ on $\Ecal$ which is continuous for the topologies on the sections $\Gamma(-,\Ecal)$. 
This definition extends to the category of stacks on $\Ad_{\Q_p}^{\rm lft}$.
\end{defn}

\begin{defn}
Let $X$ be an adic space locally of finite type over $\Q_p$.\\
\noindent (i) A \emph{$\phi$-module over $\sA_X$} is an $\sA_X$-module $M$ which is locally on $X$ free of finite rank over $\sA_X$ together with an isomorphism $\Phi:\phi^\ast M\isoto M$.\\
\noindent (ii)  A \emph{$\phi$-module over $\sB_X$} is an $\sB_X$-module $M$ which is locally on $X$ free of finite rank over $\sB_X$ together with an isomorphism $\Phi:\phi^\ast M\isoto M$.\\
\noindent (iii) A $\phi$-module $M$ over $\sB_X$ is called \emph{\'etale} if it is locally on $X$ of the form $N\otimes_{\sA_X}\sB_X$ for a $\phi$-module $N$ over $\sA_X$.
\end{defn}

The following theorem summarizes results of \cite{families} which are needed in the sequel. 
\begin{theo}\label{summary}
Let $X$ be a reduced adic space locally of finite type over $\Q_p$ and let $(\Ncal,\Phi)$ be an \'etale $\phi$-module of rank $d$ over $\sB_X$.\\
\noindent {\rm (i)} The set
\[X^{\rm adm}=\{x\in X\mid \dim_{\kappa(x)}((\Ncal\otimes_{\sB_X}\wt\sB_X)\otimes \kappa(x))^{\Phi={\rm id}}=d\}\subset X\]
is an open subspace and 
\[\mathcal{V}=(\Ncal\otimes\wt\sB_X)^{\Phi={\rm id}}\]
is a family of $\sG_{K_{\infty}}$-representations on $X^{\rm adm}$.\\
\noindent {\rm (ii)} If $f:Y\rightarrow X$ is a morphism in ${\rm Ad}^{\rm lft}$ and if $(\Ncal_Y,\Phi_Y)$ denotes the pullback of $(\Ncal,\Phi)$ along $f$, then $Y^{\rm adm}=f^{-1}(X^{\rm adm})$ and 
\[(\Ncal_Y\otimes \wt\sB_Y)^{\rm \Phi={\rm id}} = (f|_{Y^{\rm adm}})^\ast \mathcal{V}\]
as families of $\sG_{K_{\infty}}$-representations on $Y^{\rm adm}$. \\
\noindent {\rm (iii)} If $(\Mfrak,\Phi)$ is a $\phi$-module of finite height over $\sA_X^{\rm [0,1)}$ as in Definition~\ref{Def4.2} and $(\Ncal,\Phi)=(\Mfrak,\Phi)\otimes_{\sA_X^{\rm [0,1)}}\sB_X$, then 
\[U=X^{\rm adm}=\{x\in X\mid \rk_{\kappa(x)^+}\Hom_{\sA_X^{[0,1)}\otimes \kappa(x),\Phi}(\Mfrak\otimes \kappa(x),\wtsA_X^{[0,1)}\otimes \kappa(x)) =d\}\]
and 
\[\sH\!om_{\sA^{[0,1)}_U,\Phi}(\Mfrak|_U,\wtsA_U^{[0,1)})\otimes_{\Z_p}\Q_p=\sH\!om_{\sB_U,\Phi}(\Mfrak|_U\otimes_{\sA_U^{[0,1)}}\sB_U,\wt\sB_U)\]
as families of $\sG_{K_\infty}$-representations on $U=X^{\rm adm}$.
\end{theo}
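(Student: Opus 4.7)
The plan is to deduce Theorem~\ref{summary} from the results of \cite{families} combined with the properties of the sheaves $\wt\sB_X$, $\wtsA_X^{[0,1)}$, and the period ring description at rigid analytic points given in Proposition~\ref{periodsheavesatpoints}. Since this theorem is stated as a summary, the bulk of the work is already present in the corresponding statements for the case without monodromy in \cite{families}; the task is to assemble the pieces coherently.

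For part (i), I would first show that the function $x\mapsto \dim_{\kappa(x)}((\Ncal\otimes_{\sB_X}\wt\sB_X)\otimes\kappa(x))^{\Phi=\id}$ is upper semi-continuous, with maximum value $d$. By Proposition~\ref{periodsheavesatpoints}(i), the fiber of $\wt\sB_X$ at a rigid analytic point $x$ identifies with $\wt\bB\otimes_{\Q_p}\kappa(x)$, so Fontaine's classical theory bounds the $\kappa(x)$-dimension by $d=\rk\Ncal$. Upper semi-continuity, which is established in \cite[Corollary~8.10]{families} (or its analogue in the present setup), then shows that $X^{\rm adm}$ is open. The main content is that over $X^{\rm adm}$ the $\Phi$-invariants form a locally free sheaf of rank $d$; this is proved using $\wt\sB_X^{\Phi=\id}=\Ocal_X$ from Proposition~\ref{periodsheavesatpoints}(iii), together with the fact that the natural map $\mathcal{V}\otimes_{\Ocal_X}\wt\sB_X\to \Ncal\otimes_{\sB_X}\wt\sB_X$ becomes an isomorphism on $X^{\rm adm}$ (again checkable at rigid analytic points by the injectivity from Proposition~\ref{periodsheavesatpoints}(ii)). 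Continuity of the $\sG_{K_\infty}$-action on $\mathcal{V}$ is inherited from continuity of the action on $\wt\sB_X$.

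For part (ii), base change, the morphism $f\colon Y\to X$ induces a canonical map $f^{-1}\wt\sB_X\to \wt\sB_Y$ compatible with $\Phi$ and the Galois action, giving a natural comparison $f^\ast\mathcal{V}\to (\Ncal_Y\otimes\wt\sB_Y)^{\Phi=\id}$ on $f^{-1}(X^{\rm adm})$. The equality $Y^{\rm adm}=f^{-1}(X^{\rm adm})$ and the isomorphism of families both reduce, via the injectivity statement in Proposition~\ref{periodsheavesatpoints}(ii), to a check on fibers at rigid analytic points; at such points we recover the classical assertion that Fontaine's functor commutes with scalar extension of the coefficient field.

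For part (iii), the integral refinement, the key input is that for a finite-height $\phi$-module $(\Mfrak,\Phi)$ over $\sA_X^{[0,1)}$ tensored up to $\sB_X$, admissibility can be detected by integral $\Phi$-equivariant homomorphisms into $\wtsA_X^{[0,1)}$. At a rigid analytic point $x$, Fontaine's equivalence between $\sG_{K_\infty}$-representations and étale $\phi$-modules over $\bA$ identifies $\Hom_{\Phi}(\Mfrak\otimes\kappa(x),\wtsA_X^{[0,1)}\otimes\kappa(x))$ with a $\kappa(x)^+$-lattice of rank $d$ in the underlying representation, and rational inversion yields the $\sB_X$-level statement. The main obstacle will be upgrading the pointwise picture to the statement on sheaves: one needs local freeness of $\sH\!om_{\sA_U^{[0,1)},\Phi}(\Mfrak|_U,\wtsA_U^{[0,1)})$ over $U$, which requires the analysis of integral models of $\wtsA_X^{[0,1)}$ and arguments of \cite[\S\,8]{families}. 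Once this is established, inverting $p$ and comparing with the rational Hom-sheaf yields the claimed identification of families of $\sG_{K_\infty}$-representations.
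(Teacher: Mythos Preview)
Your proposal is correct and matches the paper's approach: the theorem is explicitly a summary of results already proved in \cite{families}, and the paper's own proof is simply the one-line citation ``This is a summary of \cite[Proposition 8.20, Corollary 8.21, Proposition 8.22 and Proposition 8.23]{families}.'' Your write-up goes further by sketching the mechanisms behind those cited results (semi-continuity of $\Phi$-invariants, fiberwise checks via Proposition~\ref{periodsheavesatpoints}, Fontaine's equivalence at points), which is more informative but not required; just be aware that the precise proposition numbers you invoke from \cite{families} (e.g.\ Corollary~8.10) may not line up with the paper's citations.
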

\begin{proof}
This is a summary of \cite[Proposition 8.20, Corollary 8.21, Proposition 8.22 and Proposition 8.23]{families}. 
\end{proof}
Given a cocharacter $\mu$ as in $(\ref{mu})$, the stack $\sH_{\phi,N,\preceq \mu}$ is the stack quotient of $P_{K_0,d}\times_{\Spec \Q_p}Q_{K,d,\mu}$ by the action of the reductive group $(\Res_{K_0/\BQ_p}\GL_{d,K_0})_{E_\mu}$.
Let us denote by $\sH_{\phi,N,\preceq\mu }^{\rm red}$ the quotient of the reduced subscheme underlying $P_{K_0,d}\times_{\Spec \Q_p}Q_{K,d,\preceq\mu}$ by the induced action of $(\Res_{K_0/\BQ_p}\GL_{d,K_0})_{E_\mu}$.
Recall that $P_{K_0,d}\times_{\Spec\Q_p} Q_{K,d,\mu}$ is reduced, hence this modification will not be necessary if we restrict to the case where the Hodge type is fixed by $\mu$. 
\begin{cor}\label{universalGKinfty}
There is an open substack $\sH^{\rm red,ad,adm}_{\phi,N,\preceq\mu}\subset \sH_{\phi,N,\preceq\mu}^{\rm red, ad, int}$ and a family $\Ecal$ of $\sG_{K_\infty}$-represen\-ta\-tions on $\sH^{\rm red,ad,adm}_{\phi,N,\preceq\mu}$ such that 
\[\Ecal=(\underline{\Mcal}(D,\Phi,N,\q)\otimes_{\sB_X^{[0,1)}}\wt\sB_X)^{\rm \Phi=\id},\]
where $(D,\Phi,N,\q)$ denotes the restriction of the universal family on $\sH_{\phi,N,\preceq\mu}^{\rm red, ad}$.\\
This subspace is maximal in the following sense: If $X$ is a reduced adic space and if $\ulD'$ is a $(\phi,N)$-module with Hodge-Pink lattice over $X$ with Hodge polygon bounded by $\mu$, then the induced map $f:X\rightarrow \sH_{\phi,N,\preceq\mu}^{\rm ad}$ factors over $\sH^{\rm red, ad, adm}_{\phi,N,\preceq \mu}$ if and only if $X=X^{\rm adm}$ with respect to the family\[\underline{\Mcal}(\ulD')\otimes_{\sB_X^{\rm [0,1)}}\wt\sB.\]
In this case there is a canonical isomorphism of $\sG_{K_\infty}$-representations
\[f^\ast\Ecal=(\underline{\Mcal}(\ulD')\otimes_{\sB_X^{\rm [0,1)}}\wt\sB)^{\Phi=\id}.\]
If $L$ is a finite extension of $E_\mu$, then $\sH^{\rm red,ad,adm}_{\phi,N,\preceq\mu}(L)= \sH_{\phi,N,\preceq\mu}^{\rm red, ad, int}(L)$
\end{cor}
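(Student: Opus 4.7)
The plan is to construct $\sH^{\rm red,ad,adm}_{\phi,N,\preceq\mu}$ by applying Theorem~\ref{summary} to the $(\phi,N_\nabla)$-module attached to the universal object over $\sH_{\phi,N,\preceq\mu}^{\rm red,ad,int}$, and then to check the universality and the point-count assertion separately. Throughout I restrict to the reduced substack so that the period sheaves $\wt\sB_X$, $\Ocal_X\wh\otimes B_{\rm cris}$ and $\Ocal_X\wh\otimes B_{\rm st}$ are available.

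First I would work locally on $\sH_{\phi,N,\preceq\mu}^{\rm red,ad,int}$. By Corollary~\ref{Cor4.7} the universal $(\phi,N_\nabla)$-module $\ul\CM(D,\Phi,N,\q)$ is \'etale over this open substack, so after a fixed Tate twist by some $n\gg0$ depending only on $\mu$ and passing to an fpqc-covering $X\to\sH_{\phi,N,\preceq\mu}^{\rm red,ad,int}$, we obtain a $\phi$-module of finite height $(\Mfrak,\Phi)$ over $\sA_X^{[0,1)}$ whose base change to $\sB_X^{[0,1)}$ recovers the twisted $(\phi,N_\nabla)$-module. Set $(\Ncal,\Phi):=(\Mfrak,\Phi)\otimes_{\sA_X^{[0,1)}}\sB_X$ and apply Theorem~\ref{summary}(i),(iii): this defines an open subspace $X^{\rm adm}\subset X$ and a family $\CE_X$ of $\sG_{K_\infty}$-representations on $X^{\rm adm}$. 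Untwisting by $-n$ (which is harmless as it corresponds to twisting the resulting $\sG_{K_\infty}$-representation by the restriction of the cyclotomic character to $\sG_{K_\infty}$) gives the desired family.

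Next I would show that the locus $X^{\rm adm}$ and the family $\CE_X$ descend along the covering to an open substack $\sH^{\rm red,ad,adm}_{\phi,N,\preceq\mu}$ of $\sH_{\phi,N,\preceq\mu}^{\rm red,ad,int}$ carrying a family $\CE$. The compatibility with base change for fpqc morphisms is exactly the content of Theorem~\ref{summary}(ii), which also shows that the formation of $X^{\rm adm}$ is independent of the choice of integral model: different choices differ by an isomorphism of $\phi$-modules over $\sA$, and this induces an isomorphism of the resulting $\sG_{K_\infty}$-representations. The openness of $\sH^{\rm red,ad,adm}_{\phi,N,\preceq\mu}$ is inherited from the openness of each $X^{\rm adm}$.

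The universality property then follows by combining Corollary~\ref{CorImagePeriodMap} with Theorem~\ref{summary}(ii). Indeed, if $f\colon X\to\sH_{\phi,N,\preceq\mu}^{\rm red,ad}$ is given by $\ulD'$, then $f$ factors through $\sH_{\phi,N,\preceq\mu}^{\rm red,ad,int}$ iff, fpqc-locally on $X$, there exists integral data $(\Mfrak_i,\Phi_i,N)$ lifting $\ulD'|_{U_i}$; once such a lift exists, $f$ factors through $\sH^{\rm red,ad,adm}_{\phi,N,\preceq\mu}$ iff $U_i=U_i^{\rm adm}$ with respect to $\ul\CM(\ulD')$, by Theorem~\ref{summary}(i). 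The isomorphism $f^{*}\CE\cong(\ul\CM(\ulD')\otimes\wt\sB_X)^{\Phi=\id}$ of families of $\sG_{K_\infty}$-representations is again Theorem~\ref{summary}(ii).

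Finally, for the statement about $L$-valued points with $L/E_\mu$ finite: by Proposition~\ref{PropIntWA} every element of $\sH_{\phi,N,\preceq\mu}^{\rm red,ad,int}(L)$ is weakly admissible, and by the Genestier--Lafforgue theorem \cite[Th\'eor\`eme~0.6]{GL12} (already cited in the excerpt after~\eqref{periodmap}) the integral datum $(\Mfrak,\Phi)$ over $\Spf\CO_L$ produces, via Kisin's formalism, a $\sG_{K_\infty}$-representation of the correct dimension. This forces $\dim_L\bigl((\Ncal\otimes\wt\sB_L)^{\Phi=\id}\bigr)=d$, so the point lies in $\sH^{\rm red,ad,adm}_{\phi,N,\preceq\mu}(L)$; the reverse inclusion is tautological. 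The main technical obstacle is bookkeeping the Tate twist uniformly (so that the subspace and the family both descend, independently of the chosen $n$) and verifying that the two descriptions of $\sG_{K_\infty}$-representations -- one via $\phi$-modules of finite height over $\sA$, the other via \'etale $\phi$-modules over $\sB$ -- agree on overlaps; both reduce to the compatibility statements packaged in Theorem~\ref{summary}.
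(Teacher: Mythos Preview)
Your argument is correct and follows the same strategy as the paper: produce an \'etale $\phi$-module from the universal $(\phi,N_\nabla)$-module on the int-locus (after a Tate twist), feed it into Theorem~\ref{summary}, and descend. The differences are cosmetic but worth noting.

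The paper does not pass to an abstract fpqc covering. Instead it works directly on the smooth presentation $X_{\preceq\mu}:=(P_{K_0,d}\times_{\BQ_p}Q_{K,d,\preceq\mu})^{\rm red,ad}$ of the stack, applies Theorem~\ref{summary} there, and then observes that $X_{\preceq\mu}^{\rm adm}$ and the resulting family $(\CM\otimes\wt\sB)^{\Phi=\id}$ are $(\Res_{K_0/\BQ_p}\GL_{d,K_0})_{E_\mu}$-equivariant, so they descend to the quotient stack. This replaces your fpqc descent by a single invocation of equivariance, which avoids having to check that different local integral models $\Mfrak$ glue. For the universality statement the paper likewise avoids Corollary~\ref{CorImagePeriodMap}: since $X$ is reduced, one can locally choose a basis of $D'$ and lift $f$ to a map $f'\colon X\to X_{\preceq\mu}$, then apply Theorem~\ref{summary}(ii) to $f'$ directly.

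One small point: for the equality of $L$-valued points (which the paper's proof actually omits), your appeal to Genestier--Lafforgue is not the tightest reference. What you need is that for $L/\BQ_p$ finite, a $\phi$-module of finite height $\Mfrak$ over $(\CO_L\otimes W)\dbl u\dbr$ always satisfies $\rk_{\CO_L}\Hom_{\sA^{[0,1)},\Phi}(\Mfrak,\wt\bA^{[0,1)}\otimes\CO_L)=d$; this is Kisin's \cite[Lemma~2.1.12 and Proposition~2.1.12]{crysrep} (equivalently, Fontaine's equivalence between \'etale $\phi$-modules and $\sG_{K_\infty}$-representations), rather than the period-map surjectivity of \cite{GL12}.
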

\begin{proof}
Let us write $X_{\preceq \mu}=(P_{K_0,d}\times_{\Spec \Q_p}Q_{K,d,\preceq\mu})^{\rm red, ad}$ for the moment. Further we denote the pullback of the universal family of vector bundles on the open unit disc to $X_{\preceq \mu}$ by $(\Mcal,\Phi,N_\nabla^\CM)=\underline{\Mcal}(D,\Phi,N,\q)$. Locally on $X_{\preceq \mu}^{\rm int}$ there exists a $\phi$-module of finite height $\Mfrak$ inside $(\Mcal,\Phi)$, at least after a Tate twist. It follows that $\Mfrak\otimes_{\sA_X^{[0,1)}}\sA_X$ is \'etale and we may apply the above theorem.
Then $X_{\preceq\mu}^{\rm adm}\subset X_{\preceq \mu}$ is invariant under the action of $(\Res_{K_0/\BQ_p}\GL_{d,K_0})_{E_\mu}$ and hence its quotient by this group is an open substack $\sH^{\rm red,ad,adm}_{\phi,N,\preceq\mu}\subset \sH_{\phi,N,\preceq\mu}^{\rm red, ad}$. Further 
\[(\Mcal\otimes_{\sB_{X_\mu^{\rm adm}}^{[0,1)}}\wt\sB_{X_\mu^{\rm adm}})^{\Phi=\id}\]
is a $(\Res_{K_0/\BQ_p}\GL_{d,K_0})_{E_\mu}$-equivariant vector bundle with $\sG_{K_\infty}$-action on $X_{\preceq\mu}^{\rm adm}$. Hence it defines a family of $\sG_{K_\infty}$-representations on $\sH^{\rm red,ad,adm}_{\phi,N,\preceq\mu}$.

The second statement is local on $X$ and hence, after locally choosing a basis of $D$, we can locally lift the morphism $f:X\rightarrow \sH_{\phi,N,\preceq\mu}^{\rm red, ad}$ to a morphism $f' :X\rightarrow X_{\preceq\mu}$ such that the pullback of $(D,\Phi,N,\q)$ on $X_{\preceq\mu}$ along $f'$ is isomorphic to $\ulD'$. Now the claim follows from Theorem \ref{summary} (ii).
\end{proof}

\section{The universal semi-stable representation}

In this section we want to construct a semi-stable $\sG_K$-representation out of the $\sG_{K_\infty}$-representation on $\sH^{\rm red,ad,adm}_{\phi,N,\preceq\mu}$ from Corollary~\ref{universalGKinfty}. This will be possible only on a part of $\sH^{\rm red,ad,adm}_{\phi,N,\preceq\mu}$. First of all we need to restrict to the open subspace where the Hodge polygon is constant. This can be seen as follows. Let $\CE$ be a family of $\sG_K$-representations on an adic space $X$. It follows from \cite[\S\,4.1]{BergerColmez} that the (generalized) Hodge-Tate weights vary continuously on $X$. Namely, they are the eigenvalues of Sen's operator $\Theta_{\rm Sen}$ constructed in \cite[before Remark~4.1.3]{BergerColmez}. The characteristic polynomial of $\Theta_{\rm Sen}$ has coefficients in $\CO_X\otimes_{\BQ_p}K$. However, with any reasonable definition of a semi-stable family $\Ecal$ the Hodge-Tate weights of $\Ecal\otimes \kappa(x)$ should be integers for all $x\in X$ and hence the Hodge-Tate weights and the Hodge polygon are locally constant on $X$. 

Secondly, Kisin~\cite[Theorem~0.1 and Corollary~1.3.15]{crysrep} showed that the universal \'etale $(\phi,N_\nabla)$-module $\ul\CM$ on $\sH_{\phi,N,\preceq\mu}^{\rm ad,int}$ from Corollary~\ref{Cor4.7} can come from a semi-stable $\sG_K$-representation only if the connection $\nabla$ has logarithmic singularities, which is equivalent to $N_\nabla^\CM$ being holomorphic; see Remark~\ref{RemConnection}(2). Therefore we have to restrict further to the closed subspace $\sH_{\phi,N,\mu}^\nabla\cap\sH^{\rm ad,adm}_{\phi,N,\mu}$ of $\sH^{\rm ad,adm}_{\phi,N,\mu}$ which is isomorphic to $\sD_{\phi,N,\mu}^{\rm ad,adm}$. 
Here $\sD_{\phi,N,\mu}^{\rm ad,adm}\subset \sD_{\phi,N,\mu}^{\rm ad}$ is the admissible locus with respect to the family defined in Remark \ref{familyonDadphiN}.

\begin{lemma}\label{Lemma8.1}
Let $\Ecal$ be a family of $\sG_{K_\infty}$-representations over a reduced adic space $X$ locally of finite type over $\Q_p$. Let $\Mfrak_1$ and $\Mfrak_2$ be two $\phi$-modules of finite height over $\sA_X^{[0,1)}$ such that $\phi$ and $\sG_{K_\infty}$-equivariant isomorphisms
\begin{equation}\label{EqLemma8.1}
\Mfrak_i\otimes_{\sA_X^{[0,1)}}\wt\sA_X^{[0,1)}[1/p]\cong \Ecal\otimes_{\CO_X}\wt\sA_X^{[0,1)}[1/p]
\end{equation}
exist for $i=1,2$. Then $\Mfrak_1[\tfrac{1}{p}]=\Mfrak_2[\tfrac{1}{p}]$ as $\sA_X^{[0,1)}[\tfrac{1}{p}]$-submodules of $\Ecal\otimes_{\CO_X}\wt\sA_X^{[0,1)}[1/p]$. In particular they are isomorphic as $\phi$-modules.
\end{lemma}

\begin{proof}
The case $X=\Spa\BQ_p$ was proven by Kisin~\cite[Proposition 2.1.12]{crysrep}. 

If $X=\Spa(A,A^+)$ for a finite free $\BZ_p$-algebra $A^+$ and $A=A^+[\tfrac{1}{p}]$, then this implies that $\Mfrak_1[1/p]$ and $\Mfrak_2[1/p]$ agree as $\sA_{\Spa(\Q_p,\Z_p)}^{[0,1)}[1/p]$-submodules (even without the $A$-action). 

For general $X$ we may work locally and assume that $\Mfrak_i\cong(\sA_X^{[0,1)})^n$. The isomorphisms \eqref{EqLemma8.1} yield a matrix $M\in\GL_n(\Gamma(X,\wt\sA_X^{[0,1)}[1/p]))$ and we must show that $M\in\GL_n(\Gamma(X,\sA_X^{[0,1)}[1/p]))$. It suffices to show that every entry $g$ of $M$ and $M^{-1}$ lies in $\Gamma(X,\sA_X^{[0,1)}[1/p])$. Multiplying the entry $g$ by a power of $p$ we can assume that it lies in $\Gamma(X,\wtsA_X^{[0,1)})$. By Lemma~\ref{LemmaPowerSeriesExp}\ref{LemmaPowerSeriesExp_C} we must check that $g(x)\in \kappa(x)^+\wh\otimes_{\Z_p}\bA^{[0,1)}$ for every rigid analytic point $x\in X$. Since $\kappa(x)$ is a finite dimensional $\BQ_p$-algebra, this was proved above.
\end{proof}

\begin{defn}\label{DefSemiStRepNeu}
Let $\Ecal$ be a family of $\sG_K$-representations of rank $d$ on an adic space $X$ locally of finite type over $\Q_p$. Denote by $\bar X$ the reduced subspace underlying $X$ and by $\bar \Ecal$ the restriction of $\Ecal$ to $\bar X$.\\
\noindent (i) The family $\Ecal$ is called \emph{crystalline with negative Hodge Tate weights} if fpqc-locally on $X$ there is a $\phi$-module $\bar \Mfrak$ of finite height over $\sA_{\bar X}^{[0,1)}$ and a $\phi$ and $\sG_{K_\infty}$-equivariant isomorphism
\begin{equation}\label{GKinftymodel}
\bar \Mfrak\otimes_{\sA_{\bar X}^{[0,1)}}\wtsA_{\bar X}^{[0,1)}[\tfrac{1}{p}]\cong \bar \Ecal\otimes_{\CO_{\bar X}}\wtsA_{\bar X}^{[0,1)}[\tfrac{1}{p}]
\end{equation}
and a $(\phi,N_\nabla)$-module $\Mcal$ over $\sB_X^{[0,1)}$ deforming $\bar\Mfrak\otimes_{\sA_{\bar X}^{[0,1)}}\sB_{\bar X}^{[0,1)}$ as a $\phi$-module such that $(\ref{GKinftymodel})$ extends to a $(\sG_K,\phi)$ equivariant isomorphism
\[\Mcal\otimes_{\sB_X^{[0,1)}}B_{\rm cris}^+\wh\otimes\Ocal_X\cong \Ecal\otimes_{\Ocal_X} B_{\rm cris}^+\wh\otimes\Ocal_X.\]
\noindent (ii) The family $\Ecal$ is called \emph{semi-stable with negative Hodge Tate weights} if fpqc-locally on $X$ there is a $\phi$-module $\bar \Mfrak$ of finite height over $\sA_{\bar X}^{[0,1)}$ and a $\phi$ and $\sG_{K_\infty}$-equivariant isomorphism
\begin{equation}\label{GKinftymodel2}
\bar \Mfrak\otimes_{\sA_{\bar X}^{[0,1)}}\wtsA_{\bar X}^{[0,1)}[\tfrac{1}{p}]\cong \bar \Ecal\otimes_{\CO_{\bar X}}\wtsA_{\bar X}^{[0,1)}[\tfrac{1}{p}]
\end{equation}
and a $(\phi,N_\nabla)$-module $\Mcal$ over $\sB_X^{[0,1)}$ deforming $\bar\Mfrak\otimes_{\sA_{\bar X}^{[0,1)}}\sB_{\bar X}^{[0,1)}$ as a $\phi$-module such that $(\ref{GKinftymodel2})$ extends to a $(\sG_K,\phi,N)$ equivariant isomorphism
\begin{equation}\label{comparison}
\Mcal\otimes_{\sB_X^{[0,1)}}B_{\rm st}^+\wh\otimes\Ocal_X\cong \Ecal\otimes_{\Ocal_X} B_{\rm st}^+\wh\otimes\Ocal_X.
\end{equation}
\noindent (iii) We say that $\Ecal$ is \emph{crystalline} (resp.~\emph{semi-stable}) if some twist of $\Ecal$ with a power of the cyclotomic character is crystalline with negative Hodge-Tate weights (resp.~semi-stable with negative Hodge-Tate weights). 
\end{defn}

\begin{rem}
The definition of being crystalline resp.~semi-stable is slightly involved. We did not define it in the usual way using only period ring $B^+_{\rm cris}\hat\otimes\Ocal_X$, as our method requires that we have a comparison isomorphism for the integral models on the open unit disc as in $(\ref{GKinftymodel})$. 
Working only with $B^+_{\rm cris}\hat\otimes\Ocal_X$ it is not clear to us whether this is automatically true.
\end{rem}
\begin{lem}
Let $X$ be an adic space locally of finite type over $\Q_p$ and let $\Ecal$ be a family of crystalline (resp. semi-stable) $\sG_K$-representations with negative Hodge-Tate weights on $X$. 
Assume that the objects $\bar\Mfrak$ and $\Mcal$ in the above Definition exist globally on $X$.
Then the following holds true:\\
\noindent (i) If $X=\Spa(A,A^+)$ for some finite dimensional $\Q_p$-algebra $A$, then $\Gamma(X,\Ecal)$ is crystalline (resp.~semi-stable) as a $\sG_K$-representation on a finite dimensional $\Q_p$-vector space and $\underline{D}(\Mcal)=D_{\rm cris}(\Gamma(X,\Ecal))$ (resp.~$=D_{\rm st}(\Gamma(X,\Ecal))$) as filtered $\phi$-modules (resp.~as filtered $(\phi,N)$-modules), compatible with the canonical $A$-action on both sides.  \\
\noindent (ii) If $Y\rightarrow X$ is any morphism of adic spaces locally of finite type, and if $\Ecal_Y$ denotes the $\sG_K$-representation on $Y$ obtained by base changing $\Ecal$, then $\Ecal_Y$ is crystalline (resp.~semi-stable).\\
\noindent (iii) 
Assume that $\Ecal$ is crystalline (resp.~semi-stable) with negative Hodge-Tate weights. The family $\Mcal$ is uniquely determined as a $(\phi,N_\nabla)$-module and in fact as a submodule of $\Ecal\otimes_{\Ocal_X}B_{\rm cris}^+\wh\otimes\Ocal_X$ (resp.~of $\Ecal\otimes_{\Ocal_X}B_{\rm st}^+\wh\otimes\Ocal_X$)
\end{lem}
\begin{proof}
\noindent (i) This follows from (the covariant formulation of) \cite[Proposition 2.1.5]{crysrep}. Note that the proof of loc.cit. implies that the morphisms in (2.1.6) of loc.cit.~are isomorphisms. The fact that the morphism is compatible with the $A$-action follows from functoriality. \\
\noindent (ii) This is obvious.\\
\noindent (iii) We only prove the crystalline case. The semi-stable case is proved along the same lines. Assume that $X=\Spa(A,A^+)$ is affinoid and that there are two $\sB_X^{[0,1)}$-modules $\Mcal_1$ and $\Mcal_2$ as in the definition.
We set $D_i=\underline{D}(\Mcal_i)$ and consider the morphisms
\[D_i\longrightarrow D_i\otimes_{\Ocal_X}B_{\rm cris}^+\wh\otimes\Ocal_X\longrightarrow \Mcal_i\otimes_{\sB_X^{[0,1)}}B_{\rm cris}^+\wh\otimes\Ocal_X=\Ecal\otimes_{\Ocal_X}B_{\rm cris}^+\wh\otimes\Ocal_X.\]
As these morphisms are compatible with the $\sG_K$-action (which is of course trivial on $D_i$) we obtain a morphism
\[\alpha_i:D_i\longrightarrow  (\Ecal\otimes_{\Ocal_X}B_{\rm cris}^+\wh\otimes\Ocal_X)^{\sG_K}.\]
Now both sides are locally on $X$ free as $\Ocal_X\otimes_{\Q_p}K_0$-modules. To see this on the right hand side use the equality \[\Mcal_i\otimes_{\sB_X^{[0,1)}}B_{\rm cris}^+\wh\otimes\Ocal_X=\Ecal\otimes_{\Ocal_X}B_{\rm cris}^+\wh\otimes\Ocal_X\]
and apply Proposition \ref{periodsheavesatpoints}.
Now the construction of this map is functorial and for each quotient $A\twoheadrightarrow A'$ onto a finite dimensional $\Q_p$-algebra $A'$  the induced map  
\[\alpha_{i,A'}:D_i\otimes_A A'\longrightarrow (\Ecal\otimes_{\Ocal_X}B_{\rm cris}^+\wh\otimes\Ocal_X)^{\sG_K}\otimes_{A}A'=((\Ecal\otimes_A A')\otimes_{\Ocal_X}B_{\rm cris}^+\wh\otimes\Ocal_X)^{\sG_K}.\]
is an isomorphism. It follows that $\alpha$ is an isomorphism for $i=1,2$ and hence $D=D_1=D_2$ is uniquely determined as a $\phi$-submodule of $\Ecal\otimes_{\Ocal_X}B_{\rm cris}^+\wh\otimes\Ocal_X$. 
In particular we have shown that $\Mcal_i[1/\lambda]$ is uniquely determined as a submodule of $\Ecal\otimes_{\Ocal_X}B_{\rm cris}\wh\otimes\Ocal_X$.

It remains to prove that the two filtrations on $D=D_1=D_2$ are the same. Assume this is not the case. Then there exists a surjection $A\twoheadrightarrow A'$ onto a finite dimensional $\Q_p$-algebra $A'$ such that the filtrations on $D\otimes_A A'$ induced by $D_1$ and $D_2$ do not agree.
Replacing $A$ by $A'$ we may assume that $A'$ is a finite dimensional $\Q_p$-algebra. However, in this case (i) implies that $\Mcal_1=\Mcal_2$ (as submodules of $\Ecal\otimes_{\Q_p}B_{\rm cris}^+$) and hence the filtrations on $D_1$ and $D_2 $ coincide.

\end{proof}
\begin{rem}
Let $\Ecal$ be a crystalline representation with negative Hodge-Tate weights. Then fpqc-locally on $X$ we have associated a $(\phi,N_\nabla)$-module $\Mcal$ over $\sB_X^{[0,1)}$ as in Definition \ref{DefSemiStRepNeu}. By the uniqueness result established in the previous lemma and fpqc descent this $(\phi,N_\nabla)$-module in fact descends to $X$.
The same remark applies to semi-stable representations as well.
\end{rem}
Using this remark we can make the following definition:
\begin{defn} Let $X$ be an adic space locally of finite type over $\Q_p$ and let $\Ecal$ be a family of $\sG_K$-representations on $X$.\\
(i) Assume that $\Ecal$ is crystalline with negative Hodge-Tate weights and let $\Mcal$ as in Definition \ref{DefSemiStRepNeu}. Then define $D_{\rm cris}(\Ecal)=\underline{D}(\Mcal)$.\\
(ii) Assume that $\Ecal$ is semi-stable with negative Hodge-Tate weights and let $\Mcal$ as in Definition \ref{DefSemiStRepNeu}. Then define $D_{\rm st}(\Ecal)=\underline{D}(\Mcal)$.\\
(iii) Assume that $\Ecal$ is crystalline and that its twist $\Ecal(i)$ is crystalline with negative Hodge-Tate weights for some $i\in \Z$. Then define $D_{\rm cris}(\Ecal)=D_{\rm cris}(\Ecal(i))(-i)$.\\
(iv) Assume that $\Ecal$ is semi-stable and that its twist $\Ecal(i)$ is semi-stable with negative Hodge-Tate weights for some $i\in \Z$. Then define $D_{\rm st}(\Ecal)=D_{\rm st}(\Ecal(i))(-i)$.
\end{defn}
\begin{rem}
Obviously the last two parts of the definition are independent of the choice of $i$ such that $\Ecal(i)$ has negative Hodge-Tate weights. 
\end{rem}
The above defines a functor from the category of crystalline representations on $X$ to the category of filtered $\phi$-modules over $X$. Moreover it is a direct consequence of the definition that for every morphism $f:Y\rightarrow X$ and any family of crystalline $\sG_K$-representations on $X$ we have 
\[D_{\rm cris}(f^\ast \Ecal)=f^\ast D_{\rm cris}(\Ecal).\]
The same remark applies to the semi-stable case as well.

\begin{defn}
Let $\mu$ be a cocharacter as in \eqref{mu}, let $E_\mu$ be its reflex field, and let $X$ be an adic space locally of finite type over $E_\mu$. We say that a crystalline, resp.\ semi-stable $\sG_K$-representation $\Ecal$ over $X$ has \emph{constant Hodge polygon equal to $\mu$} if the $K$-filtered $\phi$-module $D_{\rm cris}(\Ecal)$, resp.\ $D_{\rm st}(\Ecal)$ over $X$ has this property.
\end{defn}

It is obvious from the definition that $D_{\rm st}$ defines a functor from the category of semi-stable representations with constant Hodge polygon $\mu$ over an adic space $X$ to the category of $K$-filtered $(\phi,N)$-modules over $X$ with constant Hodge polygon $\mu$ and similarly for crystalline representations.  

\begin{rem}
Let $\Ecal$ be a crystalline (resp. semi-stable) representation over $X$ with negative Hodge-Tate weights and let $\Mcal$ be as in Definition \ref{DefSemiStRepNeu}. We write $D=D_{\rm cris}(\Ecal)=\underline{D}(\Mcal)$. 
Then $$D\otimes_{\Ocal_X\otimes_{\Q_p}K_0}\sB_X^{[0,1)}[1/\lambda]\cong \Mcal\otimes_{\sB_X^{[0,1)}}\sB_X^{[0,1)}[1/\lambda]$$ and hence, as $\lambda$ is invertible in $B_{\rm cris}$, we obtain a $(\sG_K,\phi)$-equivariant isomorphism
\[D_{\rm cris}(\Ecal)\otimes_{\Ocal_X\otimes_{\Q_p}K_0}\Ocal_X\wh\otimes B_{\rm cris}\cong \Ecal\otimes_{\Ocal_X}\Ocal_X\wh\otimes B_{\rm cris}.\]
Similarly, if $\Ecal$ is semi-stable, we obtain a $(\sG_K,\phi,N)$-equivariant isomorphism
\[D_{\rm st}(\Ecal)\otimes_{\Ocal_X\otimes_{\Q_p}K_0}\Ocal_X\wh\otimes B_{\rm st}\cong \Ecal\otimes_{\Ocal_X}\Ocal_X\wh\otimes B_{\rm st}.\]
Using twists by the cyclotomic character, we find that the same holds true also for crystalline (resp.~semi-stable) representations with arbitrary Hodge-Tate weights. 

Moreover, if we had defined (the correct) filtration on $\Ocal_X\wh\otimes B_{\rm st}$ this morphisms would also respect filtrations. However, as we will not explicitly make use of this, we did not carefully define the filtrations. 
\end{rem}

\begin{lem}\label{ssN=0cryst}
Let $\Ecal$ be a family of $\Gcal_K$-representations on an adic space $X$ locally of finite type over $\Q_p$. Then $\Ecal$ is crystalline if and only it is semi-stable and the monodromy $N$ on $D_{\rm st}(\Ecal)$ vanishes. In this case we have $D_{\rm st}(\Ecal)=D_{\rm cris}(\Ecal)$ as subobjects of $\Ecal\otimes_{\Ocal_X}(\Ocal_X\hat\otimes B_{\rm st})$.
\end{lem}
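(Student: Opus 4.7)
The plan is to reduce the equivalence to two well-known structural facts for the sheaves of period rings: that $\Ocal_X\hat\otimes B_{\rm st}=(\Ocal_X\hat\otimes B_{\rm cris})[\ell_u]$ with $N=d/d\ell_u$, so that $(\Ocal_X\hat\otimes B_{\rm st})^{N=0}=\Ocal_X\hat\otimes B_{\rm cris}$, and that by Proposition~\ref{periodsheavesatpoints}(iv) one has $(\Ocal_X\hat\otimes B_{\rm st})^{\sG_K}=\Ocal_X\otimes_{\BQ_p}K_0$. Both identities are local on $X$ and pass through to tensor products with any locally free $\Ocal_X\otimes K_0$-module with trivial $\sG_K$-action.

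For the direction where $\Ecal$ is crystalline, I would invoke the comparison isomorphism of the preceding lemma to get
\[D_{\rm cris}(\Ecal)\otimes_{\Ocal_X\otimes K_0}(\Ocal_X\hat\otimes B_{\rm cris})\;\isoto\; \Ecal\otimes_{\Ocal_X}(\Ocal_X\hat\otimes B_{\rm cris}),\]
then tensor with $(\Ocal_X\hat\otimes B_{\rm st})$ over $(\Ocal_X\hat\otimes B_{\rm cris})$ and take $\sG_K$-invariants. Since $D_{\rm cris}(\Ecal)$ is locally free of rank $d$ over $\Ocal_X\otimes K_0$ with trivial Galois action, taking invariants commutes with this tensor product (work locally with a basis) and Proposition~\ref{periodsheavesatpoints}(iv) yields $D_{\rm st}(\Ecal)=D_{\rm cris}(\Ecal)$ as submodules of $\Ecal\otimes(\Ocal_X\hat\otimes B_{\rm st})$. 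In particular $D_{\rm st}(\Ecal)$ is locally free of rank $d$, its filtration coincides with that of $D_{\rm cris}(\Ecal)$ (both filtrations being induced from the one on $B_{\rm st}$, which restricts to the one on $B_{\rm cris}$), so $\Ecal$ is semi-stable; and $N$ vanishes on $D_{\rm st}(\Ecal)=D_{\rm cris}(\Ecal)$ because $N$ vanishes on $\Ocal_X\hat\otimes B_{\rm cris}$.

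For the converse, suppose $\Ecal$ is semi-stable and $N=0$ on $D_{\rm st}(\Ecal)$. Then every section of $D_{\rm st}(\Ecal)$ lies in
\[\bigl(\Ecal\otimes_{\Ocal_X}(\Ocal_X\hat\otimes B_{\rm st})\bigr)^{N=0}\;=\;\Ecal\otimes_{\Ocal_X}(\Ocal_X\hat\otimes B_{\rm cris}),\]
so after taking $\sG_K$-invariants one obtains $D_{\rm st}(\Ecal)\subseteq D_{\rm cris}(\Ecal)$. The reverse inclusion $D_{\rm cris}(\Ecal)\subseteq D_{\rm st}(\Ecal)$ is immediate from $B_{\rm cris}\subseteq B_{\rm st}$. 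Hence $D_{\rm cris}(\Ecal)=D_{\rm st}(\Ecal)$ is locally free of rank $d$; the filtration condition for crystallinity transfers directly from the semi-stable case since the filtrations agree. This shows $\Ecal$ is crystalline and simultaneously establishes the final identification of subobjects.

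The main point to handle carefully is the commutation of $\sG_K$-invariants with the tensor product in the first direction; the rest is a transparent use of $(\Ocal_X\hat\otimes B_{\rm st})^{N=0}=\Ocal_X\hat\otimes B_{\rm cris}$.
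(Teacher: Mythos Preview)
Your proof is correct and follows essentially the same approach as the paper: for the forward direction you both tensor the crystalline comparison isomorphism up to $B_{\rm st}$ and take $\sG_K$-invariants; for the converse, where the paper tersely says ``take $N=0$ on both sides'' of the semi-stable comparison isomorphism, you spell out the same computation via $(\Ocal_X\hat\otimes B_{\rm st})^{N=0}=\Ocal_X\hat\otimes B_{\rm cris}$ and the double inclusion $D_{\rm st}(\Ecal)\subseteq D_{\rm cris}(\Ecal)\subseteq D_{\rm st}(\Ecal)$.
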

\begin{proof}
We may assume that $\Ecal$ has negative Hodge-Tate weights and that there exists some $\Mcal$ as in Definition \ref{DefSemiStRepNeu}.

Assume that $\Ecal$ is semi-stable with vanishing monodromy.  As the isomorphism $(\ref{comparison})$ is equivariant for the action of $N$ the claim follows after taking $N=0$ on both sides. 

Conversely, let us assume that $\Ecal$ is crystalline. Then obviously $\Ecal$ is semi-stable and using the definition of $D_{\rm st}$ we see immediately that $N=0$ on $D_{\rm st}(\Ecal)$. 
\end{proof}

\begin{rem}
In his paper \cite{Kisindeform}, Kisin has used the techniques from \cite{crysrep} to construct what is called \emph{(potentially) semi-stable deformation rings}. 
Fix a continuous representation $\bar\rho:\Gcal_K\rightarrow \GL_n(\Fbb)$ with $\Fbb$ a finite extension of $\Fbb_p$ as well as a set of labeled Hodge-Tate weights \[{\bf k}=\{k_{i,\tau}, i=1,\dots,n, \tau:K\hookrightarrow\bar\Q_p\}.\]
 Given these data, Kisin constructs a quotient $R_{\bar\rho}^{{\bf k}}$ of the universal framed deformation ring\footnote{Note that our notations here differ from Kisin's.} $R_{\bar\rho}$ of $\bar\rho$ such that a point
 \[\Spec L\longrightarrow \Spec R_{\bar\rho}\]
 with $L$ a finite extension of $\Q_p$, factors over $\Spec R_{\bar\rho}^{{\bf k}}$ if and only if the corresponding Galois representation is semi-stable\footnote{There is a similar version with \emph{crystalline} instead of \emph{semi-stable}} with labeled Hodge-Tate weights ${\bf k}$. 
 As the defining condition for $R_{\bar\rho}^{\bf k}$ is only formulated for points, the ring $R_{\bar\rho}^{\bf k}$ is reduced by definition (once it is known to exists). Kisin moreover shows that for every finite dimensional $\Q_p$-algebra $A$ a morphism \[\Spec A\longrightarrow \Spec R_{\bar\rho}\]
factors over $\Spec R_{\bar\rho}^{{\bf k}}$ if and only if the corresponding representation $\rho:\Gcal_K\rightarrow \GL_n(A)$ is semi-stable. 

Our construction differs from Kisin's strategy in the following way: Kisin starts with a family of Galois representations on \emph{integral} level and cuts out the locus in the generic fiber where the representations are semi-stable. In contrast to this we start with a family of $p$-adic Hodge structures in characteristic zero and cut out the locus where this family of $p$-adic Hodge structures comes from a Galois representation. 
 
On the other hand after having constructed a universal family in our case, we can compare the outcome of this construction to Kisin's deformation space again. This is done in Proposition $\ref{comparetoKisin}$ below.
\end{rem}

\begin{lem} \label{Dstff} Let $X$ be an adic space locally of finite type over $\Q_p$ and let $\Ecal$, $\Ecal_1$ and $\Ecal_2$ be families of semi-stable representations. \\
\noindent {\rm (i)}
 Assume that $\Ecal$ has negative Hodge-Tate weights. Then there is a canonical isomorphism
\[\Ecal\longrightarrow (\underline{\Mcal}(D_{\rm st}(\Ecal))\otimes_{\sB_X^{[0,1)}}(\Ocal_X\wh\otimes B_{\rm st}^+))^{\phi=\id, N=0}.\]
\noindent {\rm (ii)} One has $\Ecal_1\cong \Ecal_2$ if and only if $D_{\rm st}(\Ecal_1)\cong D_{\rm st}(\Ecal_2)$.
\end{lem}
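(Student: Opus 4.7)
The proof will exploit the comparison isomorphism
\[
f\colon D_{\rm st}(\Ecal)\otimes_{\Ocal_X\otimes K_0}(\Ocal_X\wh\otimes B_{\rm st})\;\isoto\;\Ecal\otimes_{\Ocal_X}(\Ocal_X\wh\otimes B_{\rm st})
\]
established in the preceding lemma, together with the ``$(\Fil^0,\phi=\id,N=0)$-reconstruction'' result from Proposition~\ref{periodsheavesatpoints}(iii). The approach is entirely parallel to the classical Fontaine story, sheafified via the machinery already set up.

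For part (i), I will first observe that the canonical map $\Ecal\to V_{\rm st}(D_{\rm st}(\Ecal))$ is just $e\mapsto e\otimes 1$ followed by the identification of $\Ecal\otimes_{\Ocal_X}(\Ocal_X\wh\otimes B_{\rm st})$ with $D_{\rm st}(\Ecal)\otimes_{\Ocal_X\otimes K_0}(\Ocal_X\wh\otimes B_{\rm st})$ via $f^{-1}$; this lands in $\Fil^0$ and is killed by $N$ and $\phi-\id$ since $1\in(\Ocal_X\wh\otimes B_{\rm st})$ has these properties. Through $f$, the $(\phi,N,\Fil^\bullet)$-structure on the left corresponds to $\id_\Ecal\otimes\phi$, $\id_\Ecal\otimes N$ and the convolution filtration induced purely from $B_{\rm st}$ on the right (the structures on $\Ecal$ are trivial). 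Since $\Ecal$ is a locally free $\Ocal_X$-module, tensoring with $\Ecal$ commutes with the formation of kernels; thus
\[
V_{\rm st}(D_{\rm st}(\Ecal))\;\cong\;\Ecal\otimes_{\Ocal_X}\Fil^0(\Ocal_X\wh\otimes B_{\rm st})^{\phi=\id,\,N=0}.
\]
By Proposition~\ref{periodsheavesatpoints}(iii) the inner sheaf equals $\Ocal_X$, and the composition with the above identification coincides with the canonical map. Thus (i) follows.

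For part (ii), one direction is tautological: an isomorphism $\Ecal_1\cong\Ecal_2$ induces an isomorphism $D_{\rm st}(\Ecal_1)\cong D_{\rm st}(\Ecal_2)$ by functoriality of $D_{\rm st}$ (respecting all structures: $\phi$, $N$, and filtration). For the converse, given an isomorphism $D_{\rm st}(\Ecal_1)\cong D_{\rm st}(\Ecal_2)$ of $K$-filtered $(\phi,N)$-modules, apply the functor $V_{\rm st}$ to obtain an isomorphism $V_{\rm st}(D_{\rm st}(\Ecal_1))\cong V_{\rm st}(D_{\rm st}(\Ecal_2))$ of $\Gcal_K$-representations (the $\Gcal_K$-equivariance coming from the $\Gcal_K$-action on $\Ocal_X\wh\otimes B_{\rm st}$). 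By part (i) this identifies with an isomorphism $\Ecal_1\cong\Ecal_2$.

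The only real subtlety to verify is the commutativity of tensoring with the locally free sheaf $\Ecal$ with the operations $\Fil^0(-)$, $(-)^{\phi=\id}$, and $(-)^{N=0}$; this is standard since all three are given by kernels of $\Ocal_X$-linear maps and $\Ecal$ is flat. No main obstacle arises beyond bookkeeping of the $(\phi,N,\Fil^\bullet)$-structures under the comparison isomorphism $f$.
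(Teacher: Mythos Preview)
Your proof is correct and follows the same approach as the paper, which simply states that part (i) follows from $\Fil^0(\Ocal_X\wh\otimes B_{\rm st})^{\phi=\id, N=0}=\Ocal_X$ together with the comparison isomorphism of the preceding lemma, and that part (ii) is an easy consequence. You have merely unpacked these two sentences with appropriate care about how the $(\phi,N,\Fil^\bullet)$-structures transport under $f$ and why tensoring with the locally free sheaf $\Ecal$ commutes with the relevant operations.
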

\begin{proof}
(i) Let us write $\Mcal={\Mcal}(D_{\rm st}(\Ecal))$. Then by definition fpqc-locally on $X$ we obtain an isomorphism
\[\Mcal\otimes_{\sB_X^{[0,1)}}B_{\rm st}^+\wh\otimes\Ocal_X\cong \Ecal\otimes_{\Ocal_X} B_{\rm st}^+\wh\otimes\Ocal_X.\]
Then locally on $X$ the claim follows by applying the invariants on both sides and using Proposition \ref{periodsheavesatpoints}.
The construction of this morphism is obviously compatible with the descent data and hence descends to $X$. \\
(ii) After twisting with powers of the cyclotomic character, we may assume that $\Ecal_1$ and $\Ecal_2$ have negative Hodge-Tate weights. Then second part is a direct consequence of the first.
\end{proof}
\begin{prop}\label{semistabfamily}
Let $X$ be a reduced adic space locally of finite type over $E_\mu$ and let $\ulD=(D,\Phi,N,\Fcal^\bullet)\in\sD_{\phi,N,\mu}^{\rm an,adm}(X)$. 
Then there is a family of semi-stable representations $\Ecal$ on $X$ such that $D_{\rm st}(\Ecal)=(D,\Phi,N,\Fcal^\bullet)$. 
Moreover, $\Ecal$ is canonically identified with the sub-representation 
\[(\underline{\Mcal}(D_{\rm st}(\Ecal))\otimes_{\sB_X^{[0,1)}}(\Ocal_X\wh\otimes B_{\rm st}^+))^{\phi=\id, N=0}\]
of $\underline{\Mcal}(D_{\rm st}(\Ecal))\otimes_{\sB_X^{[0,1)}}(\Ocal_X\wh\otimes B_{\rm st}^+)$.
\end{prop}
\begin{proof}
After twisting with powers of the cyclotomic character and after changing $\mu$ accordingly, we may assume that the Hodge-Tate weights defined by $\mu$ are negative. 
Let us write $\Mfrak$ for the choice of a $\phi$-module of finite height over $\sA_X^{[0,1)}$ and a family $\Ecal$ of $\sG_{K_\infty}$-representations such that there is a $(\phi,\sG_{K_\infty})$-equivariant isomorphism
\[\Ecal\otimes_{\Ocal_X}\wt\sA_X^{[0,1)}[1/p]\cong \Mfrak\otimes_{\sA_X^{[0,1)}}\wt\sA_X^{[0,1)}.\]
Such a module exists fpqc locally on $X$ by definition of the admissible locus and Theorem \ref{summary} (iii).

This isomorphism extends to an isomorphism 
\[\Ecal\otimes_{\Ocal_X}(\Ocal_X\wh\otimes B_{\rm st}^+)\cong \Mcal\otimes_{\Bcal_X^{[0,1)}}(\Ocal_X\wh\otimes B_{\rm st}^+)\]
that is still equivariant for the actions of $\phi$ and $\sG_{K_\infty}$. According to the definition of a semi-stable representation we have to prove that the $\sG_{K_\infty}$ action on $\Ecal$ extends to an action of $\sG_K$ and that the above isomorphism is equivariant for $\sG_K$. 
As $\Ecal$ embeds into the left hand side, it is enough to show that it is stabilized by the $\sG_K$-action on the right hand side. 
After localization we may assume that $X=\Spa(A,A^+)$ is affinoid and that $\Ecal$ is the trivial vector bundle on $X$. After choosing a basis of $\Ecal$ let $g\in \sG_K$ and denote by $M\in {\rm Mat}_{n\times n} (\Gamma(X,\Ocal_X\wh\otimes B_{\rm st}^+))$ the matrix of the $g$-action with respect to this basis. We have to show that this matrix has entries in $A$. 
However, $M\otimes_A\kappa(x)$ has entries in $\kappa(x)$ for all classical points $x\in X$ by \cite[Proposition 2.1.5]{crysrep} (note that the proof of that proposition implies that the arrows in (2.1.6) of loc.~cit.~are isomorphisms). 
It now follows from Lemma~\ref{LemmaPowerSeriesExp}\ref{LemmaPowerSeriesExp_B} (and the remark following that lemma) that $M$ has entries in $A$. 

This proves the existence of $\Ecal$ fpqc-locally on $X$. In order to finish the proof, we just notice that our construction defines descend data on $\Ecal$ that are compatible with the isomorphisms 
\[D_{\rm st}(\Ecal)\longrightarrow (D,\Phi,N,\Fcal^\bullet)\]
and the descend data on the latter. Hence both $\Ecal$ as well as the isomorphism descent. 
\end{proof}

Recall that the stack $\sD_{\phi,N,\mu}^{\rm ad}$ is the quotient of the adic space $X_\mu$ associated to $P_{K_0,d}\times \Flag_{K,d,\mu}$ by the action of the group $(\Res_{K_0/\BQ_p}\GL_{d,K_0})_{E_\mu}$ and consider the open subspace $X_\mu^{\rm adm}\subset X_{\mu}^{\rm int}\subset X_\mu$. This subset is stable under the action of $(\Res_{K_0/\BQ_p}\GL_{d,K_0})_{E_\mu}$ and we write $\sD_{\phi,N,\mu}^{\rm ad, adm}$ for the quotient of $X_\mu^{\rm adm}$ by this action. 
\begin{prop}\label{factorsoverDadm}
Let $\mu$ be a cocharacter as in \eqref{mu} and let $E_\mu$ be its reflex field. Let $X$ be a reduced adic space locally of finite type over $E_\mu$ and let $\Ecal$ be a family of semi-stable $\sG_K$-representations on $X$ with constant Hodge polygon equal to $\mu$.  Then the morphism 
\[X\longrightarrow \sD_{\phi,N,\mu}^{\rm ad }\]
induced by the $K$-filtered $(\phi,N)$-module $D_{\rm st}(\Ecal)$ factors over $\sD_{\phi,N,\mu}^{\rm ad, adm}$.
\end{prop}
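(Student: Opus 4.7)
The plan is to use Corollary~\ref{universalGKinfty} to reduce the claim to the equality $X=X^{\rm adm}$ for the pulled-back family on the open unit disc, to verify this equality at rigid analytic points via Kisin's theorem, and then to globalise by openness of the admissible locus together with the density of the rigid points.

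Composing $f$ with the zero section $s\colon\sD_{\phi,N,\mu}^{\rm ad}\into\sH_{\phi,N,\mu}^{\rm ad}$ of Theorem~\ref{ThmHnabla} produces a morphism $\tilde f\colon X\to\sH_{\phi,N,\mu}^{\rm ad}$ classified by the $(\phi,N)$-module with Hodge-Pink lattice $\ulD_\Fq=(D,\Phi,N,\Fq(\Fcal^\bullet))$. Since by Remark~\ref{familyonDadphiN} the stratum $\sD_{\phi,N,\mu}^{\rm ad,adm}$ is exactly the preimage of $\sH_{\phi,N,\mu}^{\rm ad,adm}$ under $s$, it suffices to show that $\tilde f$ factors through $\sH_{\phi,N,\mu}^{\rm ad,adm}$. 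The target lives inside $\sH_{\phi,N,\mu}^{\rm ad}\subset\sH_{\phi,N,\preceq\mu}^{\rm ad}$, so by the maximality assertion of Corollary~\ref{universalGKinfty} this is equivalent to the equality $X=X^{\rm adm}$ for the admissibility locus attached to the $(\phi,N_\nabla)$-module $\underline{\Mcal}(\ulD_\Fq)$ on $X\times\Ubb$ produced by Theorem~\ref{ThmEquivDandM}.

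I would then reduce to rigid analytic points. The subset $X^{\rm adm}$ is open in $X$ by Theorem~\ref{etale} combined with Theorem~\ref{summary}(i), and its complement is an analytic (Zariski-closed) subspace of $X$. Since $X$ is reduced and locally of finite type over $E_\mu$, the Nullstellensatz for affinoid algebras forces any non-empty such closed subspace to contain a rigid analytic point, so it suffices to verify that every rigid analytic point $x\in X$ lies in $X^{\rm adm}$. For such a point, let $L=\kappa(x)$, which is a finite extension of $E_\mu$. Then $\Ecal_L:=\Ecal\otimes_{\Ocal_X}L$ is a semi-stable $\sG_K$-representation of dimension $d$ with $D_{\rm st}(\Ecal_L)=\ulD\otimes L$; in particular $\ulD\otimes L$ is weakly admissible and has constant Hodge polygon $\mu$.

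At this point Kisin's integral $p$-adic Hodge theory \cite[Theorem~0.1, Corollary~1.3.15]{crysrep} applied to $\ulD_\Fq\otimes L$ produces a $\phi$-module of finite height $(\Mfrak_x,\Phi_x)$ over $\bA^{[0,1)}_L$ whose scalar extension to $\bB^{[0,1)}_L$ recovers $\underline{\Mcal}(\ulD_\Fq\otimes L)$, and such that the associated $\sG_{K_\infty}$-representation $\Hom_{\bA^{[0,1)},\Phi}(\Mfrak_x,\wt\bA^{[0,1)})\otimes_{\Z_p}\Q_p$ is canonically isomorphic to $\Ecal_L|_{\sG_{K_\infty}}$. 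As the latter has $L$-rank $d$, Theorem~\ref{summary}(iii) forces $\dim_L(\Ncal\otimes_{\sB_X}\wt\sB_X\otimes L)^{\Phi=\id}=d$, and hence $x\in X^{\rm adm}$. Combined with the density reduction of the previous paragraph this yields $X=X^{\rm adm}$ and finishes the proof. The hardest step is this canonical identification at rigid points: that the $\sG_{K_\infty}$-representation produced from the étale integral model via Fontaine's equivalence agrees with the restriction to $\sG_{K_\infty}$ of the given semi-stable representation. This compatibility is the heart of Kisin's comparison theorem, and once available pointwise, openness of $X^{\rm adm}$ and density of the rigid points propagate the conclusion to all of $X$ essentially formally.
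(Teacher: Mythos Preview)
Your approach is essentially correct and takes a genuinely different route from the paper's. The paper argues globally with period sheaves: it embeds $\Mcal=\underline{\Mcal}(\ulD)$ into $\Ecal\otimes_{\Ocal_X}(\Ocal_X\wh\otimes B_{\rm st})$ via the comparison isomorphism~\eqref{comparison}, observes that the induced $\sG_{K_\infty}$-action on this image is trivial, deduces $\Mcal\otimes\sB_X^{[r,s]}\subset(\Ecal\otimes\wt\sB_X^{[r,s]})^{\sG_{K_\infty}}$ and then $\Mcal\otimes\wt\sB_X^{[r,s]}=\Ecal\otimes\wt\sB_X^{[r,s]}$ (the latter by checking at rigid points and using reducedness via Proposition~\ref{periodsheavesatpoints}), and finally invokes \cite[Propositions~8.27 and~8.28]{families}. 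So the paper constructs a \emph{global} identification of the $\sG_{K_\infty}$-module built from $\Mcal$ with the one coming from $\Ecal$, and then reads off admissibility from that. Your route is more economical: you reduce at once to checking $x\in X^{\rm adm}$ for rigid $x$ via Kisin's pointwise comparison (which is exactly \cite[Proposition~2.1.5]{crysrep}, also the input behind the last sentence of Corollary~\ref{universalGKinfty}), and then propagate by openness. The paper's argument has the side benefit of making the global compatibility with $\Ecal$ explicit, which is reused in Proposition~\ref{semistabfamily} and Theorem~\ref{ThmUnivFamily}; your argument isolates exactly what is needed for the proposition at hand.

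One correction: your density step reaches the right conclusion but for the wrong reason. You assert that $X\setminus X^{\rm adm}$ is ``analytic (Zariski-closed)'' and then invoke the Nullstellensatz, but nothing in Theorem~\ref{summary} gives Zariski-closedness of the complement---only adic-openness of $X^{\rm adm}$. What you should use instead is Huber's comparison between rigid spaces and adic spaces locally of finite type: open subsets of such an adic space are determined by their sets of rigid analytic points (equivalently, the closed points of $\Spa(A,A^\circ)$ for $A$ strictly affinoid over $\BQ_p$ are exactly the classical ones, so every nonempty closed subset contains a rigid point). With this in hand, $X^{\rm adm}$ open and containing all rigid points forces $X^{\rm adm}=X$ directly, without any appeal to Zariski-closedness.
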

\begin{proof}
By Definition \ref{DefSemiStRepNeu}, there is (locally on $X$) an $\sA_X^{[0,1)}$-module $\Mfrak$ such that $\Mfrak\otimes_{\sA_X^{[0,1)}}\sB_X^{[0,1)}=\underline{\Mcal}(D)$, where $D=D_{\rm st}(\Ecal)$ is the filtered $(\phi,N)$-module on $X$ defining the morphism $X\longrightarrow \sD_{\phi,N,\mu}^{\rm ad }$. Moreover by definition 
\[\Mfrak\otimes_{\sA_X^{[0,1)}}\wt\Acal_X^{[0,1)}[1/p]\cong \Ecal\otimes_{\Ocal_X}\wt\Acal_X^{[0,1)}[1/p]\]
equivariant for the action of $\phi$ and $\sG_{K_\infty}$. 
In particular this implies that $f$ factors over the admissible locus. 
\end{proof}

\begin{theo}\label{ThmUnivFamily}
There is a family $\Ecal^{\rm univ}$ of semi-stable $\sG_K$-representations on $\sD_{\phi,N,\mu}^{\rm ad,adm}$ such that $D_{\rm st}(\Ecal)=(D,\Phi,N,\Fcal^\bullet)$ is the universal family of filtered $(\phi,N)$-modules on $\sD_{\phi,N,\mu}^{\rm ad,adm}$.
This family is universal in the following sense: Let $X$ be an adic space locally of finite type over $E_\mu$ and let $\Ecal'$ be a family of semi-stable $\mathscr{G}_K$-representations on $X$ with constant Hodge polygon equal to $\mu$. Then there is a unique morphism $f:X\rightarrow \sD_{\phi,N,\mu}^{\rm ad,adm}$ such that $\Ecal'\cong f^\ast\Ecal$ as families of $\sG_K$-representations.
\end{theo}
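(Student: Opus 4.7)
My plan is to construct $\CE$ by descent from a reduced cover of $\sD_{\phi,N,\mu}^{\rm ad,adm}$ and then to deduce the universal property from Propositions~\ref{semistabfamily} and \ref{factorsoverDadm} together with the full faithfulness of $D_{\rm st}$ in Lemma~\ref{Dstff}. Since $P_{K_0,d}$ is reduced (Theorem~\ref{ThmReduced}) and $\Flag_{K,d,\mu}$ is smooth, the adic space $X_\mu=(P_{K_0,d}\times_{\BQ_p}\Flag_{K,d,\mu})^{\rm ad}$ is reduced, and by definition $\sD_{\phi,N,\mu}^{\rm ad,adm}$ is the stack quotient of its open subspace $X_\mu^{\rm adm}$ by the action of $(\Res_{K_0/\BQ_p}\GL_{d,K_0})_{E_\mu}$. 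On $X_\mu^{\rm adm}$ sits the universal $K$-filtered $(\phi,N)$-module $\ulD_{\rm univ}=(D,\Phi,N,\CF^\bullet)$; viewed via the zero section $\sD_{\phi,N,\mu}^{\rm ad}\into\sH_{\phi,N,\mu}^{\rm ad}$ of Theorem~\ref{ThmHnabla} it lands inside $\sH_{\phi,N,\mu}^{\nabla}\cap\sH_{\phi,N,\mu}^{\rm ad,adm}$, so Corollary~\ref{Cor4.7} together with Theorem~\ref{etale} produces, fpqc-locally on $X_\mu^{\rm adm}$, a $\phi$-module of finite height $\Mfrak$ over $\sA_X^{[0,1)}$ with $\Mfrak\otimes_{\sA_X^{[0,1)}}\sB_X^{[0,1)}\cong\ul\CM(\ulD_{\rm univ})$ after a suitable Tate twist. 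Proposition~\ref{semistabfamily} then yields a family $\CE_0$ of semi-stable $\sG_K$-representations with $D_{\rm st}^\ast(\CE_0)\cong\ulD_{\rm univ}$, and setting $\CE:=\CE_0^\vee$ gives $D_{\rm st}(\CE)\cong\ulD_{\rm univ}$. Any two local choices of $\Mfrak$ lead to canonically isomorphic $\CE$ by Lemma~\ref{Dstff}(ii), so $\CE$ descends fpqc-locally to all of $X_\mu^{\rm adm}$; the same argument makes the construction $(\Res_{K_0/\BQ_p}\GL_{d,K_0})_{E_\mu}$-equivariant, so $\CE$ descends further to a family on $\sD_{\phi,N,\mu}^{\rm ad,adm}$ whose $D_{\rm st}$ is the restriction of the universal filtered $(\phi,N)$-module.

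For the universal property, given a family $\CE'$ of semi-stable $\sG_K$-representations on $X$ with constant Hodge polygon $\mu$, the filtered $(\phi,N)$-module $D_{\rm st}(\CE')$ determines a morphism $f\colon X\to\sD_{\phi,N,\mu}^{\rm ad}$. For reduced $X$, Proposition~\ref{factorsoverDadm} shows $f$ factors through $\sD_{\phi,N,\mu}^{\rm ad,adm}$; for arbitrary $X$ the same conclusion holds because $\sD_{\phi,N,\mu}^{\rm ad,adm}\subset\sD_{\phi,N,\mu}^{\rm ad}$ is open and factorisation is therefore detected on the underlying topological space $|X|=|X_{\rm red}|$. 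Using the compatibility of $D_{\rm st}$ with pullbacks of adic spaces I then obtain $D_{\rm st}(f^\ast\CE)\cong f^\ast D_{\rm st}(\CE)\cong D_{\rm st}(\CE')$, so Lemma~\ref{Dstff}(ii) supplies the required isomorphism $f^\ast\CE\cong\CE'$. Uniqueness of $f$ follows from the same full faithfulness: any such $f$ satisfies $f^\ast\ulD_{\rm univ}\cong D_{\rm st}(\CE')$, which pins $f$ down as a morphism into $\sD_{\phi,N,\mu}^{\rm ad,adm}$.

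The principal technical hurdle I expect is the compatibility of $D_{\rm st}$ with arbitrary pullbacks of adic spaces, including non-reduced ones, since the sheaf $\CO_X\wh\otimes B_{\rm st}$ is constructed via a completion procedure best behaved on reduced affinoids; this should follow from the local description of that sheaf together with Lemma~\ref{Dstff}(i), which reconstructs $\CE$ as $V_{\rm st}(D_{\rm st}(\CE))$ and so identifies the pullback on the representation side with the $\Phi={\rm id}$, $N=0$ piece of $f^\ast D_{\rm st}(\CE)\otimes(\CO_X\wh\otimes B_{\rm st})$ on the module side. A secondary but necessary step is to justify fpqc-descent for the $\sG_K$-equivariant bundle $\CE$ on $X_\mu^{\rm adm}$ and across the group action, which is routine given the canonical nature of the descent isomorphisms provided by full faithfulness of $D_{\rm st}$.
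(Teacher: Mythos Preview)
Your proposal is correct and follows essentially the same route as the paper: construct $\CE$ on the reduced cover $X_\mu^{\rm adm}$ via Proposition~\ref{semistabfamily}, descend by equivariance, and deduce universality from Proposition~\ref{factorsoverDadm} together with Lemma~\ref{Dstff}. The only minor tactical difference is in the gluing step: the paper observes that each locally constructed $\CE$ sits canonically inside the global sheaf $\sH\!om_{\CO_X\otimes K_0}(D,\CO_X\wh\otimes B_{\rm st})$, so the local pieces literally agree on overlaps, whereas you invoke full faithfulness of $D_{\rm st}$ (via $V_{\rm st}$) to produce the gluing and equivariance data; both work, but the paper's ambient-sheaf argument sidesteps any need to verify cocycle conditions.
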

\begin{proof}
The existence of the family $\Ecal$ follows by applying Proposition \ref{semistabfamily} to the family $(\Mfrak,\Phi)$ of $\phi$-modules of finite height over $\sA^{[0,1)}$ on
\[Y=(P_{K_0,d}\times {\rm Flag}_{K,d,\mu})^{\rm ad, adm}.\] 
As the construction is obviously functorial, this vector bundle is equivariant for the action of the group $(\Res_{K_0/\BQ_p}\GL_{d,K_0})_{E_\mu}$ and hence defines the desired family of semi-stable $\sG_K$-representations on $\sD_{\phi,N,\mu}^{\rm ad,adm}$. Further the isomorphism $D_{\rm st}(\Ecal)\cong (D,\Phi,N,\Fcal^\bullet)$ on $Y$ is by construction equivariant under the action of $(\Res_{K_0/\BQ_p}\GL_{d,K_0})_{E_\mu}$ and hence descends to $\sD_{\phi,N,\mu}^{\rm ad,adm}$.

Now let $X$ be as above. The $K$-filtered $(\phi,N)$-module $D_{\rm st}(\Ecal')$ defines a morphism $f:X\rightarrow \sD_{\phi,N,\mu}^{\rm ad}$. This map factors over $\sD_{\phi,N,\mu}^{\rm ad, adm}$ by Proposition \ref{factorsoverDadm} as factoring over an open subspace may be check on the reduced space underlying $X$. Further we have isomorphisms $D_{\rm st}(\Ecal')\cong f^\ast D_{\rm st}(\Ecal)\cong D_{\rm st}(f^\ast\Ecal)$.
Now the claim follows from Lemma \ref{Dstff}.
\end{proof}

\begin{cor}\label{CorUnivCrystFamily}
There is a family $\Ecal$ of crystalline $\sG_K$-representations on $\sD_{\phi,\mu}^{\rm ad,adm}$ such that $D_{\rm cris}(\Ecal)=(D,\Phi,\Fcal^\bullet)$ is the universal family of filtered $\phi$-modules on $\sD_{\phi,\mu}^{\rm ad,adm}$.
This family is universal in the following sense: Let $X$ be an adic space locally of finite type over $E_\mu$ and let $\Ecal'$ be a family of crystalline $\mathscr{G}_K$-representations on $X$ with constant Hodge polygon $\mu$. Then there is a unique morphism $f:X\rightarrow \sD_{\phi,\mu}^{\rm ad,adm}$ such that $\Ecal'\cong f^\ast\Ecal$ as families of $\sG_K$-representations.
\end{cor}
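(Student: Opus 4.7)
The plan is to deduce this corollary from Theorem~\ref{ThmUnivFamily} for semi-stable representations via Lemma~\ref{ssN=0cryst}, which characterizes crystalline representations among semi-stable ones as those with vanishing monodromy.

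First I would check the geometric compatibility: the closed embedding $\sD_{\phi,\mu}^{\rm ad}\hookrightarrow\sD_{\phi,N,\mu}^{\rm ad}$ (cutting out $N=0$, cf.\ Definition~\ref{DefStacks}) identifies $\sD_{\phi,\mu}^{\rm ad,adm}$ with $\sD_{\phi,\mu}^{\rm ad}\cap\sD_{\phi,N,\mu}^{\rm ad,adm}$. This is essentially by construction, since admissibility is defined in terms of the associated $(\phi,N_\nabla)$-module on the open unit disc and hence in terms of a $\sG_{K_\infty}$-representation; these notions only use the underlying $\phi$-module structure via $\underline{\CM}$ and not the $N$-action on $D$, so imposing $N=0$ and then taking the admissible locus gives the same result as intersecting the admissible locus with $\{N=0\}$.

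Second, I would define $\Ecal$ to be the restriction of the universal semi-stable family $\Ecal^{\rm st}$ of Theorem~\ref{ThmUnivFamily} along the closed embedding $\sD_{\phi,\mu}^{\rm ad,adm}\hookrightarrow\sD_{\phi,N,\mu}^{\rm ad,adm}$. By construction $D_{\rm st}(\Ecal^{\rm st})$ is the universal filtered $(\phi,N)$-module, and its pullback to $\sD_{\phi,\mu}^{\rm ad,adm}$ has vanishing monodromy by the identification in the previous step. Lemma~\ref{ssN=0cryst} then gives that $\Ecal$ is a family of crystalline $\sG_K$-representations with
\[
D_{\rm cris}(\Ecal)\;=\;D_{\rm st}(\Ecal)\;=\;(D,\Phi,\CF^\bullet)\,,
\]
the universal filtered $\phi$-module on $\sD_{\phi,\mu}^{\rm ad,adm}$.

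Third, for universality, let $\Ecal'$ be a family of crystalline $\sG_K$-representations on $X$ with constant Hodge polygon~$\mu$. By Lemma~\ref{ssN=0cryst} it is semi-stable with $D_{\rm st}(\Ecal')=(D',\Phi',0,\CF'^\bullet)$. Theorem~\ref{ThmUnivFamily} provides a unique morphism $f\colon X\to\sD_{\phi,N,\mu}^{\rm ad,adm}$ with $\Ecal'\cong f^*\Ecal^{\rm st}$. Since the pullback of the universal $N$ vanishes, the map $f$ factors through the closed substack $\sD_{\phi,\mu}^{\rm ad}$, and thus through $\sD_{\phi,\mu}^{\rm ad,adm}$ by the identification of Step~1. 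The uniqueness of $f$ as a morphism into $\sD_{\phi,\mu}^{\rm ad,adm}$ follows from its uniqueness as a morphism into $\sD_{\phi,N,\mu}^{\rm ad,adm}$ combined with the fact that the inclusion $\sD_{\phi,\mu}^{\rm ad,adm}\hookrightarrow\sD_{\phi,N,\mu}^{\rm ad,adm}$ is a monomorphism. The only real point requiring care is the compatibility of the two admissible loci across the closed embedding in Step~1; once that is in place the corollary reduces entirely to the semi-stable case plus Lemma~\ref{ssN=0cryst}.
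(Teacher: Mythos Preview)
Your proposal is correct and follows exactly the approach the paper intends: the paper's entire proof is the single sentence ``This is a direct consequence of Lemma~\ref{ssN=0cryst},'' and you have simply unpacked what that sentence entails --- restricting the universal semi-stable family from Theorem~\ref{ThmUnivFamily} along $\{N=0\}$ and invoking Lemma~\ref{ssN=0cryst} in both directions. Your Step~1 concern about compatibility of the admissible loci is legitimate but routine, since the admissible locus is constructed from the associated $(\phi,N_\nabla)$-module via $\underline{\CM}$, which does not see the datum $N$ on $D$ (only the Hodge--Pink lattice), so setting $N=0$ commutes with passing to the admissible locus.
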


\begin{proof}
This is a direct consequence of the discussion of the semi-stable case in the Theorem above and Lemma $\ref{ssN=0cryst}$.
\end{proof}

Let us compare this result to the construction of the universal semi-stable deformation rings as in \cite{Kisindeform}. Fix a continuous representation 
\[\bar\rho:\Gcal_K\longrightarrow \GL_n(\Fbb)\]
with $\Fbb$ a finite extension of $\Q_p$ and write $R_{\bar\rho}$ for the universal framed deformation ring of $\bar\rho$. Further we write $R_{\bar\rho}^{\bf k}$ for the quotient of $R_{\bar\rho}$ constructed in \cite[Theorem 2.5.5]{Kisindeform}. 
Moreover let us write $\wt\sD_{\phi,N,\mu}^{\rm ad,adm}$ for the stack over $\sD_{\phi,N,\mu}^{\rm ad,adm}$ parametrizing trivializations of the universal semi-stable representation constructed in Theorem $\ref{ThmUnivFamily}$, i.e. for the stack that assigns to $f:S\rightarrow \sD_{\phi,N,\mu}^{\rm ad,adm}$ the set of isomorphisms $\Ocal_S^n\cong f^\ast \Ecal^{\rm univ}$. 
Note that $\wt\sD_{\phi,N,\mu}^{\rm ad,adm}$ actually is representable by an adic space locally of finite type over $\Q_p$ (resp.~over the reflex field of $\mu$). We write $\wt\sD_{\phi,N,\mu}^{\rm ad,adm,+}$ for the open subspace where the canonical representation
 \[\rho^{\rm univ}:\Gcal_K\longrightarrow  \GL(\Gamma(\wt\sD_{\phi,N,\mu}^{\rm ad,adm},\Ecal))\longrightarrow \GL_n(\Gamma(\wt\sD_{\phi,N,\mu}^{\rm ad,adm},\Ocal))\] 
 factors over \[ \GL_n(\Gamma(\wt\sD_{\phi,N,\mu}^{\rm ad,adm},\Ocal^+))\subset  \GL_n(\Gamma(\wt\sD_{\phi,N,\mu}^{\rm ad,adm},\Ocal)).\]
 Note that this really defines an open subspace as the group $\Gcal_K$ is topologically finitely generated (and hence we only need to check for finitely many elements of $\sG_K$ whether the corresponding matrix has bounded entries). 
 
 Having fixed $\bar\rho$ we can cut out an open subspace $\wt\sD_{\phi,N,\mu}^{\rm ad,adm,+}(\bar\rho)$ by demanding that the composition
 \[\Gcal_K\longrightarrow \GL_n(\Gamma(\wt\sD_{\phi,N,\mu}^{\rm ad,adm},\Ocal^+))\longrightarrow \GL_n(\Gamma(\wt\sD_{\phi,N,\mu}^{\rm ad,adm},\Ocal^+/\Ocal^{++})) \]
 is equal to $\bar\rho$. Here $\Ocal^{++}\subset \Ocal^+$ denotes the ideal of topologically nilpotent elements. More precisely, given any affinoid open subset $U=\Spa(A,A^+)\subset \wt\sD_{\phi,N,\mu}^{\rm ad,adm,+}$ we have a canonical family of $\Gcal_K$-representations on the reduced special fiber $\Spec A^+/A^{++}$ of $\Spf A$ (where $A^{++}\subset A^+$ is the ideal of topologically nilpotent elements), namely 
\[\Gcal_K\longrightarrow \GL_n(A^+)\longrightarrow \GL_n(A^+/A^{++}).\]
Then we let $U(\bar\rho)\subset U$ denote the tube over the Zariski closed subset of $\Spec A^+/A^{++}$ where this composition is equal to $\bar\rho$ (resp.~the base change of $\bar\rho$ to $A^+/A^{++}$). This construction is  obviously compatible with localization on the generic fiber (i.e.~with replacing $\Spf A^+$ by an affine open subset of an admissible blow up) and hence the pieces $U(\bar\rho)$ glue together to give $\wt\sD_{\phi,N,\mu}^{\rm ad,adm,+}(\bar\rho)$.

 Moreover the restriction of $\rho^{\rm univ}$ to $\wt\sD_{\phi,N,\mu}^{\rm ad,adm,+}(\bar\rho)$ induces by construction a morphism to $(\Spf R_{\bar\rho})^{\rm ad}$. 
\begin{prop}\label{comparetoKisin}
The canonical morphism
\begin{equation}\label{comparewKisin}
\wt \sD_{\phi,N,\mu}^{\rm ad,adm,+}(\bar\rho)\longrightarrow (\Spf R_{\bar\rho})^{\rm ad}
\end{equation}
induces an isomorphism
\[\wt\sD_{\phi,N,\mu}^{\rm ad,adm,+}(\bar\rho)\cong (\Spf R_{\bar\rho}^{\bf k})^{\rm ad}\]
\end{prop}
\begin{proof}
It follows from \cite[Theorem 2.5.5]{Kisindeform} and the reducedness of the source that the morphism factors over $(\Spf R_{\bar\rho}^{\bf k})^{\rm ad}$ and is a bijection on $L$-valued points. Now \cite[Theorem 2.5.5]{Kisindeform} again and the functorial description of the left hand side show that the morphism is an isomorphism on $A$-valued point for all finite dimensional $\Q_p$-algebras $A$. As the left hand side is known to be representable we find that the morphism  
\[f:\wt\sD_{\phi,N,\mu}^{\rm ad,adm,+}(\bar\rho)\longrightarrow  (\Spf R_{\bar\rho}^{\bf k})^{\rm ad}\]
is a smooth and bijective map of adic spaces locally of finite type over $\Q_p$. Especially it is \'etale and hence locally given by the composition of an open embedding with a finite \'etale morphism. As $f$ is bijective on $L$-valued points, the finite \'etale morphism has to be of degree $1$, i.e.~an isomorphism. We deduce that $f$ is an open embedding. We conclude that $f$ is an isomorphism by constructing a continuous section to $f$.

Indeed, Kisin's construction in \cite[(2.5)]{Kisindeform} consists of two steps: first he constructs a quotient $A$ of $R_{\bar\rho}$ where the restriction of the universal $\sG_K$ representation to $\sG_{K_\infty}$ is defined by a $\phi$-module $\Mfrak$ over $A_W\dbl u \dbr$, that is by a $\Acal_{(\Spf A)^{\rm ad}}^{[0,1)}$-module of finite height. Let us write $X=(\Spf A)^{\rm ad}$ and $\Ecal$ for the restriction of the universal $\sG_K$-representation to $X$. 
Then $R_{\bar\rho}^{\bf k}$ is the quotient of $A$, defined by the condition that the isomorphism  
\[\Mfrak\otimes_{\sA_{X}^{[0,1)}}\wtsA_{X}^{[0,1)}[\tfrac{1}{p}]\cong \Ecal\otimes_{\CO_{X}}\wtsA_{X}^{[0,1)}[\tfrac{1}{p}]\]
extends to a $(\sG_K,\phi,N)$-equivariant isomorphism
\[\Mfrak\otimes_{\sA_{X}^{[0,1)}}(B\otimes_{\Q_p} B^+_{\rm st})\cong \Ecal\otimes_{\CO_{X}}B\otimes_{\Q_p} B^+_{\rm st}\]
for every map $R_{\bar\rho}^{\bf k}\rightarrow B$ to a finite dimensional $\Q_p$-algebra. 
Using Lemma \ref{LemmaPowerSeriesExp}\ref{LemmaPowerSeriesExp_B} and the matrices of the action of $g\in \sG_K$ (resp.~of $\phi$ and $N$) in some chosen basis, we deduce that hence the induced isomorphism 
\[\Mfrak\otimes_{\sA_{X}^{[0,1)}}(\Ocal_X\wh\otimes B^+_{\rm st})\cong \Ecal\otimes_{\CO_{X}}\Ocal_X\wh\otimes B^+_{\rm st}\]
is equivariant for the actions of $(\sG_K,\phi,N)$. In particular the family of Galois representations on $(\Spf R_{\bar\rho}^{\bf k}) ^{\rm ad}$ is semi-stable according to our definition. 

Hence we obtain a canonical morphism \[(\Spf R_{\bar\rho}^{\bf k})^{\rm ad}\longrightarrow \sD_{\phi,N,\mu}^{\rm ad, adm}.\] As $\Ecal$ comes with a trivialization of an $\Gcal_K$-stable $\Ocal^+$-lattice inside $\Ecal$ this morphism canonically lifts to $\wt\sD_{\phi,N,\mu}^{\rm ad,adm,+}(\bar\rho)$ and defines a morphism that is continuous set-theoretically and a section to $f$. As $f$ already is known to be an open embedding it is enough to conclude. 
\end{proof}
\begin{rem}
We note that Kisin's description of the semi-stable deformation rings is a priori quite different: a family of Galois representations over some affinoid algebra $A$ is crystalline (resp.~semistable) in Kisin's sense if it is crystalline (resp.~semistable) after the base change to each quotient of $A$ that is finite dimensional as a $\Q_p$-vector space. On the other hand we have aimed at giving a definition of a family of crystalline representations in the spirit of Fontaine (though we did not do this using filtered $\phi$-modules, but rather $\phi$-modules on the open unit disc).
As it is not so obvious how these definitions directly relate to each other we construct the morphism $(\ref{comparewKisin})$ in a slightly complicated manner. 

Note that our construction has the advantage that we no longer need to fix a framing of an integral structure inside the Galois representation. 
After this paper was written, Wang-Erickson~\cite{Erickson} extended the results of Kisin in a direct way to families that do not longer fix a framing. For such families a similar comparison with our construction should hold true. However, is seems that one can not recover the main result of \cite{Erickson} from our construction that takes place purely in the generic fiber. 
\end{rem}

We finally comment on the relation of our construction with the work of Berger and Colmez \cite{BergerColmez}. In loc.~cit.~the authors study families of $p$-adic representations parametrized by $p$-adic Banach algebras. They prove for example that in such a family the locus of point-wise crystalline (resp.~semi-stable) representations of fixed Hodge-Tate weight is a closed subspace, and there exist a family of filtered $\phi$-modules (resp.~filtered $(\phi,N)$-modules) that specializes to the filtered $(\phi,N)$-modules at each point. We deduce from the comparison with Kisin's construction that our families have the same property.  
\begin{cor}
Let $X$ be a reduced adic space locally of finite type over $\Q_p$ and let $\Ecal$ be a family of $\sG_K$-representations on $X$. We assume that (fpqc-locally on $X$) there exists a $\sG_K$-stable $\Ocal_X^+$-lattice in $\Ecal$. 
Then $\Ecal$ is a semi-stable family if and only if $\Ecal\otimes \kappa(x)$  is semi-stable for all $x\in X$. 
\end{cor}
\begin{proof}
The subspace $(\Spf R_{\bar\rho}^{\bf k})^{\rm ad}\subset (\Spf R_{\bar\rho})^{\rm ad}$ is the Zariski-closure of all classical points at which the universal Galois representation on $(\Spf R_{\bar\rho})^{\rm ad}$ is semi-stable with Hodge-Tate weight ${\bf k}$. The result hence follows from the fact that by assumption we may (locally in the fpqc-topology) construct a morphism $X\rightarrow (\Spf R_{\bar\rho})^{\rm ad}$ such that the pullback of the universal representation on $(\Spf R_{\bar\rho})^{\rm ad}$ agrees with the $\sG_K$-stable $\Ocal_X^+$ lattice in $\Ecal$. 
\end{proof}
We remark that the existence of an integral lattice is always assumed in \cite{BergerColmez}. In fact we do not know whether it automatically exists or whether this is a true condition. 
As our definition (in particular the definition of the completed sheaves of period rings) differs from the one in \cite{BergerColmez}, the relation of our construction with theirs is less clear in the non-reduced case. However, the universal case is reduced.

\section{The morphism to the adjoint quotient}\label{SectAdjQuot}

\newcommand{\bbb}{b}

As in \cite[\S\,4]{Hellmann} we consider the adjoint quotient $A/\FS_d$ where $A\subset\GL_{d,\BQ_p}$ is the diagonal torus and $\FS_d$ is the finite Weyl group of $\GL_d$. Under the morphism $c:A\to\BA_{\BQ_p}^{d-1}\times_{\BQ_p}\BG_{m,\BQ_p}$ which maps an element $g$ of $A$ to the coefficients $c_1,\ldots,c_d$ of its characteristic polynomial $\chi_g=X^d+c_1X^{d-1}+\ldots+c_d$, the adjoint quotient $A/\FS_d$ is isomorphic to $\BA_{\BQ_p}^{d-1}\times_{\BQ_p}\BG_{m,\BQ_p}=\Spec\BQ_p[c_1,\ldots,c_d,c_d^{-1}]$. Recall from \cite[\S\,4]{Hellmann} that there is a morphism
\begin{equation}\label{EqAdjQuot}
\Res_{K_0/\BQ_p}\GL_{d,K_0}\longto A/\FS_d
\end{equation}
which is invariant under $\phi$-conjugation on the source. It is defined on $R$-valued points by sending $\bbb\in(\Res_{K_0/\BQ_p}\GL_{d,K_0})(R)=\GL_d(R\otimes_{\BQ_p}K_0)$ to the characteristic polynomial of $(\bbb\cdot\phi)^f=\bbb\cdot\phi(\bbb)\cdot\ldots\cdot\phi^{(f-1)}(\bbb)$ where $f=[K_0:\BQ_p]$. This characteristic polynomial actually has coefficients in $R$, because it is invariant under $\phi$, as can be seen from the formula $\phi(\bbb\cdot\phi)^f=\bbb^{-1}\cdot(\bbb\cdot\phi)^f\cdot\phi^f(\bbb)=\bbb^{-1}\cdot(\bbb\cdot\phi)^f\cdot\bbb$. Since $\Res_{K_0/\BQ_p}\GL_{d,K_0}$ acts on itself by $\phi$-conjugation via $(g,\bbb)\mapsto g^{-1}\bbb\,\phi(g)$ and $(g^{-1}\bbb\,\phi(g)\,\phi)^f=g^{-1}\cdot(\bbb\cdot\phi)^f\cdot g$ the map \eqref{EqAdjQuot} is invariant under $\phi$-conjugation. 

Let $\mu$ be a cocharacter as in \eqref{mu}, let $E_\mu$ be its reflex field, and set $(A/\FS_d)_{E_\mu}:=A/\FS_d\times_{\BQ_p}E_\mu$. By projecting to $\Res_{K_0/\BQ_p}\GL_{d,K_0}$ we may extend $\beta$ to morphisms
\[
\xymatrix @C+1pc @R-0.5pc {
P_{K_0,d}\,\times_{\BQ_p}\,Q_{K,d,\preceq\mu} \ar[r]^{\qquad\TS\wt\alpha} \ar[d] & (A/\FS_d)_{E_\mu} \,~\ar@{=}[d] \\
\sH_{\phi,N,\preceq\mu} \ar[r]^{\TS\alpha} & (A/\FS_d)_{E_\mu} \;.
}
\]
We further obtain morphisms to $(A/\FS_d)_{E_\mu}$ from the locally closed substacks $\sH_{\phi,\preceq\mu}$, $\sH_{\phi,N,\mu}$, $\sH_{\phi,\mu}$, $\sD_{\phi,N,\mu}$, and $\sD_{\phi,\mu}$, which we likewise denote by $\alpha$. Here we view $\sD_{\phi,N,\mu}$ and $\sD_{\phi,\mu}$ as substacks of $\sH_{\phi,N,\mu}$ via the zero section from Remark~\ref{Rem2.4}(3). We also consider the adification of these morphisms.

\begin{theorem}\label{Thm7.1}
Let $\mu$ be a cocharacter as in \eqref{mu}, let $E_\mu$ be its reflex field and let $x\in(A/\FS_d)_{E_\mu}^\ad$. Then there exists an open subscheme $X$ of $\wt\alpha^{-1}(x)$ such that the weakly admissible locus in the fiber over $x$ is given by
\[
\wt\alpha^{-1}(x)^{\rm wa}\;=\;X^\ad\,.
\]
\end{theorem}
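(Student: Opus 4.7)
The plan is to revisit the proof of Theorem~\ref{ThmWAOpen} and to show that its key ingredients degenerate into purely algebraic data when restricted to the fiber $\wt\alpha^{-1}(x)$. The decisive observation is that on $\wt\alpha^{-1}(x)$ the characteristic polynomial $\chi$ of $(\Phi^f)_0$ is a fixed monic polynomial of degree $d$ with coefficients in $\kappa(x)$. Consequently, for any $(\Phi,N)$-stable sub-$L\otimes K_0$-module $U$ of rank $i$ of the universal $(\phi,N)$-module over the fiber, the characteristic polynomial of $(\Phi^f)_0|_U$ must divide $\chi$ over an algebraic closure of $\kappa(x)$, and hence takes only finitely many values.

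First I would restrict the scheme $Z_i$ from the proof of Theorem~\ref{ThmWAOpen} to the fiber over $x$, obtaining a projective $\kappa(x)$-scheme $Z_i^{\rm fib}$. The coefficients of the characteristic polynomial of $(\Phi^f)_0$ on the universal $(\Phi,N)$-stable sub-representation are regular functions on $Z_i^{\rm fib}$ which, by the preceding observation, take values in a finite subset of $\kappa(x)$. They are therefore locally constant in the Zariski topology, yielding a finite decomposition of $Z_i^{\rm fib}$ into open-and-closed algebraic pieces on each of which the global section $f_i=\det((\Phi^f)_0|_U)^f$ is a fixed constant $c\in\kappa(x)^\times$. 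I would further refine by the value of the Hodge slope $h_i$ on $Z_i^{\rm fib}\times Q_{K,d,\preceq\mu}^{\rm fib}$: it is $\BZ$-valued, algebraically upper semi-continuous (as it arises from kernel dimensions of maps of locally free modules on an algebraic scheme), and bounded in terms of $\mu$, so this produces a finite algebraic stratification.

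On each combined stratum both $f_i$ and $h_i$ are constants, say $c$ and $m$. The description \eqref{walocus} of the non-wa locus then reduces, on this stratum, to the single inequality $v_x(c)>v_x(p)^{f^2m}$, which depends only on the base point $x$ and not on the variable $y$. Hence each stratum is either entirely contained in the non-wa locus or entirely in its complement. Taking images under the proper algebraic projection induced by $\pr_i\colon Z_i\times Q_{K,d,\preceq\mu}\to P_{K_0,d}\times Q_{K,d,\preceq\mu}$ restricted to the fiber over $x$, the non-wa locus inside $\wt\alpha^{-1}(x)^{\ad}$ is exhibited as a finite union of closed algebraic subschemes of $\wt\alpha^{-1}(x)$, and $X$ is defined as its complement.

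The main obstacle I anticipate is making the stratification of $Z_i^{\rm fib}$ by the possible divisors of $\chi$ rigorous when $\kappa(x)$ does not contain the splitting field of $\chi$: one cannot then decompose $U$ directly into generalized eigenspaces over $\kappa(x)$, and must instead argue abstractly that a family of monic polynomial divisors of the fixed polynomial $\chi$, with continuously varying coefficients in $\kappa(x)$, must have locally constant coefficients. This reduces to the finiteness of the set of monic degree-$i$ divisors of $\chi$ in an algebraic closure, together with Galois-invariance to descend back to $\kappa(x)[T]$. The interaction with the monodromy $N$ via $N\Phi^f=p^f\Phi^f N$ further constrains which divisors actually occur but does not affect the finiteness argument.
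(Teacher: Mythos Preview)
Your proposal is correct and follows essentially the same route as the paper: both arguments rest on the single observation that over $\wt\alpha^{-1}(x)$ the function $f_i$ takes only finitely many values (products of the roots of the fixed polynomial $X^d+c_1X^{d-1}+\cdots+c_d$), so that the valuation inequality $v_y(f_i)>v_y(p)^{f^2m}$ becomes a condition on constants in $\kappa(x)$ and hence is locally constant along the algebraic fiber. The paper packages this slightly more efficiently by keeping the closed sets $Y_{i,m}=\{h_i\ge m\}$ from the proof of Theorem~\ref{ThmWAOpen} rather than stratifying by the exact value of $h_i$: your strata $\{h_i=m\}$ are only locally closed, so to conclude that the bad locus is a closed subscheme you still need the observation that for each fixed value $c$ of $f_i$ the bad $m$'s form an upward-closed interval, whence the union $\bigcup_{m\ge m_c}\{h_i=m\}=\{h_i\ge m_c\}$ is closed --- exactly the $Y_{i,m_c}$ the paper uses from the start. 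Your anticipated obstacle about $\kappa(x)$ not containing the splitting field is not a real difficulty, and the paper does not dwell on it either: the finitely many possible values of $f_i$ lie in a finite extension, so $f_i$ satisfies a fixed monic polynomial over $\kappa(x)$ and is therefore locally constant as a regular function on the fiber.
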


\begin{proof}
This is similar to the proof of \cite[Theorem~4.1]{Hellmann}. Let $x=(c_1,\dots,c_d)\in \kappa(x)^{d-1}\times \kappa(x)^\times$ and let $v_x$ denote the (multiplicative) valuation on $\kappa(x)$.
First note that 
\[c_d=\det\nolimits_{\kappa(x)\otimes_{\Q_p}K_0}(\bbb\cdot\phi)^f=\det\nolimits_{\kappa(x)}((\bbb\cdot\phi)^f)^{1/f}\]
and hence $\widetilde{\alpha}^{-1}(x)^{\rm wa}=\emptyset$ unless 
\[ 
v_x(c_d)^{-1/f}\cdot v_x(p)^{\tfrac{1}{ef}\sum_{\psi,j}\mu_{\psi,j}}\;=\;\lambda(\ulD)\;=\;1.
\]
In the following we will assume that this condition is satisfied. We now revert to the notation of the proof of Theorem~\ref{ThmWAOpen}. In particular we consider the projective $P_{K_0,d}$-schemes $Z_i$, the global sections $f_i\in\Gamma(Z_i,\CO_{Z_i})$, the functions $h_i$, the closed subsets 
\[Y_{i,m}=\{y\in Z_i^\ad\times Q^\ad_{K,d,\preceq\mu}\mid h_i(y)\geq m\}\]
and the proper projections $\pr_{i,m}:Y_{i,m}\to P_{K_0,d} \times_{\BQ_p} Q_{K,d,\preceq \mu}$. This time
\[S_{i,m}=\{y=(g_y,N_y,U_y,\Fq_y)\in Y_{i,m}\times_{(P_{K_0,d}\times Q_{K,d,\preceq\mu})}\widetilde{\alpha}^{-1}(x)\mid v_y(f_i(g_y,U_y))>v_y(p)^{f^2m}\}\]
is a union of connected components of $Y_{i,m}\times_{(P_{K_0,d}\times Q_{K,d,\preceq\mu})}\widetilde{\alpha}^{-1}(x)$, hence a closed subscheme and not just a closed adic subspace. This can be seen as follows: Let $\lambda_1,\dots,\lambda_d$ denote the zeros of the polynomial
\[X^d+c_1X^{d-1}+\dots+c_{d-1}X+c_d.\]
Then every possible value of the $f_i$ is a product of some of the $\lambda_i$ and hence $f_i$ can take only finitely many values. As in the proof of Theorem~\ref{ThmWAOpen}
\[
\wt\alpha^{-1}(x)^{\rm wa}\;=\;\wt\alpha^{-1}(x)\,\setminus\, \bigcup_{i,m} \pr_{i,m}(S_{i,m}),
\]
where the union runs over $1\leq i\leq d-1$ and $m\in \Z$. So $\wt\alpha^{-1}(x)^{\rm wa}$ is an open subscheme of $\wt\alpha^{-1}(x)$.
\end{proof}

\begin{cor}\label{Cor9.2}
Let $x\in (A/\FS_d)_E^{\rm ad}$ and consider the $2$-fiber product
\[\begin{xy}\xymatrix{
\alpha^{-1}(x)^{\rm wa} \ar[r]\ar[d] & \sH_{\phi,N,\preceq\mu}^{\rm ad,wa}\ar[d]^{\TS\alpha}\\
x \ar[r] & (A/W)_E^{\rm ad}.
}
\end{xy}\]
Then there exists an Artin stack in schemes $\mathfrak{A}$ over the field $\kappa(x)$ which is an open substack of $\alpha^{-1}(x)$, such that $\alpha^{-1}(x)^{\rm wa}=\mathfrak{A}^{\ad}$. The same is true for $\sH_{\phi,\preceq\mu}$, $\sH_{\phi,N,\mu}$, $\sH_{\phi,\mu}$, $\sD_{\phi,N,\mu}$, and $\sD_{\phi,\mu}$.
\end{cor}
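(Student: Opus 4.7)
The plan is to descend the result of Theorem~\ref{Thm7.1} from the presentation $P_{K_0,d}\times Q_{K,d,\preceq\mu}$ to the stack quotient. Recall that $\sH_{\phi,N,\preceq\mu}$ is by construction the stack quotient of $P_{K_0,d}\times_{\BQ_p} Q_{K,d,\preceq\mu}$ by the action of $G:=(\Res_{K_0/\BQ_p}\GL_{d,K_0})_{E_\mu}$ via $\phi$-conjugation on the first factor and the natural action on the second, and $\wt\alpha$ is the composition of the smooth cover $P_{K_0,d}\times Q_{K,d,\preceq\mu}\to\sH_{\phi,N,\preceq\mu}$ with $\alpha$. Since $\alpha$ (hence $\wt\alpha$) takes values in the adjoint quotient and is by definition invariant under $\phi$-conjugation, the fiber $\wt\alpha^{-1}(x)$ is a $G_{\kappa(x)}$-stable closed subscheme of $(P_{K_0,d}\times Q_{K,d,\preceq\mu})\times_{\BQ_p}\kappa(x)$, and $\alpha^{-1}(x)$ is the stack quotient $[\wt\alpha^{-1}(x)/G_{\kappa(x)}]$.

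The key observation is that the open subscheme $X\subset\wt\alpha^{-1}(x)$ produced by Theorem~\ref{Thm7.1} is automatically $G_{\kappa(x)}$-stable: weak admissibility is a property of the isomorphism class of the $(\phi,N)$-module with Hodge-Pink lattice (see Corollary~\ref{waandBC} and the argument preceding it), so $\wt\alpha^{-1}(x)^{\rm wa}$ is $G$-invariant, and $X$ is uniquely determined by the equality $X^\ad=\wt\alpha^{-1}(x)^{\rm wa}$ together with openness. Thus we may form the open substack
\[
\mathfrak{A}\;:=\;\bigl[X\,\big/\,G_{\kappa(x)}\bigr]\;\subset\;\bigl[\wt\alpha^{-1}(x)\,\big/\,G_{\kappa(x)}\bigr]\;=\;\alpha^{-1}(x),
\]
which is an Artin stack in schemes over $\kappa(x)$ since $X$ is a scheme of finite type over $\kappa(x)$ and $G_{\kappa(x)}$ is a smooth affine algebraic group (cf.\ \cite[4.6.1, 4.14]{LaumonMB}).

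It remains to verify that $\mathfrak{A}^\ad=\alpha^{-1}(x)^{\rm wa}$. Since the formation of stack quotients commutes with the adification functor for quotients of schemes of finite type by smooth affine group schemes, $\mathfrak{A}^\ad=[X^\ad/G_{\kappa(x)}^\ad]$ and $\alpha^{-1}(x)^\ad=[\wt\alpha^{-1}(x)^\ad/G_{\kappa(x)}^\ad]$. By Theorem~\ref{Thm7.1} we have $X^\ad=\wt\alpha^{-1}(x)^{\rm wa}$, and since the weakly admissible locus $\sH_{\phi,N,\preceq\mu}^{\rm ad,wa}\subset\sH_{\phi,N,\preceq\mu}^{\rm ad}$ is an open substack (Theorem~\ref{ThmWAOpen}) compatible with the smooth cover, we obtain $\mathfrak{A}^\ad=\alpha^{-1}(x)^{\rm wa}$ as claimed. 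The analogous argument applies verbatim to the other stacks $\sH_{\phi,\preceq\mu}$, $\sH_{\phi,N,\mu}$, $\sH_{\phi,\mu}$, $\sD_{\phi,N,\mu}$, $\sD_{\phi,\mu}$: each is a quotient of a locally closed $G$-invariant subscheme of $P_{K_0,d}\times Q_{K,d,\preceq\mu}$ (or of $\Res_{K_0/\BQ_p}\GL_{d,K_0}\times\Flag_{K,d,\mu}$), weak admissibility descends to an open $G$-invariant subscheme in the fiber over $x$ by Theorem~\ref{Thm7.1} (applied to the appropriate sub-situation) together with Corollary~\ref{Cor3.7}, and taking the quotient yields the desired open substack $\mathfrak{A}$.

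The only point requiring care is the compatibility of ``weakly admissible'' with base change to the non-algebraic residue field $\kappa(x)$ and the identification $X^\ad\cap\wt\alpha^{-1}(x)^\ad=(X\cap\wt\alpha^{-1}(x))^\ad$; this is automatic since $X$ was defined in Theorem~\ref{Thm7.1} as an open subscheme of $\wt\alpha^{-1}(x)$ itself, not of the ambient space. No further obstacle is expected, as all the technical work was already carried out in Theorem~\ref{Thm7.1}.
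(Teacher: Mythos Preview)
Your proposal is correct and follows essentially the same approach as the paper: the paper's proof is the single sentence ``This is an immediate consequence of Theorem~\ref{Thm7.1} and the proof of Corollary~\ref{Cor3.7},'' and you have spelled out precisely what that means---namely, that the open subscheme $X\subset\wt\alpha^{-1}(x)$ from Theorem~\ref{Thm7.1} is $G_{\kappa(x)}$-stable (since weak admissibility depends only on the isomorphism class), so its stack quotient $\mathfrak A=[X/G_{\kappa(x)}]$ is the desired open substack, and the passage to the other stacks is handled exactly as in Corollary~\ref{Cor3.7}.
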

\begin{proof}
This is an immediate consequence of Theorem~\ref{Thm7.1} and the proof of Corollary~\ref{Cor3.7}.
\end{proof}

We also determine the image of the weakly admissible locus in the adjoint quotient.

\begin{theorem}\label{ThmImageWA}
The image of $\sH_{\phi,N,\preceq\mu}^{\rm ad,wa}$ (and $\sH_{\phi,\preceq\mu}^{\rm ad,wa}$, $\sH_{\phi,N,\mu}^{\rm ad,wa}$, $\sH_{\phi,\mu}^{\rm ad,wa}$, $\sD_{\phi,N,\mu}^{\rm ad,wa}$, and $\sD_{\phi,\mu}^{\rm ad,wa}$) under the morphism(s) $\alpha$ is equal to the affinoid subdomain 
\begin{equation}\label{eqNewtonstratum}
\Bigl\{\,c=(c_1,\ldots,c_d)\in (A/\FS_d)_{E_\mu}^\ad\,\Big|\es v_c(c_i)\le v_c(p)^{^{\TS\tfrac{1}{e}\sum_\psi(\mu_{\psi,d}+\ldots+\mu_{\psi,d+1-i})}}\text{ with equality for }i=d\,\Bigr\}\,,
\end{equation}
where $v_c$ is the (multiplicative) valuation of the adic point $c$ with $v_c(p)<1$.
\end{theorem}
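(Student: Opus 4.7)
The plan is to prove the two inclusions separately, after reducing to a point of $\sH_{\phi,N,\preceq\mu}^{\rm ad,wa}$ over an algebraically closed valued extension $L$ of $E_\mu$; this reduction is legitimized by Corollary~\ref{waandBC}.

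For the inclusion ``image $\subseteq$ subdomain'', let $\ulD=(D,\Phi,N,\Fq)$ over $L$ be weakly admissible with $\alpha([\ulD])=c=(c_1,\ldots,c_d)$. The equality at $i=d$ is the slope-one condition: Remark~\ref{RemDecompOfD} gives $\det_L\Phi^f=(\det_L(\Phi^f)_0)^f=((-1)^dc_d)^f$, so $t_N(\ulD)=v_L(c_d)^{1/f}$; combined with $t_H(\ulD)=\frac{1}{ef}\sum_{\psi,j}\mu_{\psi,j}$ this yields the required equality on $v_L(c_d)$. For the inequalities at $i<d$ one needs a Newton-above-Hodge statement on the eigenvalues $\lambda_1,\ldots,\lambda_d$ of $(\Phi^f)_0$. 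The relation $N_0(\Phi^f)_0=p^f(\Phi^f)_0N_0$ forces $N_0$ to send the generalized $\lambda$-eigenspace of $(\Phi^f)_0$ into the $p^{-f}\lambda$-eigenspace, so for each $d'\leq d$ the $L$-subspace $D_0^{(d')}\subset D_0$ spanned by the generalized eigenspaces for the $d'$ eigenvalues of largest $v_L$-valuation is $(\Phi^f)_0$- and $N_0$-stable, and via Remark~\ref{RemDecompOfD} it extends to a $(\Phi,N)$-stable $(L\otimes K_0)$-submodule $D^{(d')}\subset D$. An elementary linear-algebra estimate on $\Fq\cap D^{(d')}\otimes_{L\otimes K_0}\BdR{L}$ (the induced Hodge-Pink polygon satisfies $\mu'_{\psi,k}\geq\mu_{\psi,k+d-d'}$ for all $k$) gives $t_H(D^{(d')})\geq\frac{1}{ef}\sum_\psi(\mu_{\psi,d}+\ldots+\mu_{\psi,d-d'+1})$. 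Combining $\slope(D^{(d')})\geq1$ with the ultrametric estimate $v_L(c_i)\leq\prod_{j=1}^iv_L(\lambda_{(j)})$ (for the $\lambda_{(j)}$ sorted by decreasing $v_L$), applied with $d'=i$, then yields the claimed inequality on $v_L(c_i)$.

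For the reverse inclusion, let $c=(c_1,\ldots,c_d)$ lie in the subdomain and let $L$ be an algebraically closed valued extension of $\kappa(c)\cdot\norm{K}$. Set $D=L\otimes_{\BQ_p}K_0^d$ and $N=0$, and under the decomposition $L\otimes_{\BQ_p}K_0\cong\prod_iL$ of Remark~\ref{RemDecompOfD} take $b=(C,\id,\ldots,\id)$ with $C$ the companion matrix of $X^d+c_1X^{d-1}+\ldots+c_d$. Then $\Phi:=b\phi$ has $(\Phi^f)_0=C$, realizing the characteristic polynomial $c$; it remains to exhibit a Hodge-Pink lattice $\Fq$ of constant Hodge polygon $\mu$ for which $(D,\Phi,0,\Fq)$ is weakly admissible. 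By Proposition~\ref{PropQ} such $\Fq$ form the smooth irreducible $L$-variety $Q_{K,d,\mu}\otimes L$, and by Theorem~\ref{ThmWAOpen} the weakly admissible locus inside it is open; one must show it is non-empty.

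The hard part is this non-emptiness. For each $\Phi$-stable sub-$(L\otimes K_0)$-module $D'\subset D$, the locus $Z_{D'}\subset Q_{K,d,\mu}\otimes L$ on which $\slope(D',\Phi|_{\phi^*D'},0,\Fq\cap D'\otimes\BdR{L})<1$ is closed, and the task reduces to showing $\bigcup_{D'}Z_{D'}$ is a proper closed subset. By semi-continuity, a generic $\Fq$ in the Schubert cell realizes the minimal value $t_H(D')=\frac{1}{ef}\sum_\psi(\mu_{\psi,d}+\ldots+\mu_{\psi,d-d'+1})$ for each fixed $D'$ of rank $d'$, so the defining inequality of $Z_{D'}$ for a generic $\Fq$ becomes $\sum_{j\in S(D')}w_L(\lambda_j)<\frac{w_L(p)}{e}\sum_\psi(\mu_{\psi,d}+\ldots+\mu_{\psi,d-d'+1})$, where $S(D')$ indexes the $(\Phi^f)_0$-eigenvalues occurring in $D'$. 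Minimizing the left hand side over $D'$ of rank $d'$ gives precisely the value of the Newton polygon of $\chi_c$ at $d'$, and an elementary lower-convex-hull argument---applied to the points $(i,w_L(c_i))$ together with the endpoints $(0,0)$ and $(d,\frac{w_L(p)}{e}\sum_{\psi,j}\mu_{\psi,j})$ which both lie on the Hodge polygon---shows that the Newton polygon of $\chi_c$ lies on or above the Hodge polygon whenever $c$ satisfies the defining inequalities of the subdomain. Hence no generic $\Fq$ lies in any $Z_{D'}$, each $Z_{D'}$ is a proper closed subset, and a routine fiber-dimension count over the parameter space of $\Phi$-stable sub-$(L\otimes K_0)$-modules of fixed rank (a finite disjoint union of quasi-projective varieties) shows $\bigcup_{D'}Z_{D'}$ is itself proper in $Q_{K,d,\mu}\otimes L$, producing the required weakly admissible $\Fq$.
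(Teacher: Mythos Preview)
Your ``image $\subseteq$ subdomain'' direction follows the same line as the paper, including the key estimate $t_H(\ulD')\ge l_i$ (which is Lemma~\ref{LemmaELLi} in the paper). One imprecision: your subspace $D_0^{(d')}$ ``spanned by the generalized eigenspaces for the $d'$ eigenvalues of largest $v_L$-valuation'' need not have rank exactly $d'$ when several eigenvalues share the same valuation. The paper avoids this by allowing, for an arbitrary subset $I$, eigenvalues of the form $p^{-n_j f}\lambda_j$ with $n_j\ge 0$; since $v_L(p^{-n_j f}\lambda_j)\ge v_L(\lambda_j)$ one still gets $t_N(\ulD')^f\ge\prod_{j\in I}v_L(\lambda_j)$ and the inequality on $v_L(c_i)$ follows. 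Your version can also be repaired by interpolating between the two threshold subspaces bracketing rank $d'$, using the convexity of $i\mapsto l_i$.

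The more serious gap is in the reverse inclusion. Your construction produces a weakly admissible $(D,\Phi,0,\Fq)$ with a \emph{generic} Hodge--Pink lattice, hence a point of $\sH_{\phi,\mu}^{\rm ad,wa}$. This suffices for the four $\sH$-stacks, but not for $\sD_{\phi,\mu}^{\rm ad,wa}$ and $\sD_{\phi,N,\mu}^{\rm ad,wa}$: these sit inside $\sH_{\phi,\mu}^{\rm ad,wa}$ via the zero section, not the other way around, so exhibiting a point upstairs says nothing about the image of the filtration stacks. The paper handles this by proving the reverse inclusion directly for $\sD_{\phi,\mu}^{\rm ad,wa}$ (citing \cite[Theorem~5.5 and Proposition~5.2]{Hellmann}) and then using the containments $\sD_{\phi,\mu}^{\rm ad,wa}\subset\sH_{\phi,N,\preceq\mu}^{\rm ad,wa}$ etc. Your genericity argument transfers verbatim to $\Flag_{K,d,\mu}$ in place of $Q_{K,d,\mu}$: for a fixed $D'$ of rank $d'$ and generic $\CF^\bullet$, one has $\dim(\CF^i_\psi\cap D'_\psi)=\max(0,\dim\CF^i_\psi-(d-d'))$, hence again $t_H(D',\CF^\bullet\cap D'_K)=l_{d'}$, and the rest goes through. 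So the fix is easy but necessary.

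Finally, your ``routine fiber-dimension count'' is not needed: the companion matrix $C$ is cyclic, so its invariant subspaces correspond bijectively to the monic divisors of $\chi_c$ over $L$, a finite set. Thus $\bigcup_{D'}Z_{D'}$ is already a finite union of proper closed subsets of the irreducible variety $Q_{K,d,\mu}\otimes L$ (or $\Flag_{K,d,\mu}\otimes L$), hence proper.
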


\begin{remark}
(1) The subset described in $(\ref{eqNewtonstratum})$ is really an affinoid subdomain. Indeed the adjoint quotient $(A/\FS_d)_{E_\mu}^\ad$ is (admissibly) covered by the (admissible) open affinoid rigid spaces (resp.\ adic spaces) $X_M=\{c=(c_1,\dots,c_d)\in (A/\FS_d)_{E_\mu}^\ad\mid v_c(c_i)\leq p^M,\ \text{for all}\ i\ \text{and}\ v_c(v_d)\geq -p^M\}$ and the subspace $(\ref{eqNewtonstratum})$ is easily seen to be a Laurent subdomain of each of these $X_M$ for $M\gg 0$.  

\medskip\noindent
(2) The morphisms $\alpha$ forget the Hodge-Pink lattice $\Fq$ (or the $K$-filtration $\CF^\bullet$) and in general their fibers contain infinitely many weakly admissible points.

\medskip\noindent
(3) Like in \cite[Proposition~5.2]{Hellmann} the affinoid subdomain of Theorem~\ref{ThmImageWA} can be described as the \emph{closed Newton stratum} of the coweight $(-\tfrac{1}{e}\sum_\psi\mu_{\psi,d}\ge\ldots\ge-\tfrac{1}{e}\sum_\psi\mu_{\psi,1})$ of $A$. By this we mean that the $\ol\BQ_p$-valued points (i.e.~the rigid analytic points) of \ref{eqNewtonstratum} coincide with the points of the corresponding Newton stratum in the sense of Kottwitz~\cite{Kottwitz06}. In \cite{Hellmann} the claim is made for all points of the corresponding Berkovich space. In the set up of adic spaces we can not rely on Kottwitz's definition of a Newton stratum for all points of the adic space, as the valuations are not necessarily rank one valuations, i.e.~the value group is not necessarily a subgroup of the real numbers. Especially the Newton strata do not cover the adic space $(A/\mathfrak{S}_d)^{\rm ad}$.

\medskip\noindent
(4) For $\sD_{\phi,\mu}^{\rm ad,wa}$ the description of the image in our Theorem~\ref{ThmImageWA} has previously been obtained by Fontaine and Rapoport \cite[Th\'eor\`em~1]{FontaineRapoport} and Breuil and Schneider~\cite[Proposition~3.2]{BreuilSchneider} on the level of $L$-valued points where in \cite{FontaineRapoport} $L$ is a complete discretely valued extension of $E_\mu$ with algebraically closed residue field. In \cite{BreuilSchneider} $L$ is a finite extension of $E_\mu$ and in addition all Hodge-Tate weights are assumed to be pairwise different. Moreover, our affinoid subdomain~\eqref{eqNewtonstratum} equals $\FS_d\backslash\mathbf{T}'_\xi$ from \cite[Corollary~2.5]{BreuilSchneider}, where $\xi$ is associated with the cocharacter $\tilde\xi:=\bigl(-\mu-(0,1,\ldots,d-1)\bigr)_\dom\in X_*(\wt T)$. Actually, both \cite{FontaineRapoport,BreuilSchneider} even prove that over an $L$-valued point $c$ in the image there is an $L$-valued point in $\wt\alpha^{-1}(c)^{\rm wa}$. This also follows from our Theorem~\ref{Thm7.1}, which shows that $\wt\alpha^{-1}(c)^{\rm wa}$ is Zariski-open in a scheme covered by affine spaces, see \eqref{sDpresentation}, because the $L$-valued points (for any infinite field $L$) lie dense in such schemes. In this way our theorem provides a new proof for \cite[Th\'eor\`em~1]{FontaineRapoport} and generalizes \cite[Proposition~3.2]{BreuilSchneider}; see Chapter~\ref{SectApplications} for more details.
\end{remark}

Before we prove the theorem we note the following 

\begin{lemma}\label{LemmaELLi}
Set $l_i:=\tfrac{1}{ef}\sum_\psi(\mu_{\psi,d}+\ldots+\mu_{\psi,d+1-i})$. Then $l_i$ equals the number $l_i$ defined in \cite[Formula~(5.2) on p.~988]{Hellmann}. If $\ulD=(D,\Phi,N,\Fq)$ is a $(\phi,N)$-module with Hodge-Pink lattice over a field $L\supset E_\mu$ whose Hodge polygon is bounded by $\mu$ and if $\ulD'=\bigl(D',\Phi|_{\phi^*D'},N|_{D'},\Fq\cap D'\otimes_{L\otimes K_0}\BdR{L}\bigr)\subset\ulD$ for a free $L\otimes_{\BQ_p}K_0$-submodule $D'\subset D$ of rank $i$ which is stable under $\Phi$ and $N$, then $t_H(\ulD')\ge l_i$.
\end{lemma}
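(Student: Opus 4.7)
The identification of $l_i$ with the quantity appearing in \cite[Formula~(5.2)]{Hellmann} is immediate from unwinding definitions: both sides equal $\tfrac{1}{ef}$ times the sum over $\psi$ of the $i$ smallest entries of $\mu_\psi$.

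For the inequality, the plan is to first extend scalars to a finite extension of $L$ containing $\norm{K}$, which leaves both sides unchanged. The decomposition $L\otimes_{\BQ_p}K=\prod_\psi L$ then splits every Hodge-Pink lattice into $\psi$-components, and writing $\wedge^i_{L\dbl t\dbr}\Fq'_\psi = t^{-h'_\psi}\wedge^i_{L\dbl t\dbr}\Fp'_\psi$ yields $t_H(\ulD')=\tfrac{1}{ef}\sum_\psi h'_\psi$, reducing the problem to showing $h'_\psi\ge l_{i,\psi}:=\mu_{\psi,d-i+1}+\ldots+\mu_{\psi,d}$ for each embedding $\psi$. I would let $\mu_{\ulD,\psi,1}\ge\ldots\ge\mu_{\ulD,\psi,d}$ denote the elementary divisors of $(\Fq_\psi,\Fp_\psi)$, i.e.\ the Hodge polygon of $\ulD$ at $\Spec L$. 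By Proposition~\ref{PropHWts}\ref{PropHWts_B}, boundedness by $\mu$ becomes the Bruhat condition $\sum_{j\le k}\mu_{\ulD,\psi,j}\le\sum_{j\le k}\mu_{\psi,j}$ for all $k$, with equality at $k=d$.

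The core of the argument will be an interlacing-type inequality $h'_\psi\ge \mu_{\ulD,\psi,d-i+1}+\ldots+\mu_{\ulD,\psi,d}$; combined with the Bruhat inequality at $k=d-i$ subtracted from the equality at $k=d$, this gives $h'_\psi\ge l_{i,\psi}$ immediately. To prove the interlacing, I would fix any $L\otimes_{\BQ_p}K_0$-linear complement $D''$ of $D'$ in $D$, so that $\Fp_\psi=\Fp'_\psi\oplus\Fp''_\psi$, and consider the projection $\pi:V_\psi\twoheadrightarrow V''_\psi=\Fp''_\psi[\tfrac1t]$ along $\Fp'_\psi[\tfrac1t]$. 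This fits $\Fq_\psi$ into a short exact sequence of free $L\dbl t\dbr$-modules $0\to\Fq'_\psi\to\Fq_\psi\to\pi(\Fq_\psi)\to 0$. Taking top exterior powers of this sequence together with the analogous splitting for $\Fp_\psi$, the pole orders will add up to give the identity
\[
h'_\psi+h''_\psi\;=\;\mu_{\ulD,\psi,1}+\ldots+\mu_{\ulD,\psi,d},
\]
where $h''_\psi$ is defined by $\wedge^{d-i}\pi(\Fq_\psi)=t^{-h''_\psi}\wedge^{d-i}\Fp''_\psi$.

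The hardest part, and the main calculation to be carried out, will be the upper bound $h''_\psi\le \mu_{\ulD,\psi,1}+\ldots+\mu_{\ulD,\psi,d-i}$. Writing $\Fq_\psi$ in an elementary divisor basis $(t^{-\mu_{\ulD,\psi,k}}b_k)_{k=1}^d$ with $b_k\in\Fp_\psi$ and applying $\pi$, the image $\pi(\Fq_\psi)$ is $L\dbl t\dbr$-spanned by the elements $t^{-\mu_{\ulD,\psi,k}}\pi(b_k)$, each lying in $t^{-\mu_{\ulD,\psi,k}}\Fp''_\psi$. Wedging any $d-i$ of these generators and using that the $\mu_{\ulD,\psi,k}$ are arranged in decreasing order will yield the required bound on $h''_\psi$. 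Combining with the previous identity and the Bruhat relations then gives $h'_\psi\ge l_{i,\psi}$. The only subtlety is in verifying the additivity of pole orders for the short exact sequence, which reduces to a standard determinantal identity for free modules.
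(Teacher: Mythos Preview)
Your proof is correct. The reduction via the Bruhat inequality to the interlacing estimate $h'_\psi\ge\mu_{\ulD,\psi,d-i+1}+\ldots+\mu_{\ulD,\psi,d}$ is exactly the one the paper makes; the difference lies in how that estimate is proved. The paper constructs, for each $\psi$, an explicit basis $(v'_{\psi,1},\ldots,v'_{\psi,i})$ of $\Fp'_\psi$ adapted to the elementary-divisor flag of $(\Fq_\psi,\Fp_\psi)$, with $v'_{\psi,j}\in\langle v_{\psi,1},\ldots,v_{\psi,d+j-i}\rangle_{\kappa(s)\dbl t\dbr}$, and then observes directly that $t^{-\mu'_{\psi,d+j-i}}v'_{\psi,j}\in\Fq'_\psi$; wedging these gives the lower bound on $h'_\psi$. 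Your route instead passes through a complement $D''$ and the short exact sequence $0\to\Fq'_\psi\to\Fq_\psi\to\pi(\Fq_\psi)\to0$, using multiplicativity of the top exterior power to trade the lower bound on $h'_\psi$ for an upper bound on $h''_\psi$, which you then obtain by wedging generators of $\pi(\Fq_\psi)$. Both arguments are short and essentially dual to one another; yours avoids the somewhat delicate inductive construction of the adapted basis (the saturation step in the paper's argument), at the price of introducing the auxiliary complement and quotient lattice.

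One small remark: your treatment of the first assertion is rather brisk. The formula in \cite[(5.2)]{Hellmann} is written in terms of the distinct values $x_{\psi,j}$ and their multiplicities, and the paper spends a paragraph verifying that it agrees with $\tfrac{1}{ef}\sum_\psi(\mu_{\psi,d}+\ldots+\mu_{\psi,d+1-i})$ by computing the differences $l_{i+1}-l_i$. Your claim that both equal the sum of the $i$ smallest entries is correct, but a line or two justifying it would not be out of place.
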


\begin{proof}
The number $l_i$ in \cite[(5.2)]{Hellmann} was defined as follows. Write $\{\mu_{\psi,1},\ldots,\mu_{\psi,d}\}=\{x_{\psi,1},\ldots,x_{\psi,r}\}$ with $x_{\psi,j}>x_{\psi,j+1}$. Let $n_{\psi,j}:=\max\{k:\mu_{\psi,k}\ge x_{\psi,j}\}$. In particular $n_{\psi,r}=d$ and $\mu_{\psi,n_{\psi,j}}\ge x_{\psi,j}$. For $0\le i\le d$ let $m_{\psi,j}(i):=\max\{0,n_{\psi,j}+i-d\}$. So $m_{\psi,j}(0)=0$ for all $j$ and $m_{\psi,r}(i)=i$. It follows that $n_{\psi,j}\ge d-i$ if and only if $\mu_{\psi,d-i}\ge x_{\psi,j}$. Now $l_i$ was defined in \cite[(5.2)]{Hellmann} as
\[
l_i=\tfrac{1}{ef}\sum_{\psi}\Biggl(\sum _{j=1}^{r-1}(x_{\psi,j}-x_{\psi,j+1})m_{\psi,j}(i)+x_{\psi,r} m_{\psi,r}(i)\Biggr).
\]
We compute $l_{i+1}-l_i=\tfrac{1}{ef}\sum_\psi\left(\sum_{j=1}^{r-1}(x_{\psi,j}-x_{\psi,j+1})\bigl(m_{\psi,j}(i+1)-m_{\psi,j}(i)\bigr)+x_{\psi,r}\right)$. The difference $m_{\psi,j}(i+1)-m_{\psi,j}(i)$ is $1$ if $n_{\psi,j}+i-d\ge0$, that is if $x_{\psi,j}\le\mu_{\psi,d-i}$. Else $m_{\psi,j}(i+1)-m_{\psi,j}(i)$ is $0$. Therefore $l_{i+1}-l_i=\tfrac{1}{ef}\sum_\psi\mu_{\psi,d-i}$ and $l_0=0$ implies that $l_i=\tfrac{1}{ef}\sum_\psi(\mu_{\psi,d}+\ldots+\mu_{\psi,d+1-i})$.

To prove the second assertion let $s\in\Spec L\otimes_{E_\mu}\norm{K}$ be a point and let $\mu'=\mu_\ulD(s)$  be the Hodge polygon of $\ulD$ at $s$. Then $\mu_{\psi,d}+\ldots+\mu_{\psi,d+1-i}\le\mu'_{\psi,d}+\ldots+\mu'_{\psi,d+1-i}$ for all $\psi$ and all $i$ by Proposition~\ref{PropHWts}\ref{PropHWts_B}. We let $\Fp_\psi$ be the $\psi$-component of $s^*\Fp:=s^*D\otimes_{\kappa(s)\otimes K_0}\BdRplus{\kappa(s)}$ and $\Fp'_\psi$ be the $\psi$-component of $s^*\Fp':=s^*D'\otimes_{\kappa(s)\otimes K_0}\BdRplus{\kappa(s)}$. By definition of the Hodge polygon, see Construction~\ref{ConstrHodgeWts}, we can choose a $\kappa(s)\dbl t\dbr$-basis $(v_{\psi,1},\ldots,v_{\psi,d})$ of $\Fp_\psi$ such that $(t^{-\mu'_{\psi,1}}\,v_{\psi,1},\ldots,t^{-\mu'_{\psi,d}}\,v_{\psi,d})$ is a $\kappa(s)\dbl t\dbr$-basis of the $\psi$-component $\Fq_\psi$ of $s^*\Fq$. Since $\dim_{\kappa(s)\dpl t\dpr}\bigl(\Fp'_\psi[\tfrac{1}{t}]\cap\langle v_{\psi,1},\ldots,v_{\psi,n}\rangle_{\kappa(s)\dpl t\dpr}\bigr)\ge n+i-d$ for all $n$, we can find a $\kappa(s)\dbl t\dbr$-basis $(v'_{\psi,1},\ldots,v'_{\psi,i})$ of $\Fp'_\psi$ with $v'_{\psi,j}\in\langle v_{\psi,1},\ldots,v_{\psi,d+j-i}\rangle_{\kappa(s)\dbl t\dbr}$. Namely, for each $j$ we let $\bar v'_j$ be an element of $\bigl(\Fp'_\psi\cap\langle v_{\psi,1},\ldots,v_{\psi,d+j-i}\rangle_{\kappa(s)\dbl t\dbr}\bigr)\big/\langle v'_{\psi,1},\ldots,v'_{\psi,j-1}\rangle_{\kappa(s)\dbl t\dbr}$ which generates a non-zero saturated $\kappa(s)\dbl t\dbr$-submodule, and we let $v'_{\psi,j}\in\Fp'_\psi\cap\langle v_{\psi,1},\ldots,v_{\psi,d+j-i}\rangle_{\kappa(s)\dbl t\dbr}$ be a lift of $\bar v'_{\psi,j}$. Then $(v'_{\psi,1},\ldots,v'_{\psi,j})$ is linearly independent over $\kappa(s)\dpl t\dpr$ and generates a saturated $\kappa(s)\dbl t\dbr$-submodule of $\Fp'_\psi$. Using this basis we see that $t^{-\mu'_{\psi,d+j-i}}\cdot v'_{\psi,j}\in\Fq_\psi\cap\Fp'_\psi[\tfrac{1}{t}]$. This implies that $t_H(\ulD')\ge\tfrac{1}{ef}\sum_\psi\mu'_{\psi,d}+\ldots+\mu'_{\psi,d+1-i}\ge l_i$.
\end{proof}

\begin{proof}[Proof of Theorem~\ref{ThmImageWA}]
We consider the embedding of $\sD_{\phi,\mu}$ into $\sH_{\phi,\mu}$ via the zero section. Under this section $\sD_{\phi,\mu}^{\rm ad,wa}$ is contained in $\sH_{\phi,N,\preceq\mu}^{\rm ad,wa}$, $\sH_{\phi,\preceq\mu}^{\rm ad,wa}$, $\sH_{\phi,N,\mu}^{\rm ad,wa}$, $\sH_{\phi,\mu}^{\rm ad,wa}$, and $\sD_{\phi,N,\mu}^{\rm ad,wa}$ by Lemma~\ref{waundersection} or Remark~\ref{Rem3.8}. Conversely they are all contained in $\sH_{\phi,N,\preceq\mu}^{\rm ad,wa}$. Moreover, these inclusions are compatible with the morphisms $\alpha$ to $(A/\FS_d)_{E_\mu}$. 

We first claim that the affinoid subdomain is contained in the image of the weakly admissible locus for all these stacks. By the above it suffices to prove the claim for $\sD_{\phi,\mu}^{\rm ad,wa}$. In this case the claim follows from \cite[Theorem~5.5 and Proposition~5.2]{Hellmann} using Lemma~\ref{LemmaELLi}. Note that in loc.\ cit.\ only Berkovich's analytic points are treated, but the given argument works verbatim also for adic points.

Conversely let $c=(c_1,\ldots,c_d)$ be an $L$-valued point of $(A/\FS_d)_{E_\mu}^\ad$ which lies in the image of the weakly admissible locus of one of these stacks. By the above it lies in the image of $\sH_{\phi,N,\preceq\mu}^{\rm ad,wa}$. So let $\ulD\in\sH_{\phi,N,\preceq\mu}^{\rm ad,wa}(L')$ for a field extension $L'/L$, such that $\ulD$ maps to $c$. By extending the field $L'$ further we may assume that $K_0\subset L'$ and that $X^d+c_1X^{d-1}+\ldots+c_d=\prod_{j=1}^d(X-\lambda_j)$ splits into linear factors with $\lambda_j\in L'$. We claim that $v_{L'}(\prod_{j\in I}\lambda_j)\le v_{L'}(p)^{fl_i}$ for all subsets $I\subset\{1,\ldots,d\}$ of cardinality $i$. By Lemma~\ref{LemmaELLi} this implies that $c$ lies in our affinoid subdomain.

To prove the claim we use Remark~\ref{RemDecompOfD}. Then $X^d+c_1X^{d-1}+\ldots+c_d$ is the characteristic polynomial of the $L'$-endomorphism $(\Phi^f)_0$ of $D_0$ and $t_N(\ulD)=v_{L'}(\det_{\SSC L'}(\Phi^f)_0)^{1/f}$. We write $(\Phi^f)_0$ in Jordan canonical form and observe that $N_0$ maps the generalized eigenspace of $(\Phi^f)_0$ with eigenvalue $\lambda_j$ into the one with eigenvalue $p^{-f}\lambda_j$. If $I\subset\{1,\ldots,d\}$ is a subset with cardinality $i$ this allows us to find an $i$-dimensional $L'$-subspace $D'_0\subset D_0$ which is stable under $(\Phi^f)_0$ and $N_0$ such that the eigenvalues of $(\Phi^f)_0$ on $D'_0$ are of the form $(p^{-n_j}\lambda_j:j\in I)$ for suitable $n_j\in\BZ_{\ge0}$. We let $D'\subset D$ be the $(\phi,N)$-submodule corresponding to $D'_0\subset D_0$ under Remark~\ref{RemDecompOfD}. Then
\[
v_{L'}\bigl(\,\prod_{j\in I}\lambda_j\bigr)\;\le\;v_{L'}\bigl(\,\prod_{j\in I}p^{-n_j}\lambda_j\bigr)\;=\;v_{L'}\bigl(\det\nolimits_{\SSC L'}(\Phi^f)_0|_{D'_0}\bigr)\;=\;t_N(\ulD')^f\;\le\; v_{L'}(p)^{f\,t_H(\ulD')}\;\le\; v_{L'}(p)^{fl_i}
\]
by the weak admissibility of $\ulD$ and by Lemma~\ref{LemmaELLi}. This proves the theorem.
\end{proof}

\section{Applications}\label{SectApplications}
Let us mention two conjectural applications of our constructions to the $p$-adic local Langlands program. 

\smallskip\noindent
{\bf Breuil's conjecture on the locally analytic socle}\\
In \cite{Br1} and \cite{Br2} Breuil formulates a conjecture on the locally analytic principal series representations that embed into the $\rho$-isotypical part of completed cohomology (or some $p$-adically completed space of automorphic forms) for some fixed global Galois representation $\rho$ which is associated to an automorphic representation. 
The automorphic representation to which $\rho$ is associated defines a locally algebraic representation inside completed cohomology, i.e.~a representation that appears in the conjecture of Breuil and Schneider; see below. 
The conjectured existence of more locally analytic principal series representations is the representation-theoretic formulation of the existence of \emph{companion points} on eigenvarieties, i.e.~the existence of (overconvergent) $p$-adic automorphic forms (of finite slope) such that the associated Galois representation is in fact automorphic. 

These additional locally analytic representations that should conjecturally embed into completed cohomology are described by combinatorial data: the relative position of the de Rham filtration and a flag of $\phi$-stable subspaces inside $D_{\rm st}(\rho)$, i.e.~they are described completely by local data. 
In fact one can formulate a conjecture for all (potentially) semi-stable local Galois representations (not just the restrictions of global Galois representations) by replacing the completed cohomology by the candidate for the $p$-adic local Langlands correspondence as in \cite{CEGGPS}.

In joint work with Breuil and Schraen the second author establishes a link between the existence of these locally analytic principal series representations and the degenerations of certain structures from $p$-adic Hodge theory (resp.~the theory of $(\phi,\Gamma)$-modules) in rigid analytic families. The degenerations predicted by Breuil's conjecture can be constructed using precisely the universal families of semi-stable representations defined in the present article.

\bigskip\noindent
{\bfseries The Breuil-Schneider conjecture}\\
This second application is rather a speculation than a true application. As mentioned in the introduction the $p$-adic local Langlands program wants to relate on the one hand certain continuous representations of $\mathscr{G}_K$ on $n$-dimensional $L$-vector spaces for another $p$-adic field $L$, and on the other hand topologically irreducible admissible representations of ${\rm GL}_n(K)$ on finite dimensional $L$-Banach spaces. We want to explain in which sense both kinds of representations vary in families.

On the side of $\GL_n(K)$-representations, when all Hodge-Tate weights are pairwise different, Breuil, Schneider and Teitelbaum~\cite{BreuilSchneider,SchneiderTeitelbaum06} constructed a Banach-Hecke algebra $\CB$ which is the completion of the usual spherical Hecke algebra for a certain norm. This Banach-Hecke algebra is an affinoid algebra over the Galois closure $\norm{K}$ of $K/\BQ_p$, whose associated affinoid space $\Spa\CB$ is contained in a split $n$-dimensional torus $A$. Moreover, the algebra $\CB$ acts on a universal infinite dimensional locally algebraic Banach representation of $\GL_n(K)$. Breuil and Schneider also conjectured that the specialization of the universal Banach representation at any $L$-valued point of $\Spa\CB$ admits an (in general many) invariant norm(s) and proved this in some cases. Further cases were established by Sorensen~\cite{Sorensen13} and more recently many new cases were proved by Caraiani, Emerton, Gee, Geraghty, Pa{\v{s}}k{\=u}nas and Shin \cite{CEGGPS}. One might hope that the completions with respect to these norms produce the searched for irreducible admissible finite dimensional $L$-Banach representations. 

If on the Galois side one restricts to semi-stable or crystalline representations of $\mathscr{G}_K$ then we provide in this article the moduli spaces $\sD_{\phi,N,\mu}^{\rm ad,adm}$ for those. Sending a semi-stable $\mathscr{G}_K$-representation to the characteristic polynomial of its associated Frobenius defines a morphism $\alpha$ from $\sD_{\phi,N,\mu}^{\rm ad,adm}$ to the adjoint quotient $(A/\FS_n)^\ad$ which contains (an image of) the affinoid domain $\Spa\mathcal{B}$ of Breuil and Schneider. In Chapter~\ref{SectAdjQuot} we proved that the fibers of this morphism $\alpha$ are Artin stacks in schemes (Corollary~\ref{Cor9.2}) and we determined the image of $\alpha$. If all Hodge-Tate weights are pairwise different, Breuil and Schneider \cite[Proposition~3.2]{BreuilSchneider} proved that the image equals $\Spa\mathcal{B}$. Our Theorem~\ref{ThmImageWA} generalizes this to arbitrary Hodge-Tate weights. So one may now ask whether there is a relation between the fiber of the morphism $\alpha$ over an $L$-valued point of $\Spa\CB$ and the set of invariant norms on the specialization of the universal Banach representation at this point.

The reader should note that by the condition of \cite{families} that the Hodge-Tate weights lie in $\{0,1\}$ together with the condition of \cite{BreuilSchneider,SchneiderTeitelbaum06} that they are pairwise different, one was limited to $\GL_2$ for which the $p$-adic local Langlands program is established when $K=\BQ_p$; see \cite{Colmez10, Paskunas13, CDP13}. So for the application to $\GL_n$ when $n>2$ our generalization in the present article is essential.

\vspace{1cm}

\noindent
\parbox[t]{0.5\textwidth}{ 
Urs Hartl  \\ 
Universit\"at M\"unster\\
Mathematisches Institut \\
Einsteinstr.~62\\
D -- 48149 M\"unster
\\ Germany
\\[1mm]
\href{https://www.uni-muenster.de/Arithm/hartl/}{https:/\!/www.uni-muenster.de/Arithm/hartl/}
} 
\parbox[t]{0.5\textwidth}{ 
Eugen Hellmann\\
Universit\"at M\"unster\\
Mathematisches Institut \\
Einsteinstr.~62\\
D -- 48149 M\"unster\\
Germany
\\[1mm]
E-mail: \href{mailto:e.hellmann@uni-muenster.de}{e.hellmann@uni-muenster.de}
}
\end{document}